\numberwithin{section}{chapter}
\numberwithin{equation}{chapter}
\newtheorem{theorem}{Theorem}[chapter]
\newtheorem{lemma}[theorem]{Lemma}
\newtheorem{corollary}[theorem]{Corollary}
\newtheorem{proposition}{Proposition}[chapter]
\theoremstyle{definition}
\newtheorem{definition}{Definition}[chapter]
\newtheorem{example}{Example}[chapter]
\newtheorem{examples}{Examples}[chapter]
\newtheorem{ex}{Exercise}[chapter]
\theoremstyle{remark}
\newtheorem{remark}{Remark}[chapter]
\def\bbC{\mathbb{C}}
\def\bbE{\mathbb{E}}
\def\bbK{\mathbb{K}}
\def\bbN{\mathbb{N}}
\def\bbP{\mathbb{P}}
\def\bbR{\mathbb{R}}
\def\bbQ{\mathbb{Q}}
\def\bbZ{\mathbb{Z}}
\def\bB{\mathbf{B}}
\def\bS{\mathbf{S}}
\def\b1{\mathbb{1}}
\def\cA{\mathcal{A}}
\def\cB{\mathcal{B}}
\def\cC{\mathcal{C}}
\def\cE{\mathcal{E}}
\def\cF{\mathcal{F}}
\def\cI{\mathcal{I}}
\def\cJ{\mathcal{J}}
\def\cM{\mathcal{M}}
\def\cN{\mathcal{N}}
\def\cO{\mathcal{O}}
\def\cP{\mathcal{P}}
\def\cR{\mathcal{R}}
\def\cT{\mathcal{T}}
\def\cU{\mathcal{U}}
\def\cV{\mathcal{V}}
\def\cW{\mathcal{W}}
\def\fB{\mathfrak{B}}
\def\fC{\mathfrak{C}}
\def\fR{\mathfrak{R}}
\def\fT{\mathfrak{T}}
\def\sfA{\mathsf{A}}
\def\sfS{\mathsf{S}}
\def\sfT{\mathsf{T}}
\def\e{\varepsilon}
\def\R{\mathop{R}}
\title[ADVANCES  IN INFORMATION  GEOMETRY]{EXPLORING INFORMATION GEOMETRY: \\
RECENT ADVANCES\\ AND \\CONNECTIONS TO TOPOLOGICAL FIELD THEORY
\\
\vspace{1cm}
\vspace{2cm}
\vspace{2cm}}
\author{Authors:}
\author{No\'emie C. Combe\\ Philippe G. Combe\\ Hanna K. Nencka}
\email{
}
\address{}
\curraddr{}
\thanks{This research is part of the project No. 2022/47/P/ST1/01177 cofunded by the National Science Centre and the European Union's Horizon 2020 research and innovation programme under the Marie Skłodowska-Curie grant agreement no. 945339 \includegraphics[width=1cm, height=0.5cm]{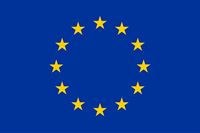}.
For the purpose of Open Access, the author has applied a CC-BY public copyright licence to any Author Accepted Manuscript (AAM) version arising from this submission. The first author extends heartfelt gratitude to the organizers of the Wis\l{}a Baltic Workshop 2023  and  Maria Ulan for the invitation to lecture on information geometry, an opportunity that ultimately led to the creation of this book for students, researchers, and all those curious about this fascinating field. 
~
The first author is also grateful to Maxim Kontsevich for asking stimulating questions on the topic, that inspired deeper exploration and helped illuminate aspects of the subject that had remained unclear. This textbook is, in part, an effort to clarify these questions and provide a structured introduction to the topic. The first author thanks Katarzyna Pietruska-Pa\l{}uba for a great motivational discussion, which led to the creation of the final shape of this manuscript. We thank Janusz Grabowski for his interest in the subject. Warm thanks go to Matilde Marcolli also. Finally, we would like to express all our gratitude to Yuri I. Manin, whose untimely death has saddened us all. }
\address{University of Warsaw, Department of Mathematics \\ 
(MIMUW)\\
Ulica Banacha 2, 02-097 Warsaw}
\email{n.combe@uw.edu.pl}
\begin{document}

\maketitle
\tableofcontents

\begin{center}
 \includegraphics[scale=0.4]{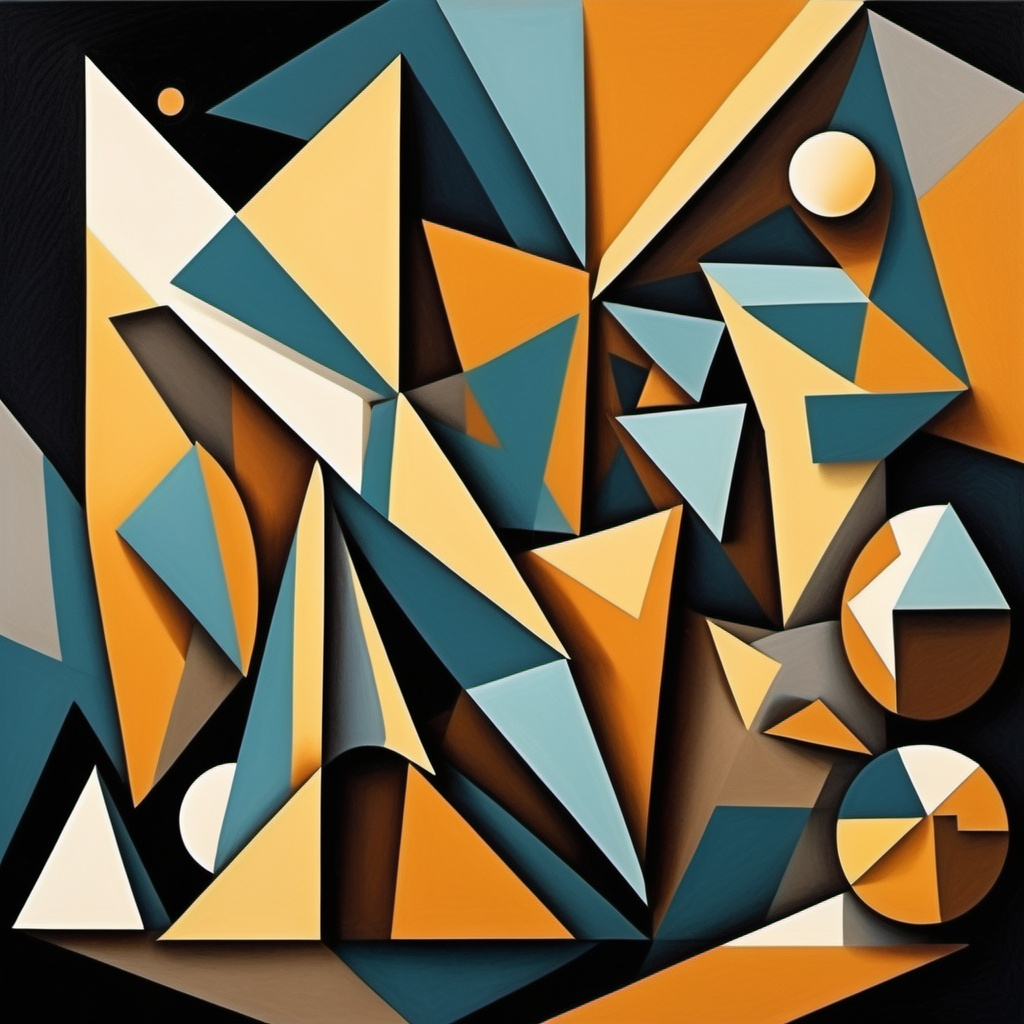}.
\end{center}

\section{Foreword}

Welcome to this Introductory Textbook on Geometry and Information Geometry! Whether you are a student, researcher, or simply someone curious about this fascinating field, this book is designed to provide you with a clear and accessible entry point into the world of geometry and its modern extension into information geometry.

Geometry, in its many forms, has been a cornerstone of mathematics for centuries. With the advent of information theory and machine learning, new geometric structures have emerged that help us better understand complex probabilistic models, optimization, and data analysis. Information geometry is one such powerful framework, blending classical differential geometry with modern applications in probability and statistics.

This book is structured to guide you from the fundamentals of topology and differentiable manifolds to the more advanced concepts of probability geometry and Frobenius manifolds. The progression is designed to be intuitive, allowing you to build your knowledge step by step.

Throughout the chapters, you will find exercises to test your understanding, as well as solutions to some of the more challenging problems to aid your learning. To complement the material, you are encouraged to follow along with the accompanying YouTube video (see the QR code below), which aligns with this textbook and served as an inspiration for its creation.

\begin{center}
 \includegraphics[scale=0.3]{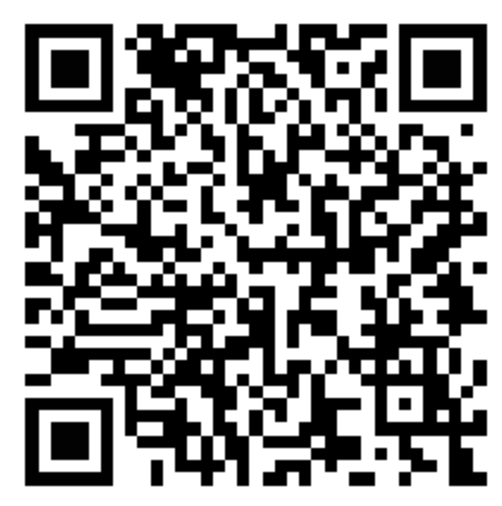}.
\end{center}

We hope this book provides you with both insight and enjoyment as you embark on your journey into geometry and information geometry.

\mainmatter

\begin{abstract}  
This introductory text arises from a lecture given in Göteborg, Sweden, given by the first author and is intended for undergraduate students, as well as for any mathematically inclined reader wishing to explore a synthesis of ideas connecting geometry and statistics. At its core, this work seeks to illustrate the profound and yet natural interplay between differential geometry, probability theory, and the rich algebraic structures encoded in (pre-)Frobenius manifolds.  

\, 

The exposition is structured into three principal parts. The first part provides a concise introduction to differential topology and geometry, emphasizing the role of smooth manifolds, connections, and curvature in the formulation of geometric structures. The second part is devoted to probability, measures, and statistics, where the notion of a probability space is refined into a geometric object, thus paving the way for a deeper mathematical understanding of statistical models. Finally, in the third part, we introduce (pre-)Frobenius manifolds, revealing their surprising connection to exponential families of probability distributions and, discuss more broadly, their role in the geometry of information.  At the end of those three parts the reader will find stimulating exercises.

\, 

By bringing together these seemingly distant disciplines, we aim to highlight the natural emergence of geometric structures in statistical theory. This work does not seek to be exhaustive but rather to provide the reader with a pathway into a domain of mathematics that is still in its formative stages, where many fundamental questions remain open. The text is accessible without requiring advanced prerequisites and should serve as an invitation to further exploration.  
\end{abstract}

\chapter{Introduction}

In what follows, we shall be concerned with a surprising fusion of ideas, namely, that of differential geometry and probability theory (or, more generally, statistics). At first glance, these two domains might appear unrelated, yet their synthesis gives rise to a highly rich and intricate mathematical structure, leading to the emergence of a relatively young field, known as the \emph{geometry of information}. The relevance of this framework extends beyond pure mathematics, with applications in artificial intelligence (such as in language models like ChatGPT) and machine learning. Given the accelerating pace of developments in these areas, it is imperative to pursue the deeper mathematical underpinnings of this theory.  

\,

One of the great advantages of geometry is that it offers an intuitive and often visual means of understanding complex situations. This feature allows us to circumvent difficulties that might otherwise seem insurmountable. More precisely, at the heart of the geometry of information lies a fundamental object: the manifold of probability distributions, where the probability measures belong to a specific class of distributions. 

\,

To make matters precise, recall that a \emph{probability space} consists of a triple $(\Omega, \mathcal{F}, P)$, where $\Omega$ is the sample space (the set of possible outcomes), $\mathcal{F}$ is a $\sigma$-algebra of measurable subsets of $\Omega$, and $P$ is a probability measure assigning to each event in $\mathcal{F}$ a real number in the interval $[0,1]$. This object $(\Omega, \mathcal{F}, P)$ serves as a rigorous mathematical model for phenomena arising in nature. The challenge we face is to endow this structure with additional geometric data, thereby giving rise to a natural geometric space. However, this task is far from trivial.  

\,

A first step is to recognize that the set $\mathcal{F}$ carries a particular mathematical structure: that of a $\sigma$-algebra. The significance of $\sigma$-algebras lies in their closure properties: if $(\Omega, \mathcal{F})$ is a \emph{measurable space}, then $\mathcal{F}$ is closed under countable unions, countable intersections, and complements. However, when seeking to introduce a geometric framework, it is often more convenient to work directly with probability distributions rather than the space $(\Omega, \mathcal{F})$ itself.  

\,

Probability distributions come in a vast array of different families. To clarify our discussion, let us recall the three principal classes of probability distributions:

\begin{enumerate}[1)]
    \item Discrete probability distributions, either with finite or infinite support.
    \item Absolutely continuous probability distributions, whose support may be:
    \begin{enumerate}[a)]
        \item A bounded interval.
        \item An interval of length $2\pi$ (directional distributions).
        \item A semi-infinite interval, $[0, \infty)$.
        \item The entire real line.
    \end{enumerate}
    \item Probability distributions with variable support.
\end{enumerate}

For our purposes, we restrict attention to the second class, focusing in particular on absolutely continuous distributions supported on intervals of length $2\pi$, namely, \emph{wrapped exponential distributions}. 

A remarkable fact, which has emerged through careful analysis, is that probability distributions of this type possess a hidden geometric structure. The apparatus of differential geometry provides the natural language for uncovering and describing this structure. Objects such as connections, parallel transport, curvature, and flatness serve as fundamental tools in the construction of a geometric framework for probability distributions. This allows for the realization of a novel class of geometric spaces: \emph{the manifolds of probability distributions}. 

More concretely, consider a measurable space $(\Omega, \mathcal{F})$, where $\mathcal{F}$ is a $\sigma$-algebra on $\Omega$. If we consider a family of parametrized probability distributions on $(\Omega, \mathcal{F})$, then the space of such distributions naturally inherits the structure of a manifold. 

\,

The study of these manifolds, particularly in the case of exponential families, leads to unexpected connections with \emph{Topological Field Theory} (TFT). The relation between exponential families and TFT is not accidental; rather, it is deeply encoded in the mathematical structures underlying both domains. Indeed, Topological Field Theory is intimately linked to the celebrated \emph{Witten–Dijkgraaf–Verlinde–}
\emph{Verlinde (WDVV) equations}, which in turn play a central role in the theory of \emph{Frobenius manifolds}, developed and studied by Dubrovin, Manin, Kontsevich, and others. 

\,

A fundamental aspect of our approach, which renders manifest the deep relation between the manifold of probability distributions of exponential type and the (pre-)Frobenius manifold structures, lies in the delicate matter of choosing an appropriate system of coordinates. Indeed, the very possibility of perceiving this connection in an explicit and natural manner is contingent upon the identification of a privileged class of coordinates—ones that reflect, in their very definition, the intrinsic geometry of the underlying structures. The art, then, is not merely to introduce coordinates, but to do so in a manner that unveils (rather than obscures) the hidden algebraic and differential properties inherent in the space.  

\,

Thus, in certain cases, we are able to establish a deep and precise connection between the WDVV equations and the geometry of information. This, in turn, suggests that the study of probability distributions, when viewed through a geometric lens, is far richer than initially expected and might hold profound implications for both mathematical physics and information theory.  

\section{Summary and synthesis}
This introductory textbook is structured in three interrelated parts, each designed to build a solid foundation in modern mathematics and lead the reader toward the emerging field of information geometry.

\, 

\section*{\bf Part 1: Manifolds, Topology, and Geometry}
\, 

\subsection*{\bf Chapter 1: Foundations of General Topology}
\,

The book begins with a modern treatment of general topology, reinterpreting Kuratowski’s early ideas \cite{Ku72} in a contemporary framework. Readers are introduced to essential topological concepts—such as open sets, continuity, convergence, and compactness—establishing the language and tools necessary for later discussions.

\, 

\subsection*{\bf Chapter 2: Topological and Modelled Manifolds}~

Building on the groundwork of topology, the text moves into the realm of manifolds. Chapter 2 explains topological manifolds—spaces that locally resemble Euclidean space—while also presenting modelled manifolds based on S. Lang’s influential works \cite{L95,L99}. For additional reference, the text suggests consulting \cite{Mil65} to cover aspects not fully developed in this book. This chapter serves as a bridge, transitioning from abstract topological spaces to concrete geometric structures.

\,

\subsection*{\bf Chapter 3: Differentiability and Gateaux Derivatives}
Differentiability is explored next with a focus on Gateaux derivatives. This notion is particularly useful in infinite-dimensional settings, especially in probability theory where a norm may not be available. The text carefully contrasts Gateaux differentiability with the stronger concept of Fréchet differentiability, ensuring students understand the subtleties and applications of both approaches.
\,

\subsection*{\bf Chapter 4: Fiber Bundles}
Fiber bundles are introduced as indispensable tools in differential geometry. This chapter outlines the local-to-global perspective that fiber bundles offer, demonstrating how complex geometric structures can be understood by piecing together simpler, locally trivial components.

\,

\subsection*{\bf Chapter 5: Connections, Parallel Transport, and Sheafs}
Delving deeper into the geometric framework, Chapter 5 covers connections, parallel transport, and covariant derivatives—key concepts for understanding how geometric data evolves along a manifold. Classical references such as \cite{KoNo96} and \cite{Sik} provide further reading on these fundamental topics. Additionally, an introduction to sheafs is provided, offering a gentle entry point into their role in modern geometry, with \cite{KS90} recommended for a complete reference. This material lays the groundwork for applying geometric techniques within information geometry later in the book.

\,

\section*{\bf Part 2: Probabilities, Statistics, and Related Topics}
\,

\subsection*{\bf Chapter 6: Basics of Probability and Statistics}~

The second part shifts focus to probability theory and statistics. It introduces standard definitions, theorems, and methodologies, employing familiar examples—such as coin flipping—to illustrate key concepts. The chapter also discusses the Radon–Nikodym derivative, linking measure theory with probabilistic reasoning. References such as \cite{Bi,Pa67} for measure theory and \cite{Fe66} for statistics are suggested for further reading.

\,

\subsection*{\bf  Chapters 7 $\&$ 8: Categorical and Geometrical Structures in Probability}~

These chapters are particularly innovative, blending philosophy with mathematics. Beginning with a discussion inspired by Klein’s geometry and Plato’s ideas, the text presents a novel perspective on how categorical structures naturally arise in the study of probability distributions. By considering manifolds of probability measures and the role of Markov kernels, the reader is guided through a modern generalization of classical geometry into a probabilistic and categorical setting. Key references for this section include \cite{Am85,Am97,MoCh89,MoCh91-1,MoCh91-2}.

\,

\section*{\bf Part 3: Frobenius Manifolds and Information Geometry}
\, 

\subsection*{\bf  Chapters 9,10 $\&$ 11 and Beyond: Frobenius Structures in Modern Research}
The final part of the book centers on Frobenius manifolds—geometric structures that elegantly encapsulate the interplay between algebra and geometry, coming from 2D Topological Field Theory. Here, the text explains how Frobenius structures emerge naturally within the framework of information geometry. For foundational references on Frobenius manifolds and related topics, the book cites \cite{Du,Man99}.

\,

Chapter 11 also integrates cutting-edge research from 2020 onward \cite{CoMa,CoCoNen}, demonstrating the latest developments in the field. A notable highlight is the discussion of learning methods pioneered by researchers such as D. Ackley, G. Hinton, and T. Sejnowski \cite{AHS}, showing how deep learning techniques relate to the broader mathematical framework of information geometry.

\,

{\bf  Conclusion}
This textbook provides a concise, accessible, and modern overview of several core areas of mathematics—topology, geometry, probability, and category theory—culminating in the study of information geometry. Through careful exposition and a judicious selection of topics, it equips students with the necessary tools to explore this young and rapidly evolving branch of mathematics, bridging classical theory with modern research and applications.

\part{Manifolds, Topology and Geometry}

\chapter{Foundations in General Topology}
General topology, also known as \emph{point-set topology}, provides the foundational language for modern mathematics by studying the most fundamental properties of sets and their structures. Topology focuses on the intrinsic properties of spaces that remain unchanged under continuous transformations.

A topological space is a set 
$X$ equipped with a topology—a collection of subsets called open sets that satisfy specific axioms and ensuring consistency with notions of continuity and convergence. This framework generalizes many familiar mathematical spaces, including metric spaces and Euclidean spaces.

\, 

Let us mention some key concepts in general topology:

\begin{itemize}
\item Topological spaces: A set equipped with a topology that defines continuity and convergence.
\item Open and closed sets: Fundamental building blocks of topology that generalize intervals in real analysis.
\item Basis and sub-basis: Tools for constructing topologies using (minimal) generating sets.
\item Continuity: A function between topological spaces is continuous if the pre-image of every open set is open.
\item Homeomorphisms: Structure-preserving maps that define when two spaces are topologically equivalent.
\item Compactness: a property that allows to generalize the notion of a closed and bounded subset in an Euclidean space. 
\item Connectedness: A criterion telling whether a space can be separated into disjoint open sets.
Separation axioms: Conditions such as $T_0,T_1,T_2$ (Hausdorff), which control how points can be distinguished in a space.
  \end{itemize}
  
These fundamental concepts serve as the basis for more advanced areas of mathematics, including analysis, geometry, algebraic topology, and functional analysis.

For readers already familiar with general topology, this section can be skipped. However, for those who need a refresher, the following discussion will provide a structured and intuitive approach to these essential topics before moving on to more advanced material.

\section{Topological space}

\begin{definition} [{\bf Topological space}]

 Let $X$ be a set and denote by $\cP(X)$ the \emph{power set}, that is, the set of all its subsets. A topology on $X$ is a distinguished collection of subsets, $\cT \subset \cP(X)$, which we regard as specifying the admissible open sets. This collection satisfies the following axioms:
 \begin{enumerate}
\item The set itself and the empty set belong to the topology: \[\emptyset, X \in  \cT.\]
\item Any arbitrary union of open sets remains open: \[\forall\, \{\cU_{i}\}_{i\in I} \subset  \cT,\quad \bigcup\limits_{i\in I} \cU_{i} \in  \cT.\]
\item Any finite intersection of open sets remains open:  \[\forall \, \cU_{1},\dots,\cU_{N} \in \cT,  \quad \bigcap\limits_{i=1}^{N} \cU_{i} \in  \cT.\]
  \end{enumerate}
   \end{definition}
   
  Thus, a topology provides the language of continuity: it defines what it means for a function to be continuous without reference to distances, relying only on the structure of open sets. 
  
  \, 
  
   The elements of $ \cT$ are called the {\sl open} of the topology; the conditions (1), (2) and (3) form  the axioms of a topology.

\, 

To summarize, we can therefore also state this definition by saying that a topology is a collection of subsets of $X$, called open, which must verify that
\begin{enumerate}
\item the empty set and $X$ are open;
\item any union of open sets is an open;
\item A finite intersection of open sets is an open.
\end{enumerate}

We will sometimes write $(X,\cT)$ to specify that we are considering a set $X$ equipped with its topology $\cT$.

\subsection{Basis}
\begin{definition}[{\bf Basis}]
A basis $\cB$ for a topology $\cT$ is a family of elements of $\cT$ such that every $\cU\in \cT$ is the union of elements of $\cB$. 
\end{definition}
We then refer to this as a topology generated by $\fB$.

\noindent 

An equivalent definition can be given. A basis $\fB$ for a topology $\cT$ is a family of elements of $\cT$ such that for each $x\in X$ and $\cU\in \cT$, with $ x\in \cU$, there exists $\cB\in \fB$ such that $x\in \cB$ and $\cB\subset \cU$.

\begin{example}
A basis for the usual topology of $\bbR$ is provided by the set of open intervals $\{]a,b[ \, \mid\,  a<b, a,b\in \cR\}$.
\end{example}

An example of a topology widely used in practice is the metric topology.

\vspace{5pt}
\begin{definition}[{\bf Metric space}]~\label{D:EspMet} 
A metric space is a set $M$ endowed with a notion of distance, that is, a function $d : M\times M \to \bbR^{+}$ that satisfies:
\begin{enumerate}
\item $d(x,x)\geq 0, \, \forall \, x\in M$.
\item $d(x,y)= 0 \iff x=y, \, \forall \, x,y\in M$.
\item $d(x,y)=d(y,x), \, \forall \, x,y\in M$.
\item $d(x,z)=d(x,y)+d(y,z), \, \forall \, x,y,z\in M$.
\end{enumerate}
The topology of $M$ is generated by the open balls $B_{r}(a)=\{x\in M \mid d(a,x)<r$\}.
\end{definition}
\begin{ex}\label{Ex:1}
The Cantor ternary set $\cC$
 is created by iteratively deleting the open middle third from a set of line segments. One starts by deleting the open middle third $(\frac{1}{3},\frac {2}{3})$ from the interval 
$[0,1]$, leaving two line segments: 
$[0,\frac {1}{3}]\cup [\frac {2}{3},1]$. One continues iterating the a similar procedure for the remaining line segments. The Cantor set is constituted from all points in the interval $[0,1]$ that are not deleted at any step in this infinite process.
Is the Cantor set a metric space?
\end{ex}
\begin{definition}[{\bf Neighborhood}]
\

\begin{enumerate}
\item A neighborhood of a point $x \in X$ is the set $\cN(x)$ containing an open set that contains the point $x$.
\item A neighborhood of a set $A \subset X$ is the set $\cN(A)$ containing an open set that contains $A$.
\end{enumerate}
\end{definition}

\subsection{Glossary of topologies}
\begin{definition}[{\bf Trivial and discrete topology}]

\

\begin{list}{$\triangleright$}{}
\item {\bf trivial topology}: The trivial topology\index{Topology! Trivial} on a non-empty set $X$ consists in taking as open sets the empty set $\emptyset$ and the entire space $X.$
\item {\bf Discrete topology}: The discrete topology\index{Topology! Discrete} on a non-empty set $X$ consists in taking as open sets the elements of $\cP(X)$ the set of subsets of $X.$
\end{list}
\end{definition}

\begin{definition}[{\bf Coarser and finer topology}]

\

\begin{list}{$\triangleright$}{}

\item {\bf Coarser topology} :\index{Topology!coarse} if $\cT_{1}$ and $\cT_{2}$ are two topologies on $X$ such that $$\cT_{1}\subset \cT_{2},$$ $\cT_{1}$ is said to be coarser than $\cT_{2},$
\item {\bf Fine topology} :\index{Topology!Fine} if $\cT_{1}$ and $\cT_{2}$ are two topologies on $X$ Such that $$\cT_{1}\subset \cT_{2},$$ then $\cT_{2}$ is said to be finer than $\cT_{1}$.
\end{list}
\end{definition}

\noindent $\bullet$ The coarsest topology is the trivial topology.

\noindent $\bullet$ The finest topology is the discrete topology.

\begin{definition}[{\bf Induced topology on a subset}]
Let $(X, \cT_{X })$ be a topological space and $A \subset X$ be a subset. We define a topology $ \cT_{A}$ on $A$ by setting:
\[
\cT_{A}=\{U\cap A\mid U\in \cT_{X}\} .
\]
\end{definition}

In other words, we take as open sets of $A$ the intersections of open sets of $X$ with $A$.

\begin{definition}[{\bf Quotient topology}]
Let $(X,\cT)$ be a topological space and $\cR$ be an equivalence relation on $X$. Let the map 
\[
p : X\to X/\cR,\ x\mapsto [x],
\]
 associate an element $x\in X$ with an equivalence class of $X$. The open sets of the quotient topology on $X/\cR$ are the subsets $\cV\subset X/\cR$ such that $\cV=p^{-1}(\cU)$, where $\cU\in \cT$. \end{definition}

\section{Separated space}
In the study of topological spaces, separation properties play a crucial role in understanding the structure of a space and the behavior of functions defined on it.
\begin{definition}[{\bf Hausdorff  space - $\mathbf{T_2}$}]\index{Space!Hausdorff } 

A topological space $(X,\cT)$ is called Hausdorff (separated or $\mathbf{T_2}$) if for any pair of distinct points ${\scriptstyle M}$ and ${\scriptstyle N}$, we can find two open sets $\cU_{M}, \cU_{N}$ with ${\scriptstyle M}\in\cU_{M} , {\scriptstyle N}\in\cU_{N}$ and $\cU_{M}\cap \cU_{N}=\emptyset$.
\end{definition}

\begin{definition}[{\bf Normal space}]\index{Space!normal}
 A topological space $(X,\cT)$ is normal if it is Hausdorff and if for any pair of disjoint closed sets $ F_{1}$ and $F_{2}$, there exist two disjoint open sets $\cU_{1}$ and $\cU_{2}$ such that $F_{1}$ is included in $\cU_{1}$ and $F_{2}$ in $\cU_{2}$.
\end{definition} 

\begin{theorem} A Hausdorff space $X$ is normal if and only if it satisfies the following condition:

\, 

For every closed subset $F\subset X$ and every open subset $\cU$ containing $F$, there exists an intermediate open subset $\cV$ containing $F$ satisfying 
\[F\subset \cV\subset \overline \cV \subset \cU.\] 
\end{theorem}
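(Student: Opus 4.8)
The plan is to establish the equivalence by proving the two implications separately, in each case exploiting the duality between an open set and the complement of a closed set, together with the basic fact that if a set is contained in a closed set, then so is its closure.

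For the forward direction, I would assume $X$ is normal and fix a closed set $F$ together with an open set $\cU$ with $F\subset\cU$. The idea is to manufacture a second closed set to which normality can be applied: since $F\subset\cU$, the complement $X\setminus\cU$ is a closed set disjoint from $F$. Applying normality to the disjoint closed pair $F$ and $X\setminus\cU$ yields disjoint open sets $\cV$ and $\cW$ with $F\subset\cV$ and $X\setminus\cU\subset\cW$. The key step is then to pass to the closure: from $\cV\cap\cW=\emptyset$ we get $\cV\subset X\setminus\cW$, and because $X\setminus\cW$ is closed this forces $\overline{\cV}\subset X\setminus\cW$. Finally $X\setminus\cU\subset\cW$ gives $X\setminus\cW\subset\cU$, so that $F\subset\cV\subset\overline{\cV}\subset\cU$, as required.

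For the converse, I would assume the insertion condition (and that $X$ is Hausdorff) and take two disjoint closed sets $F_1$ and $F_2$. Here the trick is to apply the hypothesis to the open set $\cU:=X\setminus F_2$, which contains $F_1$ precisely because $F_1\cap F_2=\emptyset$. The hypothesis then produces an open set $\cV$ with $F_1\subset\cV\subset\overline{\cV}\subset X\setminus F_2$. I would then set $\cU_1:=\cV$ and $\cU_2:=X\setminus\overline{\cV}$; both are open, one contains $F_1$, and the inclusion $\overline{\cV}\subset X\setminus F_2$ rearranges to $F_2\subset X\setminus\overline{\cV}=\cU_2$. Disjointness is immediate since $\cU_1\cap\cU_2=\cV\cap(X\setminus\overline{\cV})=\emptyset$, so $\cU_1$ and $\cU_2$ separate $F_1$ and $F_2$ and $X$ is normal.

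I do not expect a genuine obstacle in this argument; it is a routine duality manipulation. The one step deserving care—appearing in both directions—is the passage from an inclusion of an open set into a closed set to the corresponding inclusion of its closure, which relies on the closure being the smallest closed set containing $\cV$. I would make sure to state explicitly that $X\setminus\cW$ (respectively $X\setminus F_2$) is closed at the moment this monotonicity of closure is invoked.
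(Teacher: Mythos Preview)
Your argument is correct and is exactly the standard duality proof of this classical characterization. The paper actually states this theorem without supplying a proof, so there is nothing to compare against; your write-up would serve perfectly well as the omitted justification.
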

Notice that here $\overline \cV $ is properly defined in Sec. \ref{S:topProp} under the terminology of \emph{closure} of a set: $\overline{\{p\}}=\{p\}$.  
\begin{example}
An open set $\cU\subset \bbR^{n}$ is a Hausdorff space and moreover a normal space.
\end{example}

\begin{definition}
    A topological space is ${\bf T_1}$ if for any two distinct points $x$ and $y$, there exist open sets $U$ and 
$V$ such that: 

\begin{itemize}
    \item $x\in U$, $y\notin U$,
    \item $y\in V$, $x\notin V$.
\end{itemize}
In other words, each point is closed (its complement is open).
\end{definition}

\begin{ex}\label{Ex:2}
Prove or disprove that any Hausdorff space is a $\mathbf{T_1}$ space. A $\mathbf{T_1}$ space is a space in which any set consisting of one point is closed. 
\end{ex}

$\star$  Warning! There exist ${\bf T_1}$ space that are not ${\bf T_2 }$
\begin{ex}\label{Ex:T_1neqT_2}
Give an example of a ${\bf T_1}$ space that is not ${\bf T_2 }$.
\end{ex}

 \section{Continuity}
\begin{definition}[{\bf Continuity}]\index{Application!Continuity}
Consider two topological spaces $(X,\cT),(X',\cT')$. A map $f:(X,\cT)\to,(X',\cT')$ is continuous if for any open set $\cU'\in \cT'$ its inverse image $g^{-1}(\cU')$ is an open set of the topology $\cT$.
\end{definition}

\begin{theorem}
\

\begin{enumerate}
\item A map $f:(X,\cT)\to(X',\cT')$ is continuous at the point $x_0\in X$ if for every neighborhood $\cN(f(x_0)) \subset X'$  there exists a neighborhood $\cN(x_0 )\subset X)$ of $x\in X$ such that $f(x) \in \cN(f(x_0))$, whenever $x \in\cN(x_0)$.

The map $f$ is continuous on $X$ if it is continuous at all $x\in X$.

\item A map $f$ is continuous if and only if the sequence
$\{f(x_{\beta})\}$ converges to $f(x)$ when the sequence $\{x_{\beta}\}$ converges to $x$.
\end{enumerate}
\end{theorem}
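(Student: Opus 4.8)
The plan is to read the theorem as asserting the mutual equivalence of three descriptions of continuity: the global one from the preceding definition (preimages of open sets are open), the local neighbourhood condition of Part~(1), and the convergence condition of Part~(2). I would organize the proof as two blocks of two implications each, reusing the neighbourhood notion already fixed above, namely that a neighbourhood $\cN(x)$ of $x$ is any set containing an open set that contains $x$.

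For Part~(1) I would first show that the global definition forces the local condition. Fix $x_0$ and a neighbourhood $\cN(f(x_0))$; by definition it contains an open set $\cU'$ with $f(x_0)\in\cU'$, so $f^{-1}(\cU')$ is open by the continuity definition and contains $x_0$, hence is itself a neighbourhood, which I take as $\cN(x_0)$; every $x\in\cN(x_0)$ then satisfies $f(x)\in\cU'\subset\cN(f(x_0))$. Conversely, assuming the local condition at every point, I would prove $f^{-1}(\cU')$ open for each open $\cU'$ by showing every $x_0\in f^{-1}(\cU')$ is interior: since $\cU'$ is a neighbourhood of $f(x_0)$, the local condition yields a neighbourhood $\cN(x_0)$ with $f(\cN(x_0))\subset\cU'$, i.e. $\cN(x_0)\subset f^{-1}(\cU')$, and $\cN(x_0)$ contains an open set around $x_0$. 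This is the routine half.

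For Part~(2) the forward implication is similar in spirit: if $f$ is continuous and $x_\beta\to x$, then for any neighbourhood $\cN(f(x))$, choosing open $\cU'$ with $f(x)\in\cU'\subset\cN(f(x))$, the open set $f^{-1}(\cU')$ is a neighbourhood of $x$, so the net is eventually inside it and $f(x_\beta)$ is eventually in $\cU'$, giving $f(x_\beta)\to f(x)$. The substantive direction is the converse, which I would argue by contraposition. If $f$ is not continuous, some open $\cU'$ has a non-open preimage, so there is a point $x\in f^{-1}(\cU')$ that is not interior, meaning every neighbourhood $V$ of $x$ meets $X\setminus f^{-1}(\cU')$. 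Directing the neighbourhood system of $x$ by reverse inclusion and choosing, for each such $V$, a point $x_V\in V\setminus f^{-1}(\cU')$, I obtain a family with $x_V\to x$ (it is eventually inside any prescribed neighbourhood $W$, taking the index $W$ itself) yet $f(x_V)\notin\cU'$ for all $V$, so $f(x_V)$ cannot converge to $f(x)$; this contradicts the hypothesis.

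The main obstacle is precisely this last construction, and it is where the choice of index $\beta$ matters: the statement is true only if $\{x_\beta\}$ is read as a \emph{net} (a family indexed by an arbitrary directed set) rather than an ordinary sequence. For a genuine sequence the converse fails in spaces that are not first-countable, so the heart of the proof is the realization that the neighbourhood system of $x$, ordered by reverse inclusion, is the correct directed index set, together with the (choice-based) selection of the witnesses $x_V$. I would flag this point explicitly, since it is the only place where more than the bare definitions is required.
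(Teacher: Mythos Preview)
The paper states this theorem without proof, so there is nothing to compare against directly. Your argument is correct and is the standard one: Part~(1) is the routine equivalence between the global open-preimage definition and the local neighbourhood formulation, and Part~(2) is handled by the usual forward computation plus a contrapositive construction of a net indexed by the neighbourhood filter of $x$ under reverse inclusion.

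Your explicit caveat about nets versus sequences is well placed and in fact sharper than the paper, which uses the word ``sequence'' but indexes by a general $\beta$; as you note, the converse in Part~(2) genuinely fails for ordinary sequences outside first-countable spaces, so reading $\{x_\beta\}$ as a net is the only way to make the statement true at the level of generality of the chapter.
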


\begin{ex}\label{Ex:cont}
Show that for any pair of continuous functions $f,g:X\to Y$, where $X$ is a topological space $X$ and $Y$ is a Hausdorff space,
the set
    \[\{x\in X \,|\, f(x)=g(x)\} \]
    is closed in $X$. 
\end{ex}

\begin{definition}[{\bf Homeomorphism}]\index{Application!Hom\'eomorphism}
A homeomorphism between two topological spaces $(X,\cT_{X}), (Y,\cT_{Y})$ is a continuous bijective map $h :(X,\cT_{X})\to (Y,\cT_{Y})$ whose inverse map is continuous.
\end{definition}

\begin{definition}[{\bf Open map}]
A continuous map $f: X\to Y$ is called open if the image of any open set of $X$ is an open set of $Y$.
\end{definition}

\begin{definition}[{\bf Closed map}]
A map $f:X\to Y$ between topological spaces is said to be closed if the subset 
\[
\Gamma_f=\{(x,f(x)) \mid x\quad \text{lies in a domain of}\quad  f\}\subset X\times Y,
\] 
The set $\Gamma_f$ is closed in $X\times Y$. The set $\Gamma_f$ is called the \emph{graph} of $f$.
\end{definition}

\section{Topological properties of sets}\label{S:topProp}

\subsection{Open and closed sets}
We will recall some definitions and theorems related to various properties of subsets of a topological space $X$.

\begin{definition}

\

\begin{list}{$\triangleright$}{}

\item {\bf Closed subset} :\index{Set!Subset!Closed}
A set $F \subset X$ is closed if it is the complement of an open set.

\item{\bf Limit point}:\index{Topology! Limit point}
A point $x\in X$ is a limit point of $A \subset X$ if every neighborhood $\cN_{x}$ of $x$ contains at least one point $a\subset A$ different from $x$.

\item {\bf Closure}:\index{Set!Subset!Adherence} \index{Set!Subset!Closure}
The closure $\bar A$ of $A \subset X$ is the union of $A$ and all its limit points. It is the smallest closed set containing $A$.

\item{\bf Dense subset} :\index{Set!Subset!Dense}
The set $A \subset X$ is dense in $ X$ if $\bar A=X$.

\item{\bf Separable} :\index{Set!Separable}
A topological space $X$ is separable if it contains a countable dense subset .

\item {\bf Interior} : \index{Set!Subset!Interior}
The interior $\AA$ of the set $A \subset X$ is the largest open set contained in $A$.

\item {\bf Nowhere dense set} : \index{Set!Subset!Nowhere dense}\label{D:npd}

The set $A \subset X$ is nowhere dense in $X$ if
its adhesion $\bar A$ has an empty interior.
\end{list}
\end{definition}

\begin{definition}[{\bf Baire space}]\label{D:EBaire}
A topological space is called a Baire space if the intersection of any countable family of dense open sets remains dense.

Equivalently, a topological space is Baire if the union of any countable collection of closed sets with empty interior also has an empty interior.
\end{definition}
Complete metric spaces, as well as locally compact Hausdorff spaces, provide natural examples of Baire space. 
\begin{example}
\

\begin{list}{$\triangleright$}{}
\item The set of integers is nowhere dense in the set of real numbers equipped with the usual topology.

\item The set of real numbers whose decimal expansion only includes the digits $0$ or $1$ is nowhere dense in the set of real numbers.
\end{list}
\end{example}

\begin{ex}\label{Ex:3}
    Is the set of rational numbers equipped with the subspace topology a Baire space? Write a proof. 
\end{ex}

\begin{ex}\label{Ex:4} 
Is the set of real numbers equipped with standard topology a Baire space? Write a proof. 
\end{ex}
\begin{theorem}[{\bf Baire's Category Theorem}]
\

\begin{enumerate}
\item Every complete metric space is a Baire space.
\item Every locally compact topological space is a Baire space.
\end{enumerate}
\end{theorem}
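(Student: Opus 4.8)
\section*{Proof proposal}

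The plan is to prove both assertions by the same two-step strategy. First I would reduce each statement to the dense-open-sets formulation of a Baire space: it suffices to show that if $\{\cU_n\}_{n\in\bbN}$ is a countable family of dense open sets, then $\bigcap_n \cU_n$ meets every nonempty open set $\cW$, since this is precisely the assertion that the intersection is dense. The core of both proofs is then a nested-neighborhood construction, differing only in the tool (completeness versus compactness) used to extract a point lying in all of the $\cU_n$.

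For part (1), given a nonempty open $\cW$ in a complete metric space $(M,d)$, I would build inductively a decreasing sequence of closed balls. Since $\cU_1$ is dense and open, $\cW\cap\cU_1$ is nonempty and open, so it contains a closed ball $\overline{B_{r_1}(x_1)}$ with $r_1<1$. Having chosen $\overline{B_{r_n}(x_n)}$, density and openness of $\cU_{n+1}$ make $B_{r_n}(x_n)\cap\cU_{n+1}$ nonempty and open, so I can pick a closed ball $\overline{B_{r_{n+1}}(x_{n+1})}\subset B_{r_n}(x_n)\cap\cU_{n+1}$ with $r_{n+1}<2^{-n}$. The shrinking radii force $\{x_n\}$ to be Cauchy, and completeness yields a limit $x$. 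Because the balls are nested and closed, $x\in\overline{B_{r_n}(x_n)}\subset\cU_n$ for every $n$, while $x\in\overline{B_{r_1}(x_1)}\subset\cW$; hence $x\in\cW\cap\bigcap_n\cU_n$, as required.

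For part (2), in a locally compact (Hausdorff) space I would run the same induction but with open sets of compact closure in place of closed balls. The key auxiliary fact I would invoke is that inside any nonempty open set one can find a nonempty open set whose closure is compact and still contained in the original set---the \emph{shrinking lemma} for locally compact Hausdorff spaces. Using it, I would produce nonempty open sets $\cV_n$ with $\overline{\cV_{n+1}}\subset\cV_n\cap\cU_{n+1}$ and each $\overline{\cV_n}$ compact, starting from $\overline{\cV_1}\subset\cW\cap\cU_1$. The family $\{\overline{\cV_n}\}$ is then a decreasing sequence of nonempty compacta, so the finite intersection property gives a point $x\in\bigcap_n\overline{\cV_n}$, which lies in every $\cU_n$ and in $\cW$.

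I expect the main obstacle to lie in part (2), specifically in justifying the shrinking lemma, since this is where the (usually unstated) Hausdorff hypothesis is genuinely needed: local compactness alone grants each point a neighborhood with compact closure, but passing to a neighborhood whose closure sits inside a prescribed open set requires a separation argument exploiting the compactness of that closure. In part (1) the only delicate point is the otherwise routine verification that $x$ belongs to each closed ball, which follows because all later centers lie in $B_{r_n}(x_n)$, so the limit lies in its closure.
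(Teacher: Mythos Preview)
The paper states this theorem without proof, so there is no argument of the paper's to compare against; your proposal supplies a complete and correct proof by the standard nested-sets method, and it is exactly the argument one would expect in a textbook at this level.

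One point worth flagging: your caution about the Hausdorff hypothesis in part~(2) is well placed, but note that the paper has adopted the Bourbaki convention in which ``compact'' already entails Hausdorff (see the definition of compactness earlier in the chapter). Under that convention, ``locally compact'' means every point has a compact---hence Hausdorff---neighborhood, which is precisely what the shrinking lemma needs: given a nonempty open $\cU$ and $x\in\cU$, take a compact neighborhood $K$ of $x$, observe that $K$ is Hausdorff and hence regular, and separate $x$ from the closed set $K\setminus\cU$ inside $K$ to obtain an open $\cV\ni x$ with $\overline{\cV}\subset K\cap\cU$ compact. So the ``usually unstated'' hypothesis you worried about is in fact baked into the paper's definitions, and your inductive construction of the $\cV_n$ goes through without further assumptions.
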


\begin{theorem}
\

\begin{enumerate}
\item A subset $A \subset X$ is open if and only if it is a neighborhood of each of its points.

\item A subset $A \subset X$ is closed if and only if it contains all its limit points.
\end{enumerate}
\end{theorem}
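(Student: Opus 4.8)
The plan is to treat the two assertions separately, proving part (1) first, since the converse of part (2) will invoke it. Each claim is a biconditional, so in both cases I would establish the two implications independently, relying only on the axioms of a topology and the definitions of \emph{neighborhood}, \emph{limit point}, and \emph{closed set} recalled above.

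For part (1), the forward implication is immediate: if $A$ is open, then for every $x\in A$ the set $A$ itself is an open set satisfying $x\in A\subset A$, so $A$ qualifies as a neighborhood $\cN(x)$ of $x$ by definition. For the converse, I would suppose that $A$ is a neighborhood of each of its points; then to every $x\in A$ I can attach an open set $\cU_x\in\cT$ with $x\in\cU_x\subset A$. The key step is to observe that $A=\bigcup_{x\in A}\cU_x$: the inclusion $\supset$ holds because each $\cU_x\subset A$, and $\subset$ holds because each $x$ lies in its own $\cU_x$. Since this exhibits $A$ as an arbitrary union of open sets, axiom (2) of the topology forces $A\in\cT$, that is, $A$ is open.

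For part (2), I would argue the forward implication by contradiction. Assume $A$ is closed and let $x$ be a limit point of $A$; if $x\notin A$, then $x$ lies in the open set $X\setminus A$, which is therefore a neighborhood of $x$ meeting $A$ in no point at all, contradicting the requirement that every neighborhood of a limit point contain a point of $A$ distinct from $x$. Hence $x\in A$, so $A$ contains all its limit points. For the converse, suppose $A$ contains all its limit points and take any $y\in X\setminus A$. Since $y\notin A$, it is not a limit point of $A$, so some neighborhood of $y$ contains no point of $A$ other than $y$, and (as $y\notin A$) no point of $A$ whatsoever; shrinking to an open set inside it gives $y\in\cU\subset X\setminus A$. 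Thus $X\setminus A$ is a neighborhood of each of its points, and part (1) immediately yields that $X\setminus A$ is open, so $A$ is closed.

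These arguments are essentially exercises in unwinding definitions, and I do not expect a genuine obstacle; the one point demanding care is the precise reading of the \emph{neighborhood} definition, namely that a neighborhood need only \emph{contain} an open set around the point. Both converses hinge on passing from an ambient neighborhood to an open subset, through which the union (part 1) or the openness of the complement (part 2) is built. An equivalent and equally short route for part (2) would be to note that $A$ contains all its limit points exactly when $\bar A=A$, and then to cite that the closure $\bar A$ is the smallest closed set containing $A$.
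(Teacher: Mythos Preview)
Your argument is correct and is exactly the standard one. The paper itself states this theorem without proof, so there is nothing to compare against; your write-up would serve perfectly well as the missing justification, and your care in distinguishing a neighborhood from an open set (and in passing to an open subset before taking the union) is precisely what the paper's definition of neighborhood requires.
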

 
\subsection{Compactness}
\begin{definition}
\

\begin{list}{$\triangleright$}{}
\item {\bf (Open) Covering}:\index{Set!Covering} A family $\cU$ of open sets $U _{\alpha}\subset X$ is an open covering \index{Set!Covering!Open} if $\displaystyle \bigcup_{\alpha} U_{\alpha}=\cU$.

\item {\bf (Open) Subcovering}: An open subcovering of a covering\index{Set!Covering!Subcovering} $\cU$ is a subset of $\cU$ which is itself an open covering.

\item {\bf Compact }:\index{Set!Covering!Compact} A subset $A \subset X$ is compact if it is Hausdorff
and each covering has a finite subcovering.

The condition of being Hausdorff  can be dropped  in a large number of situations.
\item {\bf Relatively compact}: \index{Set!Covering!Relatively compact}A subset $A \subset X$ is {\bf relatively compact} if its closure $\bar A$ is compact.

\item {\bf Locally compact}: \index{Set!Subset!Locally compact} A subset is locally compact if every point has a compact neighborhood.

Notice that, Euclidean spaces are locally compact but not compact.

\item {\bf Paracompact }: \index{Space!Paracompact} A Hausdorff space is paracompact if every covering $\cU=\{ U _{\alpha}\}$ has a locally finite covering $\cV=\{V _i\}$ such that, for every $V_i$ there exists a $U _{\beta}$ containing it $(V_i\subset U _{\alpha})$.

\item {\bf Compactification}:\index{Topology! Compactification} A compactification of a topological space $X$ is a pair $(h,K)$ where $K$ is a compact space and $h$ is a homeomorphism of $X$ onto a dense subspace of $K$.
\end{list}
\end{definition}

\begin{theorem}
\

\begin{enumerate}
\item A compact subspace of a Hausdorff space is necessarily closed.

\item Every closed subspace of a compact space is compact.

\item {\bf Bolzano-Weierstrass theorem}:\index{Topology!Theorem!Bolzano-Weierstrass} A Hausdorff space is compact if and only if every sequence has a convergent subsequence.

\item {\bf Heine--Borel theorem}\index{Topology!Theorem!Hein-Borel} The compact subsets of $\bbR^n$ are the closed bounded subsets of $\bbR^n$.
\end{enumerate}
\end{theorem}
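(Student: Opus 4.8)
The plan is to treat the four assertions in order of increasing difficulty: I would establish the two set-theoretic statements (1) and (2) first, then deduce the Heine--Borel theorem (4) from them together with the compactness of a cube, and finally address the Bolzano--Weierstrass characterization (3), which is the genuinely delicate point. For (1), let $A$ be a compact subspace of a Hausdorff space $X$; I would show that $X\setminus A$ is open. Fix $x\in X\setminus A$. Using the Hausdorff property, for each $a\in A$ choose disjoint open sets $\cU_a\ni x$ and $\cV_a\ni a$. The family $\{\cV_a\}_{a\in A}$ is an open covering of $A$, so by compactness finitely many $\cV_{a_1},\dots,\cV_{a_n}$ already cover $A$; then $\bigcap_{i=1}^{n}\cU_{a_i}$ is an open neighborhood of $x$ disjoint from $A$. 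Hence every point of the complement is interior, and $A$ is closed.

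For (2), let $F$ be a closed subspace of a compact space $X$ and let $\{\cU_\alpha\}$ be a covering of $F$ by open sets of $X$. Adjoining the open set $X\setminus F$ yields an open covering of $X$; extract a finite subcovering by compactness and discard the term $X\setminus F$ if it occurs. What remains is a finite subfamily of $\{\cU_\alpha\}$ covering $F$, so $F$ is compact. For the Heine--Borel theorem (4), one inclusion is then immediate: a compact $A\subset\bbR^n$ is closed by (1) (since $\bbR^n$ is Hausdorff), and it is bounded because the covering $\{B_k(0)\}_{k\in\bbN}$ by open balls admits a finite subcovering, forcing $A\subset B_K(0)$ for some $K$. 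For the converse I would reduce to the compactness of a single closed cube: a bounded set lies in some cube $[-R,R]^n$, and by (2) it suffices to show this cube is compact. I would first prove that a closed interval $[a,b]$ is compact by the supremum method, setting $S=\{x\in[a,b]\mid [a,x]\text{ admits a finite subcovering}\}$ and showing that $c=\sup S$ belongs to $S$ and equals $b$, and then obtain compactness of the cube either through a finite-product argument or directly by repeated bisection producing a nested sequence of subcubes.

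The main obstacle is assertion (3). The forward implication is manageable: in a compact space every sequence $\{x_\beta\}$ has a cluster point, for otherwise each point would possess a neighborhood meeting the sequence in only finitely many terms, and a finite subcovering would then leave the tail of the sequence nowhere to go; a cluster point in turn yields a convergent subsequence once a countable neighborhood basis is available. The reverse implication—sequential compactness implies compactness—is where the real work lies: the standard route is to show that a sequentially compact space is totally bounded and enjoys a Lebesgue-number property for open coverings, and then to combine these to extract a finite subcovering from any given covering. The essential caveat, which I would make explicit, is that the equivalence as stated does not hold for arbitrary Hausdorff spaces; both implications rely on a countability axiom (first or second countability), so the natural and intended setting is that of metric spaces, where the cluster-point and Lebesgue-number arguments go through verbatim.
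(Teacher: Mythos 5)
The paper states this theorem without proof --- it appears in the topology refresher chapter as a bare list of classical facts --- so there is no internal argument to compare against; your proposal therefore stands or falls on its own merits, and it stands. Your treatments of (1), (2), and (4) are the standard correct ones: separating a point from a compact set by finitely many Hausdorff neighborhoods and intersecting the corresponding neighborhoods of the outside point, adjoining $X\setminus F$ to an open covering and discarding it from the finite subcovering, and reducing Heine--Borel to the compactness of a cube $[-R,R]^n$ via the supremum argument on $[a,b]$ followed by bisection or a finite-product argument. Every step you sketch goes through.

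Your handling of (3) deserves emphasis, because there you have caught a genuine imprecision in the statement itself rather than introduced a gap of your own. As written --- for arbitrary Hausdorff spaces --- the equivalence is false in both directions: the product $[0,1]^{[0,1]}$ with the product topology is compact Hausdorff but not sequentially compact, while the ordinal space $[0,\omega_1)$ with the order topology is sequentially compact Hausdorff but not compact. Your forward argument (a sequence with no cluster point gives each point a neighborhood meeting only finitely many terms, and a finite subcovering then strands the tail) is correct in full generality, but passing from a cluster point to a convergent subsequence does require a countable neighborhood basis, and the converse direction requires the total-boundedness and Lebesgue-number machinery that is native to metric spaces. Restricting the assertion to metric (or metrizable) spaces, as you explicitly propose, is exactly the repair the paper's statement needs; in that setting your outline is complete and correct.
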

The Heine-Borel theorem concerns finite-dimensional spaces, it is not necessarily true for an arbitrary topological space. In particular, a closed bounded set with a non-empty interior of an infinite-dimensional normed vector space is  never compact, for the norm topology.
 
\begin{theorem} 
Let $K$ be a compact space. The image of $K$ under a continuous map $f$ is also compact.
\end{theorem}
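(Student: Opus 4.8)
The plan is to argue directly from the open-covering characterization of compactness: I would pull an arbitrary open cover of the image back through $f$ to obtain an open cover of $K$, extract a finite subcover there using compactness of $K$, and push the corresponding finite family forward again to cover $f(K)$.

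First, let $\{\cV_\alpha\}_{\alpha \in A}$ be an arbitrary covering of $f(K)$ by open sets of $Y$. The key idea is that continuity lets me transport this cover into $X$: since $f$ is continuous, each preimage $f^{-1}(\cV_\alpha)$ is open in $X$. I would then check that the family $\{f^{-1}(\cV_\alpha)\}_{\alpha \in A}$ covers $K$, which is immediate, because for any $x \in K$ the point $f(x)$ lies in $f(K)$, hence in some $\cV_\alpha$, so that $x \in f^{-1}(\cV_\alpha)$.

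Next, I invoke the compactness of $K$: from the open cover $\{f^{-1}(\cV_\alpha)\}$ of $K$ I extract a finite subcover, indexed by $\alpha_1, \dots, \alpha_n$. The final step is to verify that the corresponding finite family $\{\cV_{\alpha_1}, \dots, \cV_{\alpha_n}\}$ already covers $f(K)$. Indeed, any point of $f(K)$ has the form $f(x)$ with $x \in K$; such $x$ lies in some $f^{-1}(\cV_{\alpha_i})$, whence $f(x) \in \cV_{\alpha_i}$. Thus every open cover of $f(K)$ admits a finite subcover.

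The only genuine subtlety concerns the Hausdorff clause built into the definition of compactness used here. The covering argument above establishes the finite-subcover property of $f(K)$ unconditionally, but to conclude that $f(K)$ is compact in the strict sense of the stated definition one must also know that $f(K)$ is Hausdorff. This does not follow from the hypotheses alone---continuous images of Hausdorff spaces need not be Hausdorff---so either one assumes the target $Y$ is Hausdorff, in which case the subspace $f(K)$ inherits the property, or one adopts the convention, noted immediately after the definition of compactness, that the Hausdorff requirement may be dropped. The essential content of the theorem is the covering property, and this is exactly what the pullback--subcover--pushforward scheme delivers.
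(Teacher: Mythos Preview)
Your argument is the standard and correct one: pull back an open cover of $f(K)$ along $f$, extract a finite subcover of $K$ by compactness, then push forward. The paper itself states this theorem without proof, so there is nothing to compare against; your write-up would serve perfectly well as the missing proof. Your closing remark on the Hausdorff clause is apt and matches the paper's own caveat that this condition ``can be dropped in a large number of situations.''
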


\begin{corollary} Given a compact space $K$, any continuous function on $K$ attains on $K$ a minimum and a maximum value.
\end{corollary}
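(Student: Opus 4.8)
The plan is to reduce the statement to the two theorems immediately preceding it: the image of a compact space under a continuous map is compact, and the Heine--Borel theorem characterizing compact subsets of $\bbR^n$. Throughout I assume $f : K \to \bbR$ is the continuous real-valued function in question, and that $K$ is nonempty (otherwise the claim is vacuous). First I would invoke the preceding theorem to conclude that the image $f(K) \subset \bbR$ is itself compact.

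Next I would apply the Heine--Borel theorem in the case $n = 1$: since $f(K)$ is a compact subset of $\bbR$, it is closed and bounded. Boundedness guarantees that the real numbers
\[
M = \sup f(K), \qquad m = \inf f(K)
\]
are finite. It then remains to show that these extremal values are actually attained, i.e.\ that $M \in f(K)$ and $m \in f(K)$.

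The key step is to argue that $M$ belongs to $f(K)$ using closedness (and symmetrically for $m$). By the defining property of the supremum, every neighborhood of $M$ meets $f(K)$, so $M$ lies in the closure $\overline{f(K)}$; since $f(K)$ is closed we have $\overline{f(K)} = f(K)$, hence $M \in f(K)$. The same reasoning applied to the infimum gives $m \in f(K)$. Consequently there exist points $x_{\max}, x_{\min} \in K$ with $f(x_{\max}) = M$ and $f(x_{\min}) = m$, which are exactly the maximum and minimum values attained on $K$.

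The only genuinely delicate point I expect is this last attainment argument: one must justify that the supremum of a bounded set is a point of its closure, which is where the closedness furnished by Heine--Borel is essential. Everything else is a direct chaining of the two cited theorems, so the main obstacle is purely the clean handling of $\sup$ and $\inf$ as limit points, together with the harmless bookkeeping assumption that $K \neq \emptyset$.
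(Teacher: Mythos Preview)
Your proof is correct. The paper actually gives no proof for this corollary at all; it is simply stated immediately after the theorem that the continuous image of a compact space is compact, and shortly after the Heine--Borel theorem. Your argument is precisely the standard one implicit in that placement: $f(K)$ is compact in $\bbR$, hence closed and bounded by Heine--Borel, so $\sup f(K)$ and $\inf f(K)$ belong to $f(K)$. There is nothing to compare against, and your handling of the attainment step via closedness is clean and complete.
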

\begin{ex}\label{Ex:5}
 Prove that  the Cantor set is compact.
\end{ex}
\begin{ex}\label{Ex:6}
 Consider the set $K$ of all functions $ f: [0, 1] \to [0, 1]$ equipped with the Lipschitz condition i.e. $|f(x) - f(y)| \leq |x - y|$ for all $x, y \in [0,1]$. Consider on $K$ the metric induced by the uniform distance 
\[d(f,g)=\sup_{x\in [0,1]}|f(x)-g(x)|.\] Prove that the space $K$ is compact.   
\end{ex}

 \subsection{Connectedness}
\begin{definition} [{\bf Connectedness}]\index{Space!Connectivity} A topological space $X$ is said to be connected if it cannot be described as the union of two disjoint (non-empty) open sets. Otherwise $X$ is said to be non-connected.

\, 

A subspace of a topological space is said to be connected if it is connected for the induced topology.
\end{definition}
\begin{ex}\label{Ex:7}
Is the General Linear group $GL_n(\bbR)$
(i.e. the space of square matrices of size $n\times n$ with non-null determinant) a connected space? Provide a proof.    
\end{ex}
\begin{theorem}
For a topological space $X$ the following conditions are equivalent:
\begin{enumerate}
\item $X$ is connected;
\item $X$ cannot be the union of two non-empty closed sets;
\item The subsets of $X$ that are both open and closed are $X$ and the empty set $\emptyset$;
\item $X$ cannot be the union of two non-empty separable sets;
\item The continuous functions from $X\to \{0,1\}$ are the constant functions.
\end{enumerate}
\end{theorem}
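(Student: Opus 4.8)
The unifying idea is that every one of the five conditions is, after the standard duality between open and closed sets under complementation, a restatement of the \emph{absence of a proper non-empty clopen subset}. Throughout, $\{0,1\}$ is to be given the discrete topology, and in item (2) (respectively item (4)) the two sets are understood to be disjoint (respectively mutually \emph{separated}, meaning $\overline A\cap B=\emptyset=A\cap\overline B$); the word in (4) should be read as ``separated'' rather than ``separable.'' My plan is to run the cycle $(1)\Rightarrow(3)\Rightarrow(5)\Rightarrow(1)$ and then to attach (2) and (4) by proving $(1)\Leftrightarrow(2)$ and $(2)\Leftrightarrow(4)$.

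First I would treat $(1)\Rightarrow(3)$ by contraposition: if some clopen $A$ satisfies $\emptyset\subsetneq A\subsetneq X$, then $A$ and $X\setminus A$ are disjoint, non-empty, and both open (the latter because $A$ is closed), so their union $X$ is disconnected, contradicting (1). For $(3)\Rightarrow(5)$, given a continuous $f\colon X\to\{0,1\}$, the set $A=f^{-1}(\{0\})$ is open since $\{0\}$ is open in the discrete topology, and closed since $X\setminus A=f^{-1}(\{1\})$ is likewise open; hence $A$ is clopen, so by (3) it equals $\emptyset$ or $X$, i.e. $f$ is constant. For $(5)\Rightarrow(1)$, again by contraposition: if $X=\cU_1\cup\cU_2$ with $\cU_1,\cU_2$ disjoint, non-empty, open, set $f\equiv 0$ on $\cU_1$ and $f\equiv 1$ on $\cU_2$; the preimages of the only open sets $\emptyset,\{0\},\{1\},\{0,1\}$ of $\{0,1\}$ are $\emptyset,\cU_1,\cU_2,X$, all open, so $f$ is continuous and non-constant, contradicting (5).

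Next I would close the loop to (2) and (4) using complementation. For $(1)\Leftrightarrow(2)$: a decomposition $X=F_1\sqcup F_2$ into disjoint non-empty closed sets gives one into disjoint non-empty open sets via $F_1=X\setminus F_2$ and $F_2=X\setminus F_1$, and conversely a disjoint open decomposition is simultaneously a closed one; thus the impossibility of the one is the impossibility of the other. For $(2)\Leftrightarrow(4)$, the key observation is that disjoint closed sets are automatically separated (since $\overline{F_i}=F_i$), while conversely, if $X=A\cup B$ with $A,B$ separated, then $\overline A\subseteq X=A\cup B$ together with $\overline A\cap B=\emptyset$ forces $\overline A\subseteq A$, so $A$ is closed, and symmetrically $B$ is closed, while $A\cap B\subseteq\overline A\cap B=\emptyset$; hence the separated decompositions are exactly the disjoint closed ones.

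The routine ingredients—complementation exchanging open and closed, and continuity as ``preimage of open is open''—are all supplied earlier in the chapter, so the bulk of the argument is bookkeeping. The only genuine subtlety, and the step I would be most careful with, is $(2)\Leftrightarrow(4)$: one must not assume the pieces in (4) to be closed or disjoint a priori, but rather deduce both from the separation hypothesis $\overline A\cap B=\emptyset=A\cap\overline B$ together with $X=A\cup B$, as above. Once that implication is in hand, the five conditions are seen to be mutually equivalent.
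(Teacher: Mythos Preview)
The paper states this theorem without proof (it appears in the introductory survey chapter on general topology, where most results are quoted rather than demonstrated), so there is no argument in the paper to compare against. Your proof is correct: the cycle $(1)\Rightarrow(3)\Rightarrow(5)\Rightarrow(1)$ is clean, the equivalence $(1)\Leftrightarrow(2)$ via complementation is immediate, and your handling of $(2)\Leftrightarrow(4)$ correctly extracts closedness and disjointness from the separation hypothesis together with $A\cup B=X$. Your reading of the statement---supplying ``disjoint'' in (2) and correcting ``separable'' to ``separated'' in (4)---is the only sensible one, since without those amendments the conditions as printed would be false for almost every space.
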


\begin{definition} [{\bf Locally connected}]\index{Space!Locally connected}
A topological space $X$ is said to be locally connected if every neighborhood of every point $x \in X$ contains a connected neighborhood.
\end{definition}

\begin{examples}
~\begin{enumerate}
\item The Euclidean line, the Euclidean plane, the cube $I^{n}$ (without boundary) are connected and locally connected.
\item Any isolated point is a point of local connectivity.
\item The curve $y=\sin\frac{1}{x}$ on the interval $(0,1]$, is connected but not locally connected. This is because the set contains the point $(0,0)$ but it is not possible to connect the function to the origin.
\item Let $\bbR^{2}$ be equipped with the standard topology and let $K=\{\frac{1}{n} \mid n\in \bbN\}$. We call the comb space the set
\[C= (\{0\}\times[0,1])\cup (K\times[0,1])\cup ([0,1]\times\{0\})\]
considered as a subspace of $\bbR^{2}$ equipped with the induced topology. The comb is a connected space that is not locally connected.

\end{enumerate}

\end{examples}

\noindent $\star$ Warning! A space can be connected without being locally connected.

\begin{ex}\label{Ex:8}
 Is the curve $y=\sin\frac{1}{x}$ on the interval $(0,1]$ locally connected?
\end{ex}

\begin{ex}\label{Ex:9}
    Compute how many connected components has the real quartic surface: 
    \[f(u,v)=3.2u-v^2-\frac{1}{20}=0\]
     where $u=x^2+y^2+z^2,v=x^2y^2+y^2z^2+x^2z^2$.

\, 

Do the same for: 
\[f(u,v)=8u-v^2-\frac{1}{20}=0\]
\end{ex}

\begin{definition}[{\bf Covering space}]
The covering of a topological space $X$ is a pair $(\tilde X,f)$, where $\tilde X$ is a connected and locally connected space and $f$ is a continuous map from $\tilde X$ onto $X$, such that for every  neighborhood of $x$, $x\in \cN(x)$, the restriction of $f$ to each connected component $C_{\alpha}$ of $f^{-1}(\cN(x))$ is a homeomorphism from $C_{\alpha}$ to $\cN(x)$.
\end{definition}

\begin{definition}[{\bf Simply connected}]
A topological space $X$ is simply connected if $X$ is connected and locally connected and any covering $(\tilde X,f)$ is isomorphic to the trivial covering $(X,Id)$ where $Id$ is the identity map.
\end{definition}
  
 \begin{example}
\

\begin{list}{$\triangleright$}{}
\item $\bbR$ is simply connected (roughly speaking it has no "holes" and every loop is shrunk to a point).
\item The circle $S^{1}=\bbR/\bbZ$ is not simply connected.
\end{list}
\end{example}

A universal covering space is the "simplest" simply connected space that covers a given space. Rigorously, it is defined as follows.
\begin{definition}[{\bf Universal covering}]
$(\tilde X,f)$ is a universal covering of the space $X$ if it is a covering and if $\tilde X$ is simply connected.
\end{definition}
\begin{example}
  A covering of $S^{1}$ is given by the pair $(\bbR,\pi)$, where $\pi$ is the canonical projection given by \[\pi(t)=\exp{2\pi\imath t}\] and $t$ is a real parameter.    
\end{example}

\begin{ex}\label{Ex:10}
Does the torus $S^1\times S^1$ have a universal covering? If yes, give it explicitly. 
\, 

Demonstrate that the Kähler torus $T^n=\bbC^n/\Lambda$ in $\bbC^n$ where $\Lambda$ is a lattice in $\bbC^n$ (meaning that it is a discrete subgroup isomorphic to $\mathbb{Z}^{2n})$  has a universal covering space.
\end{ex}

\begin{definition}[{\bf Locally simply connected}]
A topological space $X$ is locally simply connected if every point $x\in X$ has at least one simply connected neighborhood.
\end{definition}
\begin{example}
    The sphere $S^2$ is locally simply connected, since every point on the sphere has a simply connected neighborhood. For every point on $S^2$ one can choose a small open neighborhood around that point, homeomorphic to an open disk in $\bbR^2$. 
\end{example}

\begin{ex}\label{Ex:11}
We consider a space which consists of infinitely many circles of decreasing radius, all tangent to a single point. This is called a Hawaiian earring. 
Prove that this space is not locally simply connected.
\end{ex}

 \begin{definition}[{\bf Isomorphism of covering}]
Two coverings $(\tilde X_{1},f_{1})$ and $(\tilde X_{2},f_{2})$ are isomorphic if
\begin{enumerate}
\item $\varphi: \tilde X_{1}\to \tilde X_{2}$ is a homeomorphism.
\item $f_{2}= f_{1}\circ \varphi$.
\end{enumerate}
\end{definition}

\begin{definition}[{\bf Fundamental group}]
Let $X$ be a space that admits a universal covering $(\tilde X,f)$. The group of homeomorphisms $\varphi$ of $\tilde X$ onto itself such that $f\circ \varphi = f$ is called the fundamental group of $\tilde X$.

Since two universal coverings are isomorphic, so are the fundamental groups. The corresponding abstract group is called the fundamental group of $X$.

\end{definition}
\begin{ex}\label{Ex:12}
Compute the fundamental groups of the following topological spaces:
\begin{itemize}
    \item Sphere $S^n$
    \item Torus $T^2$
    \item Real projective space $\bbR P^n$
    \item Hawaiian earring
\end{itemize}
\end{ex}
\begin{theorem}
\

\begin{enumerate}
\item A connected and locally connected space has a universal covering.
\item Two universal coverings are isomorphic.
\item Any covering of a universal covering is trivial. \end{enumerate}
\end{theorem}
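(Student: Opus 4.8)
The plan is to handle the three assertions in order of increasing content: (3) is essentially definitional, (2) rests on a pullback argument, and (1) carries the real construction. The single tool underlying everything is the \emph{unique lifting property}, which I would record first as a lemma: if $(\tilde X,f)$ is a covering, $Y$ is connected, and $g_1,g_2:Y\to\tilde X$ are continuous with $f\circ g_1=f\circ g_2$ and $g_1(y_0)=g_2(y_0)$ for some $y_0\in Y$, then $g_1=g_2$. To prove it I would show the agreement set $\{y:g_1(y)=g_2(y)\}$ is clopen. It is open because, near a point of agreement, both maps land in a single component $C$ of some $f^{-1}(\cN(x))$ on which $f$ restricts to a homeomorphism, and injectivity of $f|_C$ together with $f\circ g_1=f\circ g_2$ forces $g_1=g_2$ nearby (here the local connectedness of $\tilde X$ guarantees that such components are open). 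It is closed because at a point of disagreement the two values lie in distinct, hence disjoint open, components of some $f^{-1}(\cN(x))$, so disagreement persists in a neighborhood. Connectedness of $Y$ then finishes the lemma. I would also record the companion fact that a pullback of a covering is again a covering.

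For (1) I would give the classical construction. Fix a base point $x_0\in X$, let $\tilde X$ be the set of fixed-endpoint homotopy classes of paths issuing from $x_0$, and let $f:\tilde X\to X$ send a class to the common endpoint of its representatives. I would topologize $\tilde X$ by taking, for a class $[\gamma]$ with endpoint $x$ and a suitable neighborhood $\cN(x)$, the classes obtained by prolonging $\gamma$ through small paths inside $\cN(x)$ as basic open sets; then one checks that $f$ is continuous and surjective (joining an arbitrary point to $x_0$ by a path, for which one uses that $X$ is connected and locally path-connected), and that $f$ restricts to a homeomorphism on each component of $f^{-1}(\cN(x))$, while $\tilde X$ is connected, locally connected and admits only the trivial covering. \textbf{This even-covering step is the main obstacle}: for the prolongation neighborhoods to be genuinely evenly covered one needs loops in $\cN(x)$ to be contractible in $X$, i.e. a \emph{local simple connectedness} hypothesis on $X$—precisely the condition isolated before the Hawaiian-earring exercise. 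Without it no universal covering exists (the Hawaiian earring being the standard obstruction), so the construction really establishes (1) under that additional assumption, which the statement tacitly requires.

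For (2), let $(\tilde X_1,f_1)$ and $(\tilde X_2,f_2)$ be two universal coverings and choose base points over a common $x\in X$. I would pull the covering $f_2$ back along $f_1$ to obtain a covering of $\tilde X_1$; since $\tilde X_1$ is simply connected, this pullback is—by the very definition of simple connectedness—isomorphic to the trivial covering $(\tilde X_1,Id)$, and the resulting section, composed with the projection to $\tilde X_2$, yields a continuous $\varphi:\tilde X_1\to\tilde X_2$ with $f_2\circ\varphi=f_1$, normalized to respect base points. Symmetrically I would produce $\psi:\tilde X_2\to\tilde X_1$ with $f_1\circ\psi=f_2$. Now $\psi\circ\varphi$ and $Id_{\tilde X_1}$ are two lifts of $f_1$ through the covering $f_1$ agreeing at the base point, so the lemma gives $\psi\circ\varphi=Id_{\tilde X_1}$, and likewise $\varphi\circ\psi=Id_{\tilde X_2}$. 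Hence $\varphi$ is a homeomorphism satisfying $f_2\circ\varphi=f_1$, i.e. an isomorphism of coverings.

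Finally (3) is immediate: a universal covering $(\tilde X,f)$ has $\tilde X$ simply connected by definition, and ``simply connected'' was defined to mean that every covering of the space is isomorphic to the trivial covering $(\tilde X,Id)$; thus any covering of $\tilde X$ is trivial, with nothing further to prove. The crux of the whole theorem is therefore concentrated in the even-covering verification of (1), and it is there that the hidden local-simple-connectedness hypothesis must be invoked.
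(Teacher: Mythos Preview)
The paper states this theorem without proof; it appears in the introductory survey of general topology with no argument supplied, so there is no paper proof to compare against.

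On its own merits your proposal is sound. Part (3) is indeed immediate from the paper's definition of ``simply connected'' (every covering is isomorphic to the trivial one), and your argument for (2) via pulling back one covering along the other, trivializing by simple connectedness, and then invoking the unique-lifting lemma to show the two resulting maps are mutual inverses is the standard and correct route.

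Your diagnosis of (1) is the valuable part. The statement as printed is false: the Hawaiian earring is connected and locally path-connected (hence locally connected) yet admits no universal covering, precisely because the even-covering verification in the path-space construction requires small loops to die in $X$---the (semi-)local simple connectedness hypothesis. The paper itself introduces ``locally simply connected'' and poses the Hawaiian-earring exercise just a few lines later, so the missing hypothesis is clearly in the authors' mind even though it was dropped from the theorem. One further under-hypothesization you might also flag: your surjectivity step for $f$ appeals to local \emph{path}-connectedness to join points by paths, whereas the stated hypothesis is only local connectedness; the path-space construction really wants both local path-connectedness and semi-local simple connectedness on top of connectedness.
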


\begin{definition}[{\bf Path-connected}]
\

\begin{enumerate}
\item A topological space $X$ is path-connected connected, if for any two points $a$ and $b$ in $X$, there exists a continuous path connecting those points; that is, there exists a continuous function $\gamma : [0,1] \to X$ such that $\gamma(0)=a$ and $\gamma(1)=b$.
\item $X$ is locally path-connected (or pathwise connected) if for each point $x\in X$ and each neighborhood $\cN(x)$ there exists a neighborhood $\cU(x)\subset \cN(x)$ that is path-connected.
\end{enumerate}
\end{definition}

\begin{theorem}
If a topological space is path-connected (locally path-connected) then it is connected (locally connected).
\end{theorem}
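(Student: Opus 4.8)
The plan is to prove the two implications separately, with the parenthetical (local) statement following almost immediately from the non-local one.

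For the claim that path-connectedness implies connectedness, I would argue by contradiction. Suppose $X$ is path-connected but disconnected, so that $X = \cU \cup \cV$ with $\cU, \cV$ nonempty, open, and disjoint. Choosing $a \in \cU$ and $b \in \cV$, path-connectedness furnishes a continuous $\gamma : [0,1] \to X$ with $\gamma(0) = a$ and $\gamma(1) = b$. Then $\gamma^{-1}(\cU)$ and $\gamma^{-1}(\cV)$ are open by continuity of $\gamma$, disjoint, cover $[0,1]$, and are each nonempty, since $0 \in \gamma^{-1}(\cU)$ and $1 \in \gamma^{-1}(\cV)$. This exhibits $[0,1]$ as a union of two disjoint nonempty open sets, contradicting the connectedness of $[0,1]$. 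Alternatively, I could invoke the characterization from the connectedness theorem stated earlier: for any continuous $f : X \to \{0,1\}$, the composite $f \circ \gamma$ is continuous on the interval $[0,1]$, hence constant, which forces $f(a) = f(b)$; as $a$ and $b$ are arbitrary, $f$ must be constant, and $X$ is therefore connected.

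The one nontrivial ingredient is the connectedness of $[0,1]$, where the actual content resides: it rests on the least-upper-bound property of $\bbR$. Were a decomposition $[0,1] = A \cup B$ into disjoint nonempty relatively open sets to exist, with say $0 \in A$, one would set $s = \sup\{t \in [0,1] : [0,t] \subset A\}$ and derive a contradiction by ruling out both $s \in A$ and $s \in B$ using the openness of each set. I expect this to be the main obstacle of the whole statement, although it is a standard fact about $\bbR$ that may simply be cited rather than reproved here.

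For the parenthetical claim that local path-connectedness implies local connectedness, the argument is direct and requires no new analysis. Given any $x \in X$ and any neighborhood $\cN(x)$, local path-connectedness provides a path-connected neighborhood $\cU(x) \subset \cN(x)$. By the first part of the proof, this $\cU(x)$ is connected. Hence every neighborhood of every point contains a connected neighborhood, which is precisely the definition of local connectedness, completing the argument.
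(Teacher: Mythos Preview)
Your proof is correct and follows the standard route: pull back a putative separation of $X$ along a path to contradict the connectedness of $[0,1]$, then deduce the local statement by applying the global one inside each neighborhood. The paper, however, states this theorem without proof, so there is no argument to compare your approach against; your write-up would serve perfectly well as the missing justification.
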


\, 

$\star$ Warning! The converse statement is false.

\, 

 \begin{example}
Consider the topologist's sine curve, which is constituted from the graph of the function  $y=sin(\frac{1}{x})$, where $x\in (0,1]$ and the vertical line segment $\{0\}\times [-1,1]$. 
The space is connected however it is not path-connected. 
Indeed, there is no continuous path connecting a point on the vertical segment to a point on the sine curve. In particular, any neighborhood around any point on $\{0\}\times [-1,1]$ contains infinitely many disconnected components of the sine curve. 
 \end{example}     
                       
   \begin{ex}\label{Ex:13}
    Provide an other example where the space is connected but not path-connected.  
\end{ex}

\section{Homotopy}
The notion of homotopy is a purely topological notion which allows to consider classes of topological objects up to some (homotopical) relation. 

To give an everyday life example, up to homotopy, a mug is equivalent to a doughnut (because they are both homotopy equivalent to a torus). 

\begin{definition}[{\bf Homotopic paths}]
Two paths $\gamma_{1}, \gamma_{2}:[0,1]\to X$ are homotopic if there exists a continuous map $F:[0,1]\times [0,1] \to X$ such that $F(t,0)=\gamma_{1}(t)$ and
$F(t,1)=\gamma_{2}(t)$. \end{definition}
\begin{example}
In the figure below, we draw a pair of homotopic planar curves. These curves are the level curves of the real and imaginary parts of a pair of complex polynomials.   
\begin{center}
\includegraphics[scale=0.5]{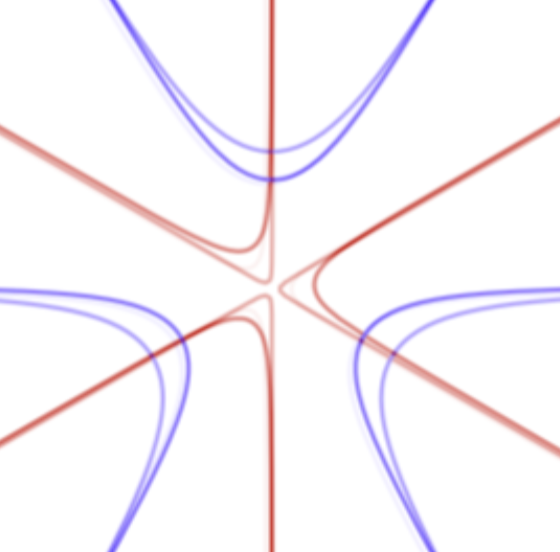}
\end{center}
Can you tell the degree of those curves?  Give (approximately) the equations of those polynomials.
\end{example}
\begin{definition}[{\bf Homotopic map}]

\

\begin{enumerate}
\item Two continuous maps $f,g:X\to Y$ are homotopic if there exists a continuous map $F:X\times [0,1] \to Y$ such that $F(x,0)=f(x)$ and
$F(x,1)=g(x)$ for all $x\in X$.

\item If $A$ is a subset of $X$, two continuous maps $f,g:X\to Y$ are homotopic with respect to $A$ if there exists a continuous map $F:X\times [0,1] \to Y$ such that $F(x,0)=f(x)$ and
$F(x,1)=g(x)$ for all $x\in X$ and $F(a,t)=f(a)$ for all $a\in A$ and $t\in [0,1]$.
\end{enumerate}
\end{definition}

\begin{theorem}
If $X$ is path-connected and locally path-connected, it is simply connected if every path $\gamma$ in $X$ is homotopic to the constant map.
\end{theorem}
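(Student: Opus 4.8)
The plan is to prove the statement directly from the covering-theoretic definition of simple connectedness adopted above: assuming that every loop in $X$ is homotopic to a constant map, I will show that an arbitrary covering $(\tilde X, f)$ of $X$ is isomorphic to the trivial covering $(X, \mathrm{Id})$. Since an isomorphism of coverings matches $f$ with $\mathrm{Id}$ through a homeomorphism, this reduces to showing that the projection $f \colon \tilde X \to X$ is itself a homeomorphism. I would begin by fixing a basepoint $x_0 \in X$ and a point $\tilde x_0 \in f^{-1}(x_0)$, and by noting that $\tilde X$ is path-connected: by the defining property of a covering, $f$ restricts to a homeomorphism on each component of $f^{-1}(\cN(x))$, so $\tilde X$ is locally homeomorphic to $X$ and hence locally path-connected; being connected and locally path-connected, $\tilde X$ is path-connected.

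Next I would record the two lifting properties that follow from the local-homeomorphism structure of $f$: unique path lifting (a path in $X$ with a prescribed initial lift has a unique lift to $\tilde X$) and homotopy lifting, which together yield the monodromy principle that the endpoints of the lift of a loop depend only on its homotopy class. In a sketch I would justify these by subdividing $[0,1]$ (and $[0,1]\times[0,1]$ for the homotopy) finely enough that each piece maps into an evenly covered neighborhood $\cN(x)$, then lifting piece by piece through the local homeomorphisms. A point worth flagging is that the definition of homotopic paths given above does not fix endpoints; however, for a loop, free null-homotopy already forces triviality in $\pi_1$, so one may replace the given free homotopy by an endpoint-preserving one before invoking monodromy.

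The core of the argument is the injectivity of $f$. Suppose $f(\tilde x_1) = f(\tilde x_2) = x$. Because $\tilde X$ is path-connected, choose a path $\tilde \gamma$ in $\tilde X$ from $\tilde x_1$ to $\tilde x_2$; then $\gamma := f \circ \tilde \gamma$ is a loop based at $x$. By hypothesis $\gamma$ is null-homotopic, so by the monodromy principle its lift starting at $\tilde x_1$ must be a closed path. But $\tilde \gamma$ is precisely that lift, whence $\tilde x_2 = \tilde \gamma(1) = \tilde \gamma(0) = \tilde x_1$. Thus $f$ is injective; it is surjective by the definition of a covering and an open local homeomorphism by its evenly-covered structure, hence a continuous bijective local homeomorphism, and therefore a homeomorphism. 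Taking $\varphi = f$ exhibits the isomorphism $(\tilde X, f) \cong (X, \mathrm{Id})$, and since the covering was arbitrary, $X$ is simply connected.

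I expect the main obstacle to be the lifting machinery underlying the monodromy principle — establishing unique path lifting and, above all, homotopy lifting for the coverings as defined here — because the whole argument hinges on the implication that a null-homotopy of $\gamma = f \circ \tilde\gamma$ forces the lift $\tilde\gamma$ to close up. Once that implication is secured, together with the minor reconciliation between free and endpoint-fixing homotopies noted above, the remaining steps — that an injective continuous local homeomorphism is a homeomorphism, and the verification of the isomorphism-of-coverings conditions — are routine.
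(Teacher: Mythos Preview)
Your argument is correct and follows the standard covering-space route: establish path lifting and the monodromy principle, use the hypothesis to show every lift of a loop closes up, deduce that $f$ is injective and hence a homeomorphism, and conclude that every covering is trivial. You also correctly flag and resolve the mismatch between the paper's endpoint-free definition of homotopic paths and what is actually needed, namely that a free null-homotopy of a loop forces its class in $\pi_1$ to be trivial (conjugate to the identity).

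The paper itself does not supply a proof of this theorem; it is merely stated. So there is no alternative argument to compare against. Your approach is exactly the natural one given the paper's covering-theoretic definition of simple connectedness, and the lifting lemmas you sketch are the unavoidable technical core. If you wanted to tighten the write-up, the one place to invest more detail is the homotopy-lifting step, since, as you note, the entire injectivity argument rests on it; the rest is routine.
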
 

The notion of homotopy between two functions allows to define an equivalence relation between topological spaces.

\begin{definition}[{\bf Homotopically equivalent spaces}]
Two spaces $X$ and $Y$ are said to be homotopically equivalent (or `` of the same homotopy type '') if there exist two continuous maps $f:X\to Y$ and $g:Y\to X$ such that :
\begin{list}{$\bullet$}{}
\item $g\circ f $ is homotopic to $Id_{X}$, the identity on $X$ ;
\item $f\circ g $ is homotopic to $Id_{Y}$, the identity on $Y$ .
\end{list}
\end{definition}

\begin{definition}[{\bf Contractible space}]\label{D:contractible}
A space is called contractible if it is homotopically equivalent to a point.
\end{definition}
This definition is equivalent to saying that its identity map is homotopic to a constant map.

\begin{example}
The space $\bbR^{n}$ is contractible.
\end{example}

\begin{theorem}
Two homeomorphic topological spaces are homotopically equivalent
\end{theorem}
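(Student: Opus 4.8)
The plan is to exhibit explicit maps realizing the homotopy equivalence directly from the homeomorphism, since a homeomorphism is a particularly rigid instance of a homotopy equivalence. Suppose $X$ and $Y$ are homeomorphic, so there exists a continuous bijection $h : X \to Y$ whose inverse $h^{-1} : Y \to X$ is also continuous. The natural candidates for the two maps required by the definition of homotopic equivalence are simply $f = h$ and $g = h^{-1}$; both are continuous by hypothesis, so they are admissible.

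Next I would compute the two relevant compositions. By the defining property of an inverse map, $g \circ f = h^{-1} \circ h = \mathrm{Id}_X$ and $f \circ g = h \circ h^{-1} = \mathrm{Id}_Y$. Thus the compositions are not merely homotopic to the identities but literally equal to them. The remaining point is to observe that the definition of homotopic equivalence only demands that $g \circ f$ be homotopic to $\mathrm{Id}_X$ (and symmetrically for the other composition), and equality is the strongest possible form of this. Concretely, any continuous map is homotopic to itself via the constant homotopy: for $\mathrm{Id}_X$ one takes $F : X \times [0,1] \to X$ defined by $F(x,t) = x$, which is continuous and satisfies $F(x,0) = F(x,1) = \mathrm{Id}_X(x)$; the same construction works on $Y$.

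Assembling these observations, the maps $f$ and $g$ satisfy both conditions of the definition, so $X$ and $Y$ are homotopically equivalent. There is no genuine obstacle here: the only conceptual subtlety worth flagging for the reader is the distinction between equality and homotopy of maps, namely that equal maps are a degenerate special case of homotopic maps. This explains why homeomorphism is strictly stronger than homotopy equivalence—the converse of the theorem fails, as the contractibility of $\bbR^n$ (Definition~\ref{D:contractible}) already illustrates, since $\bbR^n$ is homotopically equivalent to a point without being homeomorphic to it.
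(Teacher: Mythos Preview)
Your proof is correct and is the standard argument; the paper itself states this theorem without proof, treating it as immediate, and moves directly to the warning that the converse fails. Your explicit verification that equal maps are homotopic via the constant homotopy fills in exactly the detail an introductory reader might need.
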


$\star$ Warning! The converse is false, as shown by the following examples:

\begin{list}{$\triangleright$}{}
\item Except  from the point itself, contractible spaces are not homeomorphic to the point.
\item A circle is homotopically equivalent to $\bbC^{\star}$, that is, a plane without a point, but it is not homeomorphic to it. A circle without two points is not connected, while a plane without three points is.
\item The real interval, the disk in the plane or the ball in $\bbR^{3}$, of radius 1 (open or closed) are all  contractible, therefore homotopically equivalent. However,  pairwise they are non-homeomorphic (by analogous arguments).
\end{list}

\section{Disjoint sum topology}

\subsection{Disjoint sum}
\begin{definition}[{\bf Disjoint sum}]\label{somdisj}

The disjoint sum of an indexed family of sets $\{E_i\}_{i\in I}$ is defined
by:
\begin{equation}
\coprod_{i\in I}E_{i}=\bigcup_{i\in I} \{(x,i) \mid\, i\in I, \,x\in E_i.\}
\end{equation}
\end{definition}

\noindent {\bf Note}: This definition of the disjoint sum allows to take into account the case where the $E_{i}\cap E_{j}\ne \emptyset$. Indeed, the elements of the disjoint sum are ordered pairs $ (x,i)$. Here $i\in I$ serves as an auxiliary index that indicates from which $E_i$ the element $x$ came from.  
Each of the $E_i$ sets  are canonically isomorphic to the set $\tilde E_i=\{(x,i): x\in E_i\}$. 
So, if we define $\varphi_j:E_j \to \coprod_{i\in I}E_i$ by $\varphi_j(x)=(x,j), $ where $j\in I$ then $ \varphi_{j}$ is a bijection from $E_j$ onto $\tilde E_j=\{(x,i)\mid x\in E_j\}$ and the $\tilde E_j$ are  disjoint, even if the $E_i$ are not.  
\[
\coprod_{i\in I}E_{i}=\bigcup_{i\in I} \{(x,i) \mid\, i\in I, \,x\in E_{i}\}=\bigsqcup_{i\in I} \{(x,i) \mid\, i\in I, \,x\in E_{i}\}.
\] 
Usually to state that the union is a disjoint union we replace $\bigcup$ by $ \bigsqcup$.

\subsection{Disjoint sum topology}

 \begin{definition}[{\bf Disjoint sum topology}] The disjoint sum topology on $\coprod_{i\in I}E_{i}$ is defined by 
 \[
 \cT= \left\{\cU\subset \coprod_{i\in I}E_{i} \mid \varphi_{i}^{-1}(\cU)\in \cT_{i} \ \forall \ i\in I. \right\}.
 \]
  \end{definition} 

\begin{proposition}[Fundamental properties of the disjoint sum topology] 

\,

\begin{enumerate}
 \item The set $ \left\{\cU\subset \coprod_{i\in I}E_{i} \mid \varphi_{i}^{-1}(\cU)\in \mathcal{T}_{i} \ \forall \ i\in I \right\}$ defines a topology.

\item[]

\item The maps $\varphi_{j}: X_{j}\to \coprod_{i\in I}E_{i}$ are continuous, open and closed; they are in fact homeomorphisms on
\[\varphi_{j}(X_{j})=\left\{(x,j)\in \coprod_{i\in I}E_{i}\mid x\in E_{j}\right\}.\]

\item[]
\item Let $Y$ be a topological space and $f: \coprod_{i\in I}E_{i}\to Y$ be a map. Then
\[ f\quad \text{is continuous }\quad  \Leftrightarrow f\circ \varphi_{j}:E_{j}\to Y \text{ is continuous }, \forall\, j\in I.\]

\item[]
\item The disjoint sum topology is the finest among those for which the $\varphi_{j},$ for all $\ j\in I$ are continuous.
\end{enumerate}
\end{proposition}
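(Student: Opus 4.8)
The plan is to treat the four assertions in order, exploiting throughout the single elementary fact that preimages commute with arbitrary unions and intersections, together with the disjointness of the canonical images $\tilde E_j = \varphi_j(E_j)$. For (1) I would verify the three axioms directly on the defining condition $\varphi_i^{-1}(\cU)\in\cT_i$: since $\varphi_i^{-1}(\emptyset)=\emptyset$ and $\varphi_i^{-1}(\coprod_{k\in I}E_k)=E_i$, both lie in $\cT_i$; and because $\varphi_i^{-1}(\bigcup_\alpha \cU_\alpha)=\bigcup_\alpha \varphi_i^{-1}(\cU_\alpha)$ and $\varphi_i^{-1}(\bigcap_{k=1}^N \cU_k)=\bigcap_{k=1}^N \varphi_i^{-1}(\cU_k)$, stability under arbitrary unions and finite intersections is inherited from each $\cT_i$.

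For (2), continuity of $\varphi_j$ is immediate: for $\cU\in\cT$ the set $\varphi_j^{-1}(\cU)$ lies in $\cT_j$ by the very definition of $\cT$. The crucial computation is that for $V\in\cT_j$ one has $\varphi_i^{-1}(\varphi_j(V))=V$ when $i=j$ (by injectivity) and $\varphi_i^{-1}(\varphi_j(V))=\emptyset$ when $i\neq j$ (by disjointness of the images); this shows $\varphi_j(V)$ satisfies the membership condition for $\cT$, so $\varphi_j$ is open. The same computation applied to complements, or equivalently the observation that each $\tilde E_j$ is clopen with complement $\bigsqcup_{i\neq j}\tilde E_i$, yields that $\varphi_j$ is closed. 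Being a continuous open injection onto $\tilde E_j$, it is then a homeomorphism onto its image.

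Statement (3) is the characteristic universal (final) property. The forward direction is just composition of continuous maps, using that each $\varphi_j$ is continuous by (2). For the converse, given $W$ open in $Y$ I would rewrite $\varphi_i^{-1}(f^{-1}(W))=(f\circ\varphi_i)^{-1}(W)$, which is open in $E_i$ by hypothesis for every $i$; this is precisely the condition for $f^{-1}(W)$ to belong to $\cT$, so $f$ is continuous. Finally, (4) follows by comparing topologies: (2) shows $\cT$ makes every $\varphi_j$ continuous, and if $\cT'$ is any topology with the same property, then for $\cU\in\cT'$ continuity of each $\varphi_j$ forces $\varphi_j^{-1}(\cU)\in\cT_j$, i.e. $\cU\in\cT$; hence $\cT'\subset\cT$ and $\cT$ is indeed the finest such topology.

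I do not anticipate a genuine obstacle here, since each part reduces to an identity about preimages. The one step deserving care is the openness in (2), where the disjointness of the images $\tilde E_i$ (rather than of the original sets $E_i$, which may overlap) must be invoked to conclude $\varphi_i^{-1}(\varphi_j(V))=\emptyset$ for $i\neq j$.
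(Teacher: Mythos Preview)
Your proposal is correct and follows essentially the same route as the paper: each of the four parts is handled by the same preimage computations, including the key identity $\varphi_i^{-1}(\varphi_j(V))=V$ or $\emptyset$ according as $i=j$ or $i\neq j$, and the final comparison $\cT'\subset\cT$. Your remark about invoking disjointness of the images $\tilde E_j$ rather than of the $E_j$ themselves is exactly the point the paper's construction is designed to secure.
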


\begin{proof}
\begin{enumerate}
\item It is clear that $\emptyset$ and $\coprod_{i\in I}E_{i}$ belong to $\cT$.

\, 

Axioms (2) and (3) are a consequence of
the fact that the $\cT_{i}$ are topologies and of the following two set-theoretic properties of the operation $\varphi_{j}^{-1}$:

\vspace{5pt}
\begin{enumerate}[a)]
\item if we have a family $\{\cU_{\lambda}\}_{\lambda\in\Lambda}$ of subsets of $\coprod_{i\in I}E_{i} $, then:
\[
\varphi_{j}^{-1}\left(\bigcup_{\lambda\in\Lambda}\cU_{\lambda}\right)=\bigcup_{\lambda\in\Lambda} \varphi_{j}^{-1}(\cU_{\lambda}).
\]

 \vspace{3pt} 
\item if $\cU_{1},\dots,\cU_{N} \subset \coprod_{i\in I}E_{i}$, \[ \varphi_{j}^{-1}(\cU_{1} \cap \dots \cap \cU_{N})=\varphi_{j}^{-1}(\cU_{1} )\cap\dots \cap\varphi_{j}^{-1}(\cU_{N}).\] 
\end{enumerate}

 \vspace{5pt}
 \item By the above statement, the $\varphi_{i},$ where $ i \in I$ are continuous. If $\cU \subset E_{i}$ is open, let us see that $\varphi_{i}(\cU)$ is open. Indeed : 
 \[
 \varphi_{j}^{-1}(\varphi_{i})=\begin{cases} \cU& \text{ if } i=j\\ \emptyset & \text{ otherwise }\end{cases}
 \]
  and therefore it is an open set for the topology that we defined on $ \coprod_{i\in I}E_{i}$.

If $F\subset E_{j}$ is closed, then $\left(A=\coprod_{i\in I}E_i\right)/\varphi_j(F)$. This implies 
\[\varphi_i^{-1}(A)=\begin{cases} E_j/F & \text{ if } i=j\\ E_i & otherwise
\end{cases}\]
and therefore $A$ is open for the topology that we have defined on $\coprod_{i\in I}E_i$, which makes $\varphi_j(F)$ closed. It follows from previous arguments that the $\varphi_{j}$ form homeomorphisms on their image.

\vspace{5pt}
\item If $f: \coprod\limits_{i\in I}E_i\to Y$ is continuous for all $j \in I$, then the composition $f \circ \varphi_j$ is also continuous.

Suppose that $\varphi_j\circ f$ is continuous forall $j\in I$ and that $V \subset Y$ is an open set. Then $ \varphi_j^{-1}(f^{-1}(V))$ is open in
$E_j $, for all $ j \in I$, which means that $f^{-1}(V)$ is open in $ \coprod_{i\in I}E_i$. So, $f$ is continuous.

\vspace{5pt}
\item Let $\cT'$ be a topology on $\coprod_{i\in I}E_i$ for which the $\varphi_j$ are continuous for all$j \in I$. If $\cU\in \cT'$ we therefore have that $\varphi_j^{-1}(\cU')$ is open in $ E_j$, for all $j \in I$, which means that $U' \in \cT$. We thus have $\cT' \subset \cT$, that is to say that $\cT'$ is less fine than $\cT$.
\end{enumerate}
\end{proof}

\section{More Exercises}

\begin{enumerate}

\item Prove or disprove that every Hausdorff space $\mathbf{T_1}$ that is a space in which every set consisting of one point is closed.\\

\item Give an example of $\mathbf{T_1}$ space which is not Hausdorff space.\\

\item For each pair $f,g$ of continuous maps of a topological space $X$ in a Hausdorff space $Y$ the set 
$\{ X\in X: f(x)=g(x)\}$ is closed in space $X$
\end{enumerate}

\section{Hints/ short answers for exercises}

Ex. \ref{Ex:1} $\&$ \ref{Ex:5} The Cantor set is a metric space. 
The Cantor set is an uncountable set with Lebesgue measure zero. As the complement of a union of open sets, it is a closed subset of the real numbers and, consequently, a complete metric space. 
Furthermore, since it is totally bounded, the Heine–Borel theorem ensures that it is compact.
\, 

Ex.\ref{Ex:2}
Let $p$ be a given point of the space $X$. By hypothesis $X$ is a ${\bf T}_2$ space. 
Therefore, it follows that every point $x\neq p$ belongs to an open set $G_x$, that does not contain $p$ and thus that \[X\setminus\{p\}=\cup_{x\neq p}G_x.\] This implies that $X\setminus\{p\}$ is open, in other words $\{p\}$ is closed.

\, 

Ex. \ref{Ex:T_1neqT_2}  Consider the following topological space:
\[
X = \bigg\{ 0 \bigg\} \cup \bigg\{ \frac{1}{n} \;\bigg|\; n \in \mathbb{N}^* \bigg\}.
\]
We equip $X$ with a topology defined as follows:
\begin{itemize}
    \item The open sets that do not contain the point $\{1\}$ are precisely those inherited from the standard topology on $\mathbb{R}$. For example $\{\frac{1}{2},\frac{1}{3}\}$ is open.
    \item Any open set that contains the point $\{1\}$ must necessarily be of the form $U=X\setminus F$, where $F$ is a finite subset of $X$ not containing 1. For example, $U=X\setminus \{\frac{1}{2},\frac{1}{3}\}$.
\end{itemize}

Let us check that $X$ is a ${\bf T_1}$ Space. To show that $X$ is ${\bf T_1}$, we need to prove that every singleton set $\{x\}$ is closed. 
Consider:
\begin{itemize}
    \item For $x=0$: the space $X\setminus\{0\}$ is open. 
    \item For $x=\frac{1}{k}$: the space $X\setminus\{\frac{1}{k}\}$ is open because it can be written as $X\setminus F$, where $F=\{\frac{1}{k}\}$ is finite. 
\end{itemize}
Therefore, every singleton is closed, and $X$ is ${\bf T_1}$.

Let us check if $X$ is ${\bf T_2}$. 
\begin{itemize}
    \item Any open set $U$ containing 0 must be of the form $X\setminus F$, where $F$ does not contain 0. Since $X\setminus F$ is infinite, 
$U$ must contain infinitely many points of the form $\frac{1}{n}$.
\item Any open set $V$ containing 1 must also be of the form $X\setminus F$, where $F$ is finite and does not contain 1. This set $V$ will also contain infinitely many points of the form $\frac{1}{n}$. Thus $U$ and $V$ cannot be disjoint because they both contain infinitely many points $\frac{1}{n}$. Therefore, 0 and 1 cannot be separated by disjoint open sets and $X$ is not ${\bf T_2}$.
\end{itemize}

To summarize,
\begin{itemize}
    \item the space $X$ is ${\bf T_1}$, because every singleton is closed.
    \item The space $X$ is not ${\bf T_2}$,
 because the points 0 and 1 cannot be separated by disjoint open sets.
 \end{itemize}
 
\, 

Ex.\ref{Ex:cont} is is enough to show that the set $$A=\{x\in X: f(x)\neq g(x)\}$$ is open. 
For every $x\in A$ there exists in $Y$ two open sets $U$ and $V$ such that $f(x)\in U$ and $g(x)\in V$ and their intersection is empty. 
The set given by $f^{-1}(U)\cap g^{-1}(V)$ is the neighborhood of the point $x$ and contained in $A$. Therefore, $A$ is an open set. 

Ex.\ref{Ex:3} Let us consider the example of a non-Baire space: the set of rational numbers with the subspace Topology.

The set of rational numbers is countable. In particular, we can write 
\[\bbQ =\cup_{q\in \bbQ}\{q\}.\] Each singleton $\{q\}$ is closed in $\bbQ$ (subspace topology) but has empty interior: thus, $\{q\}$ is nowhere dense. Therefore, $\bbQ$ is a countable union of nowhere dense sets and $\bbQ$ cannot be a Baire space. 

\, 

Ex.\ref{Ex:4} The real numbers $\bbR$, equipped with the standard Euclidean topology, forms a Baire space. 

A space is Baire if the intersection of countably many dense open subsets is dense.
By the Baire Category Theorem, every complete metric space is a Baire space. Therefore, the conclusion follows.

\,

Ex.\ref{Ex:6} This is an application of the Arzela--Ascoli theorem.
\, 

Ex.\ref{Ex:7} No. This is due to the fact that $GL_n(\bbR)$ splits into two disjoint subsets: the set of matrices with strictly positive determinant and the set of matrices with strictly negative determinant. 

\, 

Ex.\ref{Ex:8} The curve $y=\sin\frac{1}{x}$ on the interval $(0,1]$ is connected but not locally connected nor arc-connected. 
As $x\to 0^+$, $\lim_{x\to 0^+}\frac{1}{x}$ grows without bound, causing $\sin\frac{1}{x}$ to oscillate infinitely between the values -1 and 1. By definition, a space is locally connected if every neighborhood of any point contains a connected open neighborhood.
In the case of $y=\sin\frac{1}{x}$, no matter how small a neighborhood $U$ of $(0,y)$ is chosen, the graph within 
$U$ consists of infinitely many disconnected components (due to the oscillations). Thus, there is no connected open neighborhood around $(0,y)$.

\,

Ex.\ref{Ex:9} The first quartic has eight non-compact connected components, while the second has only one. This can be shown by leveraging the fact that these quartics are invariant under the Coxeter group $CB_3$, which corresponds to the symmetries of the cube. By decomposing space into Coxeter chambers, the quartic can be analyzed within a single fundamental domain. Applying an appropriate change of variables, $x_i^2\mapsto X_i$ where $i\in \{1,2,3\}$ within this domain reduces the problem to studying degree-2 surfaces, defined in a cone contained in a positive octant (for instance the cone delimited by the equations 
$\{x_1=0\},\{x_2=0\},\{x_3=0\}$ and $\{x_i=x_j\}$ where $i\neq j$. 
Since the classification of quadrics is well understood, this approach simplifies the analysis. See \cite{C18} for an entire classification of such quartics. 

\, 

Ex.\ref{Ex:10} The universal covering space of the torus 
 is the Euclidean plane 
$\bbR^2$, with the covering map: $ \pi:\bbR^2\to T^2,$ 
$ \pi(x,y)=(\exp{2\pi\imath x},\exp{2\pi\imath y})$.

\, 

The Kähler torus (that is a torus equipped with a Kähler structure) admits a universal covering. In fact, the universal covering of a Kähler torus is the same as the universal covering of a standard torus, because the Kähler structure does not affect the underlying topology.
The map $\pi$ projects each point $z\in \bbC^n$ to its equivalence class 
$z+\Lambda$ in the quotient space $\bbC^n/\Lambda$.
This is a smooth, holomorphic map that respects the complex and Kähler structures.
The space $\bbC^n$ is simply connected because it is contractible.

\, 

Ex.\ref{Ex:11} The Hawaiian earring is a classic example of a space that is not locally simply connected. It consists of infinitely many circles of decreasing radius, all tangent to a single point. While each circle is itself simply connected, the entire space fails to be locally simply connected at the point of tangency because any neighborhood of that point contains infinitely many circles, making it impossible to find a simply connected neighborhood.

\, 

Ex.\ref{Ex:12} Fundamental groups. 
\begin{itemize}
    \item $\pi_1(S^1)\cong \bbZ$
    \item $\pi_1(S^n)\cong 0$, if $n\geq 2$
    \item $\pi_1(S^1\times S^1)\cong \bbZ\times \bbZ$
    \item $\pi_1(\bbR P^1)\cong \bbZ$
    \item $\pi_1(\bbR P^n)\cong \bbZ/2\bbZ$
    \item Hawaiian earring: uncountable. 
\end{itemize}

\, 

Ex.\ref{Ex:13} There are many examples. We can take the example of the deleted comb space. The comb space is a subspace of $\bbR^2$ which looks like a comb. A comb space is defined by the set: 
\[\{0\} \times [0,1] ) \cup (K \times [0,1]) \cup ([0,1] \times \{0\}\]
where $K=\{\frac{1}{n}\, |\, n\in\bbN^*\}$.
However, the aim of this exercise is to create your own example, using your imagination.

\chapter{Topological and Modelled manifolds}
\begin{center}
    \begin{minipage}{0.6\textwidth} 
  \centering
  \includegraphics[scale=0.5]{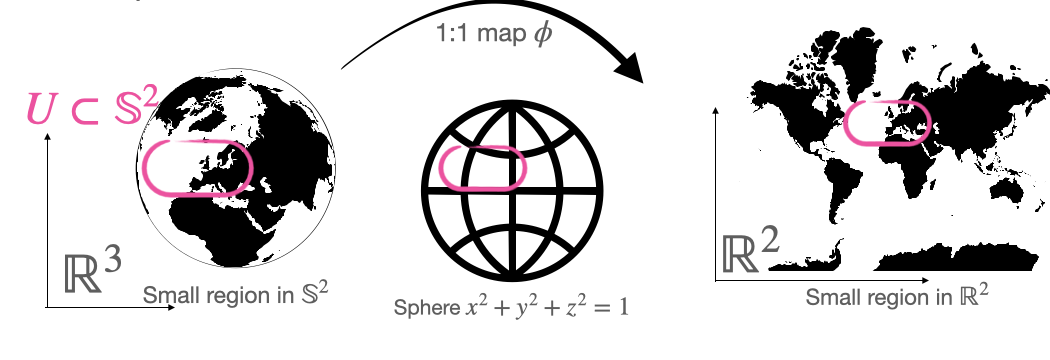}
\end{minipage}
\end{center}

A topological manifold is a fundamental object in mathematics, serving as an abstraction of spaces that locally resemble Euclidean space but  exhibit additional properties. Manifolds provide the natural setting for geometry and topology, forming the foundation for different disciplines in mathematics.

Formally, an $n$-dimensional topological manifold is a Hausdorff topological space that is locally homeomorphic to $\bbR^n$. This means that around every point, there exists a neighborhood that behaves like an open subset of the Euclidean space, allowing us to use an Euclidean space type of intuition, while still permitting the existence of global structures (namely curvature, holes, or any other topological feature).

\, 

Topological manifolds can be studied with some extra structures:

\begin{itemize}
    \item Differentiable manifolds, where smooth functions and derivatives can be defined, leading to differential geometry.
    \item Riemannian manifolds, which are equipped with a Riemannian metric.
    \item Lie groups: these are topological spaces that also possess a group structure, playing a crucial role in symmetry and mathematical physics.
\end{itemize}
The simplest examples of topological manifolds include familiar spaces such as the circle $S^1$, the sphere $S^n$ the torus $T^n$, as well as more sophisticated objects like projective spaces.
\section{Definitions}

\, 

\begin{definition}[Topological Manifolds]\label{D:topman}\index{Manifold!Topological Manifold}
Let $\cM$ be a set, and let $(\cE_i)_{i\in I}$ be a family of Hausdorff topological vector spaces. Consider a collection $\cA$ of pairs $\{(\cU_i, \varphi_i)\}_{i \in I}$, indexed by some set $I$, where each $\cU_i$ is a subset of $\cM$ and each $\varphi_i$ is a mapping associated to it, subject to the following axioms:

\begin{itemize}
    \item[$\mathbf{(\cM1)}$] The sets $\cU_i$ form an open cover of $\cM$:
    \[
    \cM = \bigcup_{i \in I} \cU_i, \quad \cU_i \subset \cM.
    \]
    
    \item[$\mathbf{(\cM2)}$] Each $\varphi_i$ is a bijection between $\cU_i$ and an open subset of $\cE_i$:
    \[
    \varphi_i: \cU_i \to \varphi_i(\cU_i) \subset \cE_i.
    \]

    \item[$\mathbf{(\cM3)}$] For each pair of indices $i, j \in I$, the transition maps
    \[
    \varphi_j \circ \varphi_i^{-1}: \varphi_i(\cU_i \cap \cU_j) \to \varphi_j(\cU_i \cap \cU_j)
    \]
    define homeomorphisms between the respective open subsets of $\cE_i$ and $\cE_j$.

    \item[$\mathbf{(\cM4)}$] Unless otherwise specified, we assume that each $\cE_i$ is a locally convex Hausdorff topological vector space.
\end{itemize}

The data $(\cM, \cA)$ thus defines a \textit{topological manifold} modeled on the spaces $\cE_i$.
\end{definition}




\begin{figure}[h]
\includegraphics[scale=0.45]{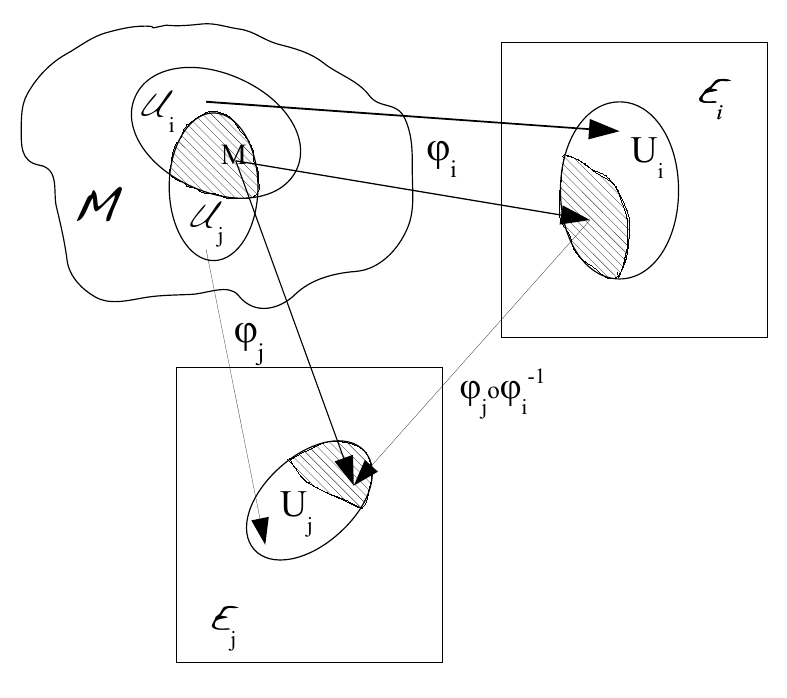} 
\caption{Charts and atlas.}\label{F:chart1}
\end{figure}

\section{Charts, Atlas}

\begin{definition}[Chart]\index{Manifold!Chart}
Let $\cM$ be a topological manifold. Each pair $(\mathcal{U}_i, \varphi_i) \in \mathcal{A}$, where $\mathcal{U}_i$ is an open subset of $\cM$ and $\varphi_i$ is a mapping of $\mathcal{U}_i$ onto an open subset of a topological vector space, is called a \textit{chart} (or \textit{coordinate system}) of $\cM$.

If a point \textsc{m} $\in \mathcal{M}$ lies in $\mathcal{U}_i$, we say that $(\mathcal{U}_i, \varphi_i)$ is a chart at \textsc{m}.
\end{definition}

\begin{definition}[Atlas]\index{Manifold!Atlas}
An \textit{atlas} on a manifold $\cM$ is a collection of charts $\{(\mathcal{U}_i, \varphi_i)\}_{i \in I}$ such that:
\begin{itemize}
    \item[$\mathbf{(A1)}$] The sets $\mathcal{U}_i$ cover $\cM$:
    \[
    \cM = \bigcup_{i \in I} \mathcal{U}_i.
    \]
    \item[$\mathbf{(A2)}$] The transition maps
    \[
    \varphi_j \circ \varphi_i^{-1}: \varphi_i(\mathcal{U}_i \cap \mathcal{U}_j) \to \varphi_j(\mathcal{U}_i \cap \mathcal{U}_j)
    \]
    are homeomorphisms for all $i, j$ for which $\mathcal{U}_i \cap \mathcal{U}_j \neq \emptyset$.
\end{itemize}
\end{definition}

It is natural to assume that the topology on $\cM$ is given a priori. In many cases, one requires that each chart $\varphi_i$ be a homeomorphism onto its image. As a first consequence of this definition, we obtain the following structural result:
\begin{proposition} One can give to $\cM$ a topology in a unique way such that each
$\cU_i$ is open, and the $\varphi_i$ are topological isomorphisms.
\end{proposition}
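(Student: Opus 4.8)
The plan is to define the topology explicitly by transporting the topologies of the model spaces $\cE_i$ through the charts, and then to verify in turn that this prescription is a topology, that it has the two required properties, and finally that these properties force it uniquely. Concretely, I would declare a subset $\cW \subset \cM$ to be open precisely when $\varphi_i(\cW \cap \cU_i)$ is open in $\cE_i$ for every $i \in I$; since $\varphi_i(\cU_i)$ is open in $\cE_i$ by $(\cM2)$, this is the same as requiring $\varphi_i(\cW \cap \cU_i)$ to be open in the subspace $\varphi_i(\cU_i)$.

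First I would check that the collection
\[
\cT = \{\cW \subset \cM \mid \varphi_i(\cW \cap \cU_i) \text{ is open in } \cE_i,\ \forall i \in I\}
\]
satisfies the axioms of a topology. Both $\emptyset$ and $\cM$ lie in $\cT$ (the latter because $\varphi_i(\cU_i)$ is open). Since each $\varphi_i$ is a bijection on $\cU_i$, it commutes with arbitrary unions and with finite intersections of the sets $\cW \cap \cU_i$, so stability under unions and finite intersections follows immediately from the corresponding stability of the topologies on the $\cE_i$. This step is purely formal.

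Next I would verify the two structural properties. For openness of $\cU_i$, I would examine $\varphi_j(\cU_i \cap \cU_j)$ for each $j$: this is open in $\cE_j$ because the transition map $\varphi_j \circ \varphi_i^{-1}$ of axiom $(\cM3)$ is a homeomorphism between open subsets of $\cE_i$ and $\cE_j$, whence $\cU_i \in \cT$. For the homeomorphism property, openness of $\varphi_i$ is immediate from the definition of $\cT$ once restricted to the open set $\cU_i$; the delicate direction is continuity of $\varphi_i$, which is where $(\cM3)$ does its real work. Given $V$ open in $\varphi_i(\cU_i)$, I would write, for each $j$,
\[
\varphi_j\big(\varphi_i^{-1}(V) \cap \cU_j\big) = (\varphi_j \circ \varphi_i^{-1})\big(V \cap \varphi_i(\cU_i \cap \cU_j)\big),
\]
and observe that the right-hand side is the image of an open set under the transition homeomorphism, hence open in $\cE_j$; this shows $\varphi_i^{-1}(V) \in \cT$, so $\varphi_i$ is continuous.

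Finally, for uniqueness, suppose $\cT'$ is any topology making every $\cU_i$ open and every $\varphi_i$ a homeomorphism. One inclusion follows by noting that for $\cW \in \cT'$ the set $\varphi_i(\cW \cap \cU_i)$ is open by the homeomorphism property, so $\cW \in \cT$; the reverse inclusion follows by writing $\cW = \bigcup_i (\cW \cap \cU_i)$ and using that each $\cW \cap \cU_i = \varphi_i^{-1}(\varphi_i(\cW \cap \cU_i))$ is $\cT'$-open. I expect the main obstacle to be the continuity direction above: it is the only place where the compatibility axiom $(\cM3)$ is genuinely used, and it relies on the (implicit) fact that $\varphi_i(\cU_i \cap \cU_j)$ is open in $\cE_i$, which must be read off from the statement that the transition maps are homeomorphisms between open subsets.
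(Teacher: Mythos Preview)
Your proof is correct and complete; it follows the standard construction (essentially that of Lang, whom the paper cites for this material). The paper itself states the proposition without proof, so there is no argument to compare against --- your approach is exactly what one would supply here, and your identification of the continuity step as the place where axiom $(\cM3)$ (and in particular the openness of $\varphi_i(\cU_i \cap \cU_j)$) does the real work is on point.
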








\begin{remark}
The condition that $\cM$ is  Hausdorff,  is not necessary.
This condition plays no role in the formal development of manifold.  

However, in practical applications,  $\cM$ is Hausdorff.
\end{remark}
\begin{ex}\label{Ex:smfd}
    Prove that the Special Linear group $SL_n(\R)$ is a smooth submanifold of $\bbR^{n\times n}$.
\end{ex}
\begin{ex}\label{Ex:sing}
Is the set $\{{x,y}\in \bbR^2\, |\, xy=0\}$ a submanifold of $\bbR^2$ ?
\end{ex}

\section{Modeled Manifolds}
\subsection*{Hausdorff Condition and Separation Axioms}
$\star$ In what follows, we impose the Hausdorff condition on all manifolds under consideration. Furthermore, any construction performed subsequently—such as products, tangent bundles, or fibered structures—will be required to yield spaces that remain Hausdorff. This ensures a well-behaved topological framework in which geometric operations preserve separation properties.

\subsection{\(\mathcal{E}\)-modeled manifolds}

In the formulation given above, particularly in condition $\cM2$, no global assumption was imposed on the structure of the topological vector spaces $\mathcal{E}_i$ used as local models. In particular, we did not require that all $\mathcal{E}_i$ be the same, nor that there exist continuous linear isomorphisms between them.

However, one of the most natural and frequently encountered cases in practice is that in which there exists a fixed topological vector space $\mathcal{E}$ such that each $\mathcal{E}_i$ is isomorphic to $\mathcal{E}$, allowing a uniform model for the local structure of $\mathcal{M}$. In such a setting, one can regard $\mathcal{M}$ as being locally modeled on a single space $\mathcal{E}$, which provides a more rigid but also more manageable framework for various constructions.





\vspace{3pt}
This remark leads us to the notion of modeled manifold:
\begin{figure}[h]
\includegraphics[scale=0.5]{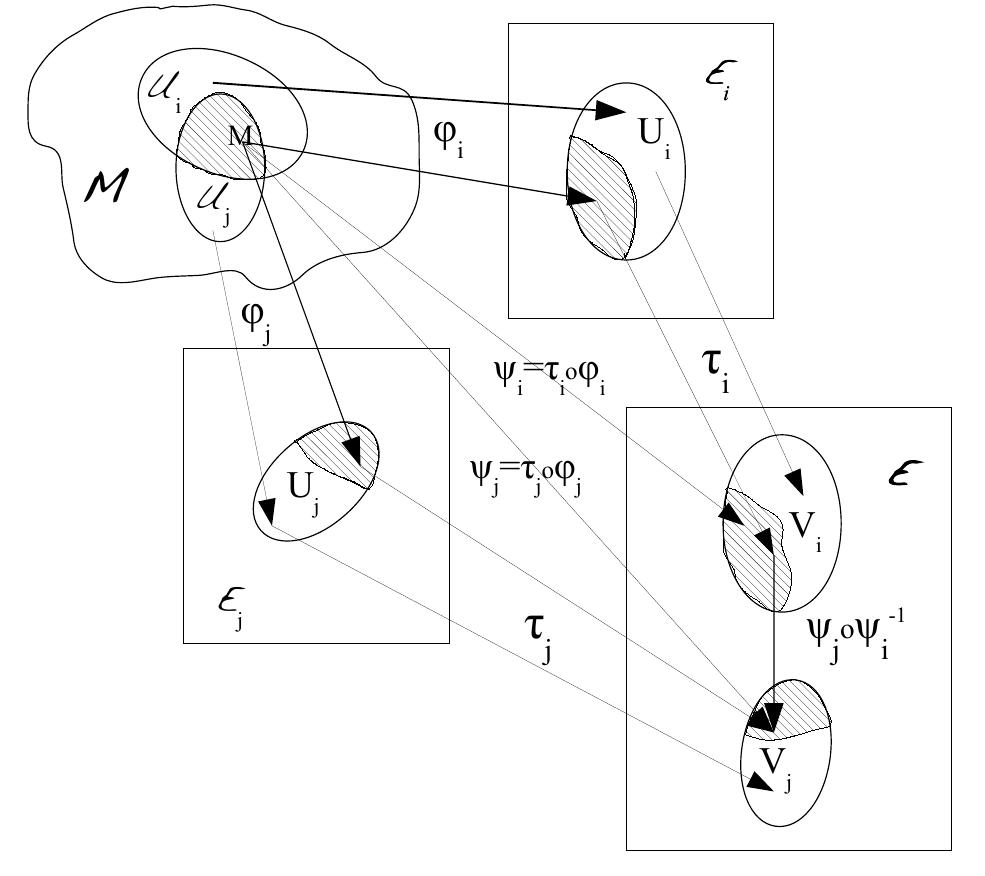}
\caption{Modeled manifold }\label{F:Chart2}
\end{figure}

\begin{definition}[Modeled manifold]\index{Manifold!Modeled manifold}

\noindent
Let \(\mathcal{M}\) be a topological space. An \(\mathcal{E}\)-\textit{atlas} on \(\mathcal{M}\) consists of a collection of charts \(\{(U_i, \varphi_i)\}_{i \in I}\), where:

\begin{enumerate}
    \item \(\{U_i\}_{i \in I}\) is an open cover of \(\mathcal{M}\),
    \item Each \(\varphi_i: U_i \to \mathcal{E}_i\) is a homeomorphism onto its image, where \(\mathcal{E}_i\) is a topological space,
    \item All the spaces \(\mathcal{E}_i\) are homeomorphic to a fixed topological space \(\mathcal{E}\), i.e., there exist homeomorphisms \(h_i: \mathcal{E}_i \to \mathcal{E}\).
\end{enumerate}

If an \(\mathcal{E}\)-atlas is given, we say that \(\mathcal{M}\) is an \(\mathcal{E}\)-\textit{modeled manifold} (or simply an \(\mathcal{E}\)-\textit{manifold}).

\end{definition}

\noindent

If the transition maps (i.e. maps of the type \(\varphi_j \circ \varphi_i^{-1}\)) satisfy additional compatibility conditions (such as differentiability, smoothness, or analyticity), then  the \(\mathcal{E}\)-manifold is endowed with some extra structure.

\vspace{5pt}

In the following, we primarily focus on modeled manifolds whose model space is a vector space that admits a well-defined differentiable structure. This is the case for manifolds modeled on Banach spaces or Hilbert spaces, where a differentiable structure can be naturally defined. However, we are also interested in a generalization of normed spaces, namely locally convex topological vector spaces. 

\, 
\begin{example}
A Hilbert manifold is a manifold modeled on a Hilbert space, which is a complete inner product space. 
Hilbert manifolds generalize finite-dimensional smooth manifolds to infinite dimensions.

\begin{itemize}
    \item {\bf Model Space:} Assume $H$ is a separable Hilbert space. One can take for instance the space of square integrable functions $L^2(\R)$.
    \item {\bf Charts:} A Hilbert manifold $M$ is a topological space equipped with an atlas of charts where $U_a\subseteq M$ is an open set and $\phi_a:U_a\to H$ is a homeomorphism onto an open subset of $H$.
    \item {\bf Transition Maps:} Given a pair of charts $(U_a,\phi_a)$ and $(U_b,\phi_b)$, the transition map 
    \[\phi_b\circ \phi_a^{-1}:\phi_a(U_a\cap U_b)\to \phi_b(U_a\cap U_b) \]
    is a smooth map between open subsets of $H$.
    \item {\bf Local Structure:} At every point $p\in M$, the tangent space $T_pM$ is isomorphic to $H$, the Hilbert space. 
\end{itemize}
\end{example}

\,

\begin{ex}\label{Ex:LoopSpace}
Consider a smooth finite-dimensional manifold $M$. The loop space $LM$ is the space of all smooth maps:

\[\gamma:S^1\to M,\]
where $S^1$ is the unit circle. Is the loop space $LM$ a modeled manifold? 
\end{ex}

\begin{ex}\label{Ex:BanSpace}
Let $M$ and $N$ be finite dimensional smooth manifolds. Let $C^k(M,N)$ be the space of $C^k$-differentiable maps from $M$ to $N$. Prove that  this space can be given the structure of a Banach manifold modeled on a Banach space.
\end{ex}


\subsection{Recollections on Locally Convex  Spaces}

\, 

Let us recall that a {\bf locally convex  space} $X$ is a vector space over $\bbK$, a (sub)field of the complex numbers (it can be $\bbC$ itself or $\bbR$ for instance).

A locally convex space is defined either in terms of convex sets or equivalently in terms of seminorms.
In fact, a topological vector space $X$ is said to be locally convex if it verifies one of the following two equivalent properties. We start with the convex set definition.

\begin{definition}[{\bf Convex sets definition}]
A topological vector space $X$ is said to be locally convex if there exists  a neighborhood basis (that is, a local base) at the origin, consisting of balanced convex sets. 

We elaborate on those two notions of convexity and of balanced sets. 

\begin{enumerate}
\item A subset $C\in  X$ is called \emph{convex} if for all $ x,y\in C$, and $0\leq t\leq 1$ we have 
\[tx+(1-t)y\in C.\] 
\item A convex subset $C\in  X$ is called 
\emph{balanced} if for all $x\in C$ and scalars $s\in \bbK$, the condition $|s|\leq 1$ implies that $ sx\in C$.
\item A convex subset $C\in  X$ is called a \emph{cone} (we assume here that the underlying field is ordered) if for all  $x\in C$ and 
$ t\geq 0$ we have $tx\in C$.
\item A convex subset $C\in  X$ is called  \emph{absorbent}  if for every $ x\in X$ there exists $r>0$ such that 
\[ x\in t\cdot C\] for all $t\in \bbK$, and  $|t|>r$. The set 
$C$ can be scaled out by any "large" value to absorb every point in the space.
\end{enumerate} 
\end{definition}
In any topological vector space, every neighborhood of the origin is absorbent.

\, 

A second possible viewpoint on this notion can be achieved using seminorms. A \emph{seminorm} on $X$ is a function \[p: X \to \bbR\] such that:

\begin{enumerate}
\item {\bf Non-negativity:} $p$ is nonnegative or positive semidefinite i.e. \[p(x)\geq 0,\quad \text{for all}\quad x\in X.\]

\item {\bf Scaling property}: $p$ is positive homogeneous or positive scalable: \[p(sx)=|s|\cdot p(x),\quad \text{for every scalar} \quad s \, \text{and}\, x\in X.\] So, in particular, $p(0)=0$,

\item {\bf Subadditivity}: $ p$ is subadditive and it satisfies the triangle inequality: \[p(x+y)\leq p(x)+p(y), \quad \text{for all}\quad x,y\in X.\]
\end{enumerate}

\begin{definition}{\bf (Seminorms definition)} A topological vector space $X$ over a field $\bbC$ or $\bbR$ is said to be locally convex if there exists a family $\cP$ of \emph{seminorms} on $X$.
Let $\{p_i\}_{i\in I}$ be a family of semi-norms on 
$X$, where $I$ is an index set. Each semi-norm $p_i:X\to \bbR$ satisfies the properties of a semi-norm (non-negativity, absolute homogeneity, and subadditivity).
\end{definition}
\,

\begin{remark}
    Although the definition in terms of a neighborhood base gives a better geometric picture and intuition the definition in terms of seminorms is easier to work with, in practice.

\end{remark}
    
\begin{ex}\label{Ex:seminorm}
Consider the space of continuous functions $C(\bbR)$ on $\bbR$. Verify whether the family of functions $\{p_n\}_{n\in \bbN}$,
given by: \[p_n(f)=\sup_{x\in [-n,n]}|f(x)|\] forms a family of seminorms.  
Is $C(\bbR)$ locally convex? Do the $\{p_n(f)\}$ generate a locally convex topology?
\end{ex}
\subsection{Bridging those two definitions}

\,
 As mentioned earlier, both definitions are equivalent.  We outline a sketch of the proof showing this equivalence. 

\, 

$\star$ The equivalence of those two definitions follows from a construction known as the \emph{Minkowski functional} or Minkowski gauge. The key feature of seminorms which ensures the convexity of their $\varepsilon$-balls is the {\it triangle inequality}.

\vspace{3pt}

For an absorbing set $C$ such that: if $x\in C$ then $ t\cdot x\in C$, whenever $ 0\leq t\leq 1$, let us define the Minkowski functional of $C$ to be \[\mu _{C}(x)=\inf\{r>0 : x \in r\, C\}.\]

\vspace{3pt}

From this definition, it follows that $\mu _{C}$ is a seminorm if $C$ is balanced and convex (it is also absorbent by assumption). 

\, 

Conversely, given a family of seminorms, the sets
\[\left\{x:p_{\alpha _{1}}(x)<\varepsilon _{1},\ldots ,p_{\alpha _{n}}(x)<\varepsilon _{n}\right\}\] form a base of convex absorbent balanced sets.

\vspace{5pt}
Let us mention some interesting properties.
\begin{itemize}
\item If $p$ is positive definite (which states that if $p(x)=0$ then 
 $x=0$) it implies that $p$ is a norm. 

\,

$\star$ While in general seminorms {\it need not} be norms, there is an analogue of this criterion for families of seminorms and separatedness, defined below.\\

\item If $X$ is a vector space and $\cP$ is a family of seminorms on $X$ then a subset $\mathcal{Q} \subset \cP$ is called a base of seminorms for $\cP$ if for all $p\in \cP$ there exists a $q\in \mathcal{Q} $ and a real number $r>0$ such that $p\leq rq$.\\

\item If $X$ is a real vector space and $C$ a convex subspace, if $x\in C$, then $C$ contains the line segment between $x$ and $-x$.\\
 
\item If $X$ is a complex vector space and $C$ a convex subspace,  for any $ x\in C$ convex space, contains the disk with $x$ on its boundary, centered at the origin, in the one-dimensional complex subspace generated by $x$.
\end{itemize}

\noindent
\subsection{Relations to the Banach Space Implicit Function Theorem}
A \textit{locally convex topological vector space} is a topological vector space in which every neighborhood of \(0\) contains an open neighborhood \(U\) of \(0\) such that, for all \(x, y \in U\) and \(0 \leq t \leq 1\),
\[
t\cdot x + (1-t)\cdot y \in U.
\]
This property turns out to be essential in the context of the implicit function theorem, for Banach spaces.

\vspace{0.5cm}

\noindent
The latter result, known as the \textbf{Banach Space Implicit Function Theorem}, can be stated as follows:

\, 

Let \(X\), \(Y\) and \(Z\) be Banach spaces and let \(\mathcal{U}\) be an open subset of \(X \times Y\). Suppose that \(f : \mathcal{U} \to Z\) is a continuously differentiable (\(C^1\)) mapping such that \(f(a, b) = 0\) for some \((a, b) \in \mathcal{U}\) and the partial derivative \(D_y f(a, b)\) is a linear isomorphism from \(Y\) onto \(Z\).

Then, there exists:

\begin{itemize}
    \item an open set \(\mathcal{W} \subset X\) with \(a \in \mathcal{W}\);
    \item an open set \(\mathcal{V} \subset \mathcal{U} \subset X \times Y\) with \((a, b) \in \mathcal{V}\);
    \item a continuously differentiable (\(C^1\)) mapping \(g : \mathcal{W} \to Y\),
\end{itemize}

such that
\[
(x, y) \in \mathcal{V},\, f(x, y) = 0 \iff x \in \mathcal{W},\, y = g(x).
\]





In other words, the implicit equation $f(x,y)=0$ has for $x\in W$ a solution $y=g(x)$ of class $C^1$
such that $(x,y)\in \cV$. 

This solution is unique in an open set $\cW'\subset \cW$.

\medskip

$\star{263C}$ Warning! It is important to note that \emph{not} every locally convex topological vector space admits a differentiable structure in a meaningful way. Nevertheless, a particular class of locally convex topological vector spaces, known as \emph{Fréchet spaces}, plays a fundamental role in various areas of mathematics, such as statistics and the theory of partial differential equations.

\, 

A \emph{Fréchet space} $X$ is defined as a locally convex, metrizable, and complete topological vector space, meaning that every Cauchy sequence in $X$ converges to a point in $X$. 

 For general normed vector space $X,Y$,  the Fr\'echet directional derivative  of a function  $ f:\cU\to Y$, where $ \cU$ is an open subset of $X$, exists at $ x\in \cU$  if there exists a bounded linear operator $A:X\to Y$ such that
\[
 \lim _{\|h\|\to 0}{\frac {\|f(x+h)-f(x)-Ah\|_{Y}}{\|h\|_{X}}}=0.
 \]
The Fréchet derivative in finite-dimensional spaces is the usual derivative.  it is represented in coordinates by the Jacobian matrix.

\section{Exercises}

Ex. \ref{Ex:LoopSpace} The loop space $LM$ is modeled on the Hilbert space $H=L^2(S^1,\bbR^n)$, where $n=dim(M)$. 
Each loop $\gamma$ can be locally approximated by functions in $L^2(S^1,\bbR^n)$.
For a fixed loop $\gamma\in LM$, a chart around $\gamma$ can be constructed using the exponential map on $M$. For a small neighborhood $U$ of $\gamma$, the chart maps $U$ to an open subset of $H$.

\, 

Ex. \ref{Ex:BanSpace} This space can be given the structure of a Banach manifold, modeled on a Banach space of sections of a vector bundle. The model space is the Banach space $C^k(M,\bbR^n)$ where $n=dim(N)$. For a fixed map $f\in C^k(M,N)$, a chart around $f$ can be constructed using the exponential map on $N$. Specifically, for a small neighborhood $U$ of $f$, the chart maps $U$ to an open subset of $C^k(M,\bbR^n)$.

\, 

Ex. \ref{Ex:seminorm}. Yes, each $p_n$ forms a seminorm. The family $\{p_n\}_{n\in \bbN}$ generates a locally convex topology on $C(\bbR)$. In this topology, a sequence of functions $f_k$ converges to $f$ if and only if $p_n(f_k-f)\to 0$, for every $n\in \bbN$.  

\, 

Ex. \ref{Ex:smfd}.
This exercise is slightly more advanced since this statement can be shown using the Regular Value Theorem, which we have not considered in this book. We will use this exercise as a possibility to mention and illustrate this theorem! 

\, 
\begin{enumerate}
    \item \begin{itemize}
    \item A first step, is to identify the space of all $n\times n$ matrices to the Euclidean space $\bbR^{n^2}$. 
\item The determinant function $\det$ is a smooth (infinitely differentiable) function since it is a polynomial in the entries of the matrix.
\item By definition,  the special linear group is a set of matrices of size $n\times n$ where the determinant is equal to 1. In other words, this is the preimage of the value 1, under the determinant function.
\end{itemize}
\item The Regular Value Theorem states that if \[f:M\to N\] is a smooth map between two smooth manifolds and $y\in N$ is a regular value of $f$, then $f^{-1}(y)$ is a smooth submanifolds of $M$. In our case, we have $M=Mat_n(\bbR)=\bbR^{n^2}$, $N=\bbR$ and $f=\det$. One needs to prove t hat 1 is a regular value of $\det$. This can be done by showing that for every matrix $A\in Sl_n(\bbR)$ the derivative $d(\det)_A$ is surjective. 
\item We now compute the derivative of the determinant. The derivative of the determinant function at a matrix $A\in Mat_n(\bbR)$ is given by the following formula: 
\[d(\det)_A:Mat_n(\bbR)\to \bbR,\quad d(\det)_A(H)=tr(adj(A)\cdot H),\]
where $adj$ stands for the classical adjoint of the square matrix $A$ (it is defined as the transpose of the cofactor matrix of $A$) and $tr $is the trace operator. If $A\in Sl_n(\bbR)$ then the formula becomes
\[d(\det)_A(H)=tr(A^{-1}\cdot H),\] since $adj(A)=A^{-1}$ if $A\in Sl_n(\bbR)$.

\item Finally, the last step is to show the surjectivity of the derivative.  In other we need to demonstrate that for any real number $c\in \bbR$, there exists a matrix $H\in Mat_n(\bbR)$ such that:
\[tr(A^{-1}H)=c.\]
An easy example of such matrices is given for $H=cA$. 
Indeed, \[tr(A^{-1}H)=tr(A^{-1}(cA))=c\cdot tr(I_n)=c\cdot n.\]
Therefore, $d(\det)_A$ is surjective for all $A\in Sl_n(\bbR)$.
\item Applying the theorem leads to the conclusion. 
\end{enumerate}
\, 

Ex. \ref{Ex:sing}. The equation $f(x,y)=xy=0$ in the Euclidean plane $\bbR^2$ describes a union of the two coordinate axes:
the $x$-axis (corresponding to $y=0$) and $y$-axis corresponding to ($x=0$). This set is not a submanifold of $\bbR^2$ in the usual sense of differential geometry. 
\begin{enumerate}
    \item Observe that there exists a \emph{singular point} at $(0,0)$. The set 
    \[S=\{x,y\in \bbR^2\, |\, xy=0\}\] consists of two connected components (the $x$-axis and the $y$-axis), which intersect precisely at the origin. Intuitively, a smooth submanifold of $\bbR^2$ must locally resemble a smooth curve (or a single point). Away from the origin, the set $S$ consists of a pair of smooth lines, but the origin, the submanifold condition fails. 
    \item To verify this, we compute the partial derivatives of the function $f(x,y)=xy$:
    \[\frac{\partial f(x,y)}{\partial x}=y\quad \text{and} \quad \frac{\partial f(x,y)}{\partial y}=x.\]
    At $(0,0)$, both derivatives  vanish:
    \[\frac{\partial f(x,y)}{\partial x}(0,0)=0 \quad \frac{\partial f(x,y)}{\partial y}(0,0)=0.\]
    This confirms that (0,0) is indeed a singular point.  
\end{enumerate}
This illustrates an example of a singular algebraic variety, where the smooth parts (the axes away from the origin) are manifolds, but the overall structure is not a manifold due to the singularity at the origin.

\chapter{Differentiability and Gateaux Derivatives}\index{Manifold!Differentiable}

In this chapter, we undertake a comprehensive investigation of the notion of differentiability, within an extended framework. This expands beyond elementary calculus to encompass both differentiable functions and the rich structure of differentiable manifolds. Building upon this foundation, we systematically develop the essential constructions of tangent vectors and tangent spaces, alongside their dual counterparts, cotangent spaces. These concepts not only underpin the analytical machinery of differential geometry but also enable far-reaching applications across mathematics—from topology and dynamical systems—and physics, particularly in the geometric formulation of classical mechanics, general relativity, and gauge field theories.

More precisely, the notion of differentiability serves in mathematics in:  
\begin{enumerate}
    \item Local Linear Approximations
    \begin{itemize}
        \item Differentiability allows functions, curves, or maps to be approximated locally by linear objects (e.g., tangent lines, Jacobian matrices). This is used for instance in optimization, where one considers extrema via critical points. This also used in machine learning. 
    \end{itemize}
    \item Smoothness (and other important properties) of manifolds:

    \begin{itemize}
        \item Differentiability defines the notion of "smoothness" of a manifold but also permits to study  vector fields, differential forms and curvature tensors. 
    \end{itemize}
    \item Global/Local analysis: 
    \begin{itemize}
        \item Differentiability connects local properties (e.g., derivatives) to global phenomena for instance using De Rham Cohomology.
        \end{itemize}
    \item Algebraic structures
    \begin{itemize}
        \item
    Differentiability is the gateway to advanced (and more algebraic) frameworks such as 
Jet Bundles; Lie Groups (studying symmetries of differentiable manifolds); Sheaf Theory (formalizing local-to-global properties in algebraic geometry).
\end{itemize}
\end{enumerate}

\section{Gateaux directional derivation: a definition} Inscribing ourselves in the vein of the previous chapter, the notion of Gateaux's directional derivation appears the most natural to start with. We introduce this notion below. 

\,

Let $\cM$ be a manifold  modeled on a topological vector space $\cE$, and let us assume  that a differentiable structure can be defined on $\cE$. As we have shown $\cE$ to be a Hausdorff (locally convex vector space) one can define a differentiable structure via the \emph{Gateaux directional derivation}.

\, 

\subsubsection{Gateaux derivative}
Suppose that $X$ and $Y$ are locally convex topological vector spaces. Assume $U \subset X$  is open and that we have the map $F: X \to Y$. The Gateaux differential $dF(x;\varphi )$ of $F$ at $x \in U$ in the direction $\varphi$ in $X$ is defined as
\begin{equation}\label{E:Gateaux}
 dF ( x ; \varphi ) = \lim_{t \to 0} \frac{F(x +t\, \varphi) - F ( x )}{ t} = \frac{d}{d t} F( x +t\,\varphi )\vert_{t=0}
 \end{equation}
If this limit exists for all $\varphi$, then $ F$ is said to be \emph{Gateaux differentiable} in $x$.

\, 

The limit  is taken relatively to the topology of $Y$.

Indeed:
\begin{enumerate}
\item if $X$ and $Y$ are {\it real} topological vector spaces, then the limit is taken for {\it real} $t$. 
\item If $X$ and $Y$ are complex topological vector spaces, then the limit above in \eqref{E:Gateaux} is usually taken as $t \to 0 $ in the complex plane, as usually in the definition of complex differentiability.
\item In some cases, a \emph{weak limit} is taken instead of a strong limit, which leads to the notion of a \emph{weak Gateaux derivative.}
\end{enumerate}

\, 

This short introduction to Gateaux's derivative guides us towards the more standard terminology of  differentiable manifolds. 

\subsection{Differentiable manifolds}
We start with a definition on differentiable manifolds.

\begin{definition}[Differentiable manifold]\index{Manifold!Differentiable}
A \(\mathcal{C}^k\)-\textit{differentiable manifold} \(\mathcal{M}\) is a topological manifold where the condition  \((\mathcal{M}3)\) is substituted by a new condition:

\begin{itemize}
    \item[\((\mathcal{M}3')\)] for each pair of indices \(i, j \in I\), the transition map between overlapping coordinate charts,
    \[
    \varphi_j \circ \varphi_i^{-1} : \varphi_i(U_i \cap U_j) \to \varphi_j(U_i \cap U_j),
    \]
    is of class \(\mathcal{C}^{k}\), meaning it is \(k\)-times continuously differentiable. Moreover, for every \(i, j \in I\), the set \(\varphi_i(U_i \cap U_j)\) is open in the model space \(\mathcal{E}\).
\end{itemize}


\end{definition}

Moreover, we have the following compatibility criterion. 
\begin{definition}[Compatible atlas]\index{Manifold!Atlas!Compatible}
Two $\cC^{k}$ atlases on $\cM$, modeled on  $\cE$ are said to be compatible if their union is an another such atlas.
\end{definition}
Remark that this notion of compatibility is in fact an equivalence relation. 

\begin{ex}
Prove the remark above.
\end{ex}

\begin{definition}[Admissible atlas]\index{Manifold!Atlas!Admisible}
Given a manifold $\cM$,
all atlases, modeled on $\cE$, and lying within the same equivalence class are said  to be \emph{admissible} on $\cM$.
\end{definition} 

It is enough to have an admissible atlas to define the structure of a manifold.

\,
\subsection{Differentiable mappings}
\, 

\begin{definition}[Differentiable mappings]~\label{DiffMap}\index{Mapping!Differentiable}
Consider a pair of $\cC^k$-differentiable manifolds, denoted  $\cM$ and $\cM'$.
\begin{itemize}
\item A mapping $f: \cM \to
\cM'$ is said to be differentiable of class $\cC^r,$ where $\, r\leq k$, if for every chart $(\cU_i,\varphi_i)$ of $\cM$ and every chart $(\cV_j,\psi_j)$ of  $\cM'$ such that $f(\cU_i) \subset \cV_j,$ the mapping $\psi_j
\circ f \circ \varphi_i^{-1}$ 
 of $\varphi_i(\cU_i)$ into $\psi_j(\cV_j)$ is
differentiable, of class $\cC^r$.

\item[ ]
\item A \emph{diffeomorphism} \index{Mapping!Diffeomorphism} $f$ of class $\cC^r$ is a bijective mapping $f$ such that $f$ and $f^{-1}$ are continuously $\cC^r$-differentiable.
\item[ ]

\item If the transition maps $\varphi_j\circ\varphi_i^{-1}$ are diffeomorphisms of class $\cC^\infty$ then the manifold $\cM$ is said to be smooth.

\item[ ]
\item A $\cC^k$-differentiable function $f:\cM \to \bbR$ on $\cM$ is a mapping of class $\cC^k$ from 
$\cM$ to $\bbR$. 
\item[ ]
\item A function $f$ is \emph{differentiable} at a point $ m$ on the manifold $\cM$ if, for some coordinate chart $(U,\phi)$ containing $ m$, the composition $f\circ \varphi^{-1}$ is differentiable at the image point $\varphi(m)$. 
\end{itemize}
\end{definition}

\noindent

Notice, that these definitions do not depend on the chart.

\,
\begin{remark}
Since the implicit function theorem does not hold for arbitrary locally convex space, this definition has a limited utility if we do not introduce   more information on the topology.
\end{remark}

\section{Manifolds modeled on a normed vector spaces}

Interesting properties are obtained
in the case where the model space $\cE=\fB$ is a Banach vector space a natural  differentiability structure is provided by the Banach derivation~\eqref{E:Bdif}. In this case the implicit function theorem allows to derive interesting properties.

\begin{definition}[Banach manifold]\index{Manifold!Banach}
A $\cC^k$-Banach manifold $\cB$ is a manifold such that condition $\cM 3$ is  replaced by: 

\vspace{3pt}
$\bullet$ $\cM 3''.$ The map
\[
\varphi_j\circ\varphi_i^{-1}:\varphi_i(\mathcal{U}_i\cap \mathcal{U}_j) \to \varphi_j(\mathcal{U}_i\cap
\mathcal{U}_j),
\]
is a $\cC^{k}$-isomorphism on $\fB$ for each pair of indices $i,j$, and for any
$i,j\in I$
$\varphi_i(\cU_i\cap \cU_j)$ is open in $\cB$.
\end {definition}

\begin{definition}[Riemann manifold]\index{Manifold!Riemann}
A Riemannian manifold is a differentiable manifold modeled on a real vector space  $\cE$, the topology of which is given by a scalar product $\langle\cdot|\cdot\rangle$.
\end{definition}
Let us recall what a scalar product is. A \emph{scalar product} on the vector space $\cE$ is a bilinear symmetric form 
\begin{align*}
\langle\cdot,\cdot\rangle:\,  \cE\times \cE \to &  \bbR,  \\
 (x,y)\mapsto&  \langle x, y \rangle  \\
\end{align*}
 where:
\begin{enumerate}
\item  The bi-linearity property is satisfied. For any scalars $\alpha, \beta$ and for any $x,x_1,x_2,y,y_1,y_2\in \cE$ the following holds: 
\begin{align*}
\langle\alpha x_{1} +\beta x_{2}, y\rangle=& \alpha\langle x_{1} ,y \rangle+\beta \langle x_{2},y\rangle\\
\langle x, \alpha y_{1} +\beta y_{2}\rangle=&\alpha\langle x ,y_{1}\rangle +\beta \langle x,y_{2}\rangle.\\
\end{align*}

\item The scalar product is symmetric. For any $x,y \in \cE$: \[\langle x,y\rangle=\langle y,x\rangle.\] 
\item The scalar product is positive definite. For any $x,y\in \cE$: \[\langle x, x \rangle \geq 0, \quad  \langle x, x\rangle =0 \,   \iff \, x=0.\]
\end{enumerate}

\, 

A Riemann manifold is said to be a Banach manifold for the norm: \[\Vert x\Vert=\langle x,x\rangle^{\frac{1}{2}},\, x\in \mathcal{E}.\]

In the case where the model space is a Hilbert space, we speak of \emph{ Hilbert manifold}. Hence a Riemann manifold is a real Hilbert manifold.

\,
\subsection{Real Manifolds,  Coordinates, Charts}
\ 

A particularly interesting class of Riemannian manifolds is the manifold endowed with the model space \( \mathcal{E} = \mathbb{R}^{n} \). The usual topology and differentiability structure on \( \mathbb{R}^{n} \) induce a differentiable structure on $ \mathcal{M} $, making it a real $ n $-dimensional differentiable manifold.

In the following, by an $ n$-dimensional (real) manifold $ \mathcal{M} $, we mean a Hausdorff topological space in which every point has a neighborhood homeomorphic to $\mathbb{R}^n$.

\begin{definition}[$n$-dimensional differentiable manifold]\index{Manifold! Finite manifold!Differentiable structure}

An $n$-dimensional real manifold of class $C^k$ is a manifold modeled on $\bbR^{n}$ and such that the condition \(\cM 3\) is replaced by the following property. For each pair of indices $i,j\in I$, the map:
\[
\varphi_j\circ\varphi_i^{-1}:\varphi_i(\cU_i\cap \cU_j) \to \varphi_j(\cU_i\cap
\cU_j),
\]
is a $\cC^{k}$-isomorphism on $\bbR^{n}$. 

Furthermore, for any pairs of indices
$i,j\in I$ the image of the intersection $\cU_i\cap \cU_j$ given by
$\varphi_i(\cU_i\cap \cU_j)$, under the coordinate map $\varphi_i$, is an open subset of $\bbR^{n}$.
\end {definition}
\begin{figure}[h]
\includegraphics[scale=0.6]{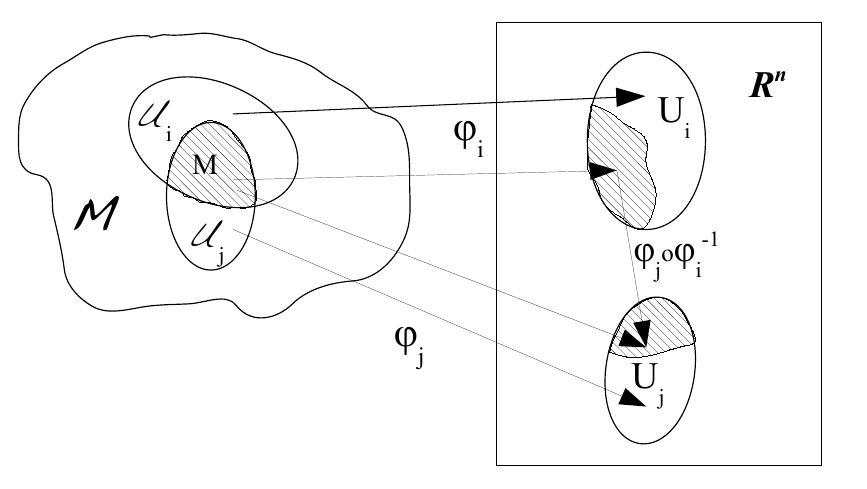} 
\caption{ $n$-dimensional real manifold}\label{F:Chart3}
\end{figure}
A natural way to describe the structure of an 
$n$-dimensional differentiable manifold is through local coordinate systems. These coordinate systems are given by charts, which map open subsets of the manifold to open subsets of $\R^n$, allowing us to analyze the manifold locally as if it were a Euclidean space.
\begin{definition}[Local coordinate system]\index{Manifold! Finite manifold!Local coordinate system}
 Let $(\cU,\varphi)$ be a  chart at the point {\scshape m} $\in \cM$, such that the image $\varphi$({\scshape m})$=x=(x^1,\dots,x^n)\in\bbR^{n}$.
Given a basis $\{e_{1},\dots,e_{n}\}$ of the Euclidean space $\mathbb{R}^{n}$, the coordinates $(x^1,\dots,x^n)$ of the image $\varphi$({\scshape m}) $\in\bbR^{n}$ of {\scshape m}\, $\in U\subset\cM$ are called the \emph{coordinates of {\scshape m} in the chart }$(\cU,\varphi)$. The chart $(\cU,\varphi)$ is also called the local coordinate system.
\end{definition}

\,
Local coordinate systems provide a way to describe differentiable manifolds in terms of open subsets $\bbR^{n}$, allowing us to define smooth functions and analyze local geometric properties. 
In particular, this enables the construction of tangent spaces, which capture the local linear structure of the manifold at each point. The dual spaces to these, known as cotangent spaces, naturally arise when considering differential forms and gradients of functions, playing a fundamental role in differential geometry and analysis on manifolds.
\section{Tangent and cotagent spaces}\index{Tangent vector} 
To grasp the essence of the tangent and cotangent spaces, we begin with the most intuitive objects on a manifold: curves. A curve provides a way to move infinitesimally along the manifold, and by examining how functions change along such curves, we arrive at the concept of directional derivatives. These, in turn, will guide us toward a precise definition of tangent vectors. Finally, this will naturally lead us to the construction of a tangent space, the fundamental linear structure underlying the local geometry of the manifold.
\subsection{Curves on a manifold $\&$ Directional derivatives}

\begin{definition}[Curve]\index{Manifold!Curve}
A curve $\gamma:[a,b]\to \cM$ on a Manifold $\cM$ is a $C^p$-map, where $p\geq 1$ mapping an interval $\mathcal{I}=[a,b]\subset \bbR$ into $\cM$. 
\end{definition}

The curve is said to be smooth if it is a $C^\infty$ map.

\, 
\begin{definition}[Tangent vector  to a curve]\index{Tangent vector!Tangent vector to a curve }
Let $\gamma:t\mapsto \gamma(t)$ be a curve of class $C^1$ such that:
\begin{align*}
    &\gamma(t_0)={\scriptstyle M}_0 \in \cM\\
    &x_0:=\varphi({\scriptstyle M}_0) \in\bbR^n.
\end{align*}
Then, the tangent vector $u_0$ at $\gamma$ in ${\scriptstyle M}_0$ is defined by
\begin{equation}
u_0 =\left. \frac{d \gamma}{dt}\right|_{t_0}.
\end{equation}
 \end{definition}

\begin{definition}[Directional  derivative]\label{D:dirder}\index{Derivative!Directional}

Let \( \mathcal{A}({\scriptstyle M}) \) be the family of \( C^1 \)-functions defined on a neighborhood of \( {\scriptstyle M} \in \mathcal{M} \). Assume \( f \in \mathcal{A}({\scriptstyle M}) \).  

Consider a curve \( \gamma \) of class \( C^1 \),  
\[
\gamma: t \mapsto \gamma(t),
\]
such that at some fixed parameter value \( t_0 \), we have  
\[
\gamma(t_0) = {\scriptstyle M}_0.
\]  
Then, the \emph{directional derivative} of \( f \) in the direction of the curve \( \gamma \) at \( t_0 \) is defined as

\begin{equation}\label{E:dirder}
D_{u_{0}}f(t_{0})=\left. \frac{d\,f\circ \gamma}{dt}\right|_{t_0}, \quad u_{0}=\left.\frac{d\gamma}{dt}\right|_{t_{0}},\quad \text{where}\quad f  \in \cA({\scriptstyle M}_0).
\end{equation}
 \end{definition}
 
 More  generally:
\begin{definition}[Derivation]\index{Derivative!Derivation}
A derivation at a point ${\scriptstyle M}$ on $\cM$ is a linear functional \[D:\cA({\scriptstyle M}) \to \bbR,\] where the Leibniz rule holds:
\begin{align}\label{E:Leibniz}\index{Derivative!Leibniz rule}
D(\alpha f + \beta g)({\scriptstyle M})&=\alpha Df ({\scriptstyle M})+ \beta Dg({\scriptstyle M}), \quad \alpha,\, \beta \in \bbR,\\
D(fg)({\scriptstyle M})&= [(Df)g +fDg]({\scriptstyle M}), \quad {\scriptstyle M}\in \cM.
\end{align} 
\end{definition}

\section{Tangent vector - Tangent space}\index{Tangent space}
\ 

One can associate, via the formula~\eqref{E:dirder}, at each point ${\scriptstyle M}\in \cM$ a tangent vector $u$.

\begin{definition}[Tangent vector]\index{Tangent vector!Differentiable manifold}
A \emph{tangent vector} \( X_{\scriptstyle M} \) at a point \( {\scriptstyle M} \in \mathcal{M} \) on a differentiable manifold \( \mathcal{M} \) is a linear map  
\[
X_{\scriptstyle M} : \mathcal{A}({\scriptstyle M}) \to \mathbb{R},
\]  
where \( \mathcal{A}({\scriptstyle M}) \) denotes the space of functions defined and differentiable in some neighborhood of \( {\scriptstyle M} \in \mathcal{M} \).  

This map satisfies the Leibniz rule: for any \( f, g \in \mathcal{A}({\scriptstyle M}) \),  

\begin{equation}\label{E:Leibniz}
\begin{aligned}
X_M(\alpha f + \beta g) &= \alpha X_M (f) + \beta X_M (g), \quad \alpha, \beta \in \mathbb{R},\\
X_M (f g) &=  f(M) X_M (g) + g(M) X_M (f).
\end{aligned} 
\end{equation}
\end{definition}

\,

Notice that two differentiable functions $f_{1}$ and $f_{2}$ which coincide on a neighborhood of  ${\scriptstyle M}$ have the same  tangent vector. Therefore, a tangent vector at ${\scriptstyle M}$ is the same for all functions belonging to   the class of differential functions, coinciding on a neighborhood of ${\scriptstyle M}$.

\, 

The class of functions which coincide on a neighborhood of  ${\scriptstyle M}$ is called a ``germ'' of $f$. A germ forms an algebra under the sum and the pointwise product. A tangent vector (also called contravariant vector) is a derivation on the algebra of germs of differentiable functions at ${\scriptstyle M}$.

\, 
\begin{definition}[Tangent space]
The space of all tangent vectors at \( {\scriptstyle M} \in \mathcal{M} \), equipped with the addition and scalar multiplication defined by  
\begin{equation}
(\alpha X_{\scriptstyle M} + \beta Y_{\scriptstyle M})(f) = \alpha X_{\scriptstyle M} (f) + \beta Y_{\scriptstyle M} (f),
\quad \alpha, \beta \in \mathbb{R}, \quad X_{\scriptstyle M}, Y_{\scriptstyle M} \in \mathcal{T}_{\scriptstyle M}(\mathcal{M}),
\end{equation}
forms a real vector space, called the \emph{tangent space} at \( {\scriptstyle M} \), and denoted by \( \mathcal{T}_{\scriptstyle M}(\mathcal{M}) \).  

\end{definition}

Let $\gamma\in\mathcal{A}({\scriptstyle M})$ be a curve such that $\gamma(t_0)={\scriptstyle M}_0$ and let $u_{0}=\frac{d\gamma}{dt} \vert_{t_{0}}$ be the tangent vector at ${\scriptstyle M}_0$ along $\gamma$. 
A tangent vector $X_{{\scriptstyle M}_0}$ at ${\scriptstyle M}_0\in \cM$  in the direction $u_{0}$ can be written

\begin{equation}\label{E:tdveccov}
X_{{\scriptstyle M}_0}(f)= D_{u_{0}}f=\left.\frac{d\,f\circ \gamma}{dt}\right|_{t_0},  \quad \forall \,f\in \cA({\scriptstyle M}),\quad u_{0=}\left.\frac{d\gamma}{dt}\right|_{t_{0}}.
\end{equation}

\,

 In the case of an $n$-dimensional real manifold $\cM$ \index{Tangent space! finite dimensional}. Let $(\cU,\varphi)$ be a chart at the point ${\scriptstyle M}\in \cM$ and let  $x=(x^1,\dots,x^n) \in \varphi(\cU)\subset \bbR^n$ be a local coordinate system. A tangent vector at $X_{{\scriptstyle M}_0}$ is the set $\{ X_{{\scriptstyle M}_0}(\varphi^{i}) \}_{i=1}^{n}$, called the local coordinates of the tangent vector, where:
 \[ 
 \varphi^i= \pi^i\circ \varphi \quad \text{ and } \quad \pi^i(x^1,\dots,x^n)=x^i ,
  \] 
and where $\pi^i$ is the projection operateur in $\bbR^{n}$. 
  \begin{itemize}
\item For each $i, \, \partial/\partial x^i|_x ,\ x=\varphi({\scriptstyle M})$ satisfies the equation~\eqref{E:Leibniz}

\item[]
\item The
$\{\partial/\partial x^i|_x\}_{i=1}^n$ forms a basis, called the \emph{natural basis}  of $\cT_x( \cM)$. (Note that this  basis associated to the local coordinate system is not an  orthogonal basis).
\end{itemize}

Moreover, for all $C^1$-functions  on a neighborhood of ${\scriptstyle M}\in \cM$,
\begin{equation}\label{E:tgvecnd}
X_{\scriptstyle M}(f) =\left. \sum_{i=1}^n X^i_{\scriptstyle M}\,\frac{\partial f\circ \varphi^{-1}}{\partial x^i}\right\vert_x \ x={\varphi({\scriptstyle M})},\quad X^i_{\scriptstyle M}= X_{\scriptstyle M}(\varphi^i) .
 \end{equation}

The equation~\eqref{E:tgvecnd} can be written in the abbreviated form:

\begin{equation}\label{E:tgvecnd2}
 X_{\scriptstyle M}= \left. \sum_{i=1}^n X_{\scriptstyle M}^i\,\frac{\partial}{\partial x^i}\right|_x  x={\varphi({\scriptstyle M})},\quad X^i_{\scriptstyle M}= X_{\scriptstyle M}(\varphi^i).
 \end{equation}
 
 \begin{remark}
 Any other basis (or frame\index{Frame}) can be obtained from the local coordinate basis. Let $\{e_{i}\}_{i=1}^{n}$ be a basis of $T_{\scriptstyle M}( \cM)$ with: 
  \[
  e_{k}= \sum_{i=1}^n \Phi_{k}^{i}\frac{\partial}{\partial x^i}= \sum_{i=1}^n\Phi_{k}^{i} \partial_{i},
  \]
 where $\Phi_{k}^{i}$ is the matrix corresponding to an invertible linear transformation. 
 We have
  \[X_{\scriptstyle M}= \sum_{k=1}^{n}\chi^{k}_{\scriptstyle M} e_{k}, \text{ with } \chi^{k}_{\scriptstyle M}=\Phi^{k}_{i}X^{i}_{\scriptstyle M}, \]
 the $X^{k}_v$ being the component of $X_{\scriptstyle M}$ in the basis $\{e_{i}\}_{i=1}^{n}$.
 \end{remark}

Tangent and cotangent spaces are dual vector spaces, linked by a canonical pairing. We discuss now the notion of cotangent vector spaces. 
\,

 
\subsection{Cotangent vector, Cotangent space}\index{Cotangent space}
\ 

\,
\subsubsection{\bf Cotangent vectors $\&$ Differential 1-forms}
\begin{definition}[Cotangent vector space]
Let  $\cM$ be a manifold. The dual space $\cT^\star_{\scriptstyle M}(\cM)$ to the tangent
vector space $\cT_{\scriptstyle M}(\cM)$ in ${\scriptstyle M}\in \cM$  is the space of linear forms on  $\cT_{\scriptstyle M}(\cM)$. It is a vector
space  called the \emph{cotangent vector space} to  $\cM$ at ${\scriptstyle M}$. 

\, 

The elements of
$\cT^\star_{\scriptstyle M}(\cM)$ are called cotangent vectors, or covariant vectors, or covectors,\index{Covector} or differential 1-forms. 
\end{definition}

\noindent
Let $\omega_{\scriptstyle M} \in \cT^\star_{\scriptstyle M}(\cM)$ be a differential 1-form and consider $X_{\scriptstyle M} \in \cT_{\scriptstyle M}(\cM)$. Then:
\begin{equation}
\left\{ \begin{aligned} &\omega_{\scriptstyle M}(X_{\scriptstyle M} ) \in \bbR,\\ 
 &X_{\scriptstyle M}(\omega_{\scriptstyle M})= \omega_{\scriptstyle M}(X_{\scriptstyle M}).
\end{aligned}\right. 
\end{equation}

\subsubsection{\bf Finite dimensional real manifold}\index{Cotangent space!Finite dimensional}
In the case of  an $n$-dimensional real manifold,
a useful canonical isomorphism between a space and its dual can be chosen as follows.

\, 

Let
$(e_1,e_2,\dots,e_n)$ be a basis in  $\cT_{\scriptstyle M}(\cM)$.  We may construct its dual
$(\varepsilon^1,\varepsilon^2,\dots,\varepsilon^n)$ by
\begin{equation}
\varepsilon^i(X_{\scriptstyle M})= X^i_{\scriptstyle M}, 
\end{equation}
where $X^i_{\scriptstyle M}$ is the component of $X_{\scriptstyle M}$ in the basis $\{e_j\}$ and 
\begin{equation}
\varepsilon^i(e_j)= \delta^i_j.
\end{equation}

If we have chosen the natural basis $\{e_{i}=\partial/\partial x^i\}_{i=1}^{n}$ then the dual basis is denoted by  $\{\epsilon^{i}=dx_{i}\}_{i=1}^{n}$ with
\begin{equation}
dx^i\left(\frac{\partial}{\partial x^j}\right) =\frac{\partial}{\partial x^j}(dx^i)=\delta^i_j;
\end{equation}
a tangent vector $X_{\scriptstyle M}\in \cT_{\scriptstyle M}$ takes the form
\[
X_{\scriptstyle M} = \sum_{i=1}^n X^{i}_{\scriptstyle M}\frac{\partial}{\partial x^{i}},\quad X^{i}_{\scriptstyle M}=dx_{i}(X_{\scriptstyle M});
\]
whereas a cotangent vector $\omega_{\scriptstyle M} \in \cT_{\scriptstyle M}^\star(\cM)$ in the dual basis is given by
\[
\omega_{\scriptstyle M} = \sum_{i=1}^n \omega_{\scriptstyle M}{}_i\, dx^{i},\qquad  \omega_{\scriptstyle M}{}_i=\omega\left(\frac{\partial}{\partial x^{i}}\right)=\frac{\partial \omega}{\partial x^{i}}.
\]

For a real manifold of finite dimension, we have the following equality:
\begin{equation}
\cT^{\star \star}_{\scriptstyle M}(\cM)= \cT_{\scriptstyle M}(\cM).
\end{equation}

\subsection{Implications of a chart change}

Let $ \cM$ a $n-$dimensional real manifold. A chart $(\cU,\varphi)$ with a local coordinate system $\varphi:{\scriptstyle M}\to \varphi({\scriptstyle M})=x=(x^1,\dots,x^n)\in \bbR^n$  at $M \in \cM$ being given, the local coordinate basis on the tangent space is given by   \[\{e_{i}=\partial/\partial x^{i}\}_{i=1}^{n}\]  and its dual basis is \[\{\epsilon^{i}=dx^{i}\}_{i=1}^{n}.\]  

\, 

Had we chosen a different chart, say  $(\cU',\varphi')$ with $\varphi': {\scriptstyle M}\to  x'=({x'}^1,\dots,{x'}^n)$ at ${\scriptstyle M}\in \cM$,  a local coordinate basis can be given by $ \{e'_{i}=\partial/\partial {x'}^{1}\}_{i=1}^{n}$ as well as its dual basis $ \{{\epsilon'}^{i}=d{x'}^{i}\}_{i=1}^{n}$. We can certainly express one local coordinate basis in terms of the other on the open set $\varphi(\cU)\cap\varphi'(\cU')$.

\, 

Let us break this statement down. Under a given change of a coordinates, the local coordinates of a tangent vector are described as follows:  
\[\begin{aligned}
 X_{\scriptstyle M}= \sum_{i=1}^n X_{\scriptstyle M}^i \frac{\partial}{\partial x^{i}}=& \sum_{i=1}^n {X'}_{\scriptstyle M}^i \frac{\partial}{\partial {x'}^{i}},\\
X_{\scriptstyle M}^i=\varphi^i({\scriptstyle M}), \quad  {X'}_{\scriptstyle M}^i={\varphi'}^i({\scriptstyle M}),\quad 
&\frac{\partial}{\partial x^i}= \sum_{j=1}^n\left. \frac{\partial {x'}^j}{\partial {x}^i}\right|_{\varphi({\scriptstyle M})} \frac{\partial}{\partial {x'}^{i}},
\end{aligned}
 \]
So, the vector transformation law is given by: 
\begin{equation}
{X'}_{\scriptstyle M}^i= \sum_{j=1}^n\left. \frac{\partial {x'}^i}{\partial {x}^j}\right|_{\varphi({\scriptstyle M})} X_{\scriptstyle M}^j.
\end{equation}

\, 

Similarly, for the covector we have
\[ 
\omega_{\scriptstyle M}= \sum_{i=1}^n \omega_idx^{i}= \sum_{i=1}^n \omega'_i d{x'}^{i},\qquad  \omega_i= \omega\left(\frac{\partial}{\partial x^{i}}\right)\ \omega'_i= \omega\left(\frac{\partial}{\partial {x'}^{i}}\right)\]
\begin{equation}
\begin{aligned}\label{E:chartchange}
dx^{i}&= \sum_{j=1}^n\left. \frac{\partial {x}^i}{\partial {x'}^j}\right|_{\varphi({\scriptstyle M})} d{x'}^{j}\\
\omega'_{i}&= \sum_{j=1}^n\left. \frac{\partial {x}^j}{\partial {x'}^i}\right|_{\varphi({\scriptstyle M}} \omega_{j}.
\end{aligned}
\end{equation}

Having explored the implications of a changing the chart for vectors and covectors, a logical next step is to establish a rigorous definition of a function's differential.

\subsection{Differential of a function}\index{Differential}
\begin{definition}
Let $f$ be a differentiable function.
The differential $df|_{\scriptstyle M}$ of $f$ on $\cM$ in the neighborhood of a point ${\scriptstyle M}$ is defined by the 1-form 
\begin{equation}\label{difffunc}
df|_{\scriptstyle M}(X_{\scriptstyle M})=X_{\scriptstyle M}(f).
\end{equation}
\end{definition}

In the natural basis
\[
df|_{\scriptstyle M}(X_{\scriptstyle M})= \left. \sum_i X_{\scriptstyle M}^{i}\frac{\partial f}{\partial x^{i}} \right\vert _{\scriptstyle M}.
\]

More generally, let $\{e_{i}\}_{i=1}^{n}$ be a basis in $\cT_{\scriptstyle M}(\cM)$ and let $\{\epsilon_{i}\}_{i=1}^{n}$ be its dual basis. The value of
 $df|_{\scriptstyle M}\in \cT^{\star}_{_{M}}(\cM)$ at $X_{\scriptstyle M}\in\cT_{\scriptstyle M}(\cM)$ is given by 
 
 \[\begin{aligned}   
df|_{\scriptstyle M}(X_{\scriptstyle M})=&X_{\scriptstyle M}(f)=\\
\sum_i X^{i}e_{i}(f)=& \sum_i e_{i}(f)\epsilon^{i}(X_{\scriptstyle M}).
\end{aligned}\]

Hence,
\begin{equation} \label{E:difcoord}
df|_{\scriptstyle M}= \sum_i e_{i}(f)\epsilon^{i}.
\end{equation}

\,
Having previously established the fundamental geometric notions of vectors, differential forms (linear maps on vectors), along with differentials of functions (a type of 1-form), we naturally arrive to the notion of tensors, which encode multilinear maps between these objects.

\section{Tensor at a point of a manifold}\label{S:tensor}\index{Tensor}

For simplicity, in this section, we assume that the manifold is finite dimensional.

\,

\subsection{Multilinear maps $\&$ Tensors of type $(r,s)$}\index{Manifold!Tensor at a point}
\begin{definition}
Given a point ${\scriptstyle M}\in \cM$, a tensor $T^{(r,s)}_{\scriptscriptstyle M}$ of type $(r,s)$ is a multi-linear form on the space
$\underbrace{ \cT_{\scriptstyle M}\times \dots \times \cT_{\scriptstyle M}}_{r}\times \underbrace{\cT_{\scriptstyle M}^\star \times\dots \times \cT_{\scriptstyle M}^\star }_{s}= \cT_{\scriptstyle M}^{\otimes r}\times  {\cT_{\scriptstyle M}^\star}^{\otimes s},$

defined by

\begin{align*}
T_{\scriptscriptstyle M}:\underbrace{ \cT_{\scriptstyle M}\times \dots \times \cT_{\scriptstyle M} }_{r}\times \underbrace{\cT_{\scriptstyle M}^\star \times\dots \times \cT_{\scriptstyle M}^\star }_{s}\to & \bbR.\\
(X_{1},\quad, \dots,\quad \, X_{r}, \omega_{1},\quad\cdots,\quad\omega_{s})\mapsto & T_{\scriptscriptstyle M}(X_{1},\dots, X_{r},\omega_{1},\dots,\omega_{s}).\\
\end{align*}

Furthermore, this object adheres to the following properties. 
 \[
\begin{aligned} 
\hspace{.2cm}T_{\scriptscriptstyle M}(X_{1},\dots,&\alpha X_{j}+ \beta Y_{j},\dots, X_{r},\omega_{1},\dots,\omega_{s})\\&=
\alpha T_{\scriptscriptstyle M}(X_{1},\dots, X_{j},\dots, X_{r},\omega_{1},\dots,\omega_{s})+\beta T_{\scriptscriptstyle M}(X_{1},\dots, Y_{j},\dots, X_{r},\omega_{1},\dots,\omega_{s}),\\
\hspace{.2cm}T(X_{1},\dots,&, X_{r},\omega_{1}\dots,\alpha\omega_{k}+\beta \eta_{k},\dots,\omega_{s})\\&=
\alpha T_{\scriptscriptstyle M}(X_{1},\dots, X_{r},\omega_{1},\dots,\omega_{k},\dots,\omega_{s})+\beta T_{\scriptscriptstyle M}(X_{1},\dots,  X_{r},\omega_{1},\dots,\eta_{k},\dots,\omega_{s}),\\
\end{aligned}\]
for scalars $\alpha,\beta \in \bbR$ and all $1\leq j\leq r$ as well as $1\leq k\leq s$.

Moreover, if $T_{\scriptscriptstyle M}$ and $S_{\scriptscriptstyle M}$ are two tensor of the same type $(r,s)$, then
\begin{equation}
\begin{aligned}
(\alpha T_{\scriptscriptstyle M}+ &\beta S_{\scriptscriptstyle M})(X_{1},\dots, X_{r},\omega_{1},\dots,\omega_{s})\\
&=\alpha T_{\scriptscriptstyle M}(X_{1},\dots, X_{r},\omega_{1},\dots,\omega_{s})+ \beta S_{\scriptscriptstyle M}(X_{1},\dots, X_{r},\omega_{1},\dots,\omega_{s} ), \end{aligned}\end{equation}
and tensors of a given type $(r,s)$ span a linear vector space of dimension $n^{r+s}$.
\end{definition}

\,

The following denominations are often used in the literature. 
\begin{itemize}
\item A (covariant) tensor of order $r$ at $\scriptstyle M$ is a tensor on $\cT_{\scriptstyle M}(\cM)^{\otimes r}$. It is of type $(r,0)$\\
\item  A contravariant tensor of order $s$ at $\scriptstyle M$, is a tensor on ${\cT^\star _{\scriptstyle M}(\cM)}^{\otimes s}$. It is of type $(0,s)$.
\end{itemize}

\,
\subsection{Tensor products}
\begin{definition}[Tensor product]\index{Tensor!Tensor product}
Let $T_{\scriptscriptstyle M}$ be a tensor at the point $\scriptstyle M\in \cM$ of type $(r,s)$ and let $S_{\scriptscriptstyle M}$ be a tensor of type $(p,q)$. Then, the tensor $T_{\scriptscriptstyle M}\otimes S_{\scriptscriptstyle M}$ defined by
\begin{equation}\begin{aligned}
T_{\scriptscriptstyle M}&\otimes S_{\scriptscriptstyle M}(X_{1},\dots,X_{r},\dots,X_{r+p},\omega_{1},\dots,\omega_{s},\dots,\omega_{s+q})\\
&=T_{\scriptscriptstyle M}(X_{1},\dots,X_{r},\omega_{1},\dots,\omega_{s})\times  S_{\scriptscriptstyle M}(X_{r+1},\dots,X_{r+p},\dots,\omega_{s+1},\dots,\omega_{s+q}).
\end{aligned}\end{equation}
is called the tensorial product of $T_{\scriptscriptstyle M}$ and $S_{\scriptscriptstyle M}$.
\end{definition}

Admitting this definition, an $(r,s)$-tensor $T$ can be seen as the tensorial product of $r$ covariant vectors and $s$ tangent vectors at $\scriptstyle M \in \cM$ as we can see: 
\[
T(X_{1},\dots,X_{r},\omega_{1},\dots,\omega_{s})= T_{(1)}\otimes\dots\otimes T_{(r)}\otimes T^{(1)}\otimes\dots\otimes T^{(s)}(X_{1},\dots,X_{r},\omega_{1},\dots,\omega_{s}),\]
where the $T_{(i)}$ are covectors and the $T^{(k)}$ are contravectors. 

BY abuse of notation, we have omitted the index $\scriptscriptstyle M$ to have a more convenient formula. However, it is important to remember that the definition is local.

\,

To conclude, the space of tensors of type $(r,s)$ can be identified with the tensorial product space:	
\[ \cT_{\scriptstyle M}^{\star}(\cM)^{\otimes r} \otimes T_{\scriptstyle M}(\cM)^{\otimes s }.\]	

\,


Let us choose a basis of $ T_{_{M}}^{\star}(\cM)^{\otimes r} \otimes T_{_{M}}(\cM)^{\otimes s }$ given by the $n^{(r+s)}$ vectors:

\begin{equation}
 \mathbf{e}^{i_{1}\dots i_{r}}_{\phantom{i_{1}\dots i_{r}}j_{1}\dots j_{s}}= \epsilon^{i_{1}}\otimes\dots\otimes\epsilon^{i_{r}}\otimes e_{j_{1}}\otimes\dots\otimes e_{j_{n}}\end{equation}  
where $\{\epsilon^{i}\}_{i=1}^{n}$ is the dual basis of  $\{e_{j}\}_{j=1}^{s}$.

\vspace{3pt}

If we work in the framework of local coordinate basis the associate basis of the $(r,s)$-tensor space is
\[
\mathbf{e}^{i_{1}\dots i_{r}}_{\phantom{i_{1}\dots i_{r}}j_{1}\dots j_{s}}=dx^{i_{1}}\otimes\dots\otimes dx^{i_{r}}\otimes \partial_{j_{1}}\otimes\dots\otimes \partial_{j_{n}}=\bigotimes_{k=1}^{r}dx^{i_{k}} \bigotimes_{\ell=1}^{s}\partial_{j_{\ell}}, \quad 1\leq i_{_{k}}, j_{_{\ell}}\leq n,
\]
 then the  $(r,s)$-tensor $T$ is given by:

\begin{equation}
T=\sum_{i_{k}=1\atop 1\leq k\leq r}\sum_{j_{\ell}=1\atop 1\leq\ell\leq s}T_{i_{1}\dots i_{r}}^{\phantom{i_{1}\dots i_{r}}j_{1}\dots j_{s}}dx^{i_{1}}\otimes\dots\otimes dx^{i_{r}}\otimes \partial_{j_{1}}\otimes\dots\otimes \partial_{j_{n}}.
\end{equation}

To enhance the clarity, it is advantageous to adopt the Einstein summation convention, where repeated upper and lower indices are implicitly summed over. In the following formula, we illustrate this convention.

\begin{equation}
T=T_{i_{1}\dots i_{r}}^{\phantom{i_{1}\dots i_{r}}j_{1}\dots j_{s}}dx^{i_{1}}\otimes\dots\otimes dx^{i_{r}}\otimes \partial_{j_{1}}\otimes\dots\otimes \partial_{j_{n}},
\end{equation}

where the components are given by,
\begin{equation}
T_{i_{1}\dots i_{r}}^{\phantom{i_{1}\dots i_{r}}j_{1}\dots j_{s}}=\partial_{i_{1}}\otimes\dots\otimes \partial_{i_{r}}\otimes dx^{j_{1}}\otimes\dots\otimes dx^{j_{n}}(T)= \bigotimes_{k=1}^{r}\partial_{i_{k}} \bigotimes_{\ell=1}^{s}dx^{j_{\ell}}(T),
\end{equation}

Under a change of coordinates, an $(r,s)$-tensor transforms via 
$r$ applications of the Jacobian (for covariant indices) and 
$s$ applications of its inverse (for contravariant indices):


\begin{equation}
T_{i'_{1}\dots i'_{r}}^{\phantom{i'_{1}\dots i'_{r}}j'_{1}\dots j'_{s}}=\prod_{k=1}^{r}\frac{\partial x^{i_{k}}}{\partial x^{i'_{k}}} \prod_{\ell=1}^{s}\frac{\partial x^{j'_{\ell}}}{\partial x^{j_{\ell}}}T_{i_{1}\dots i_{r}}^{\phantom{i_{1}\dots i_{r}}j_{1}\dots j_{s}},
\end{equation}

We leverage this framework to study  symmetries of tensors. The study of tensor symmetries within this framework reveals fundamental insights into their invariant properties and multilinear algebraic behaviour.

\subsection{Symmetry properties of tensors}\index{Tensor!Symmetry properties}

Let us first consider a (covariant) tensor of order $r$. Let $S_{r} $ be the group of permutations of the $r$ integers $\{1,2,\dots,r\}$. By definition, it acts on the tangent space at $\scriptstyle M \in \cM$ in the following way:
\[ (\sigma T)(X_{1},\dots,X_{r})= T(X_{\sigma (1)},\dots,X_{\sigma (r)}), \quad \sigma \in S_{r},\ X_{k}\in T_{_{M}}(\mathcal{ M}),\]
or in local coordinates
\[   (\sigma T)_{i_{1}\dots i_{r}}= T_{\sigma(i_{1})\dots \sigma (i_{r})}, \]
so that $T$ has the symmetry defined by $\sigma$.
\begin{definition} 

\ 

$\bullet$   If $\sigma T=T$  then the tensor $T$ is said to be symmetric.

\vspace{3pt}
$\bullet$  If $\sigma T=sign(\sigma)\,T$, with $sign(\sigma)=\pm1$ (depending on  whether the permutation is even or odd)  the tensor $T$ is said to be antisymmetric.
\end{definition}

One can define a symmetrization operator $S$ as well as an antisymmetrization operator $A$ on the (covariant) tensor $T$. This is done below: 

\begin{equation}\begin{aligned}
ST&=\frac{1}{r!}\sum\limits_{\sigma\in S_{r}} \sigma\, T:\,  \text{ a completely symmetric tensor.}\\\\
AT&= \frac{1}{r!}\sum\limits_{\sigma\in S_{r}} sign(\sigma)\,\sigma \,T:\,  \text{ a completely antisymmetric tensor,}
\end{aligned}\end{equation}

which in local coordinates is defined by 

\begin{equation}(AT)_{i_{1}\dots i_{r}}=\frac{1}{r!}\epsilon_{i_{1}\dots i_{r}}^{k_{1}\dots k_{r}}T_{k_{1}\dots k_{r}}\end{equation} where we have used the Einstein summation rule and 
\[ 
\epsilon_{i_{1}\dots i_{r}}^{k_{1}\dots k_{r}}=\begin{cases}
\phantom{+}0& \text{if } ( k_{1}\dots k_{r}) \text{ is not a permutation of } (i_{1}\dots i_{r})\\
+1& \text{if } ( k_{1}\dots k_{r}) \text{ is an even permutation of }(i_{1}\dots i_{r})\\
-1& \text{if } ( k_{1}\dots k_{r}) \text{ is an odd permutation of }(i_{1}\dots i_{r})\\
\end{cases}
\] 
is the Kronecker tensor.

\,

The symmetry properties of a contravariant tensor is defined similarly. 

\begin{remark}
The permutation $\sigma$ permutes only the labels of objects of the same nature. For a $(r,s)$-tensor, the symmetry and antisymmetry properties are defined only for indices of the same nature. 
It is usual to use square brackets $[\cdot]$ for the antisymmetry and   round brackets ($\cdot$) for symmetry. For example, the $(4,3)$-tensor $T_{[abc]d}^{\phantom{[abc]d}(ef)g}$ is antisymmetric in $a,b,$ and symmetric in $e,f$.
\end{remark}

\chapter{Fiber bundles}\index{Bundle}

In the language of modern geometry, fiber bundles serve as a unifying framework, extending classical notions such as vector bundles to a broader setting. They naturally arise in various mathematical and physical contexts, providing a structural backbone for spaces with local product structures.

\,
A principal application appears in Gauge Theory, where principal bundles furnish a natural setting for describing gauge fields in physics, including the Yang-Mills theories. Here, the connection on a bundle encodes the dynamics of fundamental interactions, and curvature expresses field strength in a geometrically intrinsic way.

\, 
In topology, fiber bundles play a fundamental role in the theory of characteristic classes, such as the Chern classes, which assign global topological invariants to vector bundles. These invariants capture deep global properties of manifolds, serving as indispensable tools in the study of topology, geometry, and mathematical physics.

\, 
\section{Definitions and examples}

The notion of bundle have been introduced to generalize topological product.

\begin{definition}[Fiber bundle]\index{Bundle!Fiber bundle}
A fiber bundle is a triple $(\cB,\cM,\pi)$ consisting of two topological spaces
$\cB$ and $\cM$ and a continuous surjective map
\[
\pi:\cB \to \cM,
\]
where $\pi$ is called the projection of the total space $\cB$ onto the base space $\cM$.
\end{definition}

\vspace{3pt}
The topological space $\cF_{\scriptstyle M}= \pi^{-1}({\scriptstyle M}), \, {\scriptstyle M}\in \cM$
 is called the fibre at ${\scriptstyle M}\in \cM$.

\

\begin{example}
The simplest example of bundle is the product bundle defined by $(E_{1}\times E_{2},E_{1},\pi) $ where $E_{i}$ is isomorphic to $\bbR$ and with $\pi(x,y)=x$, for all $x \in E_{1}$ and $y\in E_{2}$. This is illustrated by the following diagram.

\begin{center}
    \begin{tikzpicture}[scale=0.7]
    \draw[thick,->] (0,0) -- (5,0) node[below right] {$E_1$};
    \draw[thick,->] (0,0) -- (0,4) node[above left] {$E_2$};
    \node at (1.3,1.5) { $E_1 \times E_2$};
    \filldraw (3,2) circle (2pt);
    \node[above right] at (3,2) { $(x,y)$};
    \draw[dashed,->] (3,2) -- (3,0.3);
    \filldraw (3,0) circle (2pt);
    \node[below] at (3,0) {$(x,0)$};
    \node at (3.5,1) { $\pi$};
\end{tikzpicture}
\end{center}

So, in this example the total space is given by the topological product $\cB= E_1 \times E_2$ and the base space is $\cM=E_1 $. The projection map is given by $\pi(x,y)=x$, where we project $E_1 \times E_2$ onto $ E_1$. Naturally, we could have chosen to project on $E_2$. 
\end{example}

\,

The need to generalize topological products can be seen on the following examples.

a) The cylinder is obtained by taking the topological product $S^{1}\times \mathcal{I}$, where $\mathcal{I}$ is  a segment. The fiber bundle is defined by the triple $(S^{1}\times \mathcal{I},S^{1},\pi)$, where here we have chosen to define the projection map as follows $\pi:S^{1}\times \mathcal{I} \to S^1$ .

\begin{center}
\begin{tikzpicture} 
     \draw[thick] (-9,0) rectangle (-4,2.1);
         \draw[ultra thick,blue, ->] (-9,0) -- (-9,2.1);
    \draw[ultra thick,red, ->] (-4,0) -- (-4,2.1);
   \draw[ultra thick,black, ->] (-9,0) -- (-4,0);
    \draw[ultra thick, black, ->] (-9,2.1) -- (-4,2.1);
     \node at (-9,-0.2) { $B$}; 
       \node at (-4,-0.2) { $B'$}; 
  \node at (-9,2.4) { $A$};
   \node at (-4,2.4) { $A'$}; 

  \draw[dashed] (2,0) arc[start angle=0,end angle=180,x radius=2cm, y radius=0.8cm];

    \draw[thick] (2,0) arc[start angle=0,end angle=-180,x radius=2cm, y radius=0.8cm];

    \draw[thick] (0,2) ellipse (2cm and 0.8cm);   
      \draw[ultra thick,blue,->] (-2,0) -- (-2,2);
     \draw[ultra thick,red,->] (-2.09,0) -- (-2.09,2);
    \draw[thick] (2,0) -- (2,2.1);
    \node at (0,-1.4) {\Large $S^1$};   
      \node at (2.4,1) {\Large $\mathcal{I}$}; 
 \node at (-2.0,-0.2) { $B\, B'$}; 
  \node at (-2,2.5 ) { $A\, A'$}; 
\end{tikzpicture}
\end{center}

\,

b) The M\"obius band obtained by twisting a sheet and then gluing the opposite edges forms only a local topological product and is {\it no longer a topological product}. It can be done locally: for an open subset $U\subset S^1$, the topological product $U \times \cI$ describes a segment of the  M\"obius band, but does not take under account  the twisting operation.

\begin{figure}[h]
\includegraphics[scale=0.6]{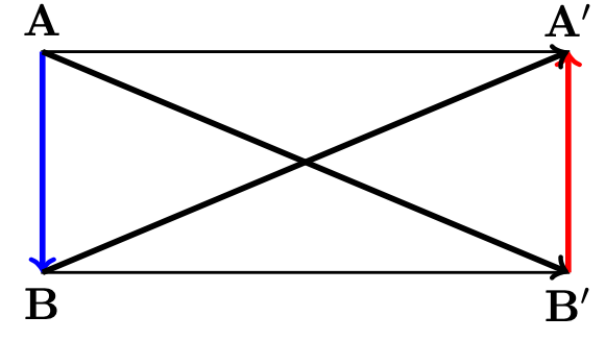}
  \hspace{0.3cm}\includegraphics[scale=0.25]{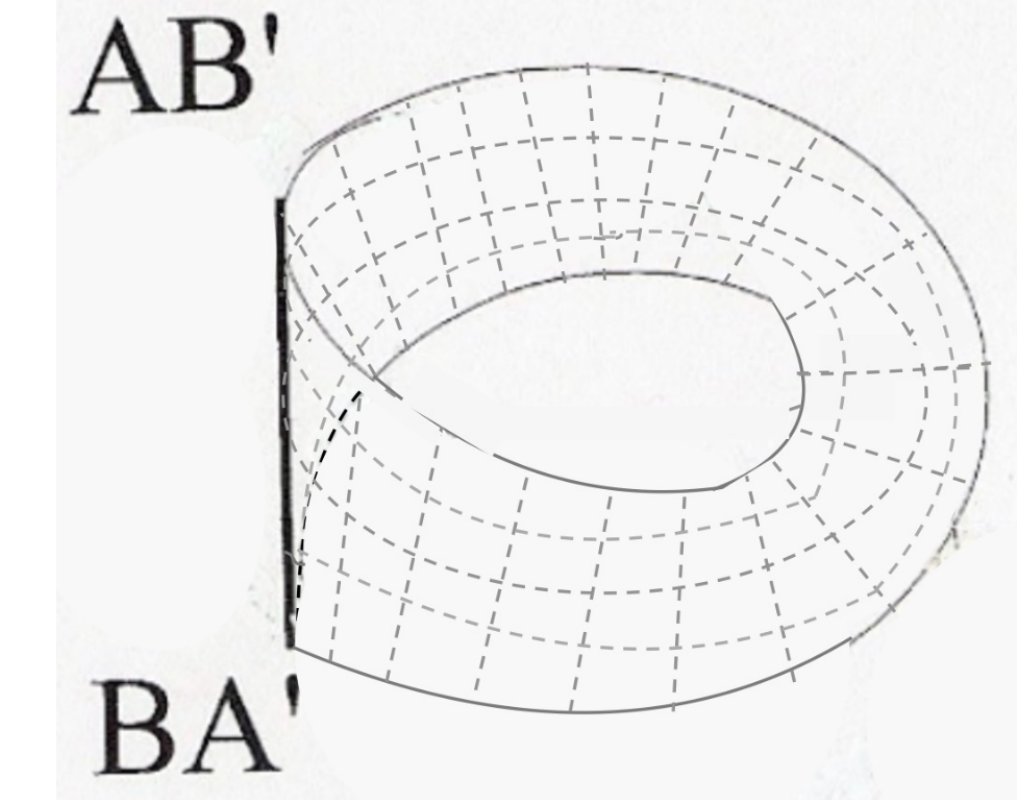}
 \end{figure}

\,

\subsection{Properties}
In the following we restrict ourselves to the case  where the topological space $\cF_{\scriptstyle M}= \pi^{-1}({\scriptstyle M}), \, {\scriptstyle M}\in \cM$ are  homeomorphic to a space $F$, called typical fibre.

\begin{definition}[Trivial bundle]\index{Bundle!Trivial bundle}
A trivial bundle $\cB$ is a fiber bundle homeomorphic to a product bundle $\cM\times \cF$ and
$ \pi $ is just the projection from the product space to $\cM$.
\end{definition}

\begin{definition}[Vector bundle]\index{Bundle!Vector bundle}	
	\noindent If $\cF_M$ is a vector space, $(\cB,\cM,\pi)$ is  called vector bundle.
\end{definition}

\begin{definition}[G-bundle]\index{Bundle!G-bundle}
A fiber bundle $(\cB,\cM,\pi, G)$ is  made of:
\begin{enumerate}
	\item a bundle $(\cB,\cM,\pi)$
together with a typical fiber $\cF$, 

\vspace{3pt}
	\item a topological group $G$ of homeomorphisms of $\cF$ onto itself (called the structural group)
	
	\vspace{3pt}
	\item a covering by open sets $\{U_j\}_{j\in J \subseteq \bbN}$ where  $U_j \in\cM$  such that:

\vspace{3pt}
\begin{enumerate}
\item Locally, the bundle is a trivial bundle.

\vspace{3pt}
\item Let ${\scriptstyle M}\in U_i\cap U_j$, the homeomorphism $ {\psi_{\scriptscriptstyle M}}_i\circ {\psi_{\scriptscriptstyle M}}_{j}^{-1}: \cF \to\cF$, where
${\psi_{\scriptscriptstyle M}}_{i}$ denote a homeomorphism from $\cF_x$ onto $\cF$, is an element of the structural group $G$ for all $i,j \in J$.

\vspace{3pt}
\item The induced mapping $\gamma_{ij}: U_i\cap U_j \to G$ by $\gamma_{ij}(x)=  {\psi_{\scriptscriptstyle M}}_i\circ {\psi_{\scriptscriptstyle M}}_{j}^{-1}$ is continuous.
\end{enumerate}
\end{enumerate}
\end{definition}

\begin{definition}[Principal fiber bundle]\index{Bundle!Principal fiber bundle}
A $G$-bundle $(\cB,\cM,\pi, G)$ in which the typical fiber $\cF$ and the structural groupe $G$ are isomorphic by left translation~\footnote{Let $G$ be a group the action \[L_{g}:G\to G \text{ by } L_{g}(h)=gh\] is called left translation.} is called a principal fiber bundle.
\end{definition}

\begin{definition}[Vector bundle]\index{Bundle!Vector bundle}	
	\noindent If $\cF$ is a vector space, $(\cB,\cM,\pi)$ is  called vector bundle.	
\end{definition}
A vector bundle is a $G$-bundle with $G$ the linear group of $\cF$.

\begin{definition}[Cross-section of a bundle] \index{Bundle!Cross-section}
A cross-section of the bundle $(\cB,\cM,\pi,G)$ is a mapping $\sigma: \cB  \to
\cM$ such that $\sigma \circ \pi = I_\cM$ the identity in $\cM$.
\end{definition}

\begin{theorem}
 A principal fiber bundle $(\cB,\cM,\pi, G)$ is trivial if and only if it is a continuous cross-section.
\end{theorem}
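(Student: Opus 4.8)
The plan is to prove both implications of the equivalence, reading the statement as: a principal bundle is trivial if and only if it \emph{admits} a global continuous cross-section, where (correcting an evident transposition in the stated definition) a cross-section is a continuous map $\sigma:\cM\to\cB$ with $\pi\circ\sigma=\mathrm{Id}_\cM$. The forward direction is immediate: if $\cB$ is trivial, fix a homeomorphism $\Phi:\cM\times G\to\cB$ commuting with the projections, and set $\sigma(m)=\Phi(m,e)$, where $e\in G$ is the identity. Then $\sigma$ is continuous, as the composition of the continuous inclusion $m\mapsto(m,e)$ with $\Phi$, and satisfies $\pi\circ\sigma=\mathrm{Id}_\cM$, so it is a global section.

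The substantive direction is the converse, and its engine is the right action of $G$ on $\cB$ available because the bundle is principal. In each local trivialization $\psi_i$ over $U_i$ a point of the fiber is recorded by an element of $G$, and right multiplication $(m,g)\cdot h=(m,gh)$ is well defined globally precisely because it commutes with the left-translation transition functions $\gamma_{ij}$: if $g=\gamma_{ij}(m)g'$, then $gh=\gamma_{ij}(m)g'h$, so the two local representatives transform consistently. I would first establish that this action is fiberwise free and transitive, which is what makes the construction below a bijection.

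Given a continuous global section $\sigma$, I would then define
\[
\Phi:\cM\times G\to\cB,\qquad \Phi(m,g)=\sigma(m)\cdot g,
\]
and verify that it is the desired trivialization. Compatibility with the projections is clear, since the action preserves fibers and $\pi(\sigma(m))=m$. Bijectivity follows fiber by fiber: on $\pi^{-1}(m)$ the map $g\mapsto\sigma(m)\cdot g$ is a bijection onto the fiber by freeness and transitivity. Continuity of $\Phi$ follows from continuity of $\sigma$ together with continuity of the action.

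The main obstacle is the continuity of $\Phi^{-1}$, which I would check using local triviality. Over each $U_i$ the section reads $\sigma(m)=\psi_i^{-1}(m,s_i(m))$ for a continuous map $s_i:U_i\to G$, so in the $i$-th chart $\Phi(m,g)$ corresponds to $(m,s_i(m)g)$, whose inverse $(m,h)\mapsto(m,s_i(m)^{-1}h)$ is continuous because inversion and multiplication in the topological group $G$ are continuous. Since the sets $U_i\times G$ cover $\cM\times G$ and $\Phi^{-1}$ is continuous on each, it is continuous globally; hence $\Phi$ is a homeomorphism exhibiting $\cB\cong\cM\times G$, and the bundle is trivial.
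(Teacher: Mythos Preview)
Your proof is correct and follows essentially the same approach as the paper: build the trivialization from the section via the group action on fibers, and conversely take $\sigma(m)=\Phi(m,e)$ when the bundle is already a product. You are in fact more careful than the paper, which asserts the map $p\mapsto(x,g_0)$ is a homeomorphism without checking continuity of the inverse; your use of local trivializations to verify this, together with your observation that the \emph{right} action is the one commuting with the left-translation transition maps, fills a gap the paper leaves implicit.
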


\begin{proof}
First assume that $\cB$ has a cross-section $\sigma: \cM \to \cB$. Then
\[
\pi \sigma(x) =x \text{ for } x\in \cB. 
\]
Given $p\in \cF$, there exists a unique $g_0$ such that $p= L_{g_0}=g_0\sigma(x)$. 
Then, 
\[
 \phi_\sigma : \cB \times G \text{ by } p\mapsto (x,g_0)
 \]
 is a homomorphism which preserves the group structure of the fibers
\[ 
\phi_\sigma(L_{g'} p) =L_{g'}\phi(p) \quad \forall g'\in G, \forall p\in \cB.
\]
In particular, $\phi_\sigma(f(x)=(x,e)$, where $e$ is the identity of $G$.
This shows that the existence of a continuous cross-section implies the triviality.

Conversely, if the principal bundle is trivial i.e. $\cB= \cM \times G$,  then \[f:\cM \to \cM \times G\] defined by $x\mapsto (x,k(x))$ (where $k : X \to G$ is some continuous mapping) forms a cross-section of $\cB$.
\end{proof}

\subsection{Tangent bundles and frame bundles} The notion of tangent spaces can be upgraded to the structure of a tangent bundle. We describe this in the next section below. 
\subsection{\bf Tangent bundle}
\begin{definition}[Tangent  bundle] \index{Bundle!Tangent bundle}
A tangent bundle $(\cT(\cM),\cM,\pi,G)$ is a bundle with fibers being the tangent spaces. 
\end{definition}

In the case of a $n$-dimensional manifold, the tangent-bundle  $(\cT(\cM), \cM, \pi,G)$ can be identified to the natural bundle defined by
\begin{itemize}
	\item a fiber at ${\scriptstyle M}\in \cM$, $\cF_{{\scriptstyle M}}= \cT_{\scriptstyle M}(\cM)$ and typical fiber $\bbR^{n}$,
	\item a projection $\pi:({\scriptstyle M},X_{{\scriptstyle M}}) \mapsto {\scriptstyle M}$, 
		\item the structural group $G=GL(n,\bbR)$ of linear automorphisms on
$\bbR^{n}$.
\end{itemize}

\subsection{Frame bundle}
Let $\cM$ a $n$-dimensional manifold. A frame $\phi_{\scriptscriptstyle M}$ associated with the tangent bundle $\cT_{\scriptstyle M}(\cM)$ of $\cM$ is a set of $n$ linearly independent vectors $\{f_{1},\dots,f_{n}\}$ which can be expressed as a linear combination of a particular basis  $\{e_{1},\dots,e_{n}\}$ of  $\cT_{\scriptstyle M}(\cM)$, such that 
\[f_{i}=a_{i}^{j}e_{j},\quad \text{for}\quad a \in GL(n,\bbR)\quad \text{and}\quad i\in \{1,\cdots, n\}. \]
This implies that there exists a bijection between the set of all frames in  $\cT_{\scriptstyle M}(\cM)$ and the group $GL(n,\bbR)$.

\begin{definition}[Frame bundle]
Let us consider the collection \[\Phi(\cM)=\{({\scriptstyle M},\phi_{\scriptscriptstyle M}),\, {\scriptstyle M}\in \cM\}\] and $\cM$ is equipped with a differentiable structure. Then, the four-tuple 

$(\Phi(\cM),\cM,\pi, GL(n,\bbR))$ 
with typical fiber $GL(n,\bbR)$ and structural group $GL(n,\bbR)$ is called the frame bundle on $\cM$.
\end{definition}

\, 

Furthermore, a frame bundle associated with a vector bundle 
forms  a principal $GL(n,\bbK)$-bundle, where $\bbK$ is a field of characteristic 0 such as $\bbR$ or $\bbC$. The general linear group acts freely and transitively on the frames via basis changes. 

\, 

The topology on the fiber bundle associated to a vector bundle $E$
 is constructed using local trivializations of 
$E$. Each trivialization induces a bijection between the fiber over 
an open set $U_i$ and $U_i\times GL(n,\bbK)$, with the final topology ensuring compatibility across overlapping regions.
The fibers of the frame bundle are $GL(n,\bbK)$-torsors. A \emph{torsor} (or  principal homogeneous space) for a Lie group $G$ is a homogeneous space for $G$ in which the stabilizer subgroup of every point is trivial. A principal homogeneous space for a group $G$ is a non-empty set on which $G$ acts freely and transitively. 

\, 
\begin{ex}
Prove that when $\cM$ is equipped with a Riemannian metric the  structure group reduces to the orthogonal group 
$O(n)$.  
\end{ex}

\begin{ex}
Prove that when $\cM$ is a $2n$-dimensional symplectic manifold then it has a natural 
$Sp(2n,\bbR)$-structure.  
\end{ex}

We remark the following fact, relating the manifold structure and the frame bundle. 

A natural manifold structure can be given if we  notice that the open sets of the typical fiber  $GL(n,\mathbb{R})$ are in bijection with the open sets of $\bbR^{n^{2}}$, and that the structural group of diffeomorphisms  of the typical fiber onto itself is simply $GL(n,\bbR)$.

\section{Vector fields, Tangent bundles, Lie algebras}
Given a differentiable manifold 
$\cM$, we consider at each point ${\scriptstyle M}$ of $\cM$ its corresponding tangent space $\cT_{\scriptstyle M}(\cM)$. A vector field 
$X$  assigns to every  point in ${\scriptstyle M}\in \cM$ a tangent vector $X_{\scriptstyle M}\in \cT_\cM$ in a smooth manner. This means that if $f$ is a smooth function on $
\cM$, the map 
$p\mapsto X_{\scriptstyle M}(f)$ must also be smooth.
 
In more concrete words, given a subset of the Euclidean space $\bbR^n$, a vector field is represented by a vector-valued function. 

\, 

More fundamentally, a smooth vector field $X$ is a linear map  $X:C^{\infty}(\cM)\to C^{\infty}(\cM)$ where $C^{\infty}(\cM)$ is the algebra of smooth functions and where $X$ satisfies the Leibnitz rule. This property reveals the deep algebraic nature of vector fields: they form a Lie algebra under the Lie bracket, defined as
\[[X,Y](f)=X(Y(f))-Y(X(f)),\] with $f\in C^{\infty}(\cM)$ and $X,Y$ are smooth vector fields on $\cM$.

\, 

To make a link between the previous section on tangent bundle, let us highlight that a vector field on $\cM$ can be defined in terms of a section of the tangent bundle. This is stated in a more precise way in the following definition.

\,

\begin{definition}[Vector field]\index{Vector field}
A vector field $X$ on a manifold $\cM$ is a cross-section of the tangent bundle
$\cT(\cM)$.
\end{definition}
Using this approach, we can say that a vector field $X$ associates to each ${\scriptstyle M} \in \cM$ a tangent vector $ X_{\scriptstyle M} \in T_{\scriptstyle M}(\cM)$ by the mapping $X: {\scriptstyle M} \mapsto ({\scriptstyle M}, X_{\scriptstyle M})$.

A vector field $X \in \cT(\cM)$ act on differentiable function $f$ on $\cM$ by
\begin{equation}
(Xf)({\scriptstyle M}) =X_{\scriptstyle M} (f), 
 \end{equation}
or in local coordinates,
\begin{equation}~\label{E:tgvecfield}
X=\sum_{i=1}^n X^i\,\frac{\partial}{\partial x^i},  \quad X^{i}=X(\varphi^{i})
\end{equation}
where $X^i$ are functions such that in a chart $(\cU,\varphi)$ in the neighborhood of $M$ by $ X_{\scriptstyle M}^{i}=(X(\varphi))({\scriptstyle M})$, is called the component of
$X$ with respect to the local coordinates $x^i=\varphi^{i}({\scriptstyle M})$. 

\begin{definition}[$\cC^{r}$-vector field]
By definition, a vector field on a $\cC^{k}$-manifold $\cM$ is  $\cC^{r}$-differentiable if the mapping $\cM \to T(\cM)$ is $\cC^{r}$-differentiable, where $r\leq k-1.$
\end{definition}

Going back to out first definition, we can refine it for $\cC^{k}$-differentiable functions of a vector field $X$ on a $\cC^{k}$-manifold $\cM$, can be understood as a derivation on the algebra $\cC^{k}(\cM)$ of function of class $\cC^{k}$ on $\cM$:
\[X: \cC^{k}(\cM)\to \cC^{k}(\cM),\]
\[X(f)({\scriptstyle M})=(Xf)({\scriptstyle M}) \text{ denoted } X(f)=Xf. \]

\subsection{Vector field Lie algebra}\index{Lie algebra}
Let us introduce the space $\fT(\cM)$ of all $\cC^{\infty}$ vectors fields on $\cM$, that is the space of all  $ X(f)$ which are differentiable for all
$\cC^{\infty}$ functions $f$ on $\cM$ . 
Under the addition and the multiplication operation:
\[(X+Y)(f)=X(f)+Y(f),\quad X,Y \in \fT(\cM),\, f \in \cC^{\infty}(\cM), \] 
\[gX(f)=g\big(X(f)\big),  X \in \fT(\cM),\, f,g  \in \cC^{\infty}(\cM) \]
this forms a module on the ring \footnote{A ring is a set $\fR$ with two internal  laws, addition and multiplication. For the addition $\fR$ is an abelian group, the multiplication is associative and distributive with respect to addition.
\[ (xy)z=x(yz),\quad x(y+z)=xy+xz,\quad (y+z)x=yx+zx,\quad \forall x,y,z\in\fT.\]

A module $\fT$ over the ring $\fR$ is an abelian group together with an external operation, called scalar multiplication such that
\[ \alpha(x+y)=\alpha x+\alpha y, \quad (\alpha+\beta)x=\alpha x+\beta x,\quad  (\alpha\beta)x= \alpha (\beta x ),  \quad x,y\in\fT,\,\alpha,\beta \in\fR.
\] } $C^{\infty}(\cM)$, but cannot be an algebra for the product of vector fields. 

The product $XY$ of two vector fields defined by $(XY)(f)=X(Y(f))$ does not satisfy the Leibnitz rule. Indeed, 
\[\begin{aligned}
(XY)(fg)&=X(Y(fg))=X\big(fY(g)+gY(f)\big)\\&=X(f)Y(g)+ fX(Y(g))+X(g)Y(f)+gX(Y(f))\\&\not=f(XY(g))+g(XY(f))=g(X(Y(f))+f(X(Y(g)).
\end{aligned}\]
Therefore $XY$ does not form a vector field.

However, notice that the Lie bracket \index{Lie algebra!Lie bracket} $[\cdot,\cdot]$ defined by
   \begin{equation}
[X,Y]f= X(Y(f))-Y(X(f)),\quad X,Y\in \fT,
 \end{equation}
does form a vector field. The multiplication, defined by the lie bracket is :

\begin{itemize}
\item distributive with respect to the addition,
\item anti-commutative,
\item not associative but satisfies the Jacobi identity: \index{Lie algebra!Jacobi's identity}
 \begin{equation}
[[X,Y],Z]+[[Y,Z],X]+[[Z,X],Y]=0,\quad X,Y,Z\in \fT,
 \end{equation}
 \end{itemize}
 
 \begin{lemma}
  The set $\fT(\cM)$ is a Lie algebra~\footnote{ A Lie algebra is a module  with bracket as internal product.} for the Lie bracket.
\end{lemma}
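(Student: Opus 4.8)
The plan is to verify the three defining axioms of a Lie algebra for the operation $[\cdot,\cdot]$ on $\fT(\cM)$, after first confirming that the bracket is a well-defined \emph{internal} operation. The crucial closure property---that $[X,Y]$ again lies in $\fT(\cM)$---has in effect already been secured in the computation immediately preceding the statement, which shows that, although the naive composition $XY$ fails the Leibniz rule, the antisymmetrized combination $[X,Y]f = X(Y(f)) - Y(X(f))$ does satisfy it and is therefore a derivation, hence a vector field. To see in addition that $[X,Y]$ is smooth, I would pass to a local chart $(\cU,\varphi)$ and write $X=\sum_i X^i\,\partial_i$ and $Y=\sum_j Y^j\,\partial_j$ with $X^i,Y^j\in C^{\infty}(\cM)$. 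Using equality of mixed partial derivatives, the second-order terms cancel and a direct computation yields
\[
[X,Y] = \sum_{i,j}\left(X^i\,\frac{\partial Y^j}{\partial x^i} - Y^i\,\frac{\partial X^j}{\partial x^i}\right)\frac{\partial}{\partial x^j},
\]
whose coefficients are smooth, so that indeed $[X,Y]\in\fT(\cM)$.

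With closure in hand, bilinearity and anti-commutativity are immediate from the definition. Bilinearity over $\bbR$ holds because each vector field acts as an $\bbR$-linear operator on $C^{\infty}(\cM)$: for $\alpha,\beta\in\bbR$ one reads off $[\alpha X_1+\beta X_2,\,Y]=\alpha[X_1,Y]+\beta[X_2,Y]$ directly from the formula for the bracket, and symmetrically in the second argument. Anti-commutativity is even more transparent, since interchanging the two entries in $X(Y(f))-Y(X(f))$ reverses the overall sign, giving $[X,Y]=-[Y,X]$ and in particular $[X,X]=0$.

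The one step involving any computation is the Jacobi identity. Here I would expand each double bracket as a composition of first-order operators acting on an arbitrary $f\in C^{\infty}(\cM)$, for instance
\[
[[X,Y],Z]f = X(Y(Z(f))) - Y(X(Z(f))) - Z(X(Y(f))) + Z(Y(X(f))),
\]
and likewise for the two cyclic permutations. Summing the three expressions produces twelve ordered triple-compositions, and the point is that each one occurs exactly twice with opposite signs, so the sum collapses to zero and $[[X,Y],Z]+[[Y,Z],X]+[[Z,X],Y]=0$. Conceptually, this is nothing but the standard Jacobi identity for the commutator $[A,B]=AB-BA$ in the associative algebra $\mathrm{End}_{\bbR}(C^{\infty}(\cM))$, to which $X,Y,Z$ belong as $\bbR$-linear operators; the identity is therefore a purely formal consequence of the associativity of operator composition.

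I do not expect any step to present a serious obstacle. The only point that is subtle rather than merely formal is the closure of the bracket---that $[X,Y]$ remains a genuine (smooth) vector field even though $XY$ does not---and this has already been established via the Leibniz verification and is re-confirmed by the smoothness of the local coefficients above. Once closure is granted, $\fT(\cM)$ is exhibited as a Lie subalgebra of $\mathrm{End}_{\bbR}(C^{\infty}(\cM))$, and the three axioms follow, completing the proof.
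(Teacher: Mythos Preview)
Your proof is correct and follows essentially the same approach as the paper, which simply states that ``the proof is mostly described above,'' referring to the preceding verification that $[X,Y]$ is a vector field (closure) together with the listed properties of distributivity, anti-commutativity, and the Jacobi identity. You have supplied the details the paper leaves implicit---the local-coordinate formula confirming smoothness and the explicit twelve-term cancellation for Jacobi---but the structure of the argument is the same.
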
 
\begin{proof}
The proof is mostly described above. \end{proof}

 \begin{definition}[Moving Frame]\label{D:Movfram}\index{Bundle!moving frame}
 A set of $n$ linearly independent differentiable vector fields $\{e_{i}\}_{i=1}^{n }$ which form a basis of the module $ \fT(U)$ where $U\subset \cM$ is called a moving frame.
 \end{definition}
 
Notice that a moving frame may not exist {\it globally} on $T(\cM)$.

To end this section, let us mention that vector fields can be used for instance in Index Theory. The Poincaré--Hopf theorem relates zeros of vector fields on $\cM$ to the Euler characteristic of $\cM$. 

\section{Cotangent bundle - Covector field}
The topic considered in this section relates to cotangent bundles. A cotangent bundle is the natural dual to the tangent bundle, encoding the differential structures of a manifold in their most intrinsic form. While the tangent bundle $\cT(\cM)$ describes directions of motion, the cotangent bundle 
$\cT^*(\cM)$  is the realm of differentials. 

For a differentiable manifold $\cM$  the cotangent space at each point consists of linear functionals on the tangent space. That is, if $X_{\scriptstyle M}$ is a tangent vector, an element of $\cT_{\scriptstyle M}^*(\cM)$ assigns to it a real number by evaluating a differential:
\[df_{\scriptstyle M}(X_{\scriptstyle M}).\]
\subsection{Cotangent bundle}

\begin{definition}[Cotangent bundle]\index{Bundle!Cotangent bundle}
The bundle space $(\cT^\star(\cM),\cM,\pi)$ where $\cT^\star(\cM)$ is the space of pairs
$({\scriptstyle M},\omega_{\scriptstyle M})$ for all ${\scriptstyle M}\in \cM$ and all $\omega_{\scriptstyle M}\in \cT^\star_{\scriptstyle M}(\cM)$ is called
cotangent bundle space. 
\end{definition}

In the case of a n-dimensional manifold,
the cotangent-bundle  $(\cT^{\star}(\cM), \cM, \pi,G)$ can be identified to the natural bundle
\begin{itemize}
	\item  fiber at ${\scriptstyle M} \in \cM$, $\cF_{\scriptstyle M}= \cT^{\star}_{\scriptstyle M}(\cM)$ and typical
fiber $\cF =\bbR^n$,  

	\item projection $\pi : ({\scriptstyle M},\omega_{\scriptstyle M}) \mapsto {\scriptstyle M} $, 
		
	\item The structural group $G=GL(n,\bbR)$,   of linear automorphisms on
$\bbR^n$.
\end{itemize}

\subsection{Covector field}
\begin{definition}[Covector field]\index{Covector field}
 A $\cC^k$-differentiable 1-form $\omega$ is a $\cC^k$-cross section of the cotangent bundle. It is often called a covariant vector field.
\end{definition}

A covariant vector field $\omega$ associates to each $ {\scriptstyle M}\in \cM$ a covariant vector $\omega_{\scriptstyle M} \in X\in \cT^{\star}_{\scriptstyle M}(\cM)$ by the mapping \[\omega: {\scriptstyle M}\mapsto ({\scriptstyle M},\omega_{\scriptstyle M}).\]

The covector field $\omega \in  \cT^{\star}(\cM)$ acts on the vector field $X\in  T(\cM)$
by
\begin{equation}
\omega(X)({\scriptstyle M})=\omega_{\scriptstyle M}(X_{\scriptstyle M}).
\end{equation}

If we denote by $ \{dx^{i}\}_{i=1}^{n}$ the dual basis of the  natural basis $\{\partial_{i}=\partial/\partial x^{i}\}_{i=1}^{n}$,
\[dx^{i}\big(\partial_{j}\big) = \partial_{j}\big(dx^{i}\big)=\delta^{i}_{j},\]
and
\[
\omega_{\scriptstyle M}(X_{\scriptstyle M})={\omega_{\scriptstyle M}}_{i} X_{\scriptstyle M}^{i},\quad \omega_{\scriptstyle M}={\omega_{\scriptstyle M} }_{i}\,dx^{i}|_{\scriptstyle M},\quad  X_{\scriptstyle M} =X_{\scriptstyle M}^{i}\, \partial_{i}|_{\scriptstyle M}, \]
where the index ${\scriptstyle M}\in \cM$. 
\, 

From the equation~\eqref{E:difcoord}, we can define the differential 1-form field by:
\begin{equation}
(df(X))({\scriptstyle M})=df|_{\scriptstyle M}(X_{\scriptstyle M}).
\end{equation}
In the natural basis 
\[df=\frac{\partial f}{\partial x^{i}}\,dx^{i},\]
and in an arbitrary local basis  $\{e_{i}\}_{i=1}^{n}$ in $\cT_{\scriptstyle M}(\cM)$, where $\{\varepsilon_{i}\}_{i=1}^{n}$ is the dual basis, it implies that we have: 
\[(df(X))({\scriptstyle M})= X^{i}_{\scriptstyle M} e_{i}|_{\scriptstyle M} (f)= e_{i}|_{\scriptstyle M}(f)\varepsilon^{i}|_{\scriptstyle M}(X_{\scriptstyle M}),\]
and finally
\begin{equation} \label{E:diffieldcoord}
df= e_{i}(f)\varepsilon^{i}.
\end{equation}

\section{Action of a diffeomorphism}\index{Diffeomorphism}

\subsection{Image of a vector field under a diffeomorphism}\index{Diffeomorphism! Image of a vector field}
Assume $\cM$ and $\cM'$ are two manifolds. Let: \[f:\cM\to \cM'\] be a differentiable mapping between  $\cM$ and $\cM'$, such that ${\scriptstyle M}\in \cM$ is mapped to ${\scriptstyle M'}=f({\scriptstyle M})\in \cM'$. 

The map $f$ induces a linear (Jacobian) mapping~\footnote {the Jacobian mapping is sometimes denoted $f'$.} denoted $f_{\star}$, between the tangent space  $T_{\scriptstyle M}(\cM)$ at ${\scriptstyle M}$ and the tangent space $T_{\scriptstyle M'}(\cM')$ at ${\scriptstyle M'}=f({\scriptstyle M})$, 
 \[
  f_{\star} : X_{\scriptstyle M}\mapsto f_{\star}(X_{\scriptstyle M})=X'_{f({\scriptstyle M})},
  \]
   defined in the following way.
   
 Let $g$ a differentiable function in the neighborhood of $f({\scriptstyle M})$. Then, we obtain:  
 
 \begin{equation}\label{E:fstar}
[(f_{\star}(X)(g)](f({\scriptstyle M}))= [X(g\circ f)]({\scriptstyle M}) =X_{\scriptstyle M} (g\circ f)
\end{equation}
\begin{theorem}
Let $f:\cM \to \cM'$ be a $C^\infty$ diffeomorphism between two $n$-dimensional differentiable manifolds. Then $f_{\star}$ is an isomorphism of the Lie algebra: $$ \fT(\cM)\to\fT(\cM),$$ given by 
\begin{equation}
f_{\star}([X,Y])=[f_{\star}(X),f_{\star}(Y)].
\end{equation}
\end{theorem}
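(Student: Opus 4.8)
The plan is to characterize $f_\star$ entirely through its defining relation \eqref{E:fstar} and then to verify the bracket identity by testing both sides against an arbitrary function. First I would record the basic principle that makes everything work: since $f$ is a diffeomorphism, every point of $\cM'$ equals $f({\scriptstyle M})$ for a \emph{unique} ${\scriptstyle M}\in\cM$, so for each $X\in\fT(\cM)$ the prescription \eqref{E:fstar} determines a genuine vector field $f_\star(X)\in\fT(\cM')$. Reading that relation as an identity of functions on $\cM$ gives the key lemma
\[
\big[(f_\star X)(g)\big]\circ f \;=\; X(g\circ f),\qquad g\in\cC^\infty(\cM'),
\]
which I will use repeatedly. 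Linearity of $X\mapsto f_\star X$ over $\bbR$ is immediate from this formula.

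Next I would prove the bracket identity by applying the lemma twice. Two vector fields on $\cM'$ coincide precisely when they act identically on every $g\in\cC^\infty(\cM')$, and because $f$ is a bijection it is enough to check that the two resulting functions agree after composition with $f$. Setting $h=(f_\star Y)(g)$, the lemma gives $\big[(f_\star X)(h)\big]\circ f = X(h\circ f)=X\big(Y(g\circ f)\big)$, where in the last step I used the lemma a second time to rewrite $h\circ f=(f_\star Y)(g)\circ f = Y(g\circ f)$. Subtracting the analogous expression with $X$ and $Y$ interchanged yields
\[
[f_\star X,f_\star Y](g)\circ f \;=\; X\big(Y(g\circ f)\big)-Y\big(X(g\circ f)\big)\;=\;[X,Y](g\circ f).
\]
On the other hand, the lemma applied to the field $[X,Y]$ reads $\big(f_\star[X,Y]\big)(g)\circ f=[X,Y](g\circ f)$. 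Comparing, the two functions agree after composition with the bijection $f$, hence they are equal; since $g$ was arbitrary, $f_\star[X,Y]=[f_\star X,f_\star Y]$.

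Finally I would upgrade this to the isomorphism statement. Functoriality of the pushforward—namely $(\mathrm{id})_\star=\mathrm{id}$ and $(g\circ f)_\star=g_\star\circ f_\star$, both of which fall out directly from \eqref{E:fstar}—shows that $(f^{-1})_\star$ is a two-sided inverse of $f_\star$, so $f_\star:\fT(\cM)\to\fT(\cM')$ is a linear bijection. Combined with the bracket identity just established, this makes $f_\star$ an isomorphism of Lie algebras.

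The main obstacle is purely the bookkeeping: one must keep straight which functions live on $\cM$ and which on $\cM'$, and resist treating $(f_\star X)(g)$ as $X$ acting on $g$ directly. The one place where the diffeomorphism hypothesis (as opposed to mere smoothness) is genuinely needed is twofold—the well-definedness of $f_\star X$ as a field on \emph{all} of $\cM'$, and the injectivity of precomposition by $f$ that lets us pass from equality-after-$f$ to equality.
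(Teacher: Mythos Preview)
Your proof is correct. The paper itself offers no proof of this theorem---it simply writes ``Exercise!''---so there is nothing to compare your approach against; what you have written is precisely the standard argument one would expect, deriving the identity $(f_\star X)(g)\circ f=X(g\circ f)$ from \eqref{E:fstar} and iterating it to reach the bracket. One small point worth making explicit: you assert that $f_\star X$ is a \emph{genuine} (i.e.\ smooth) vector field on $\cM'$, and while this is true, it uses that $f^{-1}$ is smooth---from the lemma, $(f_\star X)(g)=X(g\circ f)\circ f^{-1}$, and each ingredient on the right is $\cC^\infty$. Your closing remark that the target should be $\fT(\cM')$ rather than $\fT(\cM)$ is also well taken; this is a typo in the statement.
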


\begin{proof}
Exercise!
\end{proof}

\begin{definition}[Invariant vector field]\index{Vector field!Invariant vector field}
A vector field $X$ on $\cM$ is said to be invariant under the diffeomorphism \[f:\cM \to\cM\] if
\begin{equation}\label{E:invecfield}
f_{\star}(X_{\scriptstyle M})=X_{f({\scriptstyle M})},\quad \forall\, {\scriptstyle M}\in \cM.
\end{equation}
\end{definition}

\subsection{Image of a cotangent vector field under a diffeomorphism}\index{Diffeomorphism!Image of a covector field}
Let $\omega_x\in  T^{\star}_x(\cM)$ and $\omega'_{\scriptstyle M'}$ be such that
\[
\omega_{\scriptstyle M} (X_{\scriptstyle M})= \omega'_{\scriptstyle M'}(X'_{\scriptstyle M'}),
\]
The pull-back (reciprocal image) $f^{\star}: \cT^{\star}_{\scriptstyle M'}(\cM') \to \cT^{\star}_{\scriptstyle M} (\cM)$ of a covariant vector $\omega'_{\scriptstyle M'}$ under the differentiable mapping $f$ is defined by the equality:

\begin{equation}
\left(f^{\star}(\omega'_{\scriptstyle M})\right)_{\scriptstyle M}(X_{\scriptstyle M})=\omega'_{\scriptstyle M'}\left(f_{\star}(X)\right |_{\scriptstyle M'}.
\end{equation}

Therefore, the pull-back\index{Diffeomorphism!Pull-back} $f^\star:  T^{\star}(\cM') \to T^{\star}(\cM)$ of the 1-form $\omega'$ under a differentiable mapping $f$ is defined by
\begin{equation}
\left(f^{\star}(\omega')\right)(X)=\omega'\left(f_{\star}(X)\right)\circ f.
\end{equation}

\begin{remark}
The expression of the reciprocal image of a 1-form does not involve $f^{-1}$, whereas it is the case for a vector field.
\end{remark}

\subsection{Tensor field} We generalise the discussion in the previous chapter about tensors. This notion also can evolve using the concept of fiber bundles. 

\, 

In particular, a fiber bundle where the base space is the manifold $\cM$ and the fiber is identified with $\otimes^{p}T^{\star}_{\scriptstyle M}(\cM)\otimes^{q}T_{\scriptstyle M}(\cM)$  at all ${\scriptstyle M}\in \cM$ is called a $(p,q)$-\emph{tensor bundle}.

Notice that a tangent bundle is a $(0,1)$-tensor bundle and that the cotangent bundle is a $(1,0)$-tensor bundle.

\begin{definition}
A $(p,q)$-tensor field on a $C^{k}$-manifold is a $C^{r}$ cross-section, where $r\leq k-1$ of the $(p,q)$-tensor bundle.
\end{definition}

Operations defined in section~\ref{S:tensor} for tensors at a point, are carried over fiber-wise allowing to define similar operations on tensor fields. 

Under these operations, the set of all  $(p,q)$-tensor fields of class  $\cC^{r}$ is a \emph{ module on the ring} $\cC^{r}(\cM)$.

\chapter{Connections, Parallel Transport and Sheafs} 

In differential geometry, connections and parallel transport arise from a fundamental necessity: the ability to compare vectors at different points on a curved space, where a naive translation is no longer well-defined.

\begin{minipage}{0.5\textwidth} 
  \centering
  \includegraphics[]{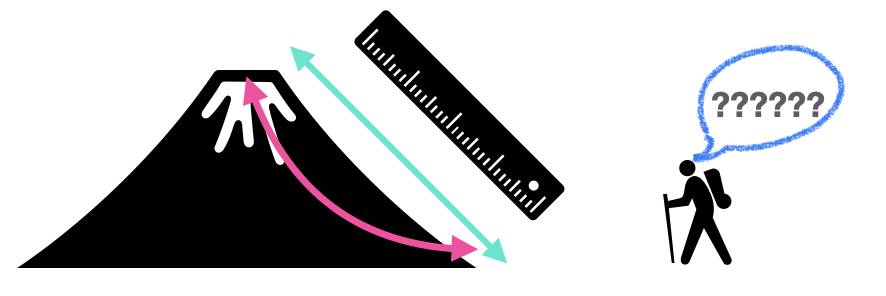}

\end{minipage}

In Euclidean space, one can simply
compare vectors via a parallel transport--shifting,
without change in magnitude or direction.

However, on a curved manifold, the very act of moving a vector must be prescribed by an additional structure, which is provided by a connection.

A connection defines a rule for differentiation that respects the manifold’s geometry, replacing the notion of partial derivative in a given direction with a well-defined covariant derivative. This operator tells us how a vector field changes as we move along the manifold, encoding information about curvature and torsion.

But the true power of connections emerges in the concept of parallel transport. Given a connection, we can transport a vector along a curve, according to the manifold’s structure. This operation is at the heart of many geometric investigations.

In Riemannian geometry, parallel transport reveals the curvature of space, since moving a vector along different paths can lead to different final results. In fact, given a loop $\gamma:[a,b]\to \cM$ where $[a,b]\subset \bbR$ on a smooth manifold $\cM$, where $\gamma(a)=\gamma(b)$ we can transport a vector $\Vec{v}$ along that curve,  starting at the initial point $\gamma(a)$. 

The question that emerges is whether there has been any modification implied on the vector $\Vec{v}$, during this transport, namely does the vector at $\gamma(b)$ coincide with the vector $\Vec{v}$ at $\gamma(a)$ or not. If those vectors coincide then the manifold on is flat on the set where the loop was defined.

In gauge theory, parallel transport describes how fields interact, with connections playing the role of gauge potentials in Yang-Mills theory.

In general relativity, the Levi--Civita connection governs the motion of free-falling observers in curved spacetime.

Thus, a connection is not merely a technical tool but an essential ingredient of geometry, allowing us to define differentiation, curvature, and transport in a way that transcends local coordinates. It is the bridge between the infinitesimal and the global, between algebra and topology, between abstract geometry and the laws of nature.

\section{Linear connection}\index{Connection!Linear}

In Euclidean space,  two vectors of different origin are compared by a parallel translation of the vectors to the one same origin. The derivative of a vector $\Vec{v}$ defined along a curve, is a vector of components $(\partial v^{i}/\partial x^{j}) dx^{j}/dt$ in the Cartesian coordinates. On an arbitrary differentiable manifold, the components  $$\partial v^{i}/\partial x^{j}$$ do not behave as the components of a tensor under a change of coordinates. To solve this difficulty we introduce the notion of  covariant derivative which is of tensorial type.

\begin{definition}[Linear connection]\label{lincon}
A linear connection on a differentiable manifold $\cM$ is a mapping $\nabla : X\mapsto \nabla X$ from the vector fields on $\cM$ into the differentiable tensor field of type $(1,1)$ on $\cM$ such that
\begin{equation}\begin{aligned}
\nabla(X+Y)&=\nabla(X)+\nabla(Y),\\
\nabla(fX)&=df\otimes X+f\nabla(X), 
\end{aligned}\end{equation}
where $f$ is a differentiable function~\footnote{To give a more precise definition we have to speak  of germ of vector (or tensor) field and germ of function on $\cM$. } on $\cM$.

\vspace{3pt}
The tensor $\nabla X $ is called the \emph{covariant derivative}  of $X$.
\end{definition}

For a $n$-dimensional differentiable manifold $\cM$, equipped with moving a frames (Defintion \ref{D:Movfram}) $\{e_{i}\}_{i=1}^{n},\ e_{i}\in T(\cM)$  and 
$\{\varepsilon^{{i}}\}_{i=1}^{n},\ \varepsilon^{i}\in T^{\star}(\cM)$ (its dual), 
we have that $\nabla e_{i}$ is a $(1,1)$-tensor field which can be written as: 

\begin{equation} 
\nabla e_{i}(e_{k},\varepsilon^{j})=\gamma^{j}_{ki}, \end{equation}
and the $(1,1)$-tensor $\nabla e_{i}$ can be expressed as:
\begin{equation}\label{E:concoef0}
\nabla e_{i}=\gamma^{j}_{ki} \varepsilon^{k}\otimes e_{j},
\end{equation}

\begin{definition}[Connection coefficients]\index{Connection!Coefficients}
The coefficients $\gamma^{j}_{ki}=\nabla e_{i}(e_{k},\varepsilon^{j})$ are called the connection coefficients in the basis $\{e_{i}\}_{i=1}^{n}$.
\end{definition}

It follows from the definition~\ref{lincon} and the equation~\eqref{difffunc} that 
\[\begin{aligned}
\nabla X &= \nabla(X^{i}e_{i})=dX^{i}\otimes e_{i}+X^{i}\nabla(e_{i})\\
&=\big(dX^{i} +\gamma^{i}_{kj} X^{j}\varepsilon^{k}\big) \otimes e_{i}\\
&=\big(e_{k}(X^{i}) +\gamma^{i}_{kj}X^{j} \big) \varepsilon^{k}\otimes e_{i}\\
&=\nabla_{k}X^{i}\varepsilon^{k}\otimes e_{i},
\end{aligned}\]
where  $\nabla_{k}X^{i}=\big (\nabla X \big)^{j}_{k}$ denote the component  of the tensor $\nabla X$.
\begin{equation}\label{concoef1}
\nabla X = \nabla_{k}X^{i}\varepsilon^{k}\otimes e_{i},\quad \text{ with } \quad\nabla_{k}X^{i}=e_{k}(X^{i}) +\gamma^{j}_{ki}X^{i}.
\end{equation}

Furthermore, one can rewrite the $(1,1)$-tensor $\nabla_X$ in the following way:
\begin{equation}\label{concoef2}
\nabla X 
=\big(dX^{i} +\omega^{i}_{j} X^{j}\big) \otimes e_{i},\quad  \omega^{i}_{j}=\gamma^{i}_{kj} \varepsilon^{k}
\end{equation}
making the covariant part explicit.
\begin{definition}[Connection forms]\index{Connection!Connection forms}
The quantities defined by 
\begin{equation}\label{E:conform}
\omega^{i}_{j}=\gamma^{i}_{kj} \varepsilon^{k} .
\end{equation}
are called the connection 1-forms.
\end{definition}
We have the relation
\begin{equation}
\omega^{i}_{j}(e_{k})=\gamma^{i}_{kj} =
\nabla e_{i}(e_{k},\varepsilon^{j}).
\end{equation}
As $\nabla X$ is a $(1,1)$-tensor it is invariant under the change of natural coordinate system. In particular:
\[e_{i}= A_{i}^{j}e'_{j},\quad {\varepsilon}^{k}=  A_{\ell}^{k} {\varepsilon'}^{\ell},\quad  A^{\ell}_{i}A_{\ell}^{j}=\delta _{i}^{j},\]
and \[ X^{i}=A^{i}_{j}{X'}^{j}\quad  \text{ implies }\quad   dX^{i}=A^{i}_{j}d{X'}^{j}+{X'}^{j}dA^{i}_{j}.\]
It follows from \eqref{concoef2} that
\[\nabla X=\big(dX^{i} +\gamma^{i}_{kj} X^{j}\epsilon^{k}\big) \otimes e_{i}=\big(d{X'}^{j} +{\gamma'}^{j}_{k\ell} {X'}^{\ell}{\varepsilon'}^{k}\big) \otimes e'_{j}.\]
Moreover,
\[\begin{aligned}
\nabla X&=\big(dX^{i} +\gamma^{i}_{kj} X^{j}\varepsilon^{k}\big) \otimes e_{j}=\big(dX^{i} +\gamma^{i}_{kj} X^{j}\varepsilon^{k}\big) \otimes A^{k}_{i}e'_{k}\\
&=\big(A^{i}_{j}d{X'}^{j}+ {X'}^{j}dA_{j}^{i}+A^{j}_{\ell}{X'}^{\ell}{\gamma}^{i}_{kj}A^{k}_{m} {\varepsilon'}^{m}\big) \otimes A^{k}_{i}e'_{k}\\
&=\big(d{X'}^{j}+ A^{j}_{k}{X'}^{\ell}dA_{\ell}^{k}+A^{m}_{\ell}{X'}^{\ell}{\gamma}^{i}_{km}A^{j}_{i}A^{k}_{n} {\varepsilon'}^{n}\big) \otimes e'_{j}\\
&=\big(d{X'}^{j}+ (A^{j}_{k}dA_{\ell}^{k}+A^{m}_{\ell}{\gamma}^{i}_{nm}
A^{j}_{i}A^{n}_{k} ){X'}^{\ell}{\varepsilon'}^{k}\big) \otimes e'_{j}\\
\end{aligned}\]
and therefore
\[
{\gamma'}^{j}_{h\ell}=A^{j}_{k}dA_{\ell}^{k}+A^{m}_{\ell}{\gamma}^{i}_{nm}
A^{j}_{i}A^{n}_{h}
\]
which, by equation~\eqref{E:difcoord}, gives us
\begin{equation}\label{E: tconform}
{\gamma'}^{j}_{h\ell}=A^{j}_{k}e'_{h}(A_{\ell}^{k})+A^{n}_{h}A^{m}_{\ell}A^{j}_{i}{\gamma}^{i}_{nm}.
\end{equation}
The connection coefficients are not components of a tensor because of the first term on the right hand side of the equation \eqref{E: tconform}.
\vspace{5pt}

In the  local coordinate system $\{x^{1},\dots,x^{n}\}$, we have:
\begin{equation}\label{E:coeffconloccoor}
\nabla X= \left( \frac{\partial X^{i} }{\partial x^{k}}+\Gamma^{i}_{k\ell}X^{\ell} \right) d x^{k} \otimes \frac{\partial}{\partial x^{i}},
\end{equation}
where  $\Gamma^{i}_{k\ell}X^{\ell}$ is the connection coefficients \eqref{E:concoef0} in the local (natural) coordinate system $\{x^{1},\dots,x^{n}\}$. For the vector field $ \partial_i=\frac{\partial}{\partial x^i}$ the components are all $0$ except for $X^i=1$, and we can express: 
\[
\nabla(\partial_i)= \Gamma^{j}_{ki} dx^k\otimes \partial_j
\]

\begin{definition}[Christoffel symbols]\index{Connection!Christoffel symbols}
The $\Gamma^{j}_{ki}$ are called the Christoffel symbols.
\begin{equation}
\Gamma^{i}_{kj} =
\nabla(\partial_{i})(\partial_{k},dx^{j})=\omega^{j}_{i}(\partial_{k}).
\end{equation}
\end{definition}

\,   

Now, the Christoffel symbols transform under a change of local coordinates system: $\{x^{1},\dots,x^{n}\}\to \{{x'}^{1},\dots,{x'}^{n}\}$ as
\begin{equation}
{\Gamma'}^{j}_{h\ell}= \frac{\partial{x'}^{j}}{\partial x^{i}}  \frac{\partial}{\partial x^{k}} \left(  \frac{\partial{x'}^{i}}{\partial x^{\ell}}\right) +     \frac{\partial x^{m}}{\partial {x'}^{\ell}} \frac{\partial x^{k}}{\partial{x'}^{h}} \frac{\partial{x'}^{j}}{\partial x^{i}} \Gamma^{i}_{k m}.
\end{equation}
Hence, the Christoffel symbols does not define a tensor.

\section{Covariant derivative  in the direction $Y$}\index{Covariant derivative}
Let $X$ and $Y$ be two vector fields. By~\eqref{E:coeffconloccoor}, we get $\nabla X(Y)=Y^{k}\nabla_{k}X^{i}e_{i}=Y^{k} \nabla_{k}X$ which can be identified to the  derivative~\eqref{E:dirder} in the $Y$-direction  

\subsection{\bf Covariant derivative of a vector field in the direction of $Y$}
Let $X$ and $Y$ be two vector fields. By~\eqref{E:coeffconloccoor}, they can be expressed as $$\nabla X(Y)=Y^{k}\nabla_{k}X^{i}e_{i}=Y^{k} \nabla_{k}X,$$ which can be identified to the  derivative~\eqref{E:dirder} in the direction of $Y$.  This leads us to the formulation of a definition of the covariant derivative. 

\begin{definition} [Covariant derivative in the direction $Y$] 
The covariant derivative $\nabla_{_{Y}}X$ of $X$ in the direction $Y$ is defined by
\begin{equation}
\nabla_{_{Y}}X=(\nabla(X))(Y)    \text{ or } \nabla_{_{Y}}X(\cdot)=(\nabla(X))(Y,\cdot) 
\end{equation}
\end{definition}

The covariant derivative $\nabla_{_{Y}}X$ is  \emph{linear} in $Y$
\[
\nabla_{{fY_{1}+gY_{2}}}X=(f\nabla_{_{Y_{1}}}+g\nabla_{_{Y_{2}}})X, \] where \[Y_{i},Y_{2}\in T_{_{M}}(\cM),\quad \text{and}\quad f,g : \cM\to \bbR.
\]

In the frame $\{e_{i}\}_{i=1}^{n}$, we obtain: 
\[
\nabla_{_{Y}}X=Y^{k}\left(e_{k}(X^{i}) + \gamma^{i}_{kj}X^{j}\right)e_{i}=\left(Y(X^{i}) + \gamma^{i}_{kj}Y^{k}X^{j}\right)e_{i},
\]
and thus 
\begin{equation}
\nabla_{e_{i}}e_{k}=  \gamma^{j}_{ik}e_{j},
\end{equation}
where $\nabla_{e_{i}}$ denote the covariant derivative  in the direction $e_i$.

\subsection{\bf The Covariant derivative of a tensor}

\, 

The answer to this question is yes. The notion of a covariant derivative in the  direction of $X\in \cT_{_{M}}(\cM)$ can be extended---in a neightborhood of $M\in \cM$--- to an arbitrary type of tensor field, under the following assumptions: 
\begin{subequations}\label{E:cdt}
\begin{gather}
 \nabla_{X}f=X(f),\quad f\in C^{1}( \cM),\label{E:cdt1}\\
 \nabla_{X}(t+ s)= \nabla_{X}t +\nabla_{X}s,\label{E:cdt2}\\
\nabla_{X}(t\otimes s)= (\nabla_{X}t)\otimes s+ t\otimes (\nabla_{X}s),\label{E:cdt3}\\
\nabla_{X} \text{  commutes with the contracted multiplication.}\label{E:cdt4}
\end{gather}
\end{subequations}

Therefore, if $t$ is a $(p,q)$-tensor its covariant derivative forms a $(q+1,p)$-tensor:
\[(\nabla t)(u,v_{1},\dots,v_{p},\omega_{1},\dots,\omega_{p})=(\nabla_{u} t)(v_{1},\dots,v_{p},\omega_{1},\dots,\omega_{p}),\] where \[ u,v_{i}\in T_{_{M}}(\cM),\quad \text{and}\quad \omega_{j}\in T^{\star}_{_{M}}(\cM).
\]

\vspace{3pt}
\noindent{\bf  (a) Covariant derivative of a 1-form}

\vspace{3pt}
We will study the above definition on the example of a 1-form. By the fourth condition above, we have that 
\[
\nabla_{_{Y}}[\mathbf{\alpha}(X)] = (\nabla_{_{Y}}\mathbf{\alpha})(X)+\mathbf{\alpha}(\nabla_{_{Y}}X),\]
which implies that 
\[(\nabla_{_{Y}} \mathbf{\alpha})(X) =\nabla_{_{Y}}[\mathbf{\alpha}(X)]- \mathbf{\alpha}(\nabla_{_{Y}}X).
\]
If we substitute $X$ by $e_{i}$ this gives
\[
(\nabla_{_{Y}} \mathbf{\alpha})_{i}=Y(\alpha^{i}) - \mathbf{\alpha}(\gamma^{j}_{ki}Y^{k}e_{j})=Y(\alpha^{i})-\gamma^{j}_{ki}Y^{k}\alpha_{j},
\]
and thus
\[
\nabla_{_{Y}} \mathbf{\alpha}=Y^{k}[e_{k}(\alpha_{i})-\gamma^{j}_{ki}\alpha_{j}]\varepsilon^{i},
\]
which means in particular that 
\begin{equation}
\nabla_{_{Y}} \varepsilon^{ i} = -Y^{k}\gamma^{j}_{ki}\varepsilon^{j} \text{ or } \nabla_{e_{k}} \varepsilon^{ i} = -\gamma^{j}_{ki}\varepsilon^{j}.
\end{equation}
This allows us to deduce that
\[
\nabla \mathbf{\alpha}= [e_{k}(\alpha_{i}) -\gamma^{j}_{ki}\alpha_{j}]\epsilon^{k}\otimes \varepsilon^{j}=(d\alpha_{i}+ \alpha_{j}\omega^{j}_{i})\otimes \varepsilon^{i}.
\]
\begin{proposition}[Covariant derivative of 1-form]
Let $\mathbf{\alpha}\in T^{\star}_{_{M}}(\cM)$. Then
 \begin{equation}
(\nabla_{_{Y}} \mathbf{\alpha})(X) =\nabla_{_{Y}}(\mathbf{\alpha}X)- \mathbf{\alpha}(\nabla_{_{Y}}X).
\end{equation}
Additionally, if $\{e_{i}\}_{i=1}^{n}$ is a moving frame and $\{\varepsilon^{i}\}_{i=1}^{n}$ then 
\begin{equation}
\nabla \mathbf{\alpha} =(d\alpha_{i}+ \alpha_{j}\omega^{j}_{i})\otimes \varepsilon^{i}
\end{equation}
\end{proposition}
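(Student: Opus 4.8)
The plan is to treat the two assertions separately. The Leibniz identity is a formal consequence of the tensor-derivation axioms \eqref{E:cdt1}--\eqref{E:cdt4}, while the coordinate formula is then obtained by evaluating that identity on a moving frame.

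First I would prove the Leibniz identity. The key is to view the scalar $\alpha(X)$ as the total contraction of the tensor product $\alpha\otimes X$. Applying the product rule \eqref{E:cdt3} to $\alpha\otimes X$ and then using \eqref{E:cdt4}, namely that $\nabla_Y$ commutes with contraction, gives
\[\nabla_Y\bigl(\alpha(X)\bigr)=(\nabla_Y\alpha)(X)+\alpha(\nabla_Y X).\]
Since $\alpha(X)$ is a function, \eqref{E:cdt1} identifies the left-hand side with $Y(\alpha(X))$, and a simple rearrangement yields the first claim. This step is purely formal; the only thing to verify is that the single contraction of $\alpha\otimes X$ returns $\alpha(X)$, so that the axioms apply verbatim.

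Next I would pass to a moving frame $\{e_i\}$ with dual coframe $\{\varepsilon^i\}$, write $\alpha=\alpha_j\varepsilon^j$ with $\alpha_i=\alpha(e_i)$, and evaluate the identity at $X=e_i$. Using $\nabla_Y e_i=Y^k\nabla_{e_k}e_i=Y^k\gamma^j_{ki}e_j$ together with $\alpha(e_j)=\alpha_j$ gives $(\nabla_Y\alpha)(e_i)=Y^k\bigl(e_k(\alpha_i)-\gamma^j_{ki}\alpha_j\bigr)$. Reading off the components of the $(2,0)$-tensor $\nabla\alpha$ through $\nabla\alpha(Y,e_i)=(\nabla_Y\alpha)(e_i)$ then gives $\nabla\alpha=\bigl(e_k(\alpha_i)-\gamma^j_{ki}\alpha_j\bigr)\varepsilon^k\otimes\varepsilon^i$. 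It remains to recognize the two pieces: by \eqref{E:difcoord} the combination $e_k(\alpha_i)\varepsilon^k$ is precisely the differential $d\alpha_i$, and by the definition \eqref{E:conform} of the connection $1$-forms one has $\gamma^j_{ki}\varepsilon^k=\omega^j_i$, so that the contracted term assembles into the connection contribution $\alpha_j\omega^j_i$, giving the claimed closed form for $\nabla\alpha$.

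I do not anticipate any genuine obstacle, since the statement is essentially a rewriting of the defining axioms; the whole difficulty lies in the index bookkeeping. Two points deserve attention. First, one must keep the contraction in \eqref{E:cdt4} honest, so that the covector slot of $\nabla_Y\alpha$ is the one paired with $X$ rather than with an internal slot. Second, one must track the sign of the connection term with care: the intermediate expression naturally carries $-\gamma^j_{ki}\alpha_j$, so the identification with $\alpha_j\omega^j_i$ must be made in accordance with the sign convention fixed for $\omega^j_i$ in \eqref{E:conform} in order to match the displayed formula.
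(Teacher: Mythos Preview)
Your proposal is correct and follows essentially the same route as the paper's own derivation: invoke the contraction--Leibniz axiom \eqref{E:cdt4} (together with \eqref{E:cdt3}) to get the first identity, then evaluate on $X=e_i$ and repackage the components using $d\alpha_i=e_k(\alpha_i)\varepsilon^k$ and $\omega^j_i=\gamma^j_{ki}\varepsilon^k$. Your closing caveat about the sign is well taken: the intermediate computation indeed produces $-\alpha_j\omega^j_i$, so the displayed formula in the statement should be read with the sign convention for $\omega^j_i$ adjusted accordingly.
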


\begin{proof}
    The proof is left as an exercise. 
\end{proof}

\noindent{\bf  (b) Covariant derivative of a tensor}
We now consider a (2,0)-tensor and explore the notion of covariant derivative of a (2,0)-tensor. Let $g=g_{\mu\nu}\varepsilon^\mu\otimes \varepsilon^{\nu}$ be a $(2,0)$-tensor.

\,

The covariant derivative of $g$ is obtained from the properties (3) and (4) outlined in the list of conditions above. 

Therefore, 
\[\begin{aligned}
\nabla_{_{X}}g&= X(g_{\mu\nu})\epsilon^\mu\otimes \epsilon^{\nu}+g_{\mu\nu}\nabla_{_{X}}\varepsilon^\mu\otimes \epsilon^{\nu}+g_{\mu\nu}\varepsilon^\mu\otimes \nabla_{_{X}}\varepsilon^{\nu}\\
&=X^{\alpha}[e_{\alpha}(g_{\mu\nu})-\gamma^{\beta}_{\alpha\mu} g_{\beta\nu}-\gamma^{\beta}_{\alpha\nu} g_{\mu\beta}]\varepsilon^{\mu}\otimes \varepsilon^{\nu},
\end{aligned}\]
or
\begin{equation}\label{covmetric}
\nabla_\alpha g_{\mu\nu}=e_{\alpha}(g_{\mu\nu})-\gamma^{\beta}_{\alpha\mu} g_{\beta\nu}-\gamma^{\beta}_{\alpha\nu} g_{\mu\beta}.
\end{equation}

\begin{ex}
The generalization is rather straight forward and left as an exercise. 
    
\end{ex}

\begin{ex}
As an example, one can work with a $(2,1)$-tensor $t_{kl}^i$ and show that : 
\[
\nabla_j t^{i}_{k\ell}=e_{j}(t^{i}_{k\ell})+\gamma^{i}_{jm}t^{m}_{k\ell} -\gamma^{m}_{jk} t^{i}_{m\ell}-\gamma^{m}_{j\ell} t^{i}_{km}.
\]
    
\end{ex}

We continue with the next remark. 
\begin{remark}
 The covariant derivative in the direction $e_{i}$ of the tensor product of  $s$ and $t$ of two tensors satisfy, by \eqref{E:cdt3}:
 \[\nabla_{e_{i}}(s\otimes t)=(\nabla_{e_{i}}s)\otimes t+s\otimes ( \nabla_{e_{i}}t).\]
However, it is important to keep in mind that the sum of two tensor products is defined only if the corresponding factors have the same rank and therefore:
\[
 \nabla (s\otimes t)\not=(\nabla s)\otimes t+s\otimes ( \nabla t).
\] 
\end{remark}

\section{Parallel transport - Geodesics}

In Euclidean space, vectors based at different points can be compared by translating them parallelly to a common origin. 

Consequently, if a vector 
$\Vec{v}$ is transported parallelly along a curve $\gamma$, the derivative $dv/dt=0$ vanishes.


On a manifold $\cM$, to compare vectors at two different points $M\in \cM$ and $M'\in \cM$, it is necessary to be able to assign a uniquely defined frame at  $M'$ based on a frame at $M$. 

This is precisely where the notion  of connection plays a key role: by enabling the existence of a covariant derivative, a connection provides a convenient notion of parallel transport.

\subsection{Parallel vector along a curve} 
\begin{definition}[Parallel transport]\index{Connection!Parallel transport}
A vector $X$ is said to be parallel along the curve $\gamma : t\to \gamma(t)$ if
 \begin{equation}
 \nabla_{u}X=0,\quad u=\gamma'\,\frac{d}{dt},
 \end{equation}
 \end{definition}
 
 \begin{remark}
 The vector $u$ is defined only at points lying on the curve $\gamma$. Somehow, it can also be extended to a vector field on a neighborhood of the curve $\gamma$. 
 \end{remark}
 
 If $(\varphi,U)$ is a local chart, the component of a point $\gamma(t)$ on the curve $\gamma$  are \[x^{i} (t)=\varphi^{i}\circ\gamma(t)\] and the components of $u$ are given by $u^{i}=dx^{i}/dt$
\[u^{i}= \varphi^{i}\circ u=u( \varphi^{i}).\]

 \subsection{Geodesics}
 We shortly digress on the notion of geodesics, as they intimately related to the topic discussed in this section. 
 
 \begin{definition} [Affine geodesic]\index{Geodesic!Affine}
 An affine geodesic on $\mathcal{M}$ is a curve \[\gamma : t\to \gamma(t)\]  such that
 \begin{equation}
 \nabla_{u}u=\lambda(t)u,\quad \text{where} \quad u=\frac{d\gamma}{dt},
 \end{equation}
 for some function $\lambda$ on $\bbR$.
 
The curve $\gamma$ is called a geodesic i.e.
 \begin{equation} \nabla_{u}u=0.\end{equation}
 \end{definition}

The concept of geodesics  generalizes the notion of straight line in Euclidean space.

There are some easy computations that can be done given $\gamma:I\to M, \quad I=[a,b]\subset \mathbb{R}$, the length of the smooth curve  is given by
\[L(\gamma):=\int_{a}^b|\dot \gamma(t)|dt.\]
In particular, this notions starts to be interesting as soon as we want to understant better the intrinsic distant between two points on a manifold. 
Assume $\cM\subset \bbR^n$ is an $m$-dim smooth manifold. Take a pair of distinct points $p,q\in \cM$. 
Their Euclidean distance is given by $|p-q|$ in the ambient Euclidean space.
However, this type of distance does not tell us much, from the viewpoint of the manifold $\cM$.
We therefore introduce the notion of an intrinsic distance in $\cM$.
\begin{definition}
The intrinsic distance on $\cM$ between $p, q\in \cM$ is a real number $d(p,q)\geq 0$ 
defined by \[d(p,q):=\inf_{\gamma\in \Omega_{p,q}}L(\gamma),\] 
where $\Omega_{p,q}$ is the space of smooth paths of the unit interval joining $p$ to $q$. 
\end{definition} 

\begin{ex}
Prove that $L(\gamma)\geq |p-q|$.
\end{ex}
\begin{center}
    \includegraphics[]{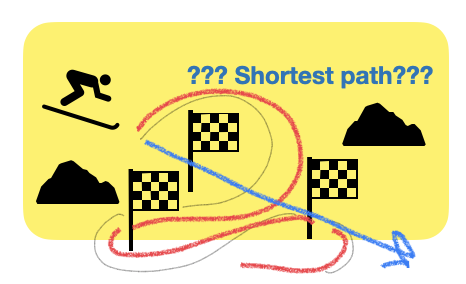}
    \end{center}
 Let us now write the equation of geodesics in local chart $(\varphi,U)$ the component of a point $\gamma(t)$ on the curve $\gamma$  are $x^{i} (t)=\varphi\circ\gamma(t)$ and the component of $u$ are $u^{i}=dx^{i}/dt$

\section{Curvature}
\subsection{Vector bundle - version}
Let $(\mathcal{M}, g)$ be a Riemannian manifold, that is a smooth manifold equipped with a Riemannian metric.
The Riemannian curvature tensor is defined as a map
\[R:\Gamma(\cT\cM) \times \Gamma(\cT\cM)  \times \Gamma(\cT\cM) \to\Gamma(\cT\cM), \] 
characterized by the formula:
\[R(X,Y)Z=\nabla_X\nabla_YZ-\nabla_Y\nabla_YZ-\nabla_{[X,Y]}Z\]
where $[X,Y]$ is a Lie bracket of vector fields and where $\Gamma(\cT\cM)$ are tangent bundle sections.

\begin{example}
    In the flat case, the absence of curvature ensures geodesic parallelism.
\end{example}
\subsection{Sheaf version}
In the scope of using a more modern language, we can consider the notion of Riemannian curvature in a more categorical framework, which relies on the notion of sheaf. To give a rough idea, a sheaf assigns some local data to open sets, and patches these local data together  into a global object.

\begin{definition}
    Let $X$ be a topological space. A presheaf $\cF$ on $X$ is a contravariant functor from the category of open sets of $X$ (denoted $Open(X)$), to a category $\mathscr{C}$, i.e.,
    \[\cF:Open(X)^{op}\to \mathscr{C}.\]
\end{definition}

This definition means that to each open set $U\subset X$, we assign an object $\cF(U$) in the category $\mathscr{C}$. The category $\mathscr{C}$ can be the category of sets, abelian groups, rings etc.

To each inclusion of open sets $V\subset U$, we assign a restriction morphism $\rho_{U,V}:\cF(U)\to \cF(V)$, satisfying: 

\begin{itemize}
    \item {\bf Identity:} $\rho_{U,U}$ is the identity;
    \item {\bf Transitivity:}  $\rho_{V,W}\circ\rho_{U,V}=\rho_{U,W}$ for $W\subset V\subset U$.
\end{itemize}

A sheaf is a presheaf with an extra gluing condition. Namely, for any open covering $U=\bigcup\limits_{\alpha} U_\alpha$ and a family of local sections that agree on overlaps (i.e. {\it compatible local sections}), there exists a unique global section that restricts to them.
In particular, $\cF$ is a sheaf if, for any open cover $U=\bigcup\limits_{\alpha} U_\alpha$, the sequence is exact: 

\[\cF(U)\to \prod_\alpha\cF(U_{\alpha}) \rightrightarrows \prod_{\alpha,\beta}\cF(U_{\alpha}\cap U_{\beta})\]

Let us breakdown this construction: 

\begin{itemize}
    \item The first map sends a global section $s\in \cF(U)$ to its restriction $s|{_{U_\alpha}}$.
    \item The second pair of maps send $(s_{\alpha})$ to their restrictions on overlaps $s_{\alpha}|_{{{U_\alpha}\cap U_{\beta}}}$ and $s_{\beta}|_{{{U_\alpha}\cap U_{\beta}}}$.
\end{itemize}

\subsection{Curvature version 2}
Going back to the definition of curvature, the main differences, between the previous language and the new formalism are depicted below:
\begin{itemize}
    \item the tangent bundle sections are viewed as a \emph{sheaf} $\cE$ of $\cO_\cM$-modules. 

  \item The covariant derivative is interpreted as a \emph{morphism of sheaves} rather than an operation on vector fields.

  \item The curvature tensor is introduced as a functorial transformation of the module structure.
  
  \item The bracket operation is associated with the intrinsic structure of the Lie algebroid given by the sheaf 
$\cE$.

\end{itemize}

Let $(\mathcal{M}, g)$ be a Riemannian manifold, where $\mathcal{M}$ is a smooth manifold equipped with a sheaf $\mathcal{O}_{\mathcal{M}}$ of smooth functions and a sheaf $\mathcal{E} = \mathcal{T}_{\mathcal{M}}$ of sections of the tangent bundle, endowed with a Riemannian metric $g$. The connection $\nabla$ is then a morphism of sheaves of $\mathcal{O}_{\mathcal{M}}$-modules:

\[
\nabla: \mathcal{E} \to \mathcal{E} \otimes_{\mathcal{O}_{\mathcal{M}}} \Omega^1_{\mathcal{M}}
\]

where $\Omega^1_{\mathcal{M}}$ is the sheaf of Kähler differentials on $\mathcal{M}$.

The \emph{Riemannian curvature tensor} is then a natural transformation of the $\mathcal{O}_{\mathcal{M}}$-module functor given by

\[
R: \mathcal{E} \times_{\mathcal{O}_{\mathcal{M}}} \mathcal{E} \times_{\mathcal{O}_{\mathcal{M}}} \mathcal{E} \to \mathcal{E}
\]

defined by the relation

\[
R(X,Y)Z = \nabla_X \nabla_Y Z - \nabla_Y \nabla_X Z - \nabla_{[X,Y]} Z,
\]

where $[X,Y]$ is the Lie bracket of vector fields, arising from the structure of the Lie algebroid associated with $\mathcal{E}$. The operator $[\nabla_X, \nabla_Y]$ is a commutator of differential operators.

Since the right hand side only depends on the values of $X, Y, Z$ at a given point, $R$ is a \emph{tensorial} object, i.e., a section of the sheaf: $ R \in \Gamma(\mathcal{M}, \operatorname{End}(\mathcal{E}) \otimes \Omega^2_{\mathcal{M}})$.

This tensor encodes the \textit{non-commutativity} of the covariant derivative and measures the curvature of the category of $\mathcal{O}_{\mathcal{M}}$-modules endowed with $\nabla$.

\begin{ex}\label{Ex:con1}
Show that we can rewrite it as: $$R(X,Y)=[\nabla_X,\nabla_Y]+\nabla_{[X,Y]}$$
\end{ex}
\begin{ex}\label{Ex:con2}
Symmetries: 
Show that we have the following relations: 
\begin{align}
&R(Y,X)=-R(X,Y) \\
&R(X,Y)Z+R(Y,Z)X+R(Z,X)Y=0\\
&\langle   R(X,Y)Z, W\rangle= \langle   R(Z,W)X, Y\rangle
\end{align}
\end{ex}
\subsection{Sectional curvature}
Let $\cM\subset \bbR^n$ be a smooth $m$-dimensional submanifold. 
Let $p\in \cM$ and let $E\subset \cT\cM$ be a 2-dimensional linear subspace of the tangent space. 
The sectional curvature of $\cM$ at $(p,E)$ is the number 
\[K(p,E)=\frac{\langle R_p(u,v)v,u\rangle }{|u|^2|v|^2-\langle u,v\rangle^2 }\]
where $u,v\in E$ are linearly independent and $R_p$ is the Riemannian curvature tensor. 

\begin{ex}
    Let us consider the manifold of symmetric positive definite matrices i.e. 
    \[\{A\in Mat_{n\times n}(\bbR)\, |\, A^T=A,\,  x^TAx>0 \}.\]
    Compute the sectional curvature for this manifold.
\end{ex}

The space of symmetric positive definite matrices has many interesting applications. 
\begin{itemize}
    \item Optimisation (convex programming).
\item Information geometry and harmonic analysis (via Wishart laws). These are important in random matrix theory and statistics.
\item Medicine (diffusion tensor Imaging) as a model of the anisotropic diffusion of water in the tissues. 
\item Machine learning, one is interested in finding the correlation between given properties (for example one could take weight / volume or weight and height). One models the relationship with a covariance matrix.
\end{itemize}
\begin{definition}
Let $k\in \bbR$ and $m\geq 2$ be an integer. An $m$-manifold has constant sectional curvature 
$k$ if and only if $K(p,E)=k$ for every $p\in \cM$ and every 2-dimensional linear subspace $E\subset \cT_p\cM$.    
\end{definition}

\begin{theorem}
  Let $\cM$ be an $m$-manifold. Fix an element $p\in \cM$ and a real number $k$. Then the following are equivalent:
$K(p,E)=k$ for every 2-dimensional linear subspace $E\subset \cT_p\cM$.
The Riemann curvature tensor of $\cM$ at $p$ is given by:
\[\langle R_p(v_1,v_2)v_3,v_4\rangle=k(\langle v_1,v_4\rangle-\langle v_2,v_3\rangle),\]
for all $v_1,\cdots, v_4 \in \cT_p\cM$.
  
\end{theorem}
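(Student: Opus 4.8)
The plan is to observe that both conditions concern only the algebraic $(0,4)$-tensor
\[
R_p(x,y,z,w) := \langle R_p(x,y)z,w\rangle
\]
(abusing notation) on the inner product space $(\cT_p\cM,\langle\cdot,\cdot\rangle)$, and to reduce the equivalence to the standard algebraic fact that a tensor with the symmetries of a curvature tensor is completely determined by the quadratic form $(u,v)\mapsto R_p(u,v,v,u)$. The implication $(2)\Rightarrow(1)$ is immediate: substituting $x=w=u$ and $y=z=v$ into the claimed expression and dividing by $|u|^2|v|^2-\langle u,v\rangle^2$ returns exactly $k$, so every $2$-plane has sectional curvature $k$. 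The content lies in the converse.

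For $(1)\Rightarrow(2)$ I would first introduce the model tensor
\[
R'_p(x,y,z,w) = k\bigl(\langle x,w\rangle\langle y,z\rangle - \langle x,z\rangle\langle y,w\rangle\bigr),
\]
which is the right-hand side of the claim written in its correct bilinear form, normalized to match the definition of $K$. A direct check shows $R'_p$ has all the algebraic symmetries of a curvature tensor recorded in Exercise~\ref{Ex:con2} (together with antisymmetry in the last pair), and that $R'_p(u,v,v,u)=k(|u|^2|v|^2-\langle u,v\rangle^2)$. Hypothesis (1) says precisely that $R_p(u,v,v,u)$ equals the same quantity for all $u,v$. Hence the difference $D := R_p - R'_p$ inherits the full list of curvature symmetries and satisfies $D(u,v,v,u)=0$ for all $u,v$.

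It then remains to prove the key lemma: a $(0,4)$-tensor $D$ that is antisymmetric in its first pair and in its last pair, symmetric under interchange of the pairs, obeys the first Bianchi identity, and has $D(u,v,v,u)=0$ for all $u,v$, must vanish. I would argue by polarization. Replacing $u$ by $u+w$ in $D(u,v,v,u)=0$ and using the pair-symmetries to show $D(w,v,v,u)=D(u,v,v,w)$ yields $2\,D(u,v,v,w)=0$, hence $D(u,v,v,w)=0$ for all $u,v,w$. Polarizing once more, replacing $v$ by $v+z$, gives $D(u,v,z,w)=-D(u,z,v,w)$, i.e. antisymmetry in the middle pair. Together with antisymmetry in positions $(1,2)$ and $(3,4)$, the adjacent transpositions generate $S_4$, so $D$ is totally antisymmetric; for such a tensor the first Bianchi identity collapses to $3\,D(u,v,z,w)=0$, whence $D=0$ and $R_p=R'_p$.

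The delicate point is the first polarization, where the pair-interchange symmetry and the two pair-antisymmetries must be combined to turn $D(u,v,v,w)+D(w,v,v,u)=0$ into $2\,D(u,v,v,w)=0$; everything else is routine substitution. I anticipate no genuine obstacle beyond keeping these index symmetries straight, but I would take care to confirm that antisymmetry in the last pair—needed for the lemma—is indeed available, since it follows from metric compatibility $\con g=0$ rather than from the relations explicitly listed in Exercise~\ref{Ex:con2}.
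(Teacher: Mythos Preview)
Your argument is the standard, correct proof of this classical result. The paper itself states the theorem without proof, so there is nothing to compare against; your proposal fills that gap cleanly. I would only add that you quietly corrected a genuine error in the displayed formula: as printed, $k(\langle v_1,v_4\rangle-\langle v_2,v_3\rangle)$ is quadratic in the arguments while the left-hand side is quartic, so the statement cannot hold as written. Your model tensor $R'_p(x,y,z,w)=k(\langle x,w\rangle\langle y,z\rangle-\langle x,z\rangle\langle y,w\rangle)$ is the intended expression, and it would be worth flagging this correction explicitly rather than absorbing it silently into the proof.
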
 

\section{Torsion in Differential Geometry}
\subsection{Torsion tensor}

Let $M$ be a smooth manifold, and let $\nabla$ be an affine connection on $M$. The presence of torsion in $\nabla$ quantifies the extent to which infinitesimal parallel transport fails to be symmetric.

\begin{definition}
The \emph{torsion tensor} $\boldsymbol{T}$ associated with a connection $\nabla$ is the $(1,2)$-tensor defined by
\[
\boldsymbol{T}(X,Y) = \nabla_X (Y) - \nabla_Y (X) - [X,Y],
\]
for any vector fields $X,Y$ on $M$, where $[X,Y]$ denotes the Lie bracket and $\nabla_Y (X)$ is the covariant derivative. The connection $\nabla$ is said to be \emph{torsion-free} if $\boldsymbol{T}=0$.
\end{definition}

\subsection{Local coordinates}
In local coordinates, $\{x^i\}$, the torsion tensor can be expressed in terms of the connection coefficients $\Gamma_{ij}^k$:
\[\boldsymbol{T}^i_{jk}=\Gamma_{jk}^i-\Gamma_{kj}^i,\]

where $\boldsymbol{T}^i_{jk}$ are the components of the torsion tensor, and $\Gamma_{jk}^i$ are the Christoffel symbols of the connection.

\begin{itemize}
    \item The torsion measures the \emph{failure of parallel transport} to preserve the commutativity of vector fields. 

\item Let us mention that the torsion affects the \emph{holonomy} of a connection, which describes how vectors rotate when parallel transported around closed loops.
\end{itemize}

In Riemannian geometry, the Levi-Civita connection is the unique torsion-free connection that is compatible with the metric. This connection is central to the study of curvature and geodesics.

\, 

\subsection{Geodesic Deviations}
In the presence of torsion, the equation of geodesic deviation (which describes how nearby geodesics spread apart or converge) includes additional terms involving the torsion tensor. 

Intuitively, imagine two nearby geodesics on a surface. If the surface is flat (such as a plane), the geodesics are straight lines, and the distance between them remains constant. However, if the surface is curved (for instance, a sphere), the geodesics may converge or diverge over time. Geodesic deviation quantifies this behavior.

\, 

Let $\mathcal{M}$ be a smooth manifold equipped with a sheaf $\mathcal{O}_{\mathcal{M}}$ of smooth functions and a sheaf $\mathcal{E} = \mathcal{T}_{\mathcal{M}}$ of sections of the tangent bundle. A \emph{Riemannian structure} (or a \emph{semi-Riemannian structure} in the case of Lorentzian manifolds) is given by a metric $g: \mathcal{E} \otimes_{\mathcal{O}_{\mathcal{M}}} \mathcal{E} \to \mathcal{O}_{\mathcal{M}}$.

A geodesic on $\mathcal{M}$ is a path satisfying the equation
\[
\nabla_{\dot{\gamma}} \dot{\gamma} = 0,
\]
where $\nabla$ is the Levi-Civita connection associated with $g$.

Now, consider a one-parameter family of geodesics, represented by the morphism
\[
\wp: I \times (-\epsilon, \epsilon) \to \mathcal{M},
\]
where $I$ is an interval in $\mathbb{R}$ parametrizing proper time or arc length, and the second parameter represents a perturbation of the initial geodesic. The \emph{geodesic deviation vector field} is given by
\[
\cJ = \frac{\partial \wp}{\partial s} \Big|_{s=0},
\]
which defines a section of $\mathcal{E}$ along the central geodesic $\gamma(s=0)$. This vector field satisfies the \emph{Jacobi equation}, which in sheaf-theoretic terms can be written as
\[
\nabla_{\dot{\gamma}} \nabla_{\dot{\gamma}} \cJ + R(\dot{\gamma}, \cJ) \dot{\gamma} = 0.
\]
Here, $R: \mathcal{E} \times_{\mathcal{O}_{\mathcal{M}}} \mathcal{E} \to \operatorname{End}(\mathcal{E})$ is the Riemann curvature tensor, viewed as a section of the sheaf $\operatorname{End}(\mathcal{E}) \otimes_{\mathcal{O}_{\mathcal{M}}} \Omega^2_{\mathcal{M}}$.

\subsubsection{Flat vs. Curved Geometry:}
In the case where $(\mathcal{M}, g)$ is flat (i.e., $\mathcal{M}$ is locally isometric to an open subset of $\mathbb{R}^n$ with the Euclidean metric), we have $R = 0$, and hence $\cJ$ satisfies
\[
\nabla_{\dot{\gamma}} \nabla_{\dot{\gamma}} \cJ = 0.
\]
This implies that geodesics remain equidistant, meaning that nearby geodesics neither converge nor diverge.

If $(\mathcal{M}, g)$ is curved, then the curvature tensor $R$ introduces a term that governs the deviation of geodesics. The sign and structure of $R(\dot{\gamma}, \cJ) \dot{\gamma}$ determine whether nearby geodesics converge or diverge, capturing the intrinsic curvature of $\mathcal{M}$ in a functorial manner.

Thus, geodesic deviation measures the sheaf-theoretic failure of parallel transport to be trivial in the presence of curvature.

\subsection{Skew-Symmetry and Vanishing Torsion}

The torsion tensor is skew-symmetric in its lower indices:
\[\boldsymbol{T}(X,Y)=-\boldsymbol{T}(Y,X).\]

\subsubsection{\bf Vanishing Torsion}

If $\boldsymbol{T}=0$, the connection is said to be torsion-free. In this case, the connection coefficients satisfy 
\[\Gamma_{jk}^i=\Gamma_{kj}^i,\] and the connection is symmetric. 
\subsection{Relations to Curvature}
The torsion tensor and the curvature tensor $R$ of a connection are related via the Bianchi identity: 

The \textbf{First Bianchi Identity} for a connection with torsion can be written as:

\begin{equation}
    R^\nabla(X, Y)Z + R^\nabla(Y, Z)X + R^\nabla(Z, X)Y = (\nabla_X \boldsymbol{T})(Y, Z) + (\nabla_Y \boldsymbol{T})(Z, X) + (\nabla_Z \boldsymbol{T})(X, Y).
\end{equation}

In components, this can be rewritten as: 

\begin{align*}
 &R^\lambda_{\;\;\mu\nu\rho} + R^\lambda_{\;\;\nu\rho\mu} + R^\lambda_{\;\;\rho\mu\nu} = \\
 & \nabla_\mu \boldsymbol{T}^\lambda_{\;\;\nu\rho} + \nabla_\nu \boldsymbol{T}^\lambda_{\;\;\rho\mu} + \nabla_\rho \boldsymbol{T}^\lambda_{\;\;\mu\nu} + \boldsymbol{T}^\sigma_{\;\;\mu\nu} \boldsymbol{T}^\lambda_{\;\;\rho\sigma} + \boldsymbol{T}^\sigma_{\;\;\nu\rho} \boldsymbol{T}^\lambda_{\;\;\mu\sigma} + \boldsymbol{T}^\sigma_{\;\;\rho\mu} \boldsymbol{T}^\lambda_{\;\;\nu\sigma}.
\end{align*}

\, 

For a more categorical approach, let $\cE$ be a sheaf of sections of the tangent bundle (or a vector bundle over a space), and let 
$\nabla$ be a connection on $\cE$. The curvature $R^{\nabla}$ and torsion $\boldsymbol{T}$
appear as components of a functorial construction on the Atiyah algebroid 
(the sheaf of first-order differential operators preserving a given structure).

\subsection{Spencer differential}
In particular, using the Lie-algebroid formalism, the first Bianchi identity corresponds to the failure of the \textit{Spencer differential} $d_\nabla$ acting on torsion to be zero, that is:

\begin{equation}
    d_{\nabla} \boldsymbol{T} + R^\nabla \wedge_{\nabla} \operatorname{id} = 0,
\end{equation}

where \begin{itemize}
    \item $d_\nabla$ is the Spencer differential associated with the connection.
    \item $\wedge_{\nabla}$ denotes the wedge operation compatible with the connection.
\end{itemize}

Specifically, if you parallel transport a vector $Y$ along a curve tangent to $X$, 
\begin{example}
The Torsion can be interpreted as an obstruction to the integrability of certain geometric structures,
such as foliations or distributions on a manifold.
\end{example}
\begin{example}
The torsion plays a central role in non-Riemannian geometries. This includes Finsler geometry for instance and 
teleparallel gravity. In those frameworks, the torsion tensor is used to describe the gravitational field, 
replacing the curvature tensor of general relativity.
\end{example}


\part{Probabilities, Statistics, and Related Topics }\label{Ch:pms}
\chapter{Probability theory - Measure theory- Statistics}

In this chapter we give a short overview on probabilistic and statistic notions.
We will see that everything that has seen introduced so far can be used in the context of probability and statistics,
namely information theory. Such a mixture gave birth to the theory of information geometry.  

We will also recall some notions of distance or divergence between probabilities, which is important for the ``learning process''.

\section{Key Information for Statistics}

When studying a mathematical problem in statistics, several key pieces of information come into play:
\begin{enumerate}
    \item  {\bf Qualitative Description of the Observed Phenomenon:} 
    
    \item[] This refers to a detailed understanding of the nature and characteristics of the event or situation being observed, which can be represented through various inputs $\omega_i$ collected from the sample space $\Omega$.
\item  {\bf Information About the Unknown Distribution of Outcomes $\omega$}: 
\item[]
This involves understanding how different outcomes are distributed, which is fundamental to statistical analysis.
\item  {\bf A Priori Probabilities}: 
\item[]
This entails the probabilities associated with observing various phenomena, represented as $(\Omega, \mathbf{S}, P_{\theta})$. Here, $\mathbf{S}$ denotes a $\sigma$-algebra that organizes measurable sets for the outcomes.
\item  {\bf A Distribution on the Parameter Space $Q\{d\theta\}$}:
\item[] This refers to a probability distribution defined on the range of possible parameter values that could explain the outcomes, denoted by the parameter space $\Theta$.
\end{enumerate}

In this context, the first item is associated with a collection of inputs gathered from the sample space $\Omega$. The second item involves the probability distribution $P[\cdot]$, which comes from a family of distributions ${P_{\theta}}$, where the parameter $\theta$ belongs to the parameter space $\Theta$. 
Overall, this transforms the issue of mathematical statistics into a decision-making problem, as one must determine which pieces of information are most beneficial. Formally, every statistical problem relates to a measurable space $(\Omega, \mathbf{S})$, which captures the outcomes of the observed phenomenon, a family of probability distributions ${P_{\theta}}$, and potentially additional a priori knowledge about the unknown phenomenon linked to another measurable space $(\Omega', \mathbf{S'})$, which may concern hypotheses or actions taken.

\section{Probability }\label{sec:proba}
\subsection{Probability space }
The basic notion in probability theory is the one of random experiment.
Such an experiment is
mathematically described by a probability space characterized by the triplet
$(\Omega,\bS, P)$ where:

\begin{enumerate}
\item $\Omega$ is a sample space i.e. the space of all possible outcomes
(samples) $\omega$ of the
random experiment.

\item ${\bS}$ is the family of events. An event  $A\in
{\bS}$, is identified with a subset $ A$ of $\Omega$.

 \item The probability associates to each event a real number between 0 and 1. A probability is
defined by a map from the space of events ${\bS}$ to the set $[0,1]$.
\end{enumerate}

\begin{definition}[$\sigma$-algebra]
A family ${\bS}$ of subsets of a sample space $\Omega$ is a $\sigma$-algebra defined by 
the following algebraic structures:

\begin{enumerate}
\item $\Omega \in {\bS}$, 
\item $ A \in {\bS}\Longrightarrow A^c \in {\bS},$
\item $A_1,A_2,\dots  \in {\bS}\Longrightarrow
\bigcup\limits_{i=1}^\infty A_i
\in {\bS}.$
\end{enumerate}
the pair $(\Omega,{\bS})$ is called measurable space.
\end{definition}

\begin{examples}
\ 

\begin{itemize}
\item The \emph{smallest} $\sigma$-algebra containing one $A\subset \Omega$ is $\bS=\{\emptyset, A,A^c,\Omega\}$.

\item  The \emph{largest}  $\sigma$-algebra is consisted of all subset  of $\Omega$. This  algebra served us in discret spaces, but is too large to be useful in general

\item If $\Omega$ is a topological space the $\sigma$-algebra generated by all the open subset of $\Omega$ is called the \emph{Borel's algebra}.
\end{itemize}
\end{examples}

\begin{ex}\label{Ex:6.1}
    Show that the $\sigma$-algebra generated by $n$ disjoint subsets of a set $\Omega$ has cardinality $2^n$
\end{ex}

 \begin{definition}
 A probability $P$ is the following map  $$P: {\bS}\to [0,1]$$ which
satisfies the following properties: 

\begin{enumerate}
\item $P[\Omega] = 1,$
\item $ P[\emptyset] = 0,$
\item $ P[\mathop{\cup}\limits_{i=1}^\infty A_i] =
\sum\limits_{i=1}^\infty P[A_i]\enspace {\mathrm if}\enspace  A_i\cap A_j =
\emptyset,\enspace
 \forall  i \not= j.$
\end{enumerate}
\end{definition}

\begin{itemize}
\item The triplet
$(\Omega,{\bS},P)$ is called a probability space.

\item An event $A \in {\bS}$ such that $P[A] = 0$ is called a $P$-null set.

\item An event $A \in {\bS}$ such that $P[A] = 1$  is said to be realized
almost surely (usually denote: ``a.s.'').
\end{itemize}

From this definition, we can obtain have  useful relations:
\begin{itemize}
 \item $P[A^c] = 1-P[A], \quad A\in {\bS}$
\item  $P[A\setminus B] = P[A] -P[B], \quad B\subseteq A \in {\bS}$, in
particular $P[B] \leq P[A]$
\item  $P[A\cup B] =P[A] +P[B]-P[A\cap B],\quad A\, B \in {\bS}$.
\end{itemize}

\section{Examples}
To  illustrate the  construction of probability spaces, let us discuss the following examples.

\subsection{Flipping coin examples: How biased is it?}
\begin{example}
    A fixed coin is flipped independently  $n$ times. The sequence of outcomes  
  $(\omega_1, \dots, \omega_n)$, consisting of heads  $H$ and tails $T$, might look like \[\underbrace{HTHTHHHHT\ldots THTTT)}_{n}.\] To determine whether a coin is biased, we need to analyze the probability of obtaining heads $H$ and tails $T$ over multiple independent flips.

  \, 
  
  The question is whether we can assume that the coin in question is not biased.   
\, 

    Equivalently, we can ask: does the probability of obtaining heads  $H$ deviate from $\frac{1}{2}$ by less than $10^{-3}$?
\end{example}

Now, consider the following situation:

\begin{example}
    The coin is flipped independently, but the probability distribution governing the outcomes is unknown. The only available information is a constant parameter $\theta $.  

    We do know, however, that the probability of observing any specific sequence  $\omega$  with $k(\omega$)  heads  $H$  follows the Bernoulli probability law (this is the Bayesian approach):
    \[
    P(\omega) = \theta^{k(\omega)} (1 - \theta)^{N-k(\omega)}.
    \]
    
    Consequently, the probability distribution $P[\cdot]$  is given by
    \[
    p_j(\theta) = \theta^{k({\omega}_j)} (1 - \theta)^{N- k ({\omega}_j)}, 
    \]
    where
    \[
    j=1, \dots, 2^{N},
    \]
    and
    \[
    \theta \in [0,1].
    \]
    
    This defines a parametric curve in the $(n - 1)$-dimensional simplex of all probability distributions on the sample space  $\Omega$, where $ n=2^N $.
    
    Now, suppose we impose a lexicographical order on all possible sequences of $ \omega$'s. Then, the number $ k({\omega}_j) $ corresponds to the number of zeros in the binary representation of the integer $ j - 1 $.
\end{example}

\,

To end our discussion on the example of the flipping coin, we observe the following. 

\begin{example}~
    \begin{itemize}
        \item For $N=1$, we obtain $n=2^1=2$.
         The unknown probability distribution  $P[\cdot]$ is represented by a  ``curve'' with parametric equation
    \[
    p_j(\theta) = \theta ^{k(\omega_j)}
    (1- \theta)^{1- k({\omega}_j)}, 
    \]
for all probability distributions on $\Omega$    
\[
    j=1,2;\quad \theta \in [0,1]
    \]
     in the $1$-dimensional simplex, that is on the segment.

      \item  For $N=2$, we obtain $n=4$.
         The probability distribution $P[\cdot]$ is represented by a curve with parametric equation:   
\[
p_j(\theta) = \theta^{k({\omega}_j)} (1- \theta)^{2-k({\omega}_j)},
\]
where
\[
 j= 1,...,4
\]
and
\[
 \theta \in [0,1].
\]
In this case, the parametric curve is a curve in the $3$-dimensional probabilistic simplex.

\, 

We explain how this curve behaves in relation to the representation of words in letters $H$ and $T$. 

We begin with $\theta=0$, which corresponds to the vertex $P[TT]=1$. Then, the curve goes through the center of our simplex, since we have $P[HH]=P[HT]=P[TH]=P[TT]=\frac{1}{4}$ and where $\theta =\frac{1}{2}$. The endpoint of the curve is given by the vertex $P[HH]$ which corresponds to $(\theta=1)$.
    \end{itemize}   
\end{example}

More generally, the flipping coin example can be re-contextualized as follows. 
\begin{example}
For finite probabilities, we may use the \emph{multinomial distribution},  with sample space $\Omega=
\{\omega_1,\dots,\omega_m\}$ and $\sigma$-algebra generated by ${\bS}=
{\mathcal P}(\Omega)$, which is the set of subsets of $\Omega$ generated by the
elementary one element sets $\{\omega_i\}$. The probability is defined by 
\begin{equation}
 p_k=p(\omega_k)=P[\{\omega_k\}],\quad \sum\limits_{k=1}^m p_k=1,
\end{equation}
with
\[ p=\sum\limits_{k=1}^m
p_k\,\delta_{\omega_k}\]
and this induces \begin{equation}
p=\sum\limits_{k=1}^{m-1}p_k\,\delta_{\omega_k}
 +(1-\sum\limits_{k=1}^{m-1} p_k)\,\delta_{\omega_m}.
\end{equation}

\end{example}

\subsection{Other examples}
\begin{example}
    In the case of countably infinite probabilities, let 
$\Omega=\{\omega_1,\omega_2,\dots\}$ and let the $\sigma$-algebra be generated
by the elementary sets $\{\omega_i\}$. The probability is defined by
its value on these sets such that:
\begin{equation}
 p_k=p(\omega_k)=P[\{\omega_k\}],\quad \sum\limits_{k=1}^\infty p_k=1.
\end{equation}

A typical example of such probability distribution is the Poisson
distribution,
\begin{equation}
p_k= e^{-\lambda}{\lambda^k\over k!}.
\end{equation}

 In the case of continuous probability distributions with respect to the Lebesgue measure, we consider the case where the sample space is identified to 
$\mathbb{R}$ with the Borel
$\sigma$-algebra ${\bS}$ being the subset of ${\mathcal P}(\Omega)$
generated by the intervals $(-\infty,a],$ where $ a\in {\mathbb R}$. The
probability is then given by 
\begin{equation}
P\big[ (-\infty,
a]\big] = \int\limits_{-\infty}^a
\rho(\omega)\, d\omega 
\end{equation}
with
\[\rho(\omega)\geq 0,\quad \int\limits_{-\infty}^\infty
\rho(\omega)\, d\omega =1.\]
A classical example is the Gaussian distribution: 

\begin{equation}
P\big[ (-\infty, a]\big]= \int_{-\infty}^a
\rho(x)\, dx\,,\quad
\rho(x) = \frac{1}{ \sigma\sqrt{2\pi}}
e^{-\displaystyle{\frac{(x-\mu)^2}{ 2\sigma^2}}}.
\end{equation}
\end{example}

\begin{ex}\label{Ex:BorelThm} Prove the following theorem on $\sigma$-Algebras and subcovers.
Let $(X, \mathcal{A}, \mu)$ be a measure space, and let $\mathcal{B} \subseteq \mathcal{A}$ be a sub-$\sigma$-algebra of $\mathcal{A}$. If $\{ A_n \}_{n \in \mathbb{N}}$ is a countable cover of $X$ by sets in $\mathcal{A}$, then there exists a subcover in $\mathcal{B}$ if and only if the measure $\mu$ on $\mathcal{A}$ is uniquely determined by its restriction to $\mathcal{B}$.
\end{ex}

\section{Measure}\label{sec:meas}
In many contexts, it is beneficial to consider a notion of measure that extends beyond the confines of probability theory. Unlike a probability measure--which by definition assigns a total measure of 1 to the entire space--a general measure does not require such normalization. By relaxing the normalization condition, we can define and work with measures that better reflect the intrinsic properties of the space under consideration. This generalization opens up new possibilities for both theoretical developments and practical applications, ranging from quantum field theory to ergodic theory and beyond.

\begin{definition}
 A measure $\lambda$ on the measurable space $(\Omega,{\bS})$ is a map, which maps events to real positive numbers such that:
\begin{enumerate}
\item $\lambda: {\bS}\to \bbR^+$,
 \item $\lambda[\emptyset] = 0,$
\item $\lambda[\mathop{\cup}\limits_{i=1}^\infty A_i] =
\sum\limits_{i=1}^\infty P[A_i]\enspace$ if $ A_i,\cap A_j = \emptyset, \,
 \forall\enspace i \not= j.$
\end{enumerate}
\end{definition}

\vspace{5pt}
\begin{itemize}
\item A measure $\lambda$ is said to be emph{bounded (or finite)} if  $\lambda[\Omega]<\infty $.
Hence a probability is a finite measure (bounded by 1).\\

\item A measure $\lambda$ is \emph{$\sigma$-finite} if 
\[\Omega =
\bigcup\limits_{k=1}^\infty
A_k, \, \text{ with  }\,  \lambda[A_k]< \infty.\] 
\end{itemize}

\begin{remark}
We draw the readers attention upon the fact that a $\sigma$-finite measure is in general not finite. An example is
the Lebesgue measure $\lambda$ on the real line, which is finite on all bounded
interval $\lambda[ a,b] = \mid b-a\mid$, but not finite.
\end{remark}

 \begin{ex}\label{Ex:Borel}
   If $X$ is a metric  space and $A$ and $B$ are two disjoint closed subsets of $X$, then there exists a continuous function $f(x)$ on $X$ with properties: 

   \,

   1) $0\leq f(x)\leq 1$

\, 

   2) \[\begin{cases}
       f(x)=0 & \text{for} \quad  x\in A \\
       f(x)=1 & \text{for} \quad x\in B
   \end{cases}\]
    
   \end{ex} 

\section{Measurable function - Random variables}\label{sec:rand}

\subsection{Definition}
\ 

Let us first consider  a random experiment described by the probability
space $(\Omega,\bS,P)$.  A real random variable is an application from the sample space
$\Omega$ to a subspace $E\in \bbR$  which preserves the
algebra of events. 
 
 \begin{definition}[Mesurable function]
  Let $(E, \cB)$ a mesurable space, a \emph{mesurable function} $f$ from
$(\Omega,\bS)$ to  $(E, \cB)$ is  such the inverse image 
of a mesurable set in $\cB$ is a mesurable set in  $\bS$
\[ 
f^{-1}(B)=\{\omega \in \Omega,\, X(\omega) \in B\} \in \bS, \quad \forall B\in
\cB.
\]
\end{definition}

\begin{definition}[Random variable]
  Let $(\Omega, \bS,P)$ a probability space, a $E$ valued \emph{random variable} $X$ is the class of $P$-ae ($P$-almost everywhere, i.e. up to set of $P$-measure $0$)  measurable function  from $(\Omega,\bS,P)$ to  $(E, \cB)$   such the inverse image of a measurable set in $\cB$ is a measurable set in  $\bS$

\end{definition}

\begin{itemize}
\item A real random variable on the measurable space $(\Omega,\bS)$ is a measurable
function $X$ with value in $E \subseteq\bbR$ (or $\overline\bbR=\bbR\cup\{-\infty,+\infty\}$) and $\cB$ is the $\sigma$-algebra generated by the topology on $E$. \\

\item If $E$ is finitely or infinitely many
countable we speak about finite or discrete random variables.\\

\item If $E$ is a vector space we speak about random vectors.
\end{itemize}

Let Let $(\Omega, \bS,P)$ a probability space, the integral over $\Omega$, if there exist, of a random variable $X$  is called the {\emph{expectation (or mean)} of $X$, and denoted 
\[
\bbE_P[X]=\int_\Omega X(\omega) P[d\omega].
\]

That is $\bbE_P[X]$ exist if $X\in L(\Omega,\bS,P)$.

\section{Dominating Measure}

\begin{definition}
If $ \mu $ and $ \nu$ are two measures on the same measurable space  $(\Omega,\bS)$,
$\mu $ is said to be \emph{absolutely continuous} with respect to $ \nu $ if $ \mu ( A)=0$ for every set 
$A$ for which $\nu (A)=0$. This is written as "$ \mu \ll \nu $". That is:
\[\
mu\ll \nu \text{ if and only if  \  for all } A \in \bS, \ (\nu (A)=0 \Leftarrow  \mu (\cA)=0).
\]

When $ \mu \ll \nu $,then $\nu $ is said to be \emph{dominating} $ \mu $.
\end{definition}

Absolute continuity of measures is:
\begin{itemize}
\item reflexive, 
\item transitive, 
\item not antisymmetric
\end{itemize}

\vspace{3pt}
If $ \mu \ll \nu $ and $ \nu \ll \mu$, the measures $ \mu $ and $\nu $ are said to be \emph{equivalent}. 
Thus absolute continuity induces a partial ordering of such equivalence classes.

\begin{ex}\label{Ex:domMeas}
   As a concrete example of a dominating measure,  let $\{P_k\}$ be a finite or countable family of probabilities on the same measurable space $(\Omega,\bS)$ and consider
   \begin{itemize}
       \item the arithmetic mean of a finite collection of probability measures:
       \[
       P_0[\cdot] = \left(\frac{1}{n} \sum_{k=1}^n P_k\right)[\cdot],
       \]
       \item or, in the case of a countable family, the weighted series:
       \[
       P_0[\cdot] = \left(\sum_{k=1}^{\infty} \frac{1}{2^k} P_k\right)[\cdot].
       \]
   \end{itemize}
   Prove that $P_k \ll P_0$
\end{ex}

\vspace{3pt}

If $ \mu $ is a signed or complex measure, it is said that $ \mu$  is absolutely continuous with respect to 
$\nu $if its \emph{variation } $ |\mu |$ satisfies$ |\mu |\ll \nu$ ; equivalently, if every set 
$\cA$ for which $ \nu (A)=0$ is $ \mu $-null.

\begin{theorem}[The Radon--Nikodym theorem]
If $\mu$ is absolutely continuous with respect to $\nu$, and both measures are $\sigma$-finite, then $\mu$ has a density, or "Radon--Nikodym derivative", with respect to 
$\nu$, which means that there exists a $\nu$-measurable function 
$\rho$ taking values in $ [0,+\infty )$, denoted by $ \rho=d\mu /d\nu$ , such that for any 
$\nu$-measurable set $\cA$ we have:
\[
 \mu (A)=\int _{A}\rho\,d\nu .
\]
\end{theorem}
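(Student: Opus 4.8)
The plan is to follow von Neumann's Hilbert space argument, which converts the existence of a density into a single application of the Riesz representation theorem, and then to reduce the general $\sigma$-finite case to the case of finite measures. This is cleaner than the alternative ``greedy supremum'' proof, which would instead extract $\rho$ as the supremum of the family $\{g \ge 0 : \int_A g\,d\nu \le \mu(A) \ \forall A\}$ and rely on the Hahn decomposition of the signed measure $\mu - \rho\,\nu - \varepsilon\nu$.

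First I would reduce to finite measures. Using $\sigma$-finiteness, I would refine the two given covers by intersection to partition $\Omega = \bigsqcup_{n} \Omega_n$ into countably many pairwise disjoint sets with $\mu(\Omega_n) < \infty$ and $\nu(\Omega_n) < \infty$. On each $\Omega_n$ the restricted measures are finite and still satisfy $\mu|_{\Omega_n} \ll \nu|_{\Omega_n}$; if I can produce a density $\rho_n$ there, then $\rho = \sum_n \rho_n \mathbf{1}_{\Omega_n}$ is the desired global density, since $\mu(A) = \sum_n \mu(A \cap \Omega_n) = \sum_n \int_{A \cap \Omega_n} \rho_n\, d\nu = \int_A \rho\, d\nu$ by countable additivity and monotone convergence.

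For the finite case, set $\lambda = \mu + \nu$, itself finite, and define $T(f) = \int_\Omega f\, d\mu$ on the Hilbert space $L^2(\Omega,\bS,\lambda)$. Cauchy--Schwarz together with $\mu \le \lambda$ gives $|T(f)| \le \|f\|_{L^2(\lambda)}\sqrt{\lambda(\Omega)}$, so $T$ is a bounded linear functional, and the Riesz representation theorem yields $g \in L^2(\lambda)$ with $\int f\, d\mu = \int f g\, d\lambda$ for every $f$, equivalently $\int f(1-g)\, d\mu = \int f g\, d\nu$. Testing this identity on the indicators $\mathbf{1}_{\{g<0\}}$ and $\mathbf{1}_{\{g>1\}}$ forces $0 \le g \le 1$ to hold $\lambda$-almost everywhere.

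The decisive step uses absolute continuity. On $E = \{g = 1\}$ the identity gives $0 = \nu(E)$, whence $\mu(E) = 0$ because $\mu \ll \nu$; thus $g < 1$ almost everywhere and $\rho := g/(1-g)$ is a well-defined nonnegative measurable function. To verify it is the density I would substitute $f = (1 + g + \cdots + g^{k})\,\mathbf{1}_A$ into $\int f(1-g)\, d\mu = \int f g\, d\nu$, obtaining $\int_A (1 - g^{k+1})\, d\mu = \int_A (g + \cdots + g^{k+1})\, d\nu$, and then let $k \to \infty$: monotone convergence on $\{g<1\}$ drives the left side to $\mu(A)$ and the right side to $\int_A \rho\, d\nu$. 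The main obstacle is precisely this final passage—handling the set $\{g=1\}$ and justifying the limit—since this is exactly where absolute continuity is indispensable and where one must guarantee $\rho < \infty$ almost everywhere, a fact that follows from $\mu$ being finite on each $\Omega_n$. Essential uniqueness then follows by the standard argument that $\{\rho_1 \ne \rho_2\}$ must be $\nu$-null.
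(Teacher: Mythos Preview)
Your argument is the standard von Neumann Hilbert-space proof and is correct: the reduction to finite measures by a common refinement of the two $\sigma$-finite covers, the Riesz representation on $L^2(\mu+\nu)$, the sandwich $0\le g\le 1$ obtained by testing on $\{g<0\}$ and $\{g>1\}$, the use of $\mu\ll\nu$ to kill $\{g=1\}$, and the geometric-series/monotone-convergence passage are all sound.

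The paper, however, does not prove this theorem at all; it is quoted as a classical result and used immediately afterwards to define the density $\rho=dP/d\nu$. So there is nothing to compare on the level of strategy: you have supplied a full proof where the text simply invokes the statement. Your approach is one of the two canonical ones (the other being the supremum/Hahn-decomposition route you mention), and either would serve as a complete justification of what the paper takes for granted.
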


If $\mu=P\ll \nu$ is a probability and $\nu$ is a positive $\sigma$-finite measure, the Radon--Nikodym derivative $\rho=\frac{dP}{d\nu}$ is a random variable, called density  (or distribution) of $P$ with respect to the measure $\nu$. We have

\[
\rho= \frac{dP}{d\lambda},\quad \quad P(A)=\int_{A}\rho d\nu,\  \forall \, A\subset \bS, \quad \int_{\Omega}\rho d\nu=1 . 
\]

In specific cases, a probability measure  $\mu$ can behave as a dominating measure for another measure $\nu$
if $\nu \ll\mu.$ 

\, 

\section{About Information Geometry}

The family of probability densities of (parametric) probability measures is denoted: 
\[\sfS= \{\rho_\theta \in L^1(\Omega, \bS,\mu) \quad \rho>0, \quad \mu - a.e \quad \int_{\Omega}\rho d\mu=1\}.\]
We consider the case when $\sfS$ is a smooth topological manifold.

Probability distributions in a statistical manifold of exponential type are such that there exist parameterizations $\theta$ satisfying:
\[
\rho_\theta = \exp \left( \sum_{i=1}^n \theta_i X^i- \psi (\theta)\right),
\]

where parameters $\theta$ and random variables $X=(X^i)_{i=1}^n$ are chosen adequately,
$\psi(\theta)$ is a \emph{potential function}.

In information geometry, an important question is whether a given family of probability distributions can be recognized as an exponential family--or, more generally, if it exhibits an exponential structure. Recall that a probability distribution belongs to an exponential family if its density function can be written in the form:
\[\rho_\theta(\omega)=h(\omega)\exp\left( \eta(\theta)\cdot \sfT(\omega)-\sfA(\theta)\right),\]
where: 

\begin{itemize}
    \item $h(\omega)$is a base measure (Radom--Nikodym derivative w.r.t the dominated measure $\mu$),
    \item $\eta(\theta)$ represents natural parameters
    \item $\sfT(\omega)$ is a sufficient statistic,
    \item $\sfA(\theta)$ is the log-partition function ensuring normalization.
\end{itemize}

\vspace{5pt}
This representation is not merely a convenient rewriting; it endows the parameter space with a rich geometric structure. 
In particular, when a family of distributions is exponential, the Fisher information metric
 $g_{ij}=\bbE[ \partial_i \ln \rho \; \partial_j \ln \rho]$ and the dual affine connections
(which emerge naturally in this setting) yield a dually flat space. This flatness simplifies the study of geodesics, divergence functions, and many aspects of statistical inference.

A classical example is the normal (Gaussian) family. 
While the normal distribution indeed constitutes an exponential family, its precise exponential form depends critically on the choice of parametrization. 

\begin{ex}\label{Ex:Gauss}
The Normal/Gaussian distributions on ($\bbR, \bB,\lambda)$, where $\bB$ is the Borel algebra on $\bbR$ and $\lambda[(a,b)]=\vert b-a\vert$ the Lebesgue measure is defined by  the probability distribution with respect to the Lebesgue measure on $\bbR$ by: 
\[\rho_{\mu,\sigma}(x)=\frac{1}{\sqrt{2\pi}\sigma}\exp{-\frac{(x-\mu)^2}{2\sigma^2}}, \quad \mu\in \bbR,\ \sigma >0.\]
Is the normal family exponential? If yes, give a proof. 
\end{ex}

The specific choice of parametrization affects not only the expression of the density but also the form of the Fisher information metric and the associated affine connections. 
Thus, determining whether a family of probability distributions is of exponential type--and selecting an appropriate parametrization in cases like the normal family--is fundamental in information geometry. It allows one to exploit powerful geometric techniques to derive insights into the behavior of statistical models, facilitating tasks such as parameter estimation, hypothesis testing, and the study of statistical divergences.

\subsection{Tangent space to a manifold of probability distributions}
Let $u\in L^{2}(\Omega,\bS, P_{\theta})$ be a tangent vector to the manifold  $\sfS$ (of dimension $n$) of
probability distributions (we assume of exponential type) at the point $P_{\theta}$.
\[
\cT_{\theta}=\left\{ u\in L^{2}(\Omega, \bS, P_{\theta}); \quad \bbE_{P_{\theta}}[u]=0,\quad u=\sum_{i=1}^{d}u^i\partial_{i}\ell_{\theta}         \right\}
\]
where $\bbE_{P_{\theta}}[u]$  is the expectation value, with respect to the probability ${P_{\theta}}$.

The tangent space to the considered manifold $\sfS$ is (locally) isomorphic to the $n$-dimensional linear space generated by the family of (centered) random variables (called \emph{score vector}),
$( \partial_{i}\ell_\theta), \ \{i=1,\dots,n\}$ where $\ell_{\theta} = \ln \rho_{\theta}.$

\begin{remark}
Densities are positive random variables almost everywhere; the tangent vectors are signed measures (it is a real-valued measure) with vanishing mean value.  
\end{remark} 

\begin{ex}\label{Ex:esp0}
  Show that $\bbE_{P_{\theta}}[u]=0$.
\end{ex}

\begin{ex}\label{Gauss2}
Show that the tangent space $\cT_{\mu,\sigma} $ to the manifold of Gaussian distribution \eqref{Ex:Gauss} is spanned, in the $(\mu, \sigma)$-parametrization, by the two random variables

\begin{itemize}
    \item ${\partial_\mu \ln \rho (x) = \frac{\partial \ln \rho}{\partial \mu}(x)  =\frac{x-\mu }{
\sigma^2}}$
\item ${\partial_\sigma \ln \rho (x) = \frac{\partial \ln \rho }{ \partial \sigma}(x)  =\frac{(x-\mu)^2 }{\sigma^3}-{1\over \sigma}}.$
\item Deduce that the plane $\cT_{\mu,\sigma}$ consists of all the quadratic polynomials in $x$ whose
expectation vanishes: 
\[
T_{\mu,\sigma}= \{ Ax^2+Bx+C, \quad C = A(\sigma^2 +\mu^2) -B\mu\}.
\]
\end{itemize}
\end{ex}

\subsection{The Riemannian metric}
  In the basis $\partial_i\ell_{\theta}$ the (Fisher--Rao) metric is the covariance matrix of the score vector:
\[   {g_{i,j}(\theta)=\mathbb{E}_{P_{\theta}}[\partial_i\ell_{\theta}\partial_{j}\ell_{\theta}]   } \]
 with   
\[{g^{i,j}(\theta)=\mathbb{E}_{P_{\theta}}[a^{i}_{\theta}a^{j}_{\theta}]},\] 

where  $\{a^{i}\}$ form a dual basis to $\{\partial_j\ell_{\theta}\}$:

\[a^{i}_{\theta}(\partial_j\ell_{\theta})=\mathbb{E}_{P_{\theta}}[a^{i}_{\theta}\partial_j\ell_{\theta}]=\delta^i_{j}\]

and
\[
\mathbb{E}_{P_{\theta}}[a^{i}_{\theta}]=0.
\]

\subsection{Applying the notions of parallel transport}
Suppose that we perturb infinitesimally $\theta $ such that $ \theta'=\theta+d\theta$.
Consider the linear map

\[
m:\cT_{\theta+d\theta}\to \cT_{\theta}
\] (this map depends on $d\theta$ and reduces to the identity map as $d\theta$ tends to 0). 

The given vector $$u^i\partial'_i \in \cT_{\theta+d\theta}$$ is mapped to $(u^k+d\theta^i\Gamma_{ij}^ku^j)\partial_k\in\cT_{\theta}$. 

This construction allows to establish a correspondence between the vectors in $T_{\theta}$ and $T_{\theta'}$.

The function $\Gamma_{ij}^k(\theta)$ are the coefficients of the affine connection.

\subsection{Covariant derivative}

Let $\pi:E\to \sfS$ be a vector bundle. A covariant derivative (also knows as a connection) is an $\bbR$-bilinear map 

\[
\nabla: \Gamma(\cT\sfS)\times \Gamma(E)\to \Gamma(E) \quad (X,s)\mapsto \nabla_{(X,s)}
\]
Consider the intrinsic change in the $j$-th basis vector $\partial_j (\theta)$ as $\theta$ deforms into $\theta'$, 
in the direction of $\partial_i$. We obtain the following vector field:
\[
\nabla_{\partial i}\partial_j=\Gamma_{ij}^k(\theta)\partial_k(\theta).
\]
This is a covariant derivative of the vector field $\partial_j$ along $\partial_i$. It is determined from the coefficients of the affine connection (namely $\Gamma_{ij}^k(\theta)$).

\subsection{Amari--Chentsov tensor}

There exists a skewness tensor. It is a fully symmetric covariant tensor of rank 3: 

\[\cT\cM \times \cT\cM \times \cT\cM\to \mathbb{R},\] 

given by \[ T|_{P_{\theta}}(u,v,w)= \mathbb{E}_{P_{\theta}}[u_{\theta}v_{\theta}w_{\theta}] \] 

so that we have:  \[T_{ijk}(\theta)=\mathbb{E}_{P_{\theta}}[\partial_i\ell_{\theta}\partial_j\ell_{\theta}\partial_k\ell_{\theta}].\]

The skewness tensor was introduced to formalize the notion of statistical curvature, via the affine connections $\{\nabla^{\alpha}\}_{\alpha\in \mathbb{R}}$. 
We have the following:
\[\nabla^{\alpha}_{X}Y= \nabla^{0}_{X}Y- \frac{\alpha}{2}(\underbrace{T.g^{-1}}_{\overline{T}})(X,Y),\]
 for any couple of vector fields $X, Y$ over $\sfS$; 
$\nabla^{0}$ is the Levi--Civita connection and $(\cdot)$ is the “contraction” (of two tensors).   

\subsection{Statistical inference}
The coefficients of the affine connection are:

\[\Gamma_{ijk}^\alpha=\mathbb{E}[\{\partial_i\partial_j\ell(x,\theta)+\frac{1-\alpha}{2}\partial_i\ell(x,\theta)\partial_j\ell(x,\theta)\}\partial_k\ell(x,\theta)].\] 
This called the $\alpha$-connection. 

Recall that the third order Amari--Chentsov tensor is defined to be: 
\[T_{ijk}(\theta)=\mathbb{E}[\partial_i\ell_\theta\partial_j\ell_\theta\partial_k\ell_\theta].\]

We use this tensor to simplify calculations of the $\alpha$-connection i.e.: 
$$\Gamma_{ijk}^\alpha=\Gamma_{ijk}^1+\frac{1-\alpha}{2}T_{ijk}.$$

 The $\alpha$-connection has a meaning of its own, depending on $\alpha$.  It plays, for example, an important role in statistical inference.

\section{Hints and solutions to Exercises}

Exercise \ref{Ex:6.1}. It is easy to show it using basic set theory.

\, 
 
Exercise \ref{Ex:BorelThm}.
~

1. \textbf{Second-Countability of $ X $:} \\
   By definition, $ X $ is second-countable, meaning it has a countable basis $ \mathcal{B} = \{B_n\}_{n \in \mathbb{N}} $ for its topology. That is, every open set in $ X $ can be written as a union of elements of $ \mathcal{B} $.

2. \textbf{Open Cover $ \mathcal{U} $:} \\
   Let $ \mathcal{U} = \{U_\alpha\}_{\alpha \in I} $ be an open cover of $ X $. This means:
   \[
   X = \bigcup_{\alpha \in I} U_\alpha.
   \]

3. \textbf{Constructing a Countable Subcover:} \\
   For each $ x \in X $, there exists some $ U_\alpha \in \mathcal{U} $ such that $ x \in U_\alpha $. \\
   Since $ \mathcal{B} $ is a basis, there exists some $ B_{n_x} \in \mathcal{B} $ such that:
   \[
   x \in B_{n_x} \subseteq U_\alpha.
   \]
   Let $ \mathcal{B}' = \{B_{n_x} \mid x \in X\} $. This is a subset of $ \mathcal{B} $, and since $ \mathcal{B} $ is countable, $ \mathcal{B}' $ is also countable.

4. \textbf{Extracting a Countable Subcover:} \\
   For each $ B_{n_x} \in \mathcal{B}' $, choose one $ U_\alpha $ such that $ B_{n_x} \subseteq U_\alpha $. Let $ \mathcal{U}' $ be the collection of all such $ U_\alpha $. \\
   Since $ \mathcal{B}' $ is countable, $ \mathcal{U}' $ is also countable.

5. \textbf{Verification that $ \mathcal{U}' $ is a Cover:} \\
   For any $ x \in X $, there exists $ B_{n_x} \in \mathcal{B}' $ such that $ x \in B_{n_x} \subseteq U_\alpha $ for some $ U_\alpha \in \mathcal{U}' $. \\
   Thus, $ x \in U_\alpha $, and since $ x $ was arbitrary, $ \mathcal{U}' $ covers $ X $.

6. \textbf{Conclusion:} \\
   We have constructed a countable subcover $ \mathcal{U}' $ of $ \mathcal{U} $, proving the theorem.

\,

  Exercise \ref{Ex:Borel}. 
   \, 
   
   If $\inf_{x\in A,y\in B} d(x,y)=\delta>0$ then the function $f$ can be chosen to be uniformly continuous. 

   Let $f(x)=d(x,A)/[d(x,A)+d(x,B)]$. It is easy to verify that $f$ satisfies both conditions 1 and 2. The other part of the theorem follows from the fact that $d(a,X)+d(x,B)\geq \delta$ and from the following fact that the function $d(x,A)$ satisfies the inequality \[|d(x,A)-d(y,A)|\leq d(x,y).\]
   In particularly, $d(x,A)$ is uniformally continuous. 

\, 

Exercise. \ref{Ex:domMeas}. The proof is omitted since it is considered in Ex. 7.2.
\, 

Exercise. \ref{Ex:esp0}. The proof is omitted since it is a straightforward calculation.

\, 

Exercise \ref{Ex:Gauss} and Ex. \ref{Gauss2}.
By an appropriate change of variables, for the Gaussian, this density can be rewritten in the exponential family form, 
revealing its natural parameters and sufficient statistics.

\chapter{Categorical Structures in Probability and Measure Theory}
In this chapter, we embark on a journey into probability and statistics in a manner that may initially surprise the reader: through the lens of philosophy. Rather than taking the conventional route of axiomatic probability theory, we begin with a deeper reflection on the nature of mathematical structures and their underlying symmetries. Our point of departure is a generalization of Felix Klein’s revolutionary insight--his Erlangen Program--which reinterpreted geometry as the study of figures and their congruences under transformation groups.

\, 

We invite the reader to consider an analogous perspective in the realm of probability distributions. Just as Klein’s figures reside in a structured mathematical world shaped by transformations, we view probability distributions as abstract entities within a broader conceptual space. This space, much like Plato’s world of ideas, consists of idealized probabilistic objects related by structural equivalences. These equivalences, or congruences, form the foundation upon which we build a categorical framework for probability theory.

\, 

This shift in perspective naturally leads us to the introduction of categories of probability distributions, where morphisms arise from Markov kernels--stochastic mappings that embody the very notion of congruence in this setting. In this way, Markov kernels serve as the probabilistic analogues of Klein’s geometric transformations, revealing deeper structures in statistical theory that might otherwise remain hidden.

\, 

Through this approach, we illuminate the rich interplay between probability, statistics, and abstract mathematical structures, setting the stage for a unifying perspective that extends beyond classical interpretations.

\section{Philosophical Approach}
\subsection{}Let us begin by considering a broad and fundamental question: 
\begin{center} {\it what is, in general, the aim of geometry?}  
\end{center}

One can state succinctly that geometry is primarily concerned with:  
\begin{itemize}
    \item the study of spaces.
    \item The transformations of these spaces, namely, their motions or the actions of groups.  
    \item Entire classes of geometric objects.  
\end{itemize}

\, 

\subsubsection{}
The foundations of geometry, as formulated by Klein, are based on the study of geometric figures and the transformations that act upon them. This perspective, known as \textit{Erlangen’s program}, emphasizes the structural relationships between figures rather than their specific realizations. In its classical setting, this formalism provides a natural framework for understanding symmetries, invariants, and classification problems within geometry.

\,

\subsection{}
For us, however, we will think of those (Klein) figures as figures {\it in the sense of Plato}. Plato's world of geometric figures refers to his philosophical concept of the {\it World of Forms}, where perfect and unchanging geometric shapes exist, independently of the physical world.

\, 

In Plato's philosophy, as described in dialogues such as the \emph{Republic} and the \emph{Phaedo}, mathematical objects: circles, triangles, and other ideal forms do not exist merely as physical approximations but as pure, abstract entities in a higher, non-material realm.
\footnote{In {\it The Republic,} Plato’s ``Allegory of the Cave'' illustrates how ordinary humans perceive mere shadows (imperfect physical objects), while true philosophers seek to understand the Forms, including perfect geometric structures. A drawn triangle or sphere in the physical world is an imperfect shadow of the true ideal triangle or sphere that exists in the intelligible realm.}

\,

It is precisely {\it this concept} of Plato's world of figures/forms which is used in Klein's perception and which we will use also to describe geometric structures in probability and measure theory.  
 
\subsection{}
In the present work, we extend this viewpoint beyond its original domain by considering a formalism in which Klein’s approach is adapted to the realm of probability theory and statistics. The motivation for such a generalization arises naturally: if one seeks to construct a statistical geometry, it is necessary to first identify the appropriate notion of \textit{geometric figures} and the corresponding transformations that preserve the relevant structures. 

\,
\subsubsection{}
Our approach follows a twofold strategy. First, we identify probabilistic objects that play the role of geometric figures in this generalized setting. These may include, for instance, families of probability distributions, families of distributions, or spaces of statistical models. Second, we introduce transformations between these objects that preserve key statistical or probabilistic properties. 
\, 

The fundamental problem then becomes: 

\,

\emph{how does one define and classify these transformations, and what structures do they induce?}

\,
\subsubsection{}
From this perspective, Markov categories and other categories called $CAP$, $CAPH$, 
$FAMH$ emerge as natural generalizations of geometric figures in Klein’s sense, and their transforms. These transformations--given by statistical morphisms, stochastic maps, or Markov kernels--serve as the analogues of geometric transformations in classical geometry. This interpretation provides a unifying language in which statistical inference, decision theory, and information geometry can be understood in terms of an overarching geometric framework.

\subsubsection{}

The goal of this work is to explore this formalism systematically and to establish the necessary mathematical structures that allow for a coherent treatment of statistical geometry. In particular, we shall investigate the role of categorical constructions, invariants under transformations, and the emergence of new geometric notions adapted to probabilistic contexts. Ultimately, our approach seeks to trivialize Klein’s original vision in such a way that its extension to statistics appears as a natural and inevitable development.

\,
The Klein figure formalism admits a natural extension to probabilistic contexts, particularly in the study of parametrized families of probability distributions. This adaptation establishes a coherent theoretical framework that facilitates systematic analysis and operational flexibility within such probabilistic structures.

\subsection{Towards a generalization of Klein's Plato-Figures}
The most elementary class of geometric objects consists of subsets of a given space, referred to by Felix Klein as \textit{figures}.

\,

\begin{itemize}
    \item Two figures $A$ and $B$ are said to be \textit{congruent} if there exists a motion mapping $A$ onto $B$, along with an inverse motion mapping $B$ onto $A$. The geometer’s principal concern is to investigate the properties of figures that remain invariant under such motions--those properties that are identical for all congruent figures. 
    \item[]
    \item The central problem thus becomes the determination of \textit{invariants} of figures, namely, real-valued functions that take equal values on congruent figures. A complete system of such invariants allows for a full {\bf classification of geometric configurations. }
\end{itemize}

\, 

Since any figure can be regarded as a set of points, one is naturally led to a {\bf generalization of this concept } of geometry--extending beyond the study of figures in the classical sense. Alongside figures in Klein’s sense, which are simply sets, one may consider \textit{parametrized sets}, thereby introducing a new geometric framework to examine their intrinsic properties.

\subsection{}

Let $\Theta$ be an index set parametrizing such a collection, and let us construct an epimorphism from this parameter set onto the space under consideration. That is, for each parameter value $\theta \in \Theta$, a corresponding point of the considered figure is assigned. In general, a single point of the figure may be associated with multiple parameter values, reflecting a richer underlying structure.

\,

Following Klein's approach to geometry, we may consider parametrized sets as Klein figures, where the parametrization is defined in a specific way. Such examples include:
    \begin{itemize}
        \item finite ordered sequences of points,  
        \item parametrized smooth curves or surfaces,  
        \item smooth manifolds parametrized by locally smooth coordinates--these can be treated both as points or as smooth surfaces.
        \end{itemize}

Furthermore, in the scope of generalizing objects and structures, we can recall what a category is, as we will need it later. 

\,
\subsection{Categories as a classifying tool}
A category is a mathematical structure that captures relationships between objects in an abstract way. At its core, a category consists of objects and morphisms (arrows) between them, which satisfy rules of composition and identity. One should think of a category not just as a formalism but a lens through which mathematics reveals its hidden symmetries and harmonies.

To be more precise, A category 
$\mathscr{C}$ consists of the following data:
\begin{itemize}
    \item A collection of objects, denoted by ${\rm Ob}(\mathscr{C})$
       \item  For each pair of objects $X, Y \in {\rm Ob}(\mathscr{C})$, a set of morphisms from $X$ to $Y$.
\end{itemize}
These morphisms must satisfy the following axioms:

\,

{\bf (1) Composition} 

For any three objects $X,Y,Z$ there exists a composition map $\circ$ which assigns to each pair of morphisms $f:X\to Y$ and $g:Y\to Z$ a composite morphism 
\[g\circ f:X\to Z.\]

\,

{\bf (2) Identity Morphisms}

For each object $X$, there exists an identity morphism $id$ such that for all 
$f:X\to Y$ and $g:Y\to X$ we have 
\[id_Y\circ f=f\quad \text{and} \quad g\circ id_Y=g\]

\, 

{\bf (3) Associativity}

For any morphisms $f:X\to Y$, $g:Y\to Z$, and $h:Z\to W$
one requires:
\[h\circ(g\circ f)=(h\circ g)\circ f.\]

\, 

\section{Generalization of Klein's (Plato)-figures/Congruences}

\subsection{Generalized Klein Figures}
One extends Klein’s notion of figures, beyond these classical cases, namely in the framework of probabilities. 
The following examples provide examples of  generalized Klein figures.

\, 
    \begin{itemize}
        \item Sets of probability distributions of a given type;  
        \item Dominated families of probability distributions, namely, all finite families of probability distributions on finite $\sigma$-algebras.  
    \end{itemize}
(Indeed, any finite or countable family $\{P_k\}$ of probability measures on a finite $\sigma$-algebra is necessarily dominated, as it admits a dominating measure.)

\subsection{Markov Transformations}

Using the Klein figure formalism, an essential ingredient in constructing a geometric framework for probabilistic or statistical quantities is the study of transformations acting on generalized Klein figures. We now introduce such transformations.

\begin{definition}
    Let $(\Omega, \bS)$ and $(\Omega', \mathbf{\Sigma})$ be two measurable spaces. A \textit{Markov transition probability}, or equivalently, a \textit{Markov morphism}, is a real-valued function \[\Pi(\omega, \mathcal{A})\] defined for $\omega \in \Omega$ and $\mathcal{A} \in \mathbf{\Sigma}$, satisfying the condition that for each fixed $\mathcal{A}$, the function \[\omega \mapsto \Pi(\omega, \mathcal{A})\] is $\bS$-measurable.  
\end{definition}

This transition probability distribution describes a Markovian random transition from the measurable space $(\Omega, \bS)$ to $(\Omega', \mathbf{\Sigma})$. The following theorem establishes an extension property for such transformations.

\,

    Let us consider now, classical example of a statistical problems.

We illustrate the congruences, mentioned in the first section of this chapter on a concrete example, which is presented below. 

\begin{example}
 We are interested in estimating the parameters of a normal distribution based on a set of $N$ independent observations $\{x^{(1)},...,x^{(N)}\}$ of  the parameters of a normal distribution with a density given by:
\[
p(x;\mu,\sigma)= \frac{1}{\sqrt{2 \pi} \sigma} \exp{ - \frac{(x-\mu)^2}{2\sigma^2}}.
\]  
Here, 
$\mu$ represents the mean, and 
$\sigma^2$ denotes the variance of the distribution.

From a geometric perspective, the set of all possible normal distributions (with densities of the form given above) forms an infinite-dimensional manifold, as it includes all probability distributions on the real line that are absolutely continuous with respect to the Lebesgue measure. However, when we restrict our attention to the family of normal distributions parameterized by $(\mu,\sigma)$, we obtain a two-dimensional surface within this infinite-dimensional space. This surface consists of all normal distributions defined on the Borel 
$\sigma-$algebra of the real line.
\end{example}

\subsection{Klein's Congruences and the Universal property}
The following theorem provides a condition under which two families of probability distributions are considered congruent, meaning that they share an underlying structural equivalence in terms of how their probability measures are defined. This  gives rise to a certain notion of {\bf universal property}, since it is reminiscent of certain  universal properties in algebraic geometry. 

\, 
\begin{theorem}[Universal property]\label{T:Univ}
Let us consider two families of probability distributions, denoted as: $ w_1$ and $w_2$ which are defined as

\[w_i=\{P_{\theta}^{{i}}(d\omega^{(i)}), \quad \theta\in \Theta\},\] 

where these families are probability distributions indexed by a parameter $\theta$, meaning that for all $\theta\in \Theta$ we have a corresponding probability measure on the spaces $(\Omega^{1},{\bf S}^{(1)})$ and $(\Omega^{2},{\bf S}^{(2)})$, respectively.

\, 

The two families $w_1$ and $w_2$ are said to be {\bf congruent} if there exists:

\begin{enumerate}
    \item {\bf A measurable mapping:}
$\epsilon=\varphi_{i}(\omega)$ of the measure spaces $(\Omega^{i},{\bf S}^{(i)})$, on which they are defined, onto a {\bf finite} space \[(\mathscr{E},{\bf S}(\mathscr{E})).\]

\item {\bf A common family of probability distributions:}
    \[R_{\theta}(d\epsilon)\] on the space $\mathscr{E}$, which serves as an intermediate representation of the distributions in both families.
    \item {\bf A family of transition distributions:}
     $$Q^{(i)}(\epsilon;d\omega^{(i)})$$
that describes how the probability measure on $\mathscr{E}$ is "transferred" back to the original space $\Omega^{(i)}$.
This relation is formalized as: 
\[P^{(i)}_{\theta}\{\cdot\}=\int_{\mathscr{E}}R_{\theta}\{d\epsilon\}Q^{(i)}(\epsilon,\cdot),\] where the key consistency condition is such that \[ Q(\epsilon,\varphi^{-1}(\epsilon))=1.\]
\end{enumerate}
This ensures that each outcome 
$\omega^{(i)}_{j}$ belongs to the preimage of its corresponding 
$\epsilon$, meaning that 
correctly classifies elements into equivalence classes in 
$\mathscr{E}$.

\[
\begin{tikzcd}
{} & (\mathscr{E},{\bf S}(\mathscr{E})) \arrow{dr}{\varphi^{-1}_2} \\
(\Omega^{1},{\bf S}^{(1)}) \arrow{ur}{\varphi_1} \arrow{rr}{\varphi^{-1}_2\circ \varphi_1} && (\Omega^{2},{\bf S}^{(2)})
\end{tikzcd}
\]
\end{theorem}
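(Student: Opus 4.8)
The plan is to read the statement as asserting two things that the displayed definition only implicitly claims: first, that the two-sided factorization through the common finite family $R_\theta$ on $(\mathscr{E}, \mathbf{S}(\mathscr{E}))$ genuinely defines a \emph{congruence} — an equivalence relation on parametrized families — and second, that the intermediate object $(\mathscr{E}, R_\theta)$ satisfies the advertised universal property, namely that it is terminal among all finite reductions through which both families factor compatibly with the parameter $\theta$.

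First I would verify that the integral representation $P^{(i)}_\theta\{\cdot\} = \int_{\mathscr{E}} R_\theta\{d\epsilon\}\, Q^{(i)}(\epsilon,\cdot)$ is well-posed: since each $Q^{(i)}(\epsilon, \cdot)$ is a Markov transition probability (measurable in $\epsilon$, a probability measure in its second argument) and $R_\theta$ is a probability on the \emph{finite} space $\mathscr{E}$, the integral collapses to a finite sum $\sum_{\epsilon \in \mathscr{E}} R_\theta(\{\epsilon\})\, Q^{(i)}(\epsilon, \cdot)$, which is automatically a probability measure on $(\Omega^{(i)}, \mathbf{S}^{(i)})$. I would then show that the consistency condition $Q^{(i)}(\epsilon, \varphi_i^{-1}(\epsilon)) = 1$ forces $\varphi_i$ to behave as a sufficient statistic: pushing $P^{(i)}_\theta$ forward along $\varphi_i$ recovers $R_\theta$ exactly, and the conditional law of $\omega^{(i)}$ given $\varphi_i(\omega^{(i)}) = \epsilon$ equals $Q^{(i)}(\epsilon, \cdot)$ \emph{independently} of $\theta$. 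This parameter-independence is the crucial structural fact underlying congruence.

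Next I would assemble the congruence itself. The composite Markov morphism written in the diagram as $\varphi_2^{-1} \circ \varphi_1$ — interpreted kernel-theoretically as $Q^{(2)} \circ (\varphi_1)_{*}$, since $\varphi_2^{-1}$ is not a genuine map but the stochastic lift given by $Q^{(2)}$ — sends $P^{(1)}_\theta$ to $P^{(2)}_\theta$ \emph{simultaneously} for every $\theta \in \Theta$, which is precisely what it means for the two parametrized families to be congruent. Symmetry follows by exchanging the indices, reflexivity by taking $\mathscr{E} = \Omega$ with $\varphi = \mathrm{id}$, and transitivity by composing two such factorizations and checking the composite again factors through a common finite reduction. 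To finish, I would establish the universal property: given any other finite space $(\mathscr{E}', R'_\theta)$ with compatible maps $\varphi'_i$ and kernels $Q'^{(i)}$, minimality of the sufficient reduction yields a measurable map $\mathscr{E}' \to \mathscr{E}$, unique up to $R_\theta$-null sets, making the triangle in the displayed diagram commute, so that $(\mathscr{E}, R_\theta)$ is terminal among all such reductions.

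The hard part will be the universal property together with its uniqueness clause. Everything up to the congruence is essentially a finite-sum computation, but proving $(\mathscr{E}, R_\theta)$ canonical amounts to a minimal-sufficiency argument: one must show that any two parameter-free kernel factorizations of the same family are intertwined by an essentially unique measurable isomorphism, and this holds only \emph{modulo} null sets. Controlling these null sets uniformly across the entire parameter range $\Theta$ — so that a single map serves for all $\theta$ at once — and exploiting the finiteness of $\mathscr{E}$ to guarantee that the reduction actually exists, is where the genuine measure-theoretic content lies; the categorical packaging of the commutative diagram is only bookkeeping layered on top of this.
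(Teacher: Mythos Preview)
You have misread the statement. Despite the label ``Universal property,'' this theorem is a \emph{definition}: the phrase ``are said to be congruent if there exists'' is definitional, and the paper accordingly offers no proof. The ``universal property'' in the title refers only to the fact that both families factor through a common finite intermediate object $(\mathscr{E}, R_\theta)$, so that the composite kernels $\Pi_{ij} = \Pi^{(i)} \circ Q^{(j)}$ (with $\Pi^{(i)}$ induced by $\varphi_i$) set up a bijective correspondence between the two families --- this is made explicit in the Remark immediately following the next theorem in the paper. The commutative diagram is the entire content.

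Your proposal therefore proves things the statement never claims. In particular: (i) the theorem does not assert that congruence is an equivalence relation, so your reflexivity/symmetry/transitivity discussion is extraneous; (ii) far more seriously, the theorem makes \emph{no} minimality or terminality claim about $(\mathscr{E}, R_\theta)$ --- it says only that \emph{some} finite space with the listed data exists. Your entire ``hard part'' (minimal sufficiency, uniqueness up to null sets uniformly in $\theta$, the terminal-object argument) is an invention: you are supplying a categorical universal property that the statement does not contain. Your intermediate observations --- that the integral collapses to a finite sum, that the consistency condition $Q^{(i)}(\epsilon,\varphi_i^{-1}(\epsilon))=1$ makes $\varphi_i$ a $\theta$-free sufficient statistic, and that the displayed arrow $\varphi_2^{-1}\circ\varphi_1$ must be read as the kernel $Q^{(2)}\circ(\varphi_1)_*$ --- are all correct and are precisely the unpacking the paper intends, but they exhaust what there is to say.
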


\begin{example}
Let us apply this theorem on congruences between families of distributions to a congruence between certain simplices, that contain them. The original families of probability distributions can be described within each simplex using the same barycentric coordinates. This allows us to focus our study on the structure of equivalent families, given by simplices in a special position. Such families will be called maximal.

More precisely, consider two simplices:

\begin{itemize}
    \item The first simplex, denoted as $Cap(\Omega',\mathbf{S'})$,  has vertices ${\bf e}_1,...,{\bf e}_n$ corresponding to the  outcomes   $(\omega_1,...,\omega_n)$.
    \item These outcomes are partitioned into 
$m$ subsets, denoted as $E_1,\cdots, E_m$.
\end{itemize}

For each face of the original simplex 
$Cap$, determined by a subset 
$E_i$ of its vertices, we select a point with barycentric coordinates $P_{E_i}^{(1)}[\omega]$, for each  $\omega \in E_i$ and for each $i\in \{1,\cdots,m\}$. We assume that all but one of these points are chosen in the interior of their respective faces.

\, 

The collection of these points 
$P_{E_i} [.]$ are the vertices of $m$-dimensional forms an 
$m$-dimensional subsimplex. This subsimplex is equivalent to a similarly constructed 
$m$-dimensional subsimplex in the second simplex. Through this construction, we establish a structural equivalence between the two families in terms of their representation within simplicial geometry.

\end{example}

\begin{theorem}
    Let $\Pi(\omega, \mathcal{A})$ be a transition probability distribution from the measure space $(\Omega, \mathbf{S})$ to $(\Omega', \mathbf{\Sigma})$. Then, by extending the probability distribution \[\Pi(\omega, \cdot)\quad \text{on}\quad \mathbf{\Sigma} \quad \text{for each }\quad \omega \in \Omega\]  to a probability distribution \[\Pi^*(\omega, \cdot) \quad \text{on}\quad \mathbf{\Sigma^*},\] we obtain a transition probability distribution from $(\Omega, \mathbf{S^*})$ to $(\Omega', \mathbf{\Sigma^*})$. 
    
    Consequently, for any probability distribution $P$, we have:
    \[
    (P\Pi)^*[\cdot] = (P^*\Pi^*)[\cdot].
    \]
\end{theorem}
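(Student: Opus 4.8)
The plan is to read the star throughout as measure-completion: $\mathbf{S}^*$ is the completion of $\mathbf{S}$ with respect to $P$, with $P^*$ its canonical extension, while $\mathbf{\Sigma}^*$ is the completion of $\mathbf{\Sigma}$ with respect to the image measure $P\Pi$, defined by $(P\Pi)[\mathcal{A}]=\int_\Omega \Pi(\omega,\mathcal{A})\,P[d\omega]$, and $(P\Pi)^*$ its extension. Each $\mathcal{A}^*\in\mathbf{\Sigma}^*$ may be written as $\mathcal{A}^*=\mathcal{A}\cup M$ with $\mathcal{A}\in\mathbf{\Sigma}$ and $M\subset N$ for some $N\in\mathbf{\Sigma}$ satisfying $(P\Pi)[N]=0$; I would then \emph{define} $\Pi^*(\omega,\mathcal{A}^*):=\Pi(\omega,\mathcal{A})$. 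Everything rests on one fact, which I would establish first: if $N\in\mathbf{\Sigma}$ is $P\Pi$-null, then $\Pi(\omega,N)=0$ for $P$-almost every $\omega$. This is immediate, since $0=(P\Pi)[N]=\int_\Omega \Pi(\omega,N)\,P[d\omega]$ is the integral of a nonnegative $\mathbf{S}$-measurable function, which must therefore vanish $P$-a.e. Well-definedness of $\Pi^*(\omega,\mathcal{A}^*)$ follows: given two admissible decompositions $\mathcal{A}_1\cup M_1=\mathcal{A}_2\cup M_2$, one has $\mathcal{A}_1\triangle\mathcal{A}_2\subset N_1\cup N_2$ with $N_1\cup N_2$ being $P\Pi$-null, so $\Pi(\omega,\mathcal{A}_1)=\Pi(\omega,\mathcal{A}_2)$ off a $P$-null set.

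Next I would verify the two structural requirements of a transition probability. For fixed $\omega$ away from the relevant null set, countable additivity and the normalization $\Pi^*(\omega,\Omega')=\Pi(\omega,\Omega')=1$ of $\Pi^*(\omega,\cdot)$ on $\mathbf{\Sigma}^*$ are inherited directly from $\Pi(\omega,\cdot)$ on $\mathbf{\Sigma}$, so $\Pi^*(\omega,\cdot)$ is a probability measure. For fixed $\mathcal{A}^*$, the map $\omega\mapsto\Pi^*(\omega,\mathcal{A}^*)$ agrees $P$-a.e. with $\omega\mapsto\Pi(\omega,\mathcal{A})$, which is $\mathbf{S}$-measurable by the definition of a Markov transition probability; a function equal $P$-a.e. to an $\mathbf{S}$-measurable one is $\mathbf{S}^*$-measurable by the very construction of the completion. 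Hence $\Pi^*$ is a transition probability from $(\Omega,\mathbf{S}^*)$ to $(\Omega',\mathbf{\Sigma}^*)$.

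For the identity I would evaluate both sides on a generic $\mathcal{A}^*=\mathcal{A}\cup M$. On one hand, $(P\Pi)^*[\mathcal{A}^*]=(P\Pi)[\mathcal{A}]=\int_\Omega \Pi(\omega,\mathcal{A})\,P[d\omega]$, by the definition of the completion and of $P\Pi$. On the other hand, $(P^*\Pi^*)[\mathcal{A}^*]=\int_\Omega \Pi^*(\omega,\mathcal{A}^*)\,P^*[d\omega]=\int_\Omega \Pi(\omega,\mathcal{A})\,P^*[d\omega]$, and since the integrand is $\mathbf{S}$-measurable while $P^*$ restricts to $P$ on $\mathbf{S}$, this last integral equals $\int_\Omega \Pi(\omega,\mathcal{A})\,P[d\omega]$. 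The two values coincide, giving $(P\Pi)^*=(P^*\Pi^*)$.

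The main obstacle — indeed the only genuinely delicate point — is that $\Pi^*(\omega,\cdot)$ is a bona fide measure only for $P$-almost every $\omega$, because the defining null sets are $P\Pi$-null rather than $\Pi(\omega,\cdot)$-null for every single $\omega$. One must therefore carry the ``$P$-a.e.'' qualifier consistently through the measurability argument and the integration in the final identity, invoking that transition probabilities, like the random variables defined earlier, are determined only up to $P$-equivalence. Making the choice of $\Pi^*(\omega,\cdot)$ on the exceptional $P$-null set explicit — for instance by fixing it to any probability measure on $\mathbf{\Sigma}^*$ extending $\Pi(\omega,\cdot)$ — removes any residual ambiguity without altering either side of the claimed equality.
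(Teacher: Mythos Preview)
Your argument is correct and rests on the same core observation as the paper's: if $N\in\mathbf{\Sigma}$ is $(P\Pi)$-null then $\Pi(\omega,N)=0$ for $P$-almost every $\omega$, which is exactly what drives the sandwich $\Pi(\omega,\mathcal{F})\le \Pi^*(\omega,\mathcal{A})\le\Pi(\omega,\mathcal{G})$ with $(P\Pi)[\mathcal{G}\setminus\mathcal{F}]=0$. The route differs in how $\Pi^*$ is \emph{defined}. The paper takes $\mathbf{\Sigma}^*$ to be the (universal) completion on which inner and outer measures of \emph{every} probability agree, and defines $\Pi^*(\omega,\cdot)$ for each $\omega$ individually as the unique extension of $\Pi(\omega,\cdot)$ via coinciding inner/outer measures; this yields a genuine probability measure on $\mathbf{\Sigma}^*$ for \emph{every} $\omega$, not merely $P$-a.e., so the delicate point you flag simply does not arise. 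The burden then shifts entirely to checking $\mathbf{S}^*$-measurability of $\omega\mapsto\Pi^*(\omega,\mathcal{A})$, which the paper does by the same sandwich you use. Your construction --- writing $\mathcal{A}^*=\mathcal{A}\cup M$ and setting $\Pi^*(\omega,\mathcal{A}^*):=\Pi(\omega,\mathcal{A})$ --- is more explicit and makes the null-set mechanism transparent, at the price of the ``$P$-a.e.'' caveat you correctly carry through; the paper's inner/outer-measure definition is cleaner categorically (it does not single out $P$) and gives an everywhere-defined kernel, which is what one wants for the functorial statement that follows the theorem.
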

\begin{remark}
We make a link between the statement outlined above and the universal property of Theorem \ref{T:Univ}.  Theorem \ref{T:Univ} says that there a {\bf bijective correspondence} is established by the operators $\Pi_{ij}$, which are given by 
\[\Pi_{ij}=\Pi^{(i)}\circ Q^{(j)},\]
where $\Pi^{(i)}$ is given by the mapping $\varphi_i(\omega)$.
\end{remark}

\begin{ex}\label{Ex:ProofThm7.2}
The proof is left as an exercise. However, since it is a very ambitious exercise, we suggest that the reader looks up the definition of an inner and outer measure. 
\end{ex}

\section{Markov Transformations and Associated Subcategories}

The following theorem shows the existence of a natural subcategory, which we denote by $CAP^*$, within the category $CAP$ of measurable spaces and Markov morphisms.

\begin{theorem}[Subcategory of Markov Morphisms]
Let $(\Omega, \mathbf{S})$ be a measurable space whose $\sigma$-algebra $\mathbf{S}$ is closed, and let $CAP$ denote the category of measurable spaces with Markov morphisms. Then the subclass of such spaces, together with all their Markov morphisms, forms a subcategory $CAP^*$ of $CAP$. Moreover, the natural extension of each probability measure from $\mathbf{S}$ to its closure $\mathbf{S}^*$ defines a functor mapping $CAP$ into $CAP^*$.
\end{theorem}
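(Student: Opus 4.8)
The plan is to establish the two assertions in turn: first that the closed measurable spaces together with their Markov morphisms form a subcategory $CAP^{*}$ of $CAP$, and then that the starred extension $(\cdot)^{*}$ organizes into a functor $F\colon CAP \to CAP^{*}$.

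For the subcategory claim I would take as objects of $CAP^{*}$ exactly those $(\Omega,\mathbf{S})$ with $\mathbf{S}=\mathbf{S}^{*}$ and as morphisms all Markov morphisms between them, so that $CAP^{*}$ is a \emph{full} subcategory. Then only two things need checking, and both are inherited from $CAP$: the composite Markov morphism $\Pi$ of two kernels between closed spaces, given by $\Pi(\omega_{1},\cdot)=\int_{\Omega_{2}}\Pi_{1}(\omega_{1},d\omega_{2})\,\Pi_{2}(\omega_{2},\cdot)$, is again a Markov morphism whose source and target are closed; and the identity on $(\Omega,\mathbf{S})$ is the Dirac kernel $I(\omega,\mathcal{A})=\mathbf{1}_{\mathcal{A}}(\omega)$, which already belongs to $CAP^{*}$. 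Hence the subcategory axioms hold with no further work.

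For the functor I would set $F(\Omega,\mathbf{S})=(\Omega,\mathbf{S}^{*})$ on objects and $F(\Pi)=\Pi^{*}$ on morphisms, where $\Pi^{*}$ is the extension supplied by the preceding theorem, which already guarantees that $\Pi^{*}$ is a bona fide Markov morphism from $(\Omega,\mathbf{S}^{*})$ to $(\Omega',\mathbf{\Sigma}^{*})$. That $F$ lands in $CAP^{*}$ requires the idempotence $(\mathbf{S}^{*})^{*}=\mathbf{S}^{*}$, so that the image object is itself closed; this is a property of the closure operation built from the inner and outer measures. Preservation of identities is immediate, since the extension of a Dirac kernel is the Dirac kernel on the enlarged $\sigma$-algebra. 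The crux is preservation of composition, namely that if $\Pi$ is the composite of $\Pi_{1}$ and $\Pi_{2}$ then $\Pi^{*}$ is the composite of $\Pi_{1}^{*}$ and $\Pi_{2}^{*}$. I would argue this fiberwise: for fixed $\omega_{1}$, set $P=\Pi_{1}(\omega_{1},\cdot)$, so that $\Pi(\omega_{1},\cdot)=(P\Pi_{2})(\cdot)$; the consequence $(P\Pi_{2})^{*}=P^{*}\Pi_{2}^{*}$ of the preceding theorem, combined with the identification $P^{*}=\Pi_{1}^{*}(\omega_{1},\cdot)$ coming from the definition of $\Pi_{1}^{*}$, gives precisely the $\omega_{1}$-fiber of the composite of $\Pi_{1}^{*}$ and $\Pi_{2}^{*}$.

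The main obstacle will be this composition step: one must verify that the extension $(\cdot)^{*}$ commutes with the integral composition of kernels, and not merely with the action of a single probability measure on a kernel. Concretely, I expect to check that $\int_{\Omega_{2}}\Pi_{1}^{*}(\omega_{1},d\omega_{2})\,\Pi_{2}^{*}(\omega_{2},\mathcal{A}_{3})$ coincides with the extension of $\int_{\Omega_{2}}\Pi_{1}(\omega_{1},d\omega_{2})\,\Pi_{2}(\omega_{2},\mathcal{A}_{3})$ for every $\mathcal{A}_{3}\in\mathbf{S}_{3}^{*}$; by uniqueness of the measure extension this reduces to agreement on a generating algebra, after which a monotone-class (Dynkin) argument extends it to all of $\mathbf{S}_{3}^{*}$. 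The remaining points---the idempotence of the closure and the $\mathbf{S}_{1}^{*}$-measurability of $\omega_{1}\mapsto\int_{\Omega_{2}}\Pi_{1}^{*}(\omega_{1},d\omega_{2})\,\Pi_{2}^{*}(\omega_{2},\mathcal{A}_{3})$---follow directly from the extension machinery of the preceding theorem.
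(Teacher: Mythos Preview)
Your proposal is correct and follows essentially the same approach as the paper: both treat the subcategory claim as an immediate consequence of taking a full subcategory, and both reduce functoriality to the fiberwise identity $(P\Pi_{2})^{*}=P^{*}\Pi_{2}^{*}$ from the preceding theorem with $P=\Pi_{1}(\omega_{1},\cdot)$. Your final paragraph's worry about needing a separate monotone-class argument is unnecessary---the fiberwise application of $(P\Pi)^{*}=P^{*}\Pi^{*}$ already delivers $(\Pi_{1}\circ\Pi_{2})^{*}=\Pi_{1}^{*}\circ\Pi_{2}^{*}$ directly, which is exactly how the paper concludes.
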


\begin{proof}
By definition, any collection of objects along with all the corresponding morphisms (when these morphisms are suitably restricted) forms a subcategory. The key point is that the operations of extending a probability measure and of applying a Markov morphism commute. Indeed, suppose that for a Markov morphism $\Pi_{01}$ we have a corresponding probability measure $P_\omega\{.\} = \Pi_{01}(\omega; \cdot)$, and let $\Pi_{12}$ be another Markov morphism. Then, for every $\omega \in \Omega$, one may show that
\[
\bigl( \Pi_{01} \circ \Pi_{12} \bigr)^* = \Pi_{01}^* \circ \Pi_{12}^*,
\]
in the sense that the extension of the composite is equal to the composite of the extensions. In particular, 
\[
(P_\omega \Pi_{12})^*\{.\} = (P_\omega^* \Pi_{12}^*)\{.\}.
\]
This commutativity property ensures that the extension procedure defines a functor from $CAP$ to $CAP^*$.
\end{proof}

\begin{remark}
The passage to the closure of the algebra in the category of measurable spaces is functorial.
\end{remark}

We now show that Markov morphisms form a category in a natural way.

\begin{lemma}
Let $\Pi(\omega';\omega'')$ be a Markov morphism from $(\Omega', \mathbf{S}')$ to $(\Omega'', \mathbf{S}'')$, and let $f(\omega'')$ be a measurable real function on $(\Omega'', \mathbf{S}'')$ with 
\[
f(\omega'') \in [0,1] \quad \text{for all } \omega'' \in \Omega''.
\]
Then, the function 
\[
g(\omega') = (\Pi^T f)(\omega') = \int_{\Omega''} \Pi(\omega'; d\omega'') \, f(\omega'')
\]
is a measurable function on $(\Omega', \mathbf{S}')$ and satisfies 
\[
g(\omega') \in [0,1] \quad \text{for all } \omega' \in \Omega'.
\]
\end{lemma}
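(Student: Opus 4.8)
The plan is to treat the two assertions separately, since the bound is essentially free while the measurability requires the standard measure-theoretic induction. The bound $g(\omega') \in [0,1]$ is immediate: for each fixed $\omega'$, the set function $\Pi(\omega'; \cdot)$ is a probability measure on $(\Omega'', \mathbf{S}'')$, hence has total mass one, and since $0 \le f(\omega'') \le 1$ everywhere, monotonicity of the integral gives
\[
0 = \int_{\Omega''} 0 \, \Pi(\omega'; d\omega'') \le \int_{\Omega''} f(\omega'') \, \Pi(\omega'; d\omega'') \le \int_{\Omega''} 1 \, \Pi(\omega'; d\omega'') = 1,
\]
so that $0 \le g(\omega') \le 1$ for every $\omega' \in \Omega'$.

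The measurability of $g$ I would establish by the usual ``standard machine'', building $f$ up from indicators. First, for an indicator $f = \mathbf{1}_{\mathcal{A}}$ with $\mathcal{A} \in \mathbf{S}''$, the definition of the integral gives
\[
g(\omega') = \int_{\Omega''} \mathbf{1}_{\mathcal{A}}(\omega'') \, \Pi(\omega'; d\omega'') = \Pi(\omega'; \mathcal{A}),
\]
which is $\mathbf{S}'$-measurable precisely by the defining property of a Markov morphism (measurability of $\omega' \mapsto \Pi(\omega'; \mathcal{A})$ for each fixed $\mathcal{A}$). By linearity of the integral, measurability then extends to every nonnegative simple function $\sum_{k} c_k \mathbf{1}_{\mathcal{A}_k}$, since a finite $\bbR$-linear combination of $\mathbf{S}'$-measurable functions is again $\mathbf{S}'$-measurable.

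The remaining, and only genuinely delicate, point is the passage from simple functions to the general $f$, and this is where I expect the main work to lie. Since $f$ is nonnegative and measurable, I would fix an increasing sequence $f_n$ of nonnegative simple functions with $f_n \uparrow f$ pointwise on $\Omega''$ (the canonical dyadic truncation). For each fixed $\omega'$, the Monotone Convergence Theorem applied to the measure $\Pi(\omega'; \cdot)$ yields
\[
g_n(\omega') := \int_{\Omega''} f_n \, \Pi(\omega'; d\omega'') \longrightarrow \int_{\Omega''} f \, \Pi(\omega'; d\omega'') = g(\omega').
\]
Each $g_n$ is $\mathbf{S}'$-measurable by the simple-function step, and a pointwise limit of measurable functions is measurable, whence $g$ is $\mathbf{S}'$-measurable. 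The subtlety to keep in mind is that the Monotone Convergence Theorem is invoked separately for each $\omega'$, so the conclusion one obtains is pointwise convergence $g_n \to g$ in the variable $\omega'$; it is exactly this ``pointwise limit of measurable functions'' fact, and not any uniformity in $\omega'$, that delivers the measurability of $g$. Combining this with the bound established above completes the proof.
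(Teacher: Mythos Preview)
Your proof is correct and follows essentially the same approach as the paper: the paper's proof is a single sentence invoking ``monotonicity and linearity of the integral, combined with the fact that $\Pi(\omega'; \cdot)$ is a probability measure for each fixed $\omega'$.'' You have simply filled in the details, in particular spelling out the standard induction (indicators $\to$ simple functions $\to$ monotone limits) for the measurability claim, which the paper leaves entirely implicit.
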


\begin{proof}
The proof follows from the monotonicity and linearity of the integral, combined with the fact that $\Pi(\omega'; \cdot)$ is a probability measure for each fixed $\omega'$.
\end{proof}

\section{Dominated Families in the Category of Markov Morphisms}

We now state a fundamental theorem concerning dominated families (denoted \textbf{DomFam}) in the context of Markov morphisms.

\begin{theorem}
Relative to the category of Markov morphisms, the property of being dominated is absolutely invariant; that is, if a measure $\mu$ on $(\Omega, \mathbf{S})$ dominates another measure $\nu$ (denoted $\mu \gg \nu$), then for any Markov morphism $\Pi$ from $(\Omega,\mathbf{S})$ to $(\Omega',\mathbf{\Sigma})$ the induced measure satisfies 
\[
\mu\Pi \gg \nu\Pi.
\]
\end{theorem}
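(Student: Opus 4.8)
The plan is to unwind the definitions and reduce the claim to an elementary ``vanishing integral'' argument. Recall that $\mu \gg \nu$ means precisely $\nu \ll \mu$, i.e. $\nu(N) = 0$ whenever $\mu(N) = 0$; and the assertion $\mu\Pi \gg \nu\Pi$ unwinds to $\nu\Pi \ll \mu\Pi$. The induced (pushforward) measures are given for $A' \in \mathbf{\Sigma}$ by
\[
(\mu\Pi)(A') = \int_\Omega \Pi(\omega, A')\, \mu(d\omega), \qquad (\nu\Pi)(A') = \int_\Omega \Pi(\omega, A')\, \nu(d\omega).
\]
So it suffices to fix an arbitrary $A' \in \mathbf{\Sigma}$ with $(\mu\Pi)(A') = 0$ and deduce $(\nu\Pi)(A') = 0$.

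First I would record the two properties of the integrand $\omega \mapsto \Pi(\omega, A')$ that make the argument work, both guaranteed by the definition of a Markov morphism: it is $\mathbf{S}$-measurable (by the measurability axiom in the definition of a Markov transition probability), and it takes values in $[0,1]$ (since $\Pi(\omega, \cdot)$ is a probability measure for each fixed $\omega$). Being a nonnegative measurable function with vanishing $\mu$-integral, the standard measure-theoretic fact that such a function vanishes $\mu$-almost everywhere gives $\Pi(\omega, A') = 0$ for $\mu$-almost every $\omega$; equivalently, the set
\[
N = \{\omega \in \Omega : \Pi(\omega, A') > 0\}
\]
lies in $\mathbf{S}$ and satisfies $\mu(N) = 0$.

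Then I would invoke the hypothesis $\nu \ll \mu$ to conclude $\nu(N) = 0$, and split the remaining integral accordingly:
\[
(\nu\Pi)(A') = \int_N \Pi(\omega, A')\, \nu(d\omega) + \int_{\Omega \setminus N} \Pi(\omega, A')\, \nu(d\omega).
\]
The second integral vanishes because $\Pi(\omega, A') = 0$ on $\Omega \setminus N$ by the very definition of $N$, while the first vanishes because its domain $N$ is a $\nu$-null set and the integrand is bounded. Hence $(\nu\Pi)(A') = 0$, which is exactly the required absolute continuity.

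I do not anticipate a genuine obstacle here: the content is entirely bookkeeping with the definitions. The one point that must be handled carefully is the direction of the domination relation—keeping straight that $\mu \gg \nu$ is $\nu \ll \mu$ and that the pushforward preserves this direction—together with the observation that the Markov-morphism axioms (measurability and $[0,1]$-valuedness of $\omega \mapsto \Pi(\omega, A')$) are exactly what licenses the ``vanishing integral implies $\mu$-a.e.\ zero'' step. No $\sigma$-finiteness or Radon--Nikodym machinery is needed, since the argument rests only on the elementary property that a nonnegative measurable function with zero integral vanishes almost everywhere.
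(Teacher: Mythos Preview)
Your proof is correct and follows essentially the same route as the paper's own argument: define the set $N=\{\omega:\Pi(\omega,A')>0\}$, use the vanishing-integral fact to get $\mu(N)=0$, transfer this to $\nu(N)=0$ via domination, and split the $\nu$-integral over $N$ and its complement. Your added remarks on the direction of the domination relation and the irrelevance of $\sigma$-finiteness are helpful clarifications but do not alter the underlying strategy.
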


\begin{proof}
Let $\mu$ be a nonnegative measure on $(\Omega, \mathbf{S})$ such that $\mu \gg \nu$, meaning that for any set $A \in \mathbf{S}$, if $\mu(A)=0$, then $\nu(A)=0$. Let $\Pi(\omega; d\omega')$ be a transition distribution determining a Markov morphism from $(\Omega,\mathbf{S})$ to $(\Omega',\mathbf{\Sigma})$. 

To show that $\mu\Pi \gg \nu\Pi$, it suffices to prove that for any $\mathcal{A} \in \mathbf{\Sigma}$ with $(\mu \Pi)(\mathcal{A}) = 0$, we have also $(\nu \Pi)(\mathcal{A}) = 0$. For a fixed $\mathcal{A}$, define the set
\[
B = \{ \omega \in \Omega \mid \Pi(\omega; \mathcal{A}) > 0 \}.
\]
It is known that if $f: \Omega \to [0,1]$ is a measurable function with $\int_\Omega f(\omega)\, \mu(d\omega)=0$, then $\mu\{ \omega \mid f(\omega) > 0 \} = 0$. Hence, taking $f(\omega)=\Pi(\omega; \mathcal{A})$, we deduce that
\[
\mu(B)=0.
\]
Since $\mu \gg \nu$, it follows that $\nu(B)=0$. Furthermore, for $\omega \in \Omega \setminus B$, we have $\Pi(\omega; \mathcal{A}) = 0$. Thus,
\[
(\nu \Pi)(\mathcal{A}) = \int_{\Omega} \nu(d\omega) \, \Pi(\omega; \mathcal{A})
= \int_B \Pi(\omega; \mathcal{A}) \, \nu(d\omega) + \int_{\Omega \setminus B} \Pi(\omega; \mathcal{A}) \, \nu(d\omega) = 0.
\]
This completes the proof of the invariance of the domination relation.
\end{proof}

\begin{theorem}
Let $CAP$ denote the category of measurable spaces with Markov morphisms. Then the subclass of dominated measurable spaces $(\Omega, \mathbf{S})$ whose $\sigma$-algebras are closed, together with all their Markov morphisms, forms a complete subcategory $\mathsf{FAMD}$ of $CAP$. Moreover, the collection of all families of mutually absolutely continuous probability distributions forms a complete subcategory $\mathsf{FAMH}$ of $\mathsf{FAMD}$.
\end{theorem}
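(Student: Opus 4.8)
The plan is to read ``complete subcategory'' as \emph{full subcategory}: in each case the objects form a distinguished subclass and every Markov morphism between two such objects is retained. Consequently the verification splits into three tasks — checking that the object classes are genuinely closed under the action of Markov morphisms, that identities lie in the subcategory, and that composites of Markov morphisms between admissible objects remain admissible. The categorical axioms (associativity, unit laws) are then inherited verbatim from $CAP$, so no separate argument is needed for them.

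For $\mathsf{FAMD}$ I would first fix the objects to be dominated families $\{P_\theta\}_{\theta\in\Theta}$ carried by measurable spaces $(\Omega,\bS)$ with closed $\sigma$-algebra. The closedness condition is handled by the previously established functor extending each measure from $\bS$ to its closure $\bS^{*}$, which maps $CAP$ into $CAP^{*}$; this lets me work entirely inside $CAP^{*}$. The decisive step is closure under morphisms: if $P_0$ dominates the family $\{P_\theta\}$ and $\Pi$ is a Markov morphism, then by the preceding invariance theorem $P_0\Pi \gg P_\theta\Pi$ for every $\theta$, so the pushed-forward family $\{P_\theta\Pi\}$ is dominated by $P_0\Pi$. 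The identity kernel $\Pi(\omega,\cdot)=\delta_\omega$ fixes every family, and the composite of two Markov morphisms is again a Markov morphism by the lemma on $\Pi^{T}$; transitivity of the domination relation together with the invariance theorem then shows the composite still carries dominated families to dominated families. Hence $\mathsf{FAMD}$ is a full subcategory.

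For $\mathsf{FAMH}$ the objects are families of pairwise \emph{mutually} absolutely continuous distributions, i.e.\ $P_i \sim P_j$ for all indices. Such a family is dominated (for instance by any single member, or by the arithmetic/weighted mean of Exercise~\ref{Ex:domMeas}), so $\mathsf{FAMH}$ is a subclass of $\mathsf{FAMD}$. The key point is that equivalence is preserved by Markov morphisms: applying the invariance theorem in both directions to $P_i \gg P_j$ and $P_j \gg P_i$ yields $P_i\Pi \gg P_j\Pi$ and $P_j\Pi \gg P_i\Pi$, hence $P_i\Pi \sim P_j\Pi$. Thus the image family remains mutually absolutely continuous, and the same identity and composition bookkeeping as above shows $\mathsf{FAMH}$ is closed under the category operations and is a full subcategory of $\mathsf{FAMD}$.

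I expect the main obstacle to be the uniformity issue in the infinite-index case: one must ensure a single dominating measure works for the \emph{whole} pushed-forward family, which is immediate here because the invariance theorem supplies $P_0\Pi$ as a uniform dominator, while mutual absolute continuity is a pairwise condition preserved term by term. The remaining care concerns the interplay with the closure $\bS \mapsto \bS^{*}$ — one must confirm that extension and pushforward commute so that the objects stay in $CAP^{*}$ — but this commutation was recorded in the functoriality remark preceding the statement, so it reduces to citing that result rather than reproving it.
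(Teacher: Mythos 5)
Your proposal is correct and takes essentially the same route as the paper: both arguments hinge on the preceding invariance theorem ($\mu \gg \nu$ implies $\mu\Pi \gg \nu\Pi$), applied once to push a single dominating measure through a Markov morphism for $\mathsf{FAMD}$, and in both directions to preserve mutual absolute continuity for $\mathsf{FAMH}$. The additional bookkeeping you supply (identity kernels, composition, fullness, and the explicit inclusion $\mathsf{FAMH}\subset\mathsf{FAMD}$ via domination by a single member) is left implicit in the paper's proof but does not alter the argument.
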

\begin{proof}
We organize the proof in two parts.

\, 

\textbf{(i) Dominated Families Form a Subcategory.}  
Suppose $Q$ is a probability measure on $(\Omega,\mathbf{S})$ that dominates a family $\{P_\theta\}_{\theta \in \Theta}$, i.e., for each $\theta$, we have $Q \gg P_\theta$. By the previous theorem, for any Markov morphism $\Pi$ from $(\Omega,\mathbf{S})$ to $(\Omega',\mathbf{\Sigma})$, the extended measure $Q\Pi$ dominates the family $\{P_\theta\Pi\}_{\theta \in \Theta}$. Thus, the property of domination is preserved under Markov morphisms, and the dominated families with all their Markov morphisms form a subcategory of $FAM$, which we denote by $\mathsf{FAMD}$.

\, 

\textbf{(ii) Mutual Absolute Continuity.}  
In a family of mutually absolutely continuous probability measures, each measure dominates every other. Since this domination relation is invariant under any Markov morphism, the subcategory $\mathsf{FAMH}$ of such families is complete.
Hence, we conclude that $\mathsf{FAMD}$ is a complete subcategory of $FAM$, and within it, $\mathsf{FAMH}$ forms a complete subcategory.
\end{proof}

\begin{remark}
The process of passing to the closure of the $\sigma$-algebra in the category of measurable spaces is functorial.
\end{remark}

We end this subsection with the following lemma. 
\begin{lemma}\label{L:7.2}
If $P(\cdot)\in CAP(\Omega,{\bf S})$ is a constructive probability distribution (i.e. such that the distribution can be constructed from the family $Q_\theta$
 then all distributions that it dominates are also constructible.  
\end{lemma}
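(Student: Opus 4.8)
The plan is to unwind the meaning of \emph{constructible} through the universal construction of Theorem~\ref{T:Univ}: a probability distribution $P$ on $(\Omega,\bS)$ is constructible from the family $\{Q_\theta\}$ precisely when it admits a representation
\[
P\{A\}=\int_{\mathscr{E}} R\{d\epsilon\}\,Q(\epsilon,A),\qquad A\in\bS,
\]
through a measurable map $\varphi:\Omega\to\mathscr{E}$ onto a finite space, a mixing measure $R$ on $\mathscr{E}$, and transition distributions $Q(\epsilon,\cdot)$ drawn from the family and subject to the consistency condition $Q(\epsilon,\varphi^{-1}(\epsilon))=1$. The target is to produce an analogous representation for any $\nu$ with $P\gg\nu$, thereby exhibiting $\nu$ as constructible.

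First I would invoke the Radon--Nikodym theorem: since $P\gg\nu$ and both are finite (hence $\sigma$-finite), there is a density $\rho=d\nu/dP\ge 0$ with $\int_\Omega\rho\,dP=1$. Substituting the representation of $P$ and exchanging the order of integration (legitimate by Tonelli, the integrands being nonnegative) gives
\[
\nu\{A\}=\int_A\rho\,dP=\int_{\mathscr{E}} R\{d\epsilon\}\int_A\rho(\omega)\,Q(\epsilon,d\omega).
\]
I would then set $c(\epsilon)=\int_\Omega\rho(\omega)\,Q(\epsilon,d\omega)$ and, on the values where $c(\epsilon)>0$, define the reweighted data
\[
\tilde R\{d\epsilon\}=c(\epsilon)\,R\{d\epsilon\},\qquad
\tilde Q(\epsilon,A)=\frac{1}{c(\epsilon)}\int_A\rho(\omega)\,Q(\epsilon,d\omega),
\]
extending $\tilde Q(\epsilon,\cdot)$ by the original $Q(\epsilon,\cdot)$ on the finitely many exceptional values where $c(\epsilon)=0$. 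This yields $\nu\{A\}=\int_{\mathscr{E}}\tilde R\{d\epsilon\}\,\tilde Q(\epsilon,A)$, and a direct computation shows $\int_{\mathscr{E}}\tilde R\{d\epsilon\}=\int_\Omega\rho\,dP=1$, so $\tilde R$ is again a mixing probability on $\mathscr{E}$.

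The remaining verifications are that $\tilde Q$ is a genuine Markov morphism of the admissible type: its measurability in $\epsilon$ is automatic because $\mathscr{E}$ is finite, and the consistency condition $\tilde Q(\epsilon,\varphi^{-1}(\epsilon))=1$ holds because $Q(\epsilon,\cdot)$, and hence $\rho\,Q(\epsilon,\cdot)$, is carried by the fibre $\varphi^{-1}(\epsilon)$, so its mass there equals $c(\epsilon)$. The invariance of domination under Markov morphisms established above guarantees that this reconstruction respects the dominated structure. The main obstacle I anticipate is not the formal computation but pinning down the exceptional fibres $c(\epsilon)=0$ and confirming that the reweighted kernel $\tilde Q$ remains admissible as a transition distribution \emph{built from the base family} $\{Q_\theta\}$ in the precise sense intended by the definition; the finiteness of $\mathscr{E}$ together with the transitivity of absolute continuity (noted earlier to be reflexive and transitive) are exactly what make this closure step go through.
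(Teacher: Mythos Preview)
The paper states Lemma~\ref{L:7.2} without proof, so there is no paper argument to compare against directly. Your formal Radon--Nikodym and Tonelli manipulations are correct; the difficulty lies elsewhere, in two places.

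First, your reading of \emph{constructive} through the mixture representation of Theorem~\ref{T:Univ} does not appear to be the intended one. Theorem~\ref{T:Univ} defines \emph{congruence of families}, not constructibility of a single law. The solution to Exercise~\ref{Ex:LemmaDom}, which explicitly invokes this lemma, treats a distribution as constructive when it is the pushforward $P_k\{\cdot\}=\lambda\{f_k^{-1}(\cdot)\}$ of Lebesgue measure under a measurable map $f_k$ from a standard interval into $\Omega$; this is Chentsov's usage. Under that definition the proof runs differently: if $P=\lambda\circ f^{-1}$ and $\nu\ll P$ with density $\rho$, then $\nu=\mu\circ f^{-1}$ where $d\mu/d\lambda=\rho\circ f$, and since any probability on the unit interval absolutely continuous with respect to $\lambda$ is itself a pushforward $\mu=\lambda\circ h^{-1}$ (via its distribution function), the composite $f\circ h$ exhibits $\nu$ as constructive.

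Second, even granting your interpretation, the gap you yourself flag is genuine and the closing sentence does not close it. The reweighted kernel $\tilde Q(\epsilon,\cdot)=c(\epsilon)^{-1}\rho\,Q(\epsilon,\cdot)$ is a legitimate Markov kernel satisfying the consistency condition $\tilde Q(\epsilon,\varphi^{-1}(\epsilon))=1$, but nothing forces it to remain \emph{drawn from the base family} $\{Q_\theta\}$: multiplying a member of a parametric family by an arbitrary density and renormalising will generically leave the family. Finiteness of $\mathscr{E}$ constrains only the index set, not the fibrewise measures, and transitivity of absolute continuity speaks to domination, not to membership in a prescribed family. Without an explicit closure hypothesis on $\{Q_\theta\}$ under such reweightings --- which is nowhere assumed --- this step does not go through.
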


\subsubsection{Conclusion}

In summary, our construction shows that the property of being dominated is invariant under Markov morphisms, and that dominated families and the families of mutually absolutely continuous distributions naturally form complete subcategories of the category of measurable spaces with Markov morphisms.

\subsubsection{Dominating Measures and Constructive Probabilities}

\begin{lemma}
    The measure 
    \[
    P_0\{.\} = \left(\frac{1}{n} \sum_{k=1}^n P_k\right) \{.\}
    \]
    is a dominating law.
\end{lemma}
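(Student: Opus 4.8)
The plan is to recognize this lemma as nothing more than the statement that $P_k \ll P_0$ for every $k \in \{1,\dots,n\}$, which is precisely what it means for $P_0$ to dominate the family $\{P_k\}$ (this is exactly Exercise~\ref{Ex:domMeas}). So the proof splits into two parts: first confirming that $P_0$ is itself a legitimate probability measure, and then verifying the absolute continuity relation.

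First I would check that $P_0$ is a probability measure on $(\Omega, \bS)$. Since each $P_k$ is a probability measure and the coefficients $\tfrac{1}{n}$ are nonnegative, the finite combination $P_0 = \tfrac{1}{n}\sum_{k=1}^n P_k$ inherits $P_0[\emptyset] = 0$ and countable additivity by linearity of the (convergent, since bounded) series of measures. Normalization follows from
\[
P_0[\Omega] = \frac{1}{n}\sum_{k=1}^n P_k[\Omega] = \frac{1}{n}\sum_{k=1}^n 1 = 1,
\]
so $P_0: \bS \to [0,1]$ is indeed a probability.

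The central step is the absolute continuity. Fix any $A \in \bS$ with $P_0(A) = 0$. By definition this means
\[
\frac{1}{n}\sum_{j=1}^n P_j(A) = 0.
\]
Here the only ingredient that does any work is the nonnegativity of each term: a finite sum of nonnegative real numbers vanishes if and only if every summand vanishes. Hence $P_j(A) = 0$ for all $j$, and in particular $P_k(A) = 0$. Since $A$ was arbitrary, we conclude $P_k \ll P_0$ for every $k$, which is exactly the assertion that $P_0$ is a dominating law for the family.

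There is, honestly, no serious obstacle here: the entire content rests on the elementary fact that $\sum_j a_j = 0$ with $a_j \geq 0$ forces each $a_j = 0$. The only point requiring a word of care is that this argument genuinely needs the \emph{finiteness} of the collection (or, in the countable case, the strict positivity of each weight $2^{-k}$ so that no vanishing coefficient could mask a nonzero $P_k(A)$); with the uniform weights $\tfrac{1}{n}$ this is immediate. I would close by remarking that the same reasoning applies verbatim to the countable weighted series $P_0 = \sum_{k=1}^\infty 2^{-k} P_k$, since each weight is strictly positive, thereby covering the countable version of the construction as well.
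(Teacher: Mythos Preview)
Your proof is correct and follows essentially the same idea as the paper. The paper phrases the key step via the explicit bound $P_k\{.\} \leq n\, P_0\{.\}$ (respectively $P_k\{.\} \leq 2^k P_0\{.\}$ in the countable case), from which the inclusion of null sets is immediate; your argument that a vanishing sum of nonnegative terms forces each term to vanish is the same observation unwound.
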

\begin{ex}\label{Ex:LemmaDom}
Show that for each $P_k$ we have the bound: 
    \[
    P_k\{.\} \leq 2^k P_0\{.\}.
    \]

\end{ex}

\section{Homogeneous Klein Geometries and Geodesic Structures}

\begin{definition}
    A Klein geometry generated by an elementary category of topological spaces is said to be \textit{almost homogeneous} or \textit{quasi-homogeneous} if any two points $a \in A$ and $b \in B$ of any two objects $A$ and $B$ are totally arbitrarily approximable.
\end{definition}

\begin{lemma}
    Consider an almost-homogeneous Klein geometry. Every invariant function in the category of topological spaces over a scalar field is identically constant.
\end{lemma}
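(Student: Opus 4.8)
The plan is to exploit the interplay between three facts: the \emph{invariance} of $f$ under the congruences (motions) of the geometry, the \emph{density} of congruence-orbits guaranteed by almost-homogeneity, and the \emph{continuity} of $f$ as a map into the scalar field. Recall from the discussion of Klein's program that an invariant function is precisely a scalar-valued function taking equal values on congruent figures; here the relevant figures are the individual points $a \in A$ of objects $A$ of the geometry, and two such points lie in the same congruence class exactly when one is the image of the other under a motion.

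First I would fix an invariant function $f$ together with two arbitrary points $a \in A$ and $b \in B$, and reduce the whole statement to showing $f(a) = f(b)$; since $A$, $B$, $a$, $b$ are arbitrary, this equality forces $f$ to be globally constant. The quasi-homogeneity hypothesis — that $a$ and $b$ are ``totally arbitrarily approximable'' — I would read as the assertion that there exists a net $\{b_\lambda\}$ (a sequence, when the topology is first-countable) of points, each congruent to $a$, with $b_\lambda \to b$ in the topology of the ambient space. In other words, the orbit of $a$ under the motions of the geometry accumulates at $b$.

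Next, invariance supplies $f(b_\lambda) = f(a)$ for every index $\lambda$, because each $b_\lambda$ and $a$ belong to the same congruence class. Continuity of $f$ then permits passage to the limit, giving $f(b) = \lim_\lambda f(b_\lambda) = f(a)$. This is the entire substance of the argument: density of the orbit, combined with invariance on the orbit and continuity everywhere, propagates a single value across the whole geometry.

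The main obstacle is making ``totally arbitrarily approximable'' operational and securing the continuity needed for the limiting step. Two points must be handled: whether the approximating family can genuinely be taken to consist of congruent images of $a$ (rather than merely points near the orbit closure), and whether invariant functions over a scalar field are continuous by hypothesis or by construction. If continuity is not built into the definition, the cleanest resolution is to adopt it — consonant with Klein's framework, where invariants are regular scalar functions — after which the density-plus-continuity argument closes immediately; and if the topology is not first-countable, I would phrase the limit with nets rather than sequences to keep the reasoning valid.
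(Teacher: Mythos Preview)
Your argument is correct and is precisely the ``evident'' reasoning the paper has in mind: the paper's own proof consists of the single sentence ``The proof is evident,'' and your density-plus-invariance-plus-continuity outline is exactly the standard way to unpack that. Your caveats about continuity and the meaning of approximability are well taken but do not reflect any divergence from the paper, which simply leaves those points implicit.
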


\begin{proof}
    The proof is evident.
\end{proof}

\subsection{Geodesic Structures}

\begin{definition}
    Let $\gamma(t)$ be a curve. We say that $\gamma(t)$ is \textit{geodesic} if the family of tangent vectors 
    \[
    X(\tau) = \frac{d}{dt} \Big|_{\tau}
    \]
    is parallel along the curve, that is, for all $t$:
    \[
    \nabla_{\frac{d}{dt}} \left(\frac{d}{dt}\right)_{p(t)} = 0.
    \]
\end{definition}

\begin{remark}
    Geodesics are the analogs of straight lines in Euclidean spaces.
\end{remark}

In local coordinates, the geodesic equation takes the form:
\[
\frac{d^2 x^{(k)}}{dt^2} + \sum_{i,j} \Gamma_{ij}^k \frac{dx^{(i)}}{dt} \frac{dx^{(j)}}{dt} =0.
\]

Since parametrization is essential, any other canonical parametrization $s = s(t)$ of $\gamma(t)$ must be an affine function of $t$, that is, $s(t) = at + b$.

\subsection{Torsion, Curvature, and Flat Connections}

The linear connection operator $\nabla$ is associated with two multi-linear operators $T(X,Y)$ and $R(X,Y)$:
\begin{itemize}
    \item The torsion operator:
    \[
    T(X,Y) = \nabla_{X}(Y) - \nabla_{Y}(X) - [X,Y].
    \]
    \item The curvature operator:
    \[
    R(X,Y)Z = \nabla_{X}(\nabla_{Y}Z) - \nabla_Y(\nabla_{X}Z) - \nabla_{[X,Y]}Z.
    \]
\end{itemize}
where $[X,Y] = XY - YX$ is the commutator.

\begin{remark}
    If both the torsion and curvature vanish identically, then:
    \begin{itemize}
        \item The connection is flat.
        \item A manifold with such a flat connection is locally affine.
        \item Any parallel translation on such a manifold depends only on its initial and final points.
        \item Parallel translation depends only on the homotopy class of paths.
    \end{itemize}
\end{remark}

\subsection{Totally Geodesic Submanifolds}

\begin{definition}
    A submanifold $N$ of a manifold $M$ equipped with a linear connection $\nabla$ is called \textit{totally geodesic} if, for any two points of $M$, it contains the whole geodesic passing through these points.
\end{definition}

\begin{lemma}
    A submanifold $N \subset M$ is totally geodesic if, for any tangent vector fields $X, Y$ on $N$, the vector field $Z = \nabla_X Y$ is also tangent to $N$.
\end{lemma}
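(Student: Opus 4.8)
The plan is to prove the direct implication: assuming $\nabla_X Y$ is tangent to $N$ whenever $X,Y$ are tangent to $N$, I will show that every geodesic of $M$ issuing from a point of $N$ with velocity tangent to $N$ stays inside $N$, which is precisely the assertion that $N$ is totally geodesic. The link between the hypothesis, stated for vector fields, and geodesics, which involve a velocity field defined only along a curve, will be furnished by the connection induced on $N$.

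First I would introduce the induced connection. Writing $T_qM = T_qN \oplus \nu_q N$ for $q \in N$ (any complement, for instance the one coming from adapted coordinates) and letting $(\cdot)^\top$ denote the projection onto $T_qN$, I define $\nabla^N_X Y := (\nabla_X Y)^\top$ for vector fields $X,Y$ tangent to $N$. One checks from Definition~\ref{lincon} that $\nabla^N$ is a linear connection on $N$: additivity in $X$ and $Y$ is immediate, and the Leibniz identities $\nabla^N_{fX}Y = f\,\nabla^N_X Y$ and $\nabla^N_X(fY) = (Xf)\,Y + f\,\nabla^N_X Y$ follow because the projection is $\mathcal{C}^\infty(N)$-linear and fixes tangent vectors. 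The hypothesis says exactly that $\nabla_X Y$ is already tangent to $N$, so $(\nabla_X Y)^\top = \nabla_X Y$; hence $\nabla^N_X Y = \nabla_X Y$ for all tangent $X,Y$.

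Next I would fix $p \in N$ and $v \in T_pN$ and let $c$ be the unique $\nabla^N$-geodesic in $N$ with $c(0)=p$ and $\dot c(0)=v$, that is $\nabla^N_{\dot c}\dot c = 0$. The key step is to upgrade this to $\nabla_{\dot c}\dot c = 0$ in $M$. Since $\dot c(t) \in T_{c(t)}N$ for all $t$, I extend $\dot c$ locally to a vector field $Y$ tangent to $N$; because the covariant derivative along a curve depends only on the values of the field along the curve, $\nabla_{\dot c}\dot c$ equals $\nabla_{\dot c}Y$ computed with such an extension, and by the hypothesis this lies in $T_{c(t)}N$. Projecting gives $\nabla_{\dot c}\dot c = (\nabla_{\dot c}\dot c)^\top = \nabla^N_{\dot c}\dot c = 0$, so $c$ is a geodesic of $M$ as well.

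Finally, the geodesic of $M$ with initial data $(p,v)$ is unique, being the solution of the second-order system $\ddot x^k + \Gamma^k_{ij}\dot x^i\dot x^j = 0$. Since $c$ solves this same system with the same initial data and takes values in $N$, the two coincide on their common interval of definition; covering a maximal $N$-geodesic by charts and iterating extends the conclusion along the whole geodesic. I expect the main obstacle to be the passage in the third paragraph from vector fields on $N$ to the velocity field along $c$: one must justify, in adapted coordinates where $N = \{x^{k+1} = \dots = x^n = 0\}$ with $k=\dim N$, that the hypothesis is equivalent to the vanishing on $N$ of the Christoffel symbols $\Gamma^c_{ab}$ with $a,b$ tangential and $c$ normal, and that this is exactly what forces the normal components of the geodesic equation to be solved by $x^c \equiv 0$.
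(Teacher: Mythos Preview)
Your argument is correct and complete: you build the induced connection $\nabla^N$, observe that the hypothesis forces $\nabla^N$ to coincide with the restriction of $\nabla$, and then use existence and uniqueness for the geodesic ODE to conclude that $\nabla^N$-geodesics of $N$ are $\nabla$-geodesics of $M$. The only passage that requires a moment's care, the transition from the hypothesis on vector fields to the covariant derivative of $\dot c$ along $c$, you have handled correctly via local extension, and your remark about adapted coordinates and the vanishing of the mixed Christoffel symbols $\Gamma^c_{ab}$ (tangential $a,b$, normal $c$) gives an equivalent and equally clean route.

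As for comparison with the paper: the paper does not give a proof at all, declaring the result ``evident.'' Your write-up therefore supplies precisely the details the paper omits, and there is no alternative approach in the paper to contrast with yours.
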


\begin{proof}
    The proof is evident.
\end{proof}

\begin{remark}
    A submanifold $N \subset M$ is totally geodesic if every locally shortest curve in $N$ is also locally shortest in $M$.
\end{remark}

\section{Equivariant Connections in the Category of Topological Spaces}

Let us return to the framework of Klein geometry and consider a category of topological spaces, where morphisms reflect the fundamental structures of geometric transformations.

\begin{definition}
    Let $\mathcal{C}$ be a category of topological spaces, and let $\nabla$ be a linear connection on each object of $\mathcal{C}$. The connection $\nabla$ is said to be \textit{absolutely equivariant} if, for any morphism $\varphi: X \to Y$ in $\mathcal{C}$, the image of every geodesic in $X$ under $\varphi$ is a geodesic in $Y$.
\end{definition}

This definition encapsulates the idea that a natural connection should be preserved under morphisms of the category, ensuring that geodesic structures remain invariant under transformations.

\subsection{Geodesic Midpoints and Absolute Covariance}

To better understand the implications of an absolutely equivariant connection, let us recall the fundamental properties of geodesics in canonical coordinates.

\begin{itemize}
    \item Given two points on a geodesic, one can uniquely define the \textit{midpoint} of the geodesic segment connecting them.
    \item This midpoint is determined by a symmetric function of the two endpoints.
    \item If the connection $\nabla$ is absolutely equivariant in the category $\mathcal{C}$, then the midpoint function is necessarily \textit{absolutely covariant} with respect to pairs of points.
\end{itemize}

Thus, the notion of geodesic symmetry emerges naturally in the categorical framework: an equivariant connection ensures that midpoint operations behave consistently across all objects and morphisms of the category.

\chapter{Categorical and Geometric structures in Statistical Manifold Theory}

In the previous chapter, we introduced key ideas in probability and statistics and briefly alluded to a more structural perspective using category theory and differential geometry. However, these tools have not yet been systematically employed. In this chapter, we take a decisive step in that direction by developing a structured approach relating to category theory and differential geometry in the study of statistical manifolds.

\, 

The categorical viewpoint allows us to formalize probabilistic transformations, treating probability distributions as objects and Markov kernels as morphisms. At the same time, differential geometry provides powerful tools for analyzing the geometric structure of statistical manifolds, particularly through notions such as connections, curvature, and geodesics. By combining these perspectives, we gain deeper insight into the intrinsic structure of statistical models and their transformations.

\, 

This chapter is devoted to making these ideas explicit. We will establish the categorical framework for statistical manifolds and then show how geometric methods naturally arise within this setting. The interplay between these two formalisms will not only clarify fundamental structures but also pave the way for further developments in information geometry, learning theory, and beyond.

\, 

In order to rigorously define the notion of statistical manifolds, we introduce five fundamental collections of probability distributions, which serve as building blocks for our categorical formulation:

\begin{itemize}
    \item The collection $\mathsf{Cap}$,
    \item The collection $\mathsf{Capd}$,
    \item The collection $\mathsf{\mathsf{Caph}}$,
    \item The collection $\mathsf{\mathsf{Conh}}$,
    \item The collection $\mathsf{Var}$.
\end{itemize}

These collections are formally defined as follows:

\begin{definition}
    Let $(\Omega, \mathbf{S})$ be a measurable space. We define:
    \begin{itemize}
        \item $\mathsf{Cap}(\Omega, \mathbf{S})$ as the collection of all probability distributions on $(\Omega, \mathbf{S})$.
        
        \item $\mathsf{Capd}(\Omega, \mathbf{S}, \mathbf{Z})$ as the collection of all probability distributions on $(\Omega, \mathbf{S})$ that vanish on the ideal $\mathbf{Z}$.
    
        \item $\mathsf{\mathsf{Caph}}(\Omega, \mathbf{S}, \mathbf{Z})$ as the collection of all probability distributions on $(\Omega, \mathbf{S})$ that vanish on $\mathbf{Z}$ and only on $\mathbf{Z}$, when considered over an open simplex.
        
        \item $\mathsf{\mathsf{Conh}}(\Omega, \mathbf{S}, \mathbf{Z})$ as the collection of all nonnegative, mutually absolutely continuous probability measures on $(\Omega, \mathbf{S})$ that vanish on $\mathbf{Z}$ and only on $\mathbf{Z}$.
      
        \item $\mathsf{Var}(\Omega, \mathbf{S})$ as the collection of all probability distributions on $(\Omega, \mathbf{S})$.
    \end{itemize}
\end{definition}

The following result formalizes the correspondence between these spaces:

\begin{proposition}
    The collections $\mathsf{Var}(\Omega, \mathbf{S})$ and $\mathsf{Cap}(\Omega, \mathbf{S})$ are in one-to-one correspondence.
\end{proposition}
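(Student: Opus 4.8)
The plan is to exhibit an explicit bijection and then verify that it is well-defined in both directions. The decisive observation is that, by the very definitions just stated, both $\mathsf{Var}(\Omega,\mathbf{S})$ and $\mathsf{Cap}(\Omega,\mathbf{S})$ are declared to be the collection of \emph{all} probability distributions on the measurable space $(\Omega,\mathbf{S})$. Consequently the two collections share the same underlying elements, and the natural candidate for the correspondence is simply the identity assignment $P\mapsto P$.

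First I would fix the common ambient set, namely the set of all maps $P:\mathbf{S}\to[0,1]$ satisfying the probability axioms $P[\Omega]=1$ and countable additivity over disjoint families (as in the definition of a probability in Section~\ref{sec:proba}). Both $\mathsf{Var}$ and $\mathsf{Cap}$ are, by definition, equal to this set. I would then define $\iota:\mathsf{Var}(\Omega,\mathbf{S})\to\mathsf{Cap}(\Omega,\mathbf{S})$ by $\iota(P)=P$, together with the candidate inverse $\iota^{-1}:\mathsf{Cap}(\Omega,\mathbf{S})\to\mathsf{Var}(\Omega,\mathbf{S})$ defined in the same manner.

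Second, I would verify well-definedness: since membership in either collection is governed by the identical defining property (being a probability distribution on $(\Omega,\mathbf{S})$), every $P$ in the source automatically lies in the target, so both $\iota$ and $\iota^{-1}$ land where they should. Injectivity and surjectivity are then immediate, because $\iota^{-1}\circ\iota=\mathrm{Id}_{\mathsf{Var}}$ and $\iota\circ\iota^{-1}=\mathrm{Id}_{\mathsf{Cap}}$; hence $\iota$ is a bijection.

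The genuine content is conceptual rather than computational: one must make explicit that the symbols $\mathsf{Var}$ and $\mathsf{Cap}$ denote the same underlying collection while being introduced to carry two distinct intended roles (the geometric ``variety'' viewpoint versus the categorical ``$\mathsf{Cap}$''-object viewpoint). The main obstacle, such as it is, lies in confirming that no hidden structural datum — a distinguished ideal $\mathbf{Z}$, an ambient simplex, or a choice of dominating measure, as occurs in the companion collections $\mathsf{Capd}$, $\mathsf{Caph}$, and $\mathsf{Conh}$ — has been silently attached to one but not the other. Once it is checked that neither $\mathsf{Var}$ nor $\mathsf{Cap}$ carries data beyond the bare set of probability distributions, the identity bijection $\iota$ settles the claim.
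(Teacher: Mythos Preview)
Your proof is correct and matches the paper's own approach: the paper's proof consists of the single sentence ``The claim follows from the structural equivalence of probability distributions in both settings,'' which is exactly the observation you spell out in detail via the identity map. Your additional remark checking that no hidden datum (an ideal $\mathbf{Z}$, a dominating measure, etc.) distinguishes the two collections is a reasonable elaboration of what the paper leaves implicit.
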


\begin{proof}
    The claim follows from the structural equivalence of probability distributions in both settings.
\end{proof}

\begin{proposition}
    The collections $\mathsf{Cap}, \mathsf{Capd}, \mathsf{\mathsf{Caph}}, \mathsf{\mathsf{Conh}}, \mathsf{Var}$ each form a manifold equipped with a corresponding atlas.
\end{proposition}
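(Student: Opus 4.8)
The plan is to construct an explicit atlas on each collection and to verify the atlas axioms (A1)--(A2), modelling the charts on the locally convex tangent space $\cT_{\theta}$ identified in the previous chapter. Since $\mathsf{Var}(\Omega,\bS)$ and $\mathsf{Cap}(\Omega,\bS)$ are in one-to-one correspondence by the preceding proposition, it suffices to treat $\mathsf{Cap}$ and then transport its structure to $\mathsf{Var}$ along that bijection. Moreover, since $\mathsf{Capd}$, $\mathsf{Caph}$ and $\mathsf{Conh}$ are all subcollections of $\mathsf{Cap}$ cut out by the vanishing conditions on the ideal $\mathbf{Z}$, I would obtain their charts by restriction of the charts built for $\mathsf{Cap}$.

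First I would fix a dominating measure $\mu$ --- which exists for dominated families, as shown in Exercise~\ref{Ex:domMeas} and the discussion of \textbf{DomFam} --- and use the Radon--Nikodym theorem to identify each $P \in \mathsf{Cap}$ with its density $\rho = dP/d\mu \in L^1(\Omega,\bS,\mu)$ subject to $\rho \geq 0$ and $\int_\Omega \rho\, d\mu = 1$. The chart at a reference distribution $P_0$ of strictly positive density $\rho_0$ is then the log-likelihood map
\[
\Phi_{P_0} : P \longmapsto \ln\frac{\rho}{\rho_0} - \bbE_{P_0}\!\left[\ln\frac{\rho}{\rho_0}\right],
\]
sending a neighbourhood of $P_0$ onto an open subset of the model space $\cT_{P_0} = \{u : \bbE_{P_0}[u]=0\}$, the space of centered random variables already identified as the tangent space. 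Its inverse is the exponential representation $u \mapsto \rho_0\,\exp(u - \psi(u))$, with $\psi(u)=\ln\int_\Omega \rho_0 e^{u}\,d\mu$ the normalising potential, which is precisely the exponential-family form.

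Next I would verify the atlas axioms. For (A1) the charts $\Phi_{P_0}$ cover $\mathsf{Cap}$ as $P_0$ ranges over distributions of positive density. For (A2) the transition map $\Phi_{P_1}\circ\Phi_{P_0}^{-1}$ is a translation by a centered function followed by subtraction of a potential, hence smooth on the overlap. In the finite case, where $\Omega = \{\omega_1,\dots,\omega_n\}$ and $\bS = \cP(\Omega)$, this reduces to the familiar statement that the open simplex is an $(n-1)$-manifold carried by the single affine chart $(p_1,\dots,p_{n-1})$ with $p_n = 1-\sum_{i<n} p_i$, while the closed simplex is stratified into manifolds by its faces. For the subcollections, imposing vanishing on $\mathbf{Z}$ restricts attention to densities supported on $\Omega\setminus\mathbf{Z}$, so $\mathsf{Capd}$, $\mathsf{Caph}$ and $\mathsf{Conh}$ inherit the construction performed on the restricted measurable space; the mutual-absolute-continuity requirement in $\mathsf{Conh}$ and the \emph{vanishing only on} $\mathbf{Z}$ condition single out the open stratum on which every density is strictly positive off $\mathbf{Z}$, which is exactly where the exponential chart is a homeomorphism.

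The main obstacle will be the infinite-dimensional case: the naive choice $\cE = L^1$ does not make the exponential chart and its inverse smooth, because $u \mapsto e^{u}$ is not differentiable, nor even everywhere defined, on $L^1$. The genuinely delicate step is therefore the choice of model space --- one must pass to an Orlicz space built from the exponential Young function so that $\psi$ is finite and analytic on a neighbourhood of $0$ and the transition maps are $\cC^\infty$-isomorphisms in the sense of $(\cM3')$. Establishing this smoothness, together with the fact that the resulting topology is independent of the reference density $\rho_0$, is where the real work lies; in the finite-dimensional and dominated-finite settings it is automatic, which is the regime relevant to the statistical manifolds of exponential type used later in the text.
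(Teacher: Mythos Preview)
The paper does not prove this proposition; it simply cites Chentsov, pp.~73--74. The detailed construction that appears \emph{later} in the chapter (the ``natural chart'' $\mu\leftrightarrow(\mu\{A_1\},\dots,\mu\{A_m\})$ on $\mathsf{Conh}$, and the barycentric/affine coordinates on $\mathsf{Caph}$) is carried out only in the finite-quotient case $\bS/\mathbf{Z}\cong S_m$ and uses linear coordinates rather than your exponential ones. Your log-likelihood chart is the \emph{canonical affine} coordinate system that the paper introduces afterwards; so you are anticipating a construction that comes later, and in a more analytically demanding form (the Orlicz-space refinement you outline is the Pistone--Sempi approach, which is well beyond what Chentsov does).

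There is, however, a genuine gap in your argument. The exponential chart $\Phi_{P_0}:P\mapsto \ln(\rho/\rho_0)-\bbE_{P_0}[\ln(\rho/\rho_0)]$ is only defined where $\rho>0$ on the support of $\rho_0$; it therefore gives charts on $\mathsf{Caph}$ and $\mathsf{Conh}$ (the open strata) but \emph{not} on $\mathsf{Cap}$ or $\mathsf{Capd}$, which contain distributions with smaller support. Your parenthetical remark that ``the closed simplex is stratified into manifolds by its faces'' concedes this but does not repair it: a stratification is not an atlas. To put a genuine manifold-with-corners or stratified-manifold structure on $\mathsf{Cap}$ and $\mathsf{Capd}$ one needs charts that extend across the boundary faces --- the linear coordinates $(p_1,\dots,p_{m})$ do this, your exponential ones do not. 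A second, smaller issue: you begin by fixing a dominating measure for $\mathsf{Cap}(\Omega,\bS)$, but the full collection of probability measures on an arbitrary $(\Omega,\bS)$ need not be dominated, so the Radon--Nikodym identification is unavailable in that generality; your construction really starts at $\mathsf{Capd}$.
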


\begin{proof}
    See Chentsov, pp. 73-74.
\end{proof}

\subsection{The Category of Statistical Decision Rules}

A statistical problem is naturally associated with the measurable space $(\Omega, \mathbf{S})$ of sample outcomes, together with:
\begin{itemize}
    \item A family $\{P_{\theta}\}$ of admissible probability distributions on $(\Omega, \mathbf{S})$,
    \item An additional structure encoding prior information about the phenomenon,
    \item A measurable space $(\Omega', \mathbf{S'})$ of inferences (actions),
    \item A statistical test quantifying the quality of a given decision rule.
\end{itemize}

\begin{definition}
    A \emph{statistical decision rule} is a transition probability distribution $\Pi(\omega, d\epsilon)$ describing a Markov random transition from the sample space $(\Omega, \mathbf{S})$ to the inference space $(\Omega', \mathbf{S'})$.
\end{definition}

\begin{remark}
    Any transition probability distribution $\Pi(\omega, d\epsilon)$ may be interpreted as a decision rule within any statistical model whose sample space is $(\Omega, \mathbf{S})$ and whose inference space is $(\Omega', \mathbf{S'})$.
\end{remark}

Furthermore, given a Markov random transition from $\Omega'$ to $\Omega''$ described by a transition probability $\Pi(\omega', d\omega'')$, we obtain a \emph{Markov morphism} between the measurable spaces $(\Omega', \mathbf{S'})$ and $(\Omega'', \mathbf{S''})$.

\begin{theorem}
    The class of objects:
    \begin{itemize}
        \item The collection $\mathsf{Cap}(\Omega, \mathbf{S})$,
        \item The collection $\mathsf{Capd}(\Omega, \mathbf{S}, \mathbf{Z})$,
        \item The collection $\mathsf{\mathsf{Caph}}(\Omega, \mathbf{S}, \mathbf{Z})$,
        \item The collection $\mathsf{\mathsf{Conh}}(\Omega, \mathbf{S}, \mathbf{Z})$,
    \end{itemize}
    equipped with the system of Markov homeomorphisms defined by
    \[
    (P\Pi) \{.\} = \int \Pi(\omega;.) P\{d\omega\},
    \]
    forms the categories of statistical decisions $\mathsf{CAP}, \mathsf{CAPD}, \mathsf{\mathsf{Caph}}, \mathsf{\mathsf{Conh}}$, respectively, which are isomorphic to categories of statistical decision rules.
\end{theorem}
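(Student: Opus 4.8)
The plan is to establish the assertion in three stages: first verify that each of the four collections equipped with Markov morphisms satisfies the category axioms; second, show that the pushforward action $(P\Pi)\{\cdot\}=\int \Pi(\omega;\cdot)\,P\{d\omega\}$ respects the defining constraint of each collection, so that the morphisms genuinely act within the collection; and third, produce an explicit pair of mutually inverse functors realizing the isomorphism with the categories of statistical decision rules. Since a statistical decision rule was \emph{defined} to be a transition probability $\Pi(\omega,d\epsilon)$, the third stage will be largely a matter of matching data, and the mathematical content lies in the first two.

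For the first stage, I would define composition of Markov morphisms $\Pi_{01}:(\Omega_0,\bS_0)\to(\Omega_1,\bS_1)$ and $\Pi_{12}:(\Omega_1,\bS_1)\to(\Omega_2,\bS_2)$ by the Chapman--Kolmogorov integral
\[
(\Pi_{01}\circ\Pi_{12})(\omega_0;\mathcal{A})=\int_{\Omega_1}\Pi_{01}(\omega_0;d\omega_1)\,\Pi_{12}(\omega_1;\mathcal{A}),
\]
and check that the result is again a Markov morphism: measurability in $\omega_0$ follows from the lemma proved earlier (the map $g(\omega')=(\Pi^{T}f)(\omega')$ is measurable, taking $f=\Pi_{12}(\,\cdot\,;\mathcal{A})$), while for fixed $\omega_0$ the assignment $\mathcal{A}\mapsto(\Pi_{01}\circ\Pi_{12})(\omega_0;\mathcal{A})$ is a probability measure by monotone convergence. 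Associativity reduces to Fubini--Tonelli, the integrands being nonnegative and bounded by $1$. The identity on $(\Omega,\bS)$ is the deterministic kernel $\Pi_{\mathrm{id}}(\omega;\mathcal{A})=\b1_{\mathcal{A}}(\omega)$, whose neutrality is immediate.

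For the second stage I would treat the collections in increasing order of refinement. Preservation of $\mathsf{Cap}$ is automatic, since the pushforward of a probability distribution is a probability distribution. For $\mathsf{Capd}$ the vanishing on the ideal $\bZ$ is propagated by observing that $(P\Pi)(\mathcal{A})=0$ whenever $\Pi(\omega;\mathcal{A})=0$ holds $P$-almost everywhere, provided the admissible morphisms are taken compatible with the ideals in source and target. For $\mathsf{Conh}$ I would invoke the already-established invariance of domination: $\mu\gg\nu$ implies $\mu\Pi\gg\nu\Pi$, so mutual absolute continuity is preserved and the completeness of $\mathsf{FAMH}$ applies. The third stage then constructs the functor sending a measurable space (with its distinguished admissible family) to the corresponding statistical model and each Markov morphism $\Pi$ to the decision rule $\Pi(\omega,d\epsilon)$; functoriality in both directions is exactly the compatibility $(P\Pi)^{*}=(P^{*}\Pi^{*})$ of the extension theorem together with the commutation $(\Pi_{01}\circ\Pi_{12})^{*}=\Pi_{01}^{*}\circ\Pi_{12}^{*}$ recorded in the subcategory theorem.

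The hard part will be the preservation of the \emph{only on $\bZ$} clause defining $\mathsf{Caph}$ and $\mathsf{Conh}$: a generic Markov morphism can collapse support and thereby enlarge the null set of the pushed-forward measure, so the plain vanishing argument used for $\mathsf{Capd}$ does not suffice. One must either restrict the admissible morphisms to those that \emph{reflect} the ideal $\bZ$ (the analogue of reflecting null sets), or argue that on the open simplex of strictly positive distributions the relevant morphisms are automatically support-preserving. Isolating the precise morphism class for which the \emph{only on $\bZ$} property is invariant, and verifying that this class remains closed under the Chapman--Kolmogorov composition, is the delicate step; once it is settled, the isomorphism with the decision-rule categories is a formal verification.
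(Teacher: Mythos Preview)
Your proposal is considerably more detailed than what the paper actually provides. The paper's entire proof reads: ``The proof follows from the categorical structure imposed by the Markov homeomorphisms and the natural compatibility of the decision rule formulation.'' In other words, the paper treats the result as essentially self-evident once the definitions are in place, and offers no verification of the category axioms, no discussion of preservation of the various collections under pushforward, and no explicit construction of the functorial isomorphism.

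Your three-stage plan is the right way to make the argument honest, and the ingredients you invoke (Chapman--Kolmogorov composition, the measurability lemma for $\Pi^{T}f$, the invariance of domination under Markov morphisms, the deterministic identity kernel) are exactly the tools the surrounding text has set up. The concern you raise in your final paragraph---that a generic Markov morphism may collapse support and hence fail to preserve the ``vanishes \emph{only} on $\mathbf{Z}$'' clause defining $\mathsf{Caph}$ and $\mathsf{Conh}$---is a genuine subtlety that the paper simply does not address. So you have not missed an argument in the paper; rather, you have correctly identified a gap that the paper leaves implicit (presumably by silently restricting to the morphism class for which the ideal is reflected, or by working in the finite-simplex setting where strict positivity handles it). Your outline is sound and, if anything, more careful than the source.
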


\begin{proof}
    The proof follows from the categorical structure imposed by the Markov homeomorphisms and the natural compatibility of the decision rule formulation.
\end{proof}

\subsection{Types of Manifolds in Theory of Statistical Manifolds}

In the theory of statistical manifolds, four types of manifolds are of particular interest:
\begin{enumerate}
    \item manifold $\mathsf{\mathsf{Conh}}$,
    \item manifold $\mathsf{\mathsf{Caph}}$,
    \item manifold $\mathsf{Capd}$
    \item manifold $\mathsf{Cap}$.
\end{enumerate}

     \subsubsection{Characteristics of the Manifold $\mathsf{\mathsf{Conh}}$} 
      
Consider the category $\mathsf{Caph}F$ of collections $\mathsf{\mathsf{Caph}}(\Omega,\mathbf{S},\mathbf{Z})$ with finite quotient algebras $\mathbf{S}/ \mathbf{Z}$. These collections are finite-dimensional manifolds. 
Any $\mathbf{Z}$-dominated measure on $(\Omega, \mathbf{S})$, where $\mathbf{S}/ \mathbf{Z}$ is finite, is completely determined by the vector
\[\boldsymbol{\mu}=(\mu\{A_1\},\mu\{A_2\},\cdots,\mu\{A_m\})=(\mu_1,\mu_2,\cdots,\mu_m).\]

\begin{definition}
Let $\mathsf{\mathsf{Conh}}(\Omega,\mathbf{S},\mathbf{Z})$ be the collection of all nonnegative, mutually absolutely continuous measures on $(\Omega,\mathbf{S})$ that vanish on $\mathbf{Z}$-sets and only there. Then if $S_m \simeq \mathbf{S}/\mathbf{Z}$, the correspondence
\[
\mu \{.\}  \longleftrightarrow \boldsymbol{\mu} = (\mu\{A_1\},\mu\{A_2\},\cdots,\mu\{A_m\}),
\]
and
\[
\mu_j > 0,\quad j\in\{1,\cdots,m\}.
\]
defines a chart of the entire cone $\mathsf{\mathsf{Conh}}$. This chart is called a \emph{natural chart}.
The atlas of charts ${\mathsf{Conh}}$ also includes other charts obtained from the natural chart through analytic or infinitely differential coordinate transformations. However, the natural chart holds a privileged position.
\end{definition}
\begin{proposition}
   The collection $\mathsf{Caph}(\Omega,\mathbf{S},\mathbf{Z})$ is selected from $\mathsf{Conh}(\Omega,\mathbf{S},\mathbf{Z})$
by the condition
\[
<\mu,I>=1,
\]
making it a hypersurface. Specifically, $\mathsf{Caph}(\Omega, \mathbf{S}, \mathbf{Z})$ is the intersection of the cone  $\mathsf{Conh}(\Omega,\mathbf{S},\mathbf{Z})$ where
$\mu_j > 0$ for $j=1,\cdots,m$, where $\mu_j= \mu\{A_j\}$, with the hyperplane defined by $<\mu,I>=1$.

The vectors $\mathbf{p}=(P\{A_1\},\cdots,P\{A_m\})$ corresponding to ${\bf P\{\cdot\}}$, exhaust the interior of the unit simplex 
exhaust the interior of the unit simplex
\[
\sum_{j=1}^m p_j\mathbf{e}_j, 
\]
where $p_j\geq 0$, $\sum_{j=1}^m p_j=1$, and  $\mathbf{e}_1=(1,0,\cdots,0),\mathbf{e}_2=(0,1,0,\cdots,0),\cdots, \mathbf{e}_m = (0,0,0,\cdots,0,1)$ are the vertices of the simplex.
\end{proposition}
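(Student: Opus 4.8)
The plan is to work entirely in the natural chart introduced above, translating the measure-theoretic description of $\mathsf{Caph}$ into linear conditions on coordinate vectors in $\bbR^m$. Recall that, since $\mathbf{S}/\mathbf{Z} \simeq S_m$ is finite, every measure $\mu \in \mathsf{Conh}(\Omega,\mathbf{S},\mathbf{Z})$ is determined by its values $\mu_j = \mu\{A_j\}$ on the $m$ atoms $A_1,\dots,A_m$, and the natural chart identifies $\mathsf{Conh}$ with the open positive cone $C = \{\boldsymbol{\mu} \in \bbR^m : \mu_j > 0,\ j = 1,\dots,m\}$. The first step is to observe that $\mathsf{Caph}$ is precisely the set of probability measures sitting inside $\mathsf{Conh}$, that is, those $\mu \in \mathsf{Conh}$ with $\mu(\Omega) = 1$.

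Next I would note that the atoms $\{A_1,\dots,A_m\}$ form a partition of $\Omega$ representing the generators of $\mathbf{S}/\mathbf{Z}$, so that by finite additivity $\mu(\Omega) = \sum_{j=1}^m \mu\{A_j\} = \langle \boldsymbol{\mu}, I\rangle$, where $I = (1,\dots,1)$ is the coordinate vector of the indicator of $\Omega$. Hence the normalization $\mu(\Omega) = 1$ is exactly the single affine equation $\langle \boldsymbol{\mu}, I\rangle = 1$, and in the chart $\mathsf{Caph}$ corresponds to $C \cap H$, where $H = \{\boldsymbol{\mu} : \langle \boldsymbol{\mu}, I\rangle = 1\}$. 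Since $I \neq 0$, the hyperplane $H$ is an affine subspace of dimension $m-1$, and $C \cap H$ is a nonempty relatively open subset of it; this establishes that $\mathsf{Caph}$ is a hypersurface, a smooth submanifold of codimension one cut out of the cone by the linear condition, which is the first assertion.

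For the second assertion I would identify the image of this hypersurface under the chart with the relative interior of the standard simplex. Writing $p_j = P\{A_j\}$, the image is $\{\mathbf{p} \in \bbR^m : p_j > 0,\ \sum_{j=1}^m p_j = 1\}$, which is exactly the interior of the simplex with vertices $\mathbf{e}_1,\dots,\mathbf{e}_m$. The point requiring care is the strictness of the inequalities $p_j > 0$: this is forced by the defining property of $\mathsf{Caph}$ that its measures vanish on $\mathbf{Z}$ \emph{and only on} $\mathbf{Z}$, for if some $p_j = 0$ then the non-$\mathbf{Z}$ atom $A_j$ would be a null set, contradicting that condition. Conversely, any $\mathbf{p}$ with $p_j > 0$ and $\sum_{j=1}^m p_j = 1$ defines, via $P\{A_j\} = p_j$ extended additively, a genuine probability measure that is mutually absolutely continuous with the reference measure and vanishes exactly on $\mathbf{Z}$; hence it lies in $\mathsf{Caph}$, and the coordinate vectors exhaust the whole open simplex.

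The argument is essentially a matter of unwinding definitions, so I do not expect a serious obstacle. The only delicate step is the equivalence between the two measure-theoretic conditions defining $\mathsf{Caph}$—mutual absolute continuity together with vanishing precisely on $\mathbf{Z}$—and the strict positivity of all coordinates; once this is pinned down, both the hypersurface claim and the surjectivity onto the open simplex follow immediately from the observation that the total mass equals $\langle \boldsymbol{\mu}, I\rangle$.
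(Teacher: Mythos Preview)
Your argument is correct and is essentially the definitional unpacking the paper implicitly relies on; in fact the paper states this proposition without proof, treating it as immediate from the natural-chart description of $\mathsf{Conh}$ and the definition of $\mathsf{Caph}$. Your careful verification that strict positivity of all $p_j$ is equivalent to the ``vanishes on $\mathbf{Z}$ and only on $\mathbf{Z}$'' condition is exactly the point one must check, and you handle it cleanly.
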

\begin{definition}
The probabilities $P\{\mathbf{A}_j\}=p_j\{P\}$ simultaneously serve as:

\begin{itemize}
    \item {\bf Natural coordinates}: The bijection

\[
\mathbf{P} \longleftrightarrow \mathbf{p}=(p_1,\cdots,p_m),
\] identifies points in the ambient space.
     \item {\bf Barycentric coordinates}: Affine coordinates relative to the simplex vertices ${\bf e}_1,\cdots {\bf e}_m$, where 
     $\mathbf{p}=\sum_{j=1}^mp_j{\bf e}_j$.
\end{itemize}
\end{definition}
\subsection{Non-Independence of Natural Coordinates}
\begin{remark}
Let  $\mathsf{Caph}$  be a manifold and consider a surface within it. The natural coordinates $\mathbf{p}=(p_1,\cdots,p_m)$ are not local coordinates of manifold $\mathsf{Caph}$.
\end{remark}
\begin{proof}
 The coordinates  $p_1,\cdots,p_m$ are linearly dependent on the surface since $\sum_{j=1}^mp_j$. This reduces the intrinsic dimension to 
$m-1$, rendering one coordinate redundant.
 \end{proof}
 \subsection{Constructing Local Coordinates}
To define a local coordinate system for $\mathsf{Caph}$, we discard one component of {\bf p}.

For example, omitting $p_m$, we obtain the chart :
\[
P\{A_j\} > 0, j=1,\cdots,m, \quad  \sum_{j=1}^m P\{A_j\} =1,\quad \Longrightarrow\, P\{A_m\}=1-\sum_{j=1}^{m-1} P\{A_j\}.
\]
However, such charts lack invariance under permutations of the atoms $A_1,\cdots, A_m$. To resolve this:

\subsubsection*{Homogeneous Coordinates}
Introduce equivalence classes under scaling:
\[
\mathbf{P} \longleftrightarrow c \cdot \mathbf{p},\quad c>0,
\]
where {\bf p} is defined up to a positive multiplicative constant. This ensures permutation invariance.

\subsubsection*{Rectilinear Coordinates}

Another type of coordinates, which can be used to solve the problem of chart invariance, is to introduce the rectilinear coordinates.
Let  $f_j:\Omega\to \bbR$, $(j=0,\cdots,m-1)$ be ${\bf S}$-measurable functions with $f_0(\omega)=I(\omega)$. Assume the matrix $\bigg(f_j(\omega_i)\bigg)$ 
has full rank $m$. Define coordinates:

\[
t_j(P)= \mathbf{M}_P f_j{\omega}= \mathbb{E}_P[f_j] = \langle P,f_j \rangle, \quad (j= 1,\cdots,m-1),
\]

where $\mathbf{M}_Pf(\omega)$ is an expectation of a random variable $f(\omega)$, defined as a following integral:
\[
\mathbf{M}_P f(\omega)= \int_{\Omega} f(\omega)P\{d\omega\}= \langle P, f \rangle,
\]
where the function $f$ must be integrable with respect to the measure $P$ (or quasi-integrable).

\subsubsection*{Natural Coordinate System}
\begin{definition}
Let $\mathsf{Caph}$ be an $(m-1)$-dimensional manifold.
A local coordinate system $\mathbf{t}= (t_1,\cdots,t_{m-1})$ for $\mathsf{Caph}$ is called \emph{natural} if it is induced by the expectations:
    \[
    \mathbf{t} = \int_{\Omega} f(\omega)P\{d\omega\},
    \]
    and such that 
    \[
    t_j(P) =  \int_{\Omega} f_j(\omega)P\{d\omega\}, \quad j=1,\cdots, m-1,
    \]
  where $f_j(\omega)$ are linearly independent functionals and each $f_j$ is $P$-quasi-integrable.
\end{definition}

\subsection{Canonical Affine Coordinates in the Theory of Statistical Manifolds}

The introduction of canonical affine coordinates in statistical manifold theory arises naturally from fundamental geometric and probabilistic considerations. Let us first define the indicator function 
\[
\epsilon_j(\omega) = 
\begin{cases} 
1, & \text{if } \omega \in A_j, \\
0, & \text{otherwise}.
\end{cases}
\]
These functions are not necessarily linearly independent; however, any $m-1$ of them, together with the identity function $I(\omega)$, form a linearly independent set.

\begin{definition}
    A coordinate system $\mathbf{s}=(s^1,\dots,s^{m-1})$ on the open simplex $\mathsf{Caph}(\Omega, \mathbf{S}, \mathbf{Z})$ is called \emph{canonical affine} if
    \[
    P_{\mathbf{s}} \{ .\} = T(\mathbf{s}) P_0\{ .\},
    \]
    where $P_0$ is the origin and $\mathbf{s}$ is a covariant linear coordinate system of the group of translations.
\end{definition}

More explicitly, if $P_0\{.\}$ represents the probability distribution at the origin and the functions $g_0(\omega) = 1$, $g_1(\omega),\dots,g_{m-1}(\omega)$ define the coordinate axes, then:
\[
\frac{dP_{\mathbf{s}}}{d\mu}(\omega) 
= p(\omega;\mathbf{s}) = p(\omega, 0)\exp \left[\sum_{\alpha=1}^{m-1} s^{\alpha} g_{\alpha} (\omega) - \Psi(\mathbf{s})  \right].
\]
Here, $\mu\{.\}$ is an arbitrary $\mathbf{Z}$-positive measure vanishing on $\mathbf{Z}$-sets only, and the normalization factor $\exp[\Psi(\mathbf{s})]$ is given by:
\[
\exp[ \Psi(\mathbf{s})] = \int_{\Omega}\exp \left[ \sum_{\alpha = 1}^{m-1}  s^{\alpha} g_{\alpha} (\omega) \right] P_0\{ d \omega\}.
\]

\subsubsection{Tangent Vectors and Tangent Spaces}
To analyze the structure of statistical manifolds, we introduce tangent vectors and vector fields. Let $\{P_t\}$ be a family of probability distributions parameterized by a smooth curve $\mathbf{x}(t)$. The functional $(Y)_{P}$ of smooth functions $f(\mathbf{x})$ is defined as:
\[
\frac{d}{dt}f(\mathbf{x}(t)) \bigg|_{t =\theta},
\]
which represents a tangent vector at the point $P=P_{\theta}$.

\begin{lemma}
    \begin{itemize}
        \item The curve $P_t$ admits the following decomposition:
        \[
        P_t{.} = P_{\theta}{.} + \tau P'_{\theta}{.}+ o(\tau),
        \]
        where $\tau = t - \theta$, $P'_{\theta}$ is a charge of bounded variation with total measure zero, i.e., $P'_{\theta} {\Omega} =0$, and the norm of the remainder term vanishes faster than $\tau$.
        \item There exists a one-to-one, linear, continuous, and analytic correspondence between tangent vectors at a given point and charges with zero total measure.
    \end{itemize}
\end{lemma}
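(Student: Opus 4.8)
The plan is to realize the entire statement inside the Banach space $\fM(\Omega,\mathbf{S})$ of finite signed measures (charges) on $(\Omega,\mathbf{S})$, equipped with the total variation norm, in which the probability distributions form the convex set $\{P\geq 0,\ P(\Omega)=1\}$ lying in the affine hyperplane $\{\mu : \mu(\Omega)=1\}$. First I would treat the decomposition. Since $t\mapsto P_t$ is by hypothesis a smooth (indeed analytic) curve into $\fM(\Omega,\mathbf{S})$, the ordinary first-order Taylor expansion in a Banach space gives
\[
P_t\{\cdot\}=P_\theta\{\cdot\}+\tau\,P'_\theta\{\cdot\}+o(\tau),\qquad \tau=t-\theta,\quad P'_\theta=\left.\frac{dP_t}{dt}\right|_{t=\theta},
\]
with the remainder $o(\tau)$ measured in total variation norm. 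Being the derivative of an $\fM$-valued curve, $P'_\theta$ is itself an element of $\fM(\Omega,\mathbf{S})$, hence a charge of bounded variation. The zero-mass property is obtained by differentiating the normalization constraint $P_t(\Omega)\equiv 1$: as the pairing $\mu\mapsto\mu(\Omega)$ is a bounded linear functional, $P'_\theta(\Omega)=\frac{d}{dt}P_t(\Omega)\big|_{\theta}=0$.

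For the correspondence I would define the map $\Phi$ sending a tangent vector $X_{P_\theta}$, represented by a germ of a curve $P_t$ through $P_\theta$ (equivalently by the derivation $f\mapsto\frac{d}{dt}f(\mathbf{x}(t))\big|_{\theta}$), to the charge $P'_\theta$. Linearity is immediate from linearity of differentiation together with the vector-space structure placed on $\cT_\theta$. The crucial identification is through the Radon--Nikodym derivative relative to a dominating measure $\mu$, which exists in the $\mathsf{Caph}$ and $\mathsf{Conh}$ setting: writing $P_t=\rho_t\,d\mu$ one has $P'_\theta=(\partial_\tau\ell_\theta)\,\rho_\theta\,d\mu$, so that $\Phi(X_{P_\theta})$ corresponds precisely to the score $u=\partial_\tau\ell_\theta$, and the zero-mass condition $P'_\theta(\Omega)=0$ is exactly $\bbE_{P_\theta}[u]=0$, which is the defining condition of $\cT_\theta$. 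This makes $\Phi$ injective (two curves with the same $P'_\theta$ induce the same derivation), and the earlier tangent-space description, which says the centered scores $\partial_i\ell_\theta$ span $\cT_\theta$, guarantees that these charges exhaust the admissible velocities.

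Surjectivity I would establish constructively. Given a charge $\nu$ with $\nu(\Omega)=0$ and $\nu\ll P_\theta$ with bounded density $u=d\nu/dP_\theta$, the exponential-tilting curve
\[
\rho_t=\rho_\theta\exp\!\big[(t-\theta)\,u-\Psi(t)\big],\qquad e^{\Psi(t)}=\int_\Omega e^{(t-\theta)u}\,dP_\theta,
\]
remains in the manifold (positivity and normalization are automatic, and $\Psi(\theta)=0$) and satisfies $\dot\Psi(\theta)=\bbE_{P_\theta}[u]=0$, whence $P'_\theta=\rho_\theta u\,d\mu=\nu$. This is the same analytic tilting already used to introduce the canonical affine charts, so continuity and analyticity of $\Phi$ and of its inverse follow from the analytic dependence of $\rho_t$ and $\Psi(t)$ on the parameter near $t=\theta$. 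When $u$ is bounded, the cheaper affine realization $P_t=P_\theta+(t-\theta)\nu$ also has velocity $\nu$, at the cost of verifying positivity only for $|t-\theta|$ small.

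The hard part will be the surjectivity step together with the positivity and analyticity bookkeeping: showing that every zero-mass charge is genuinely realized as the velocity of an admissible curve staying inside the manifold of probability distributions, controlling the integrability of $u$ so that $\Psi(t)$ is finite and analytic in a neighborhood of $t=\theta$, and upgrading the algebraic bijection $\Phi$ to a topological (indeed analytic) isomorphism. I expect the delicate point to be the passage between charges that are merely of bounded variation and those absolutely continuous with respect to $P_\theta$ with well-behaved density, which is where the domination hypotheses of the $\mathsf{Conh}$ and $\mathsf{Caph}$ classes, and the square-integrability built into the definition of $\cT_\theta$, must be invoked to close the argument.
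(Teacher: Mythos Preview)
Your argument is correct, but it is pitched at a much higher level of generality than the paper's. The lemma in the paper is stated in the context of the category $\mathsf{Caph}F$ with \emph{finite} quotient algebra $\mathbf{S}/\mathbf{Z}$, so that $\mathsf{Caph}(\Omega,\mathbf{S},\mathbf{Z})$ is the open $(m-1)$-simplex. Accordingly, the paper's proof is entirely elementary: it passes to the coordinate representation $P_t\leftrightarrow\mathbf{p}(t)=(p_1(t),\dots,p_m(t))$ with $p_m=1-\sum_{j<m}p_j$, Taylor-expands each real-valued coordinate $p_j(t)=p_j(\theta)+\tau\dot p_j(\theta)+o(\tau)$, defines $P'_\theta\{H\}=\sum_{A_j\preceq H}\dot p_j(\theta)$, and reads off $P'_\theta\{\Omega\}=0$ by differentiating $\sum_j p_j\equiv 1$. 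The correspondence is then just the chain rule $\frac{d}{dt}f(\mathbf{p}(t))|_\theta=\sum_j\partial_jf\cdot\dot p_j(\theta)$ together with the observation that through any interior point of the simplex there pass smooth curves with arbitrary prescribed velocity in the hyperplane $\{\sum\mu_j=0\}$; finite-dimensionality makes continuity and analyticity automatic.

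Your Banach-space/Radon--Nikodym/exponential-tilting route is the natural one for the genuinely infinite-dimensional $\mathsf{Caph}$ or $\mathsf{Conh}$ manifolds, and what you flag as ``the hard part'' (integrability of $u$, finiteness and analyticity of $\Psi$, absolute continuity of the charge) is exactly where the work lies there. In the paper's finite setting, however, every charge is a finite linear combination of point masses on the atoms $A_j$, every density is bounded, and your affine curve $P_t=P_\theta+(t-\theta)\nu$ already does the whole job for $|t-\theta|$ small. So your proof is not wrong, but it imports machinery (domination hypotheses, $L^2$ scores, tilting) that the paper neither needs nor uses; the paper's argument is a two-line coordinate computation.
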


\begin{proof}
    Consider the representation $P_t\{.\} \leftrightarrow \mathbf{p}(t) = (p_1(t),\dots,p_{m-1}(t),p_m(t))$, where $p_m(t)= 1 - p_1(t) - \dots - p_{m-1}(t)$. Near $t = \theta$, we expand:
    \[
    p_j(t) = p_j(\theta) + \tau \dot{p}_j(\theta) + o(\tau).
    \]
    Adding these expansions and subtracting from unity, we obtain the corresponding expression for $p_m$. Defining the auxiliary charges:
    \[
    P'_{\theta} \{H\} = \sum_{A_j \preceq H} \dot{p}_j (\theta),
    \]
    we obtain:
    \[
    P_t\{H\}= P_{\theta}\{H\}+ \tau P'_{\theta}\{H\} + R_{t,\theta}\{H\}.
    \]
    Using the chain rule,
    \[
    \frac{d}{dt}f(\mathbf{p}(t))\bigg|_{t= \theta} = \sum_{j=1}^{m-1} \frac{\partial f}{\partial p_j} \bigg|_{\mathbf{p}(\theta)} \dot{p}_j(\theta).
    \]
    The result follows from continuity and the existence of fiberings of the simplex into smooth trajectories.
\end{proof}

\subsection{Vector Fields and Ceva Lines}

In classical differential geometry, vector fields are typically expanded in a coordinate basis $X_i = \frac{\partial}{\partial x^i}$. However, in statistical manifold theory, this approach is often inconvenient due to the transformation properties of probability distributions. Instead, we construct an alternative basis.

We define $m$ vector fields $Y_j$ such that:
\[
(Y_j)_{\mathbf{p}} \leftrightarrow  \mathbf{e}_j  - \mathbf{p}.
\]
Additionally, we introduce $m$ vector fields $X_j$ satisfying:
\[
X_j= p_j Y_j, \quad (X_j)_{\mathbf{p}} \leftrightarrow  p_j \mathbf{e}_j - p_j \mathbf{p}.
\]

To understand the structure of these vector fields, consider the simplex and an arbitrary segment within it. Among all possible segments, we identify those connecting a vertex $\mathbf{e}_i$ to a point on the opposite face. These segments, known as \emph{Ceva lines}, define a special class of trajectories in the simplex.

\subsection{Ceva Lines and Tangent Vector Fields}

Let $\mathbf{p}(t; q_2,\dots,q_m)$ be a Ceva line. More precisely, we define it as
\[
\mathbf{p}(t;q_2,\dots,q_m)= p_1(t)\mathbf{e}_1 + [1 - p_1(t)] \sum_{i=2}^m q_i \mathbf{e}_i.
\]

\begin{lemma}
The vector fields $X_i = \frac{\partial}{\partial x^i}$, where $(x^i,\dots,x^n)$ is the field of differentiation along the Ceva line with respect to the parameter $t$, satisfy the differential equation
\[
\dot{p}_i(t) = p_i(t) - [p_i(t)]^2 = p_i(t) [1 - p_i(t)].
\]
In particular, for $i=1$, we obtain
\[
p_1(t)= \frac{\exp\{t\}}{\exp\{t\}+1} = \frac{1}{1+\exp\{-t\}}.
\] 
\end{lemma}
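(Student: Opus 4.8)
The plan is to recognize the Ceva line $\mathbf{p}(t;q_2,\dots,q_m)$ as the integral curve (flow line) of the vector field $X_1$ introduced above, and then to extract from the flow equations the scalar logistic equation governing its leading barycentric coordinate. First I would record the coordinate form of the generic field $X_i$. Since $(X_i)_{\mathbf{p}}$ corresponds to the charge $p_i\mathbf{e}_i - p_i\mathbf{p}$, its component along $\mathbf{e}_j$ equals $p_i\delta_{ij} - p_i p_j$; in particular the $i$-th component is $p_i - p_i^2 = p_i(1-p_i)$ and each off-diagonal component ($j\neq i$) is $-p_i p_j$. Hence the flow of $X_i$ solves
\[
\dot{p}_i = p_i(1-p_i), \qquad \dot{p}_j = -p_i p_j \quad (j\neq i),
\]
a system that automatically preserves the affine constraint, since $\sum_j \dot{p}_j = p_i(1-p_i) - p_i\sum_{j\neq i} p_j = p_i(1-p_i) - p_i(1-p_i) = 0$.

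Next I would check that the Ceva ansatz actually solves this system for $i=1$. Requiring $\mathbf{p}(t)$ to remain a probability vector forces $\sum_{j=2}^m q_j = 1$, so the $q_j$ are fixed barycentric coordinates on the face opposite $\mathbf{e}_1$; then $p_j(t) = (1-p_1(t))\,q_j$ for $j\geq 2$. Differentiating gives $\dot{p}_j = -\dot{p}_1 q_j$, and substituting the proposed relation $\dot{p}_1 = p_1(1-p_1)$ yields $\dot{p}_j = -p_1(1-p_1)q_j = -p_1 p_j$, which is precisely the $j$-th flow equation. This confirms that the Ceva line is the integral curve of $X_1$, that the differentiation field $\partial/\partial x^1$ along it coincides with $X_1$, and hence that $\dot{p}_1(t) = p_1(t)\,[1-p_1(t)]$.

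Finally I would integrate the logistic equation by separation of variables. Using the partial-fraction identity $\frac{1}{p_1(1-p_1)} = \frac{1}{p_1} + \frac{1}{1-p_1}$ gives
\[
\ln\frac{p_1}{1-p_1} = t + C,
\]
so that $p_1(t) = \dfrac{e^{t+C}}{1+e^{t+C}}$; fixing the parametrization so that $p_1(0)=\tfrac12$ (i.e. $C=0$) produces the asserted formula $p_1(t) = \dfrac{e^{t}}{e^{t}+1} = \dfrac{1}{1+e^{-t}}$.

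The main obstacle is the bookkeeping of the second step: one must verify that the off-diagonal equations $\dot{p}_j = -p_1 p_j$ are reproduced automatically by the Ceva parametrization, so that a \emph{single} logistic ODE for $p_1$ controls the entire trajectory. Both the coordinate reading of the vector field in the first step and the integration of the logistic equation in the last step are routine.
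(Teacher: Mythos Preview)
Your argument is correct. The difference from the paper is one of direction: the paper simply \emph{takes} the formula $p_1(t)=\frac{1}{1+e^{-t}}$ as given, differentiates it explicitly to obtain $\dot p_1=\frac{e^{-t}}{(1+e^{-t})^2}=p_1-p_1^2$, and then notes that the resulting tangent $\frac{d}{dt}\leftrightarrow p_1\mathbf{e}_1-p_1\mathbf{p}$ matches $(X_1)_{\mathbf{p}}$. You instead start from the coordinate description of $(X_1)_{\mathbf{p}}$, write down its full flow system, verify that the Ceva ansatz reduces that system to the single logistic equation for $p_1$, and then integrate. Your route is slightly longer but more explanatory: it makes transparent why the Ceva line is exactly the integral curve of $X_1$ and why the logistic equation appears, rather than checking this after the fact. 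The paper's verification is quicker but presupposes the closed form. Both are valid; your derivation would serve well as a replacement or complement.
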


\begin{remark}
    The vector field $Y_1$ corresponds to the choice of parameter $p_1(t) =1 - \exp\{ -t \}$. The remaining fields $X_j$
    and $Y_j$ are obtained analogously along the Ceva lines:
    \[
    \mathbf{p}^{(j)}(t) = p_j(t) \mathbf{e}_j + [ 1 - p_j(t)] \sum_{i \neq j} q_i \mathbf{e}_i,
    \]
    where $p_j(t)$ for $X_j$ satisfies the differential equation
    \[
    \dot{p}_j (t) = p_j (t) - [ p_j(t)]^2.
    \]
    Similarly, for the field $Y_j$, we have $p_j(t)= 1 - \exp\{-t\}p$.
\end{remark}

\begin{proof}
We compute $\dot{p_i}(t)$, which defines the tangent vector of differentiation with respect to $t$. For $i=1$, we have
\[
\dot{p}_1 (t)= \frac{\exp\{ - t\}}{(1+ \exp \{-t\})^2}= \frac{1}{1 + \exp \{ -t\}}- \frac{1}{(1 + \exp \{ -t\})^2} = p_1 (t) - [p_1(t)]^2.
\]
Additionally, we obtain
\[
\dot{p}_1(t)= - \dot{p}_1(t)q_i = -p_1(t) [1 - p_1(t)]q_i = - p_1 (t)  p_i (t).
\]
Since differentiation with respect to $t$ satisfies $\frac{d}{dt} \longleftrightarrow p_1(t) \mathbf{e}_1 - p_1(t)\mathbf{p}(t)$, we conclude the proof of the lemma. 
\end{proof}

\begin{lemma}
    The system of tangent vectors \[(X_j)_{\mathbf{p}},\quad j= 1,\dots,m\] is complete at each point of the manifold $\mathsf{Caph}$. That is, any tangent vector $(Z)_{\mathbf{p}}$ may be expanded in terms of the $(X_j)_{\mathbf{p}}$, and any $m-1$ vectors $(X_j)_{\mathbf{p}}$ form a basis. 
    
    \, 
    
    Every vector field $Z$ admits a unique smooth expansion:
    \[
    Z= \sum_{j=1}^m \zeta^j(\mathbf{p})X_j,
    \]
    subject to the additional condition
    \[
    \sum_{i=1}^m \zeta^i p_i = 0.
    \]
    All other expansions of the form $Z= \sum_{j=1}^m \eta^j X_j$ satisfy
    \[
    \eta^i(\mathbf{p})= \zeta^i(\mathbf{p})+ \phi(\mathbf{p}),
    \]
    where $\phi(\mathbf{p})$ is a scalar field.
\end{lemma}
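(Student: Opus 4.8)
The plan is to work entirely within the identification, established in the preceding lemma, of the tangent space $T_{\mathbf{p}}$ at a point $\mathbf{p} = (p_1, \dots, p_m)$ of $\mathsf{Caph}$ with the space of charges of total mass zero, namely vectors $\mathbf{v} = (v_1, \dots, v_m)$ subject to $\sum_{i=1}^m v_i = 0$; this space has dimension $m-1$. The first step is to record that each $(X_j)_{\mathbf{p}} = p_j \mathbf{e}_j - p_j \mathbf{p}$ indeed lies in $T_{\mathbf{p}}$: summing its $i$-th components $p_j \delta_{ij} - p_j p_i$ over $i$ gives $p_j - p_j \sum_i p_i = 0$. The same bookkeeping yields the single linear relation $\sum_{j=1}^m (X_j)_{\mathbf{p}} = \sum_j p_j \mathbf{e}_j - (\sum_j p_j)\mathbf{p} = \mathbf{p} - \mathbf{p} = 0$, which already signals that the $m$ vectors $X_j$ overdetermine an $(m-1)$-dimensional space by exactly one dimension.

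Next I would prove the basis assertion. Fixing any index, say $m$, I claim $X_1, \dots, X_{m-1}$ are linearly independent. Supposing $\sum_{j=1}^{m-1} c_j (X_j)_{\mathbf{p}} = 0$ and reading off the $i$-th component for $i \le m-1$ gives $p_i(c_i - \sum_{j} c_j p_j) = 0$; since $p_i > 0$ on the open simplex, all $c_i$ equal the common constant $C := \sum_{j=1}^{m-1} c_j p_j$. The $m$-th component then forces $-p_m C = 0$, whence $C = 0$ and all $c_i = 0$. By symmetry any $m-1$ of the $X_j$ are independent, hence form a basis of $T_{\mathbf{p}}$; in particular the full system is complete and every tangent vector expands in the $X_j$.

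For existence and uniqueness of the normalized expansion, the key observation is an explicit inversion formula. Writing a tangent vector $Z$ as the charge $\mathbf{v}$, the requirement $\mathbf{v} = \sum_j \zeta^j (p_j \mathbf{e}_j - p_j \mathbf{p})$ reads componentwise $v_i = \zeta^i p_i - p_i \sum_j \zeta^j p_j$. Imposing the side condition $\sum_j \zeta^j p_j = 0$ collapses this to $v_i = \zeta^i p_i$, that is $\zeta^i = v_i / p_i$. I would then check that this candidate automatically satisfies the constraint, since $\sum_i \zeta^i p_i = \sum_i v_i = 0$, and that it is manifestly smooth in $\mathbf{p}$ because $p_i > 0$; smoothness of $Z$ passes to smoothness of the $\zeta^i$. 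This establishes \emph{existence} of a smooth normalized expansion.

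Finally, \emph{uniqueness} and the comparison with arbitrary expansions both follow from identifying the kernel of the surjection $\bbR^m \to T_{\mathbf{p}}$, $(\zeta^j) \mapsto \sum_j \zeta^j X_j$. By the dimension count the kernel is one-dimensional, and the relation $\sum_j X_j = 0$ exhibits $(1, \dots, 1)$ as a generator; hence any two expansions $Z = \sum_j \zeta^j X_j = \sum_j \eta^j X_j$ differ by $\eta^i = \zeta^i + \phi(\mathbf{p})$ for a scalar field $\phi$, exactly the stated relation. Within each such family the quantity $\sum_i \zeta^i p_i$ shifts by $\phi$ (because $\sum_i p_i = 1$), so precisely one representative obeys $\sum_i \zeta^i p_i = 0$, giving uniqueness. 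I expect the only real subtlety, rather than any genuine obstacle, to be the careful handling of the built-in degeneracy: the $X_j$ are deliberately one more than a basis, and one must keep the affine normalization $\sum_i \zeta^i p_i = 0$ consistently in play to pin down the coefficients; the explicit formula $\zeta^i = v_i/p_i$ is what makes every clause of the lemma transparent.
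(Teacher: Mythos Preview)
Your proof is correct and follows essentially the same route as the paper: both hinge on the single relation $\sum_j X_j = 0$ and the explicit inversion $\zeta^i = \mu_i/p_i$ (your $v_i/p_i$), with the kernel of $(\zeta^j)\mapsto\sum_j\zeta^j X_j$ being spanned by $(1,\dots,1)$. If anything, your treatment is slightly more careful: you explicitly verify linear independence of any $m-1$ of the $X_j$, whereas the paper simply asserts that $X_2,\dots,X_m$ form a basis and uses this to force $\zeta^i-\eta^i$ to be constant.
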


\begin{remark}
    If we replace the condition $\sum_{i=1}^m \zeta^i p_i = 0$ by 
    \[
    \sum_{j=1}^m \zeta^j = 0,
    \]
    then the above lemma remains valid.
\end{remark}

\begin{proof}
Let $(Z)_{\mathbf{p}} \leftrightarrow \mu\{\cdot\}$, where $\mu\{\Omega\}=0$. Then
\[
\mathbf{\mu} = \sum_{j=1}^m \mu_j \mathbf{e}_j = \sum_{j=1}^m \mu_j \mathbf{e}_j - \sum_{j=1}^m \mu_j \mathbf{p} 
= \sum_{j=1}^m \mu_j (\mathbf{e}_j - \mathbf{p}) 
= \sum_{j=1}^m \frac{\mu_j}{p_j} p_j(\mathbf{e_j} - \mathbf{p})
\longleftrightarrow  \sum_{j=1}^m \frac{\mu_j}{p_j}(X_j)_{\mathbf{p}}.
\]

Since $(X_j)_{\mathbf{p}} \longleftrightarrow  p_j \mathbf{e_j} - p_j \mathbf{p}$, we obtain:
\[
\sum _{j=1}^m (X_j)_{\mathbf{p}} \longleftrightarrow  \sum_{j=1}^m  (p_j\mathbf{e_j} - p_j \mathbf{p}).
\]
Thus,
\[
\sum_{j=1}^m (X_j)_{\mathbf{p}}  \longleftrightarrow \sum_{j=1}^m p_j {\mathbf{e}_j} - \mathbf{p}\sum_j^m p_j 
  = \mathbf{p} -\mathbf{p} = \mathbf{0}.
\]
Since $\sum_{j=1}^m X_j = {\mathbf{0}}$, it follows that $X_k = - \sum\limits_{j \neq k} X_j$. Inserting this expression into $(Z)_{\mathbf{p}}$, we obtain an expansion in a basis of $m-1$ vectors.

Let $\sum\limits_j \zeta^j X_j = Z = \sum\limits_j \eta^j X_j$ be two expansions of $Z$. Then
\[
\sum_j[\zeta^j - \eta^j]X_j =\mathbf{0},
\]
or equivalently,
\[
(\zeta^1 - \eta^1)X_1 = - \sum_{i=2}^m (\zeta ^i - \eta^i)X_i.
\]
Since $X_2,\dots,X_m$ form a basis, we deduce:
\[
\zeta^1 - \eta^1 = \zeta^2 - \eta^2 =\dots= \zeta^m - \eta ^m = \phi(\mathbf{p}).
\]
From
\[
\sum_{j=1}^m \zeta^j (\mathbf{p})p_j = \sum_{j=1}^m \zeta^j(\mathbf{p})p_j - \phi(\mathbf{p}),
\]
we see that $\sum_{j=1}^m \zeta^j(\mathbf{p}) p_j=0$ if and only if $ \sum_{j=1}^m\zeta^j (\mathbf{p}) p_j = \phi(\mathbf{p})$.
Thus, the lemma is proved.
\end{proof}

\subsection{Affine Connections and Ceva Lines on Statistical Manifolds}

Let us now examine the interplay between the vector fields $X_j$ and canonical affine coordinates.

\begin{lemma}
In a canonical affine coordinate system aligned with the directions $\epsilon_j(\omega)$ for $j\neq k$, the coordinate curves parametrized by 
\[x^i= t,\quad  x^j = \text{const}, (j \neq i, k),\]

correspond to the Ceva lines of the $i$-th family. The tangent vector fields associated with these curves are given by \[X_i = \frac{\partial}{\partial x^i}.\]
\end{lemma}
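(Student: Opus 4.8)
The plan is to make the canonical affine chart explicit and then reduce the whole statement to the standard differentiation identity for an exponential family. Taking the $m-1$ indicators $\{\epsilon_j : j\neq k\}$ as the coordinate directions $g_\alpha$ (a legitimate choice, since any $m-1$ of the $\epsilon_j$ together with $I(\omega)=g_0$ are linearly independent), the defining relation for the canonical affine coordinates evaluated on the atom $A_i$ reads
\[
p_i(\mathbf{s}) = p_i(0)\exp\Big[\textstyle\sum_{j\neq k} s^j\,\epsilon_j(A_i) - \Psi(\mathbf{s})\Big].
\]
Since $\epsilon_j(A_i)=\delta_{ij}$, this gives $p_i(\mathbf{s}) = p_i(0)\,e^{\,s^i-\Psi(\mathbf{s})}$ for $i\neq k$ and $p_k(\mathbf{s}) = p_k(0)\,e^{-\Psi(\mathbf{s})}$, with $e^{\Psi(\mathbf{s})} = p_k(0)+\sum_{\ell\neq k}p_\ell(0)\,e^{s^\ell}$.

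First I would show the coordinate curve $x^i=s^i=t$, $x^j=\text{const}$ ($j\neq i,k$), is a Ceva line of the $i$-th family. It suffices to check that the mass carried by the non-$i$ atoms is redistributed in fixed proportions: for any pair $\ell,\ell'\neq i$ the ratio $p_\ell(t)/p_{\ell'}(t)$ equals $\frac{p_\ell(0)\,e^{s^\ell}}{p_{\ell'}(0)\,e^{s^{\ell'}}}$, which is independent of $t$ because none of these exponents involves $s^i$. Hence $q_\ell := p_\ell(t)/[1-p_i(t)]$ are constants with $\sum_{\ell\neq i}q_\ell=1$, and the curve takes exactly the Ceva form $\mathbf{p}(t)=p_i(t)\mathbf{e}_i+[1-p_i(t)]\sum_{\ell\neq i}q_\ell\mathbf{e}_\ell$.

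Next I would pin down the parametrization and recover the differential equation characterizing the $i$-th Ceva family. The key ingredient is the exponential-family identity $\partial\Psi/\partial s^\alpha=\mathbb{E}_{P_{\mathbf{s}}}[g_\alpha]$; with $g_\alpha=\epsilon_\alpha$ this reads $\partial\Psi/\partial s^i=\mathbb{E}[\epsilon_i]=p_i$. Differentiating $\log p_i=\log p_i(0)+s^i-\Psi$ in $t=s^i$ then yields $\dot p_i/p_i=1-p_i$, i.e. $\dot p_i=p_i(1-p_i)$, which is precisely the Ceva differential equation established earlier for $X_i$.

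Finally, to identify the tangent field, I would compute the charge $\partial\mathbf{p}/\partial s^i$. Using the same identity, $\partial p_\ell/\partial s^i=p_i(\delta_{\ell i}-p_\ell)$ for every $\ell$, so that
\[
\frac{\partial\mathbf{p}}{\partial s^i}=p_i\,\mathbf{e}_i-p_i\,\mathbf{p}.
\]
This is exactly the charge put in correspondence with $(X_i)_{\mathbf{p}}$, whence $\partial/\partial x^i=X_i$, completing the proof. The only point requiring care — the main (and mild) obstacle — is the bookkeeping around the distinguished index $k$: one must treat the $k$-th atom (whose exponent carries no $s^k$) on the same footing as the others when verifying the constancy of the ratios and when computing the partial derivatives, and confirm that the identity $\partial\Psi/\partial s^i=p_i$ holds verbatim even though the chart omits the coordinate $s^k$.
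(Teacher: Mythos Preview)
Your proposal is correct and follows essentially the same route as the paper: write the densities explicitly in the canonical affine chart aligned with $\{\epsilon_j:j\neq k\}$, read off that the non-$i$ ratios are frozen (hence a Ceva line), and verify the defining ODE $\dot p_i=p_i(1-p_i)$ together with the tangent charge $p_i\mathbf{e}_i-p_i\mathbf{p}$. The only cosmetic difference is that the paper fixes $P_0$ at the barycenter and obtains the ODE by first isolating the algebraic relation $\frac{p_i(t)}{p_i(0)}=\frac{1-p_i(t)}{1-p_i(0)}e^{t}$ and then differentiating, whereas you invoke the log-partition identity $\partial\Psi/\partial s^i=\mathbb{E}[\epsilon_i]=p_i$ directly; the two computations are equivalent.
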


\begin{proof}
    Consider the probability measure $P_0$ at the center of the simplex:
    \[
    P_0 (\cdot) \leftrightarrow  \left(\frac{1}{m}, \dots, \frac{1}{m}\right).
    \]
    Let $k=m$ and integrate with respect to $\mu$ over the atom $A_i$ in the expression
    \[
    \frac{d P_s}{d\mu} (\omega) = p(\omega;s) = p(\omega; 0)
    \exp\left\{\sum_{\alpha=1}^{m-1}s^{\alpha}g_{\alpha}(\omega)- \Psi(s)\right\},
    \]
    where $\mu$ is an arbitrary positive measure vanishing only on $\mathbb{Z}$-sets. Noting that $\epsilon_j(\omega) = 0$ on $A_i$ for $j\neq i$, we obtain 
    \[
    p_i (\mathbf{x}) = P_{\mathbf{x}}\{ A_i\} =P_0 \{ A_i\} \exp\left\{x^i - \Psi(\mathbf{x})\right\}
    \]
    for $i < m$,
    and
    \[
    p_m(\mathbf{x}) = P_0 \{ A_m\} \exp\{- \Psi(\mathbf{x})\},
    \]
    where
    \[
    \exp{\Psi(\mathbf{s})} = \int_{\Omega} \exp\left[\sum_{\alpha=1}^{m-1} s^{\alpha}g_{\alpha}(\omega)\right] P_0\{ d \omega\}.
    \]
    Thus,
    \[
    \exp[ \Psi(x) ] = 
    \sum_{j=1}^{m-1} \frac{1}{m} \exp[x^j].
    \]

    Consider the coordinates $(x_0, x^1, x^2,\dots, x^{m-1})$ and let only $x^1$ vary, that is, $x^1 = x_0^1 +t$, with $P_{x(t)}= R_t$ and $b_i(t) = B_t\{ A_i\}$, while $x^2, \dots, x^{m-1}$ remain fixed.

    Then we have
    \[
    b_1(t) = b_1 (0) \exp [ t - \Phi(t) ], \quad b_i(t) = b_i (0) \exp [ - \Phi (t)],
    \]
    where
    \[
    \Phi (t) = \Psi(x_0^1 + t, x_0 ^2, \dots, x_0^{m-1}) - \Psi (x_0^1, x_0 ^2,\dots,x_0^{m-1}).
    \]
    Summing over $i=2,\dots,m$, we obtain
    \[
    p(t; q_2,\dots,q_m) = p_1(t)\mathbf{e}_1 +[1 - p_1(t)]\sum_{i=2}^m q_i \mathbf{e}_i.
    \]
    This confirms that the coordinate line is a Ceva line.

It remains to calculate $b_1(t)$. Substituting our expression for $\exp{- \Phi(t)}$, we obtain
    \[
    \frac{b_1(t)}{b_1(0)} = \frac{1 - b_1(t)}{1-b_1(0)} \exp{t},
    \]
    and differentiating with respect to $t$,
    \[
    \dot{b}_1(t) = b_1(t) - [b_1(t)]^2.
    \]
    This establishes the required result. The remaining cases follow by permutation of atoms and coordinates.
\end{proof}

\section{Translation-Invariant Tensor Fields}

\begin{theorem}
A tensor field defined on the simplex $\mathsf{Caph}$ is invariant under the group of translations if and only if its components are constant with respect to the the canonical coordinate system.
\end{theorem}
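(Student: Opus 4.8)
The plan is to exploit the one defining feature of the canonical affine coordinate system recalled above: in these coordinates the group of translations acts by literal coordinate shifts. Write $\mathbf{s}=(s^1,\dots,s^{m-1})$ for the canonical coordinates on $\mathsf{Caph}$, and let $g_{\mathbf{a}}$ denote the translation $T(\mathbf{a})$. First I would record explicitly that
\[
g_{\mathbf{a}}:\ \mathbf{s}\mapsto \mathbf{s}+\mathbf{a},
\]
rederiving it if necessary from the density formula $p(\omega;\mathbf{s})=p(\omega,0)\exp\!\big[\sum_{\alpha}s^\alpha g_\alpha(\omega)-\Psi(\mathbf{s})\big]$: composing $P_{\mathbf{s}}$ with the translation simply adds $\mathbf{a}$ to the natural parameters, so that $\mathbf{s}$ is indeed a covariant linear coordinate system for the translation group, exactly as the definition asserts.

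Next I would compute the differential of $g_{\mathbf{a}}$. Since each component is $\bar s^{\,i}=s^i+a^i$, the Jacobian matrix is
\[
\frac{\partial \bar s^{\,i}}{\partial s^j}=\delta^i_j,
\]
and its inverse is likewise the identity. Hence the pushforward $(g_{\mathbf{a}})_*$ acts trivially on the natural frame $\{\partial/\partial s^i\}$ and its dual $\{ds^i\}$, merely relocating the base point. The key step is then to write the transformation law for the components of an arbitrary $(p,q)$-tensor field $T$, with components $T^{i_1\cdots i_p}_{j_1\cdots j_q}(\mathbf{s})$ in this natural basis. Because every Jacobian factor collapses to a Kronecker delta regardless of whether the index is covariant or contravariant, the law degenerates to the pure argument shift
\[
\big((g_{\mathbf{a}})_* T\big)^{i_1\cdots i_p}_{j_1\cdots j_q}(\mathbf{s})=T^{i_1\cdots i_p}_{j_1\cdots j_q}(\mathbf{s}-\mathbf{a}).
\]

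Finally I would translate invariance into constancy. By definition $T$ is translation-invariant precisely when $(g_{\mathbf{a}})_*T=T$ for every $\mathbf{a}$, which by the displayed identity is equivalent to
\[
T^{i_1\cdots i_p}_{j_1\cdots j_q}(\mathbf{s}-\mathbf{a})=T^{i_1\cdots i_p}_{j_1\cdots j_q}(\mathbf{s})
\quad\text{for all }\mathbf{a}\text{ and all }\mathbf{s}.
\]
Since the translation group acts transitively on $\mathsf{Caph}$ (the canonical parameters range over the full coordinate space), this forces each component function to be independent of $\mathbf{s}$, i.e.\ constant; the converse implication is read off the same identity. The only genuinely delicate point, which I would treat most carefully, is the first step: verifying that the canonical affine coordinates linearize the translation action so that the Jacobian is \emph{exactly} the identity. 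Once that is secured, the tensorial transformation law does all the remaining work automatically, and no case distinction on the tensor type $(p,q)$ is needed precisely because all Jacobian factors are Kronecker deltas.
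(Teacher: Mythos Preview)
Your argument is correct and follows essentially the same route as the paper: linearize the translation action in canonical coordinates, observe that the Jacobian of each translation is the identity so that pushforward only shifts the base point, and then use transitivity to conclude constancy of the component functions. The only cosmetic difference is that the paper treats scalar fields and vector fields explicitly as representative cases, whereas you handle the general $(p,q)$-tensor transformation law in one stroke; your formulation is thus marginally cleaner but not a different method.
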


\begin{proof}
Consider the canonical coordinate system $(z_1, z_2,\dots,z_{m-1})$. In these coordinates, the simplex becomes an $(m-1)$-dimensional affine space, where translations correspond to parallel shifts.

Since the group of parallel translations is both transitive and simply transitive, we conclude that carrying the components of a tensor unchanged across all points yields an invariant field.
Let $f(\mathbf{p})=c$ be an invariant scalar field. For any point $\mathbf{q}$, there exists a unique translation mapping $\mathbf{p}$ to $\mathbf{q}$, denoted by $T\mathbf{p}=\mathbf{q}$. Since $f^T = f$, we obtain
    \[
    f(\mathbf{q})= f^T(\mathbf{q})= f(T^{-1}(\mathbf{q}))= f(\mathbf{p})=c.
    \]
    Hence, $f(\mathbf{q})$ is necessarily constant.

    Now, let
    \[
    (Y)_{\mathbf{p}}= \sum_{j=1}^{m-1} a^j (\mathbf{p})\frac{\partial}{\partial z^j}\Big|_{\mathbf{p}}.
    \]
    Under translation, canonical coordinates shift as
    \[
    T: z^j \longrightarrow z^j + s^j,
    \]
    which implies
    \[
   T:  \left(\frac{\partial}{\partial z^j}\right)_{\mathbf{p}} \longrightarrow \left(\frac{\partial}{\partial z^j}\right)_{T\mathbf{p}}.
    \]
    If $Y$ is invariant, then
    \[
    \sum a^j (\mathbf{p})\frac{\partial}{\partial z^j} = \sum a^j (T\mathbf{p})\frac{\partial}{\partial z^j}.
    \]
    Since the $\frac{\partial}{\partial z^j}$ form a basis at each point, it follows that
    \[
    a^j (\mathbf{p})= a^j (\mathbf{q}).
    \]
    Hence, the theorem is proved.
\end{proof}

\begin{corollary}
    On the manifold $\mathsf{Caph}$, there exist translation-invariant Riemannian metrics that convert $\mathsf{Caph}$ into a Euclidean space.
\end{corollary}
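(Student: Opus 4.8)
The plan is to invoke the preceding theorem to reduce the construction of a translation-invariant metric to a single linear-algebraic choice, and then to argue that this choice produces a structure that is not merely flat but genuinely Euclidean. First I would recall that, by that theorem, a symmetric $(0,2)$-tensor field on $\mathsf{Caph}$ is invariant under the group of translations if and only if its components are constant in the canonical affine coordinate system $(z^1,\dots,z^{m-1})$. Hence it suffices to fix any constant symmetric positive-definite matrix $(g_{ij})$ — for instance $g_{ij}=\delta_{ij}$ — and to set $g=g_{ij}\,dz^i\otimes dz^j$. Being the same constant positive-definite matrix at every point, this field is positive-definite everywhere, so it is a bona fide Riemannian metric, and it is translation-invariant by the theorem.

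Next I would verify flatness. Since the components $g_{ij}$ are constant in the canonical coordinates, one has $\partial_k g_{ij}=0$, so the Levi--Civita Christoffel symbols
\[
\Gamma^{k}_{ij}=\tfrac12 g^{k\ell}\big(\partial_i g_{j\ell}+\partial_j g_{i\ell}-\partial_\ell g_{ij}\big)
\]
vanish identically; consequently the curvature tensor vanishes and the $z^j$ serve as global affine (Cartesian) coordinates adapted to $g$.

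The main obstacle — and the step that genuinely upgrades ``locally flat'' to ``Euclidean space'' — is to show that the canonical coordinate chart is a global diffeomorphism onto all of $\bbR^{m-1}$, rather than onto a proper open subset. Here I would use the exponential-family form of the canonical affine coordinates recorded earlier,
\[
p(\omega;\mathbf{s})=p(\omega,0)\exp\Big[\sum_{\alpha=1}^{m-1}s^{\alpha}g_{\alpha}(\omega)-\Psi(\mathbf{s})\Big],
\]
with the coordinate-axis functions $g_\alpha$ taken to be (the independent) indicators $\epsilon_\alpha(\omega)$. Then for $i<m$ the ratio $p_i/p_m=(p_i(0)/p_m(0))\exp(s^i)$ sweeps out all positive reals as $s^i$ ranges over $\bbR$, so as $\mathbf{s}=(s^1,\dots,s^{m-1})$ ranges over the whole of $\bbR^{m-1}$ the distributions $P_{\mathbf{s}}$ exhaust precisely the interior of the simplex, i.e. all of $\mathsf{Caph}$.

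Therefore the chart $P_{\mathbf{s}}\mapsto\mathbf{s}$ is a global diffeomorphism $\mathsf{Caph}\xrightarrow{\ \sim\ }\bbR^{m-1}$ carrying $g$ to the constant metric $g_{ij}\,dz^i\,dz^j$, so $(\mathsf{Caph},g)$ is isometric to the Euclidean space $\bbE^{m-1}$, which is the desired conclusion.
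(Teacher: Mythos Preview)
Your proof is correct and follows the approach the paper intends: the paper states the corollary without proof, treating it as immediate from the theorem (whose proof already observes that in canonical coordinates the simplex becomes an $(m-1)$-dimensional affine space with translations acting as parallel shifts), and you have simply made the implicit steps explicit. In particular, your verification that the canonical chart is a global diffeomorphism onto $\bbR^{m-1}$ via the exponential-family ratios $p_i/p_m$ is more careful than anything the paper spells out, but it is exactly the content behind the paper's one-line identification.
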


\section{Flat Connections}

\begin{definition}
    A connection is said to be \textit{flat} if
    \[
    \nabla_{X_j}X_k = 0, \quad j,k=1,2,\dots,m.
    \]
\end{definition}

\begin{remark}
    The flat connection defined above is compatible with any translation-invariant Riemannian metric.
\end{remark}

\section{Exercises}
Some easy exercises. 
\begin{itemize}
\item Exercise 1. Determine the affine transformation which maps the points $(0,0)$, $(1,0)$ and $(0,1)$ to the points
\begin{itemize}
    \item $(0,-1)$, $(1,1)$, and $(-1,1)$, respectively,
    \item $(-4,-5)$, $(1,7)$, and $(2,-9)$, respectively.
\end{itemize}
\item Exercise 2.The triangle $\bigtriangleup ABC$  has vertices $A(2,0)$, $B(-3,0)$ and $C(3,-3)$, and the points $P(-1,-1)$, $Q(1,3)$ and $R(-\frac{1}{4},0)$ lie on $BC$, $CA$, and $AB$ respectively.
\begin{itemize}
    \item Determine the ratios in which $P$, $Q$, and $R$ divide the sides of the triangle,
     \item Determine whether or not the points $P$, $Q$,and $R$ are collinear.
\end{itemize}
\item Exercise 3. Consider a statistical manifolds of exponential type. Using the charts description proof that the structure of statistical manifold is affine and flat.
\end{itemize}

\,
\subsection{Some corrections}
Exercise \ref{Ex:ProofThm7.2}, correction.
\begin{proof}
    The proof is divided into three parts.  
 \subsection*{First part of the Proof}   
    Firstly, since any measure on $\mathbf{\Sigma}$ can be extended to a measure on $\mathbf{\Sigma^*}$, where it coincides with the induced inner and outer measures, it remains to show that for any fixed $\mathcal{A} \in \mathbf{\Sigma^*}$, the transition probability function $\omega \mapsto \Pi^*(\omega, \mathcal{A})$ is $\mathbf{S^*}$-measurable.
    
\subsection*{Second part of the Proof}
Having established the first part of the proof, we now proceed to the second step. Our aim is to show that the real-valued function  
\[
\Pi_{\mathcal{A}}(\omega) = \Pi(\omega; \mathcal{A})
\]
is $\mathbf{S^*}$-measurable for any fixed $\mathcal{A} \in \mathbf{\Sigma^*}$. To do so, it suffices to verify that for any probability measure $P$ on $\mathbf{S}$ and any real number $z \in [0,1]$, the outer and inner $P$-measures of the preimage 
\[
U = \{ \omega \mid \Pi_{\mathcal{A}}(\omega) < z\}
\]
of the half-open interval $[0, z)$ coincide, i.e., 
\[
\overline{P} [U ] = \underline{P} [U ],
\]
which ensures that $U$ is indeed $\mathbf{S^*}$-measurable.

Consider probability measures $P$ and $Q$. For any $\mathcal{A} \in \mathbf{\Sigma^*}$, there exist two sets $\mathcal{G}, \mathcal{F} \in \mathbf{\Sigma}$ satisfying:
\[
\mathcal{F} \subset \mathcal{A} \subset \mathcal{G},
\]
such that for every $\omega$, the following equalities hold:
\[
Q[\mathcal{G}] = \overline{Q} [\mathcal{A}] = \underline{Q}[\mathcal{A}] = Q[\mathcal{F}].
\]

Since probability measures are monotone, we obtain the inequalities:
\[
\Pi(\omega; \mathcal{G}) \geq \Pi^*(\omega; \mathcal{A}) \geq \Pi(\omega; \mathcal{F}).
\]
Therefore, the functions $\Pi(\omega; \mathcal{G})$ and $\Pi(\omega; \mathcal{F})$ are $\mathbf{S}$-measurable and equal almost everywhere,
since
\[
\int \Pi(\omega; \mathcal{G}) P\{d\omega\} = \int \Pi(\omega; \mathcal{F}) P\{d\omega\},
\]
except possibly on a $\mathbf{S}$-measurable $P$-null set $N$ where $P\{N\} = 0$.

From this, it follows that:
\[
\{ \omega \mid \Pi(\omega; \mathcal{G}) < z \} \subseteq \{ \omega \mid \Pi^* (\omega; \mathcal{A}) < z \}.
\]
Moreover, we also have:
\[
\{ \omega \mid \Pi^*(\omega; \mathcal{A}) < z \} \subseteq \{ \omega \mid \Pi(\omega; \mathcal{G}) < z \} \cup N.
\]
Thus, we conclude:
\[
\overline{P} [ U ] = \underline{P} \{ \omega \mid \Pi(\omega; \mathcal{G}) < z \},
\]
which proves the second part.

\subsection*{Third part of the Proof}

Finally, we proceed to the third and last step: proving that the measures $(P\Pi)^*$ and $P^*\Pi^*$, both defined on $(\Omega',\mathbf{\Sigma^*})$, coincide. That is, we claim that for any $\mathcal{A} \in \mathbf{\Sigma^*}$, we have:
\[
(P \Pi)^* \{\mathcal{A}\} = (P^* \Pi^*)\{\mathcal{A}\}.
\]

Let us choose sets $\mathcal{F}$ and $\mathcal{G}$ satisfying for a given  $\mathcal{A}$:
\[
\mathcal{F}  \subseteq \mathcal{A} \subseteq \mathcal{G}.
\]
Then, as before, we have:
\[
Q\{\mathcal{G} \} = Q^*\{ \mathcal{A}\} = Q\{ \mathcal{F} \},
\]
where $Q = P \Pi$. Given that the function $\Pi_{\mathcal{A}}^*(\omega) = \Pi^*(\omega; \mathcal{A})$ is $\mathbf{\Sigma^*}$-measurable, we obtain:
\[
(P\Pi)\{ \mathcal{G} \} = \int \Pi(\omega; \mathcal{G}) P[d\omega] =
\int \Pi^*(\omega; \mathcal{G}) P^*[d\omega].
\]
By monotonicity, we conclude:
\[
\int \Pi^*(\omega; \mathcal{G}) P^*[d\omega] \geq \int \Pi^*(\omega; \mathcal{A})P^*[d\omega].
\]
Thus, we deduce:
\[
(P^* \Pi^*)\{ \mathcal{A} \} = (P\Pi)\{\mathcal{G}\}.
\]
Since this holds for all $\mathcal{A} \in \mathbf{\Sigma^*}$, we obtain the desired equality:
\[
(P^* \Pi^*)\{\mathcal{A} \} = (P \Pi)^* \{ \mathcal{A}\}.
\]
This completes the proof.

\end{proof}
\, 

Exercise \ref{Ex:LemmaDom}.

    The measure $P_0$ is dominating since for each $P_k$ we have the bound 
    \[
    P_k\{.\} \leq 2^k P_0\{.\}
    \]
    and the inclusion of null sets:
    \[
    \mathbf{Z}_{P_0} \subset \mathbf{Z}_{P_k}.
    \]
    If we can show that $P_0$ is a constructive probability measure, the desired conclusion follows from Lemma \ref{L:7.2}. 

    Consider a measurable mapping $\omega = f_k(x)$, where $f_k : \mathbf{E} \to \Omega$ determines the constructive distribution 
    \[
    P_k\{.\} = \lambda \frac{1}{f_k(.)}.
    \]
    Define $f_0$ piecewise as follows: for $2^{-k} < x < 2^{-k+1}$, let
    \[
    f_0(x) = g_k(x) = f_k (2^k x -1),
    \]
    and set $f_0(0) = f_1(0)$. 

    This function is measurable, as it is composed of countably many measurable functions, each defined on a distinct measurable subset. Consequently, the $f_0$-preimage of any $\mathbf{S}$-set is a countable union of disjoint $g_k$-preimages. 

    Furthermore, we have the measure transformation:
    \[
    \lambda g_k^{-1}\{.\} = 2^{-1} \lambda f_k^{-1}\{.\},
    \]
    since the $g_k$-preimage is the $f_k$-preimage shifted by $2^{-k}$ and contracted by a factor of $2^k$. This completes the proof.

\part{Frobenius Manifolds and Information Geometry}

\chapter{Frobenius Manifolds}
\begin{center}
\includegraphics[scale=0.4]{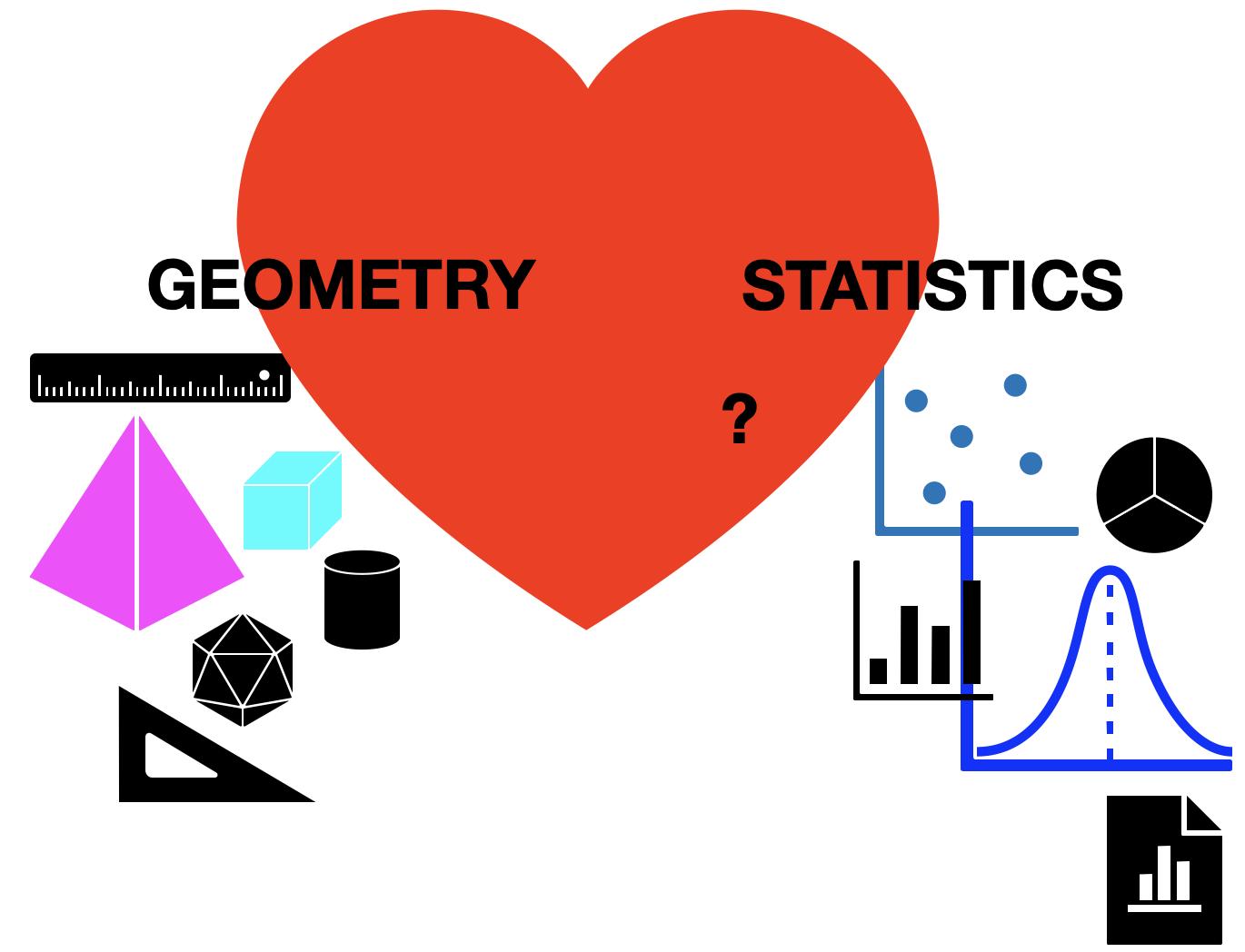}
  \end{center}

\section{A short story of Frobenius manifolds}

A Frobenius manifold $\cM$ is a geometric realization of a certain class of partial nonlinear differential equations of degree $3$. These equations, commonly referred to as the WDVV equations—named after Witten, Dijkgraaf, Verlinde, and Verlinde—are also known in some mathematical contexts as the Associativity Equations.

\,

There is a subtle distinction between Frobenius manifolds and the WDVV equations. While the notion of Frobenius manifolds represents a geometric object, the WDVV equations belong to the realm of analysis. The terminology “Frobenius manifolds” and “Associativity equations” appears predominantly in mathematical literature, whereas “WDVV equations” is more frequently encountered in discussions closer to physics.

\,

These perspectives, however, are deeply interwoven. As shown by Yu. I. Manin \cite{Man99} and B. Dubrovin \cite{Du}, they ultimately coincide.

\,

In the context of physics, solutions of the WDVV equations encode the moduli spaces of topological conformal field theories. These solutions are pivotal in the formulation of mirror symmetry for Calabi--Yau $3$-folds. Notably, specific solutions to the WDVV equations with particular properties serve as generating functions for the Gromov--Witten invariants of Kähler or symplectic manifolds.

\,
\subsection{Homological Mirror Symmetry (and more!)}
In the 1990s, the concept of Frobenius manifolds gained traction among both algebraic and analytic geometers due to the discovery of three significant classes, all stemming from the profound interplay between mathematics and physics. This development led to the creation of advanced mathematical tools and generated an abundance of challenging and intriguing problems. Some of these arose naturally from attempts to provide a rigorous mathematical interpretation of mirror symmetry (e.g., Homological Mirror Symmetry). Others emerged from the ambition to axiomatize and deepen the mathematical understanding of topological quantum field theory.

\,

The first class of Frobenius manifolds originated in the study of singularities. Specifically, Saito spaces—predating the explicit definition of Frobenius manifolds—were associated with the unfolding spaces of singularities. A particularly straightforward example is the space of complex polynomials of a fixed degree $d$ with distinct roots, which can also be identified with the configuration space of $d$ marked points on the complex plane.

\,

Subsequent classes discovered in the 1990s have a formal nature. Examples include the (formal) moduli spaces of solutions to Maurer–Cartan equations modulo gauge equivalence and the formal completions of cohomology spaces of smooth projective (or compact symplectic) manifolds. The latter is commonly referred to as quantum cohomology.

\,

More recently, a new class of Frobenius manifolds has emerged, demonstrating the versatility of this structure. It has been shown that under certain conditions, the manifold of probability distributions can generate a Frobenius manifold (this is the Combe–Manin construction). This discovery inspires many open questions, particularly regarding the complete classification of Frobenius manifolds, the identification of novel classes, and the elucidation of the intricate relationships between the existing ones.

\,

In the first section of this chapter, we provide a concise survey of Frobenius manifolds. We introduce the definitions of Frobenius, pre-Frobenius, and potential pre-Frobenius manifolds and briefly outline the WDVV equations. We also touch upon the hidden algebraic structures intrinsic to Frobenius manifolds. This section concludes with a brief summary.

\,

In the second section, we delve more deeply into the theoretical framework of Frobenius manifolds.

\,

In the third section, we present a collection of exercises and open problems for the reader to explore.

\section{Preliminary Notions: Affine Structures}

An essential ingredient for the definition of Frobenius manifolds is the concept of affine flat structures. For simplicity, we shall often refer to these as {\it affine structures}.

\,

\subsection{}
Let $\cM$ be a smooth manifold of dimension $\cM$. Affine flat structures can be defined in multiple equivalent ways.

\,
\subsubsection{}
An affine structure on an $n$-dimensional manifold $\cM$ is defined via a collection of coordinate charts $\{(U_\alpha, \phi_\alpha)\}$, where $\{U_\alpha\}$ forms an open cover of $\cM$, and $\phi_\alpha: U_\alpha \to \mathbb{R}^n$ is a local coordinate system such that the transition functions $\phi_\beta \circ \phi_\alpha^{-1}$ are affine transformations on $\phi_\alpha(U_\alpha \cap U_\beta)$, mapping it to $\phi_\beta(U_\alpha \cap U_\beta)$.

\,

Recall that the group of affine transformations is given by:
\[
\left\{
\begin{pmatrix}
A & b \\ 
0 & 1
\end{pmatrix}
\; \bigg\vert \; A \in GL(n,\mathbb{R}), \; b \in \mathbb{R}^n
\right\}.
\]

\,

\begin{definition}
An affine manifold is a smooth manifold equipped with an affine structure.
\end{definition}

\,
\subsubsection{}
The existence of an affine flat structure on $\cM$ is equivalent to the presence of a specific class of connections on the tangent bundle of $\cM$. Namely, there is a bijective correspondence between affine flat structures and flat, torsion-free affine connections $\nabla$ on $\cM$.

\,

In the context of differential geometry, a manifold over $\mathbb{K}$ with an affine structure is characterized by a tangent bundle whose underlying $G$-structure corresponds to the group of affine transformations $Aff(n) = GL(n, \mathbb{K}) \rtimes \mathbb{K}^n$, where $GL(n, \mathbb{K})$ is the general linear group over the field $\mathbb{K}$.

\,
\subsubsection{}
Alternatively, an affine flat structure can be described by a subsheaf $\cT_\cM^f \subset \cT_\cM$ of linear spaces of pairwise commuting vector fields. Locally, one has the tensor product over the ground field:
\[
\cT_\cM = \mathcal{O}_M \otimes \cT_\cM^f.
\]
Sections of $\cT_\cM^f$ correspond to flat vector fields. Furthermore, the metric $g$ is compatible with the structure $\cT_\cM^f$ if $g(X, Y)$ is constant for all flat vector fields $X$ and $Y$ (Exercises $12.1$ and $12.2$).

\,
\subsection{Crystallographic groups and fundamental groups}
A group $\Lambda$ is called an $n$-crystallographic group if it contains a normal, torsion-free, maximal abelian subgroup of rank $n$ and finite index. Crystallographic groups fit into the short exact sequence:
\[
0 \to V \to \Lambda \to P \to 1,
\]
where $V$ is a complex vector space, and $P \leq GL(n, \mathbb{Z}) \cong \mathrm{Aut}(V)$ is a finite group acting faithfully on $V$.

\,

A complex crystallographic group is a discrete subgroup $\Lambda \subset \mathrm{Iso}(\mathbb{C}^n)$ such that $\mathbb{C}^n / \Lambda$ is compact, where $\mathrm{Iso}(\mathbb{C}^n)$ denotes the group of biholomorphisms preserving the standard Hermitian metric.

\,

We note the following property:
\begin{lemma}
The fundamental group of a compact, complete flat affine manifold is an affine crystallographic group.
\end{lemma}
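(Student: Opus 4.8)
The plan is to reconstruct the manifold from its universal cover by means of the \emph{developing map}, and then to identify the fundamental group with its group of deck transformations, realized as affine transformations of $\bbR^n$.

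First I would invoke the correspondence recalled earlier between affine flat structures and flat, torsion-free connections: a flat affine manifold $\cM$ of dimension $n$ is precisely a manifold carrying a flat, torsion-free connection $\con$. Passing to the universal cover $\widetilde{\cM}$, the pulled-back connection is again flat and torsion-free, so $\widetilde{\cM}$ is a simply connected flat affine manifold. Flatness together with vanishing torsion allows one to integrate the local affine charts into a single globally defined local diffeomorphism, the developing map
\[
D : \widetilde{\cM} \to \bbR^n,
\]
which is equivariant with respect to a holonomy homomorphism $\rho : \pi_1(\cM) \to \mathrm{Aff}(n,\bbR)$, in the sense that $D \circ \gamma = \rho(\gamma)\circ D$ for every deck transformation $\gamma$.

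The crucial step is to exploit \emph{completeness}. I would show that completeness of the affine connection forces $D$ to be a covering map onto $\bbR^n$; since $\bbR^n$ is simply connected and $\widetilde{\cM}$ is connected, $D$ is then a diffeomorphism. Concretely, completeness means that every affine geodesic extends to all of $\bbR$, so the affine exponential map is defined on the whole tangent space, and this is exactly what is needed to establish surjectivity of $D$ together with the path-lifting property. Consequently $\widetilde{\cM}$ is affinely isomorphic to $\bbR^n$, and through this isomorphism each deck transformation acts as an element of $\mathrm{Aff}(n,\bbR)$. The representation $\rho$ is then injective: if $\rho(\gamma)=\mathrm{id}$ then $D\circ\gamma=D$, and since $D$ is a bijection and the deck action is free, $\gamma=\mathrm{id}$. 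Hence $\pi_1(\cM)\cong \Gamma:=\rho(\pi_1(\cM))\subset \mathrm{Aff}(n,\bbR)$.

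It remains to verify that $\Gamma$ satisfies the defining properties of an affine crystallographic group. As the deck transformation group of $\widetilde{\cM}\cong \bbR^n$, it acts properly discontinuously and freely, and discreteness of $\Gamma$ in $\mathrm{Aff}(n,\bbR)$ follows from proper discontinuity; compactness of $\cM=\bbR^n/\Gamma$ then furnishes cocompactness. Thus $\Gamma$ is a discrete, cocompact subgroup of $\mathrm{Aff}(n,\bbR)$ acting properly discontinuously, i.e. an affine crystallographic group isomorphic to $\pi_1(\cM)$. The main obstacle I anticipate is precisely the completeness step: establishing rigorously that a simply connected complete flat affine manifold develops diffeomorphically onto $\bbR^n$ requires care, since $D$ is a priori only a \emph{local} diffeomorphism, and one must rule out the incompleteness and proper-image pathologies that do occur for general (non-complete) flat affine structures. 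I would also flag that recovering the finer Bieberbach-type structure encoded in the exact sequence $0\to V\to \Lambda\to P\to 1$—a finite-index abelian normal subgroup with finite point group—is a genuinely deeper matter in the affine setting, and I would either restrict to the cocompact-discrete characterization or cite the relevant structural results rather than prove them here.
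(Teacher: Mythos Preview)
The paper states this lemma without proof; it is presented as a noted property immediately after the definitions of crystallographic groups and is followed directly by the remark on Bieberbach groups, with no argument supplied. There is therefore nothing to compare your proposal against on the level of approach.

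That said, your outline is the standard and correct argument: developing map plus holonomy, completeness forcing the developing map to be a diffeomorphism onto $\bbR^n$, and then reading off discreteness and cocompactness from the deck action. Your caveat at the end is well placed: the paper's definition of an $n$-crystallographic group (a normal, torsion-free, maximal abelian subgroup of rank $n$ and finite index, with the short exact sequence $0\to V\to\Lambda\to P\to 1$ and finite $P$) is the classical Bieberbach-type condition, and in the genuinely \emph{affine} setting this structural conclusion is not available in general---it is tied to the Auslander conjecture. So you are right to interpret ``affine crystallographic group'' in the weaker sense of a discrete cocompact subgroup of $\mathrm{Aff}(n,\bbR)$ acting properly discontinuously, and to flag the stronger statement as something to cite rather than prove.
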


\,

Among the crystallographic groups, one encounters the Bieberbach groups, which are torsion-free crystallographic groups.
\

\subsection{Pre-Lie structures}
This affine structure leads to interesting algebraic properties on the tangent bundle/tangent sheaf. Let $\Gamma(TM)$ denote the space of vector fields on a given manifold $\cM$.

\,

According to the previous chapters, a flat and torsionless connection satisfied the following. 
An affine connection $\nabla$ is {\it torsion-free} (or torsionless) if
\begin{equation}\label{E:1}
\nabla_X (Y) - \nabla_Y (X) - [X, Y] = 0.
\end{equation}
The connection is called {\it flat} if
\begin{equation}\label{E:2}
\nabla_X (Y) - \nabla_Y (X) - \nabla_{[X, Y]} = 0.
\end{equation}

Such a connection defines a covariant differentiation for vector fields $X, Y \in \Gamma(TM)$:
\[
\nabla_X: \Gamma(TM) \longrightarrow \Gamma(TM), \quad 
\nabla_X(Y) \longmapsto \nabla_X(Y).
\]

\,

\begin{ex}\label{Ex:preLie}
A pre-Lie algebra is an algebra satisfying the relation 
\[
a(bc) - b(ac) = (ab)c - (ba)c.
\]    
Show that one can recover the structure of a pre-Lie algebra on the space of vector fields on $\cM$, by putting  $X \circ Y := \nabla_X(Y)$ for an affine flat, torsionless connection $\nabla$.
\end{ex}

\,

\begin{example}
Let $V$ be a finite-dimensional real Euclidean space endowed with a real inner product, and let $C$ be a closed convex cone $C \subset V $ with its vertex at the origin. The polar of the cone is defined by
\[
C^* = \{ y \in V \; | \; \langle x, y \rangle > 0, \; \forall x \in C \}.
\]
A symmetric cone satisfies $C^* = C $.

Such symmetric cones carry an affine flat structure. The tangent sheaf of this cone is equipped with the structure of a pre-Lie algebra, defined by the operation $X \circ Y := \nabla_X(Y) $, where $X, Y $ are vector fields on $C$.
\end{example}

\subsection{2-Dimensional Affine Structures}

\,

Let us consider two well-known structures in topology: the torus and the Klein bottle. These topological objects are the only compact $2$-dimensional manifolds that admit Euclidean structures.

\,

Assume $\cM$ is a closed manifold different from the torus or Klein bottle. Then there exists {\it no} affine structure on it. This is a direct consequence of the following result:

\begin{theorem}[Benzecri, 1955]
A closed surface admits affine structures if and only if its Euler characteristic vanishes.
\end{theorem}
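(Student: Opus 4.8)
The plan is to prove the two implications separately, the forward implication ($\chi=0\Rightarrow$ affine structure exists) being elementary and the reverse implication (affine structure $\Rightarrow \chi=0$) carrying all the content. First I would invoke the classification of closed surfaces to note that $\chi(\Sigma)=0$ holds precisely for the torus $T^2$ and the Klein bottle $K$. For each I would exhibit an explicit affine structure as a quotient $\bbR^2/\Lambda$, where $\Lambda\subset \mathrm{Aff}(2)=GL(2,\bbR)\ltimes\bbR^2$ is a torsion-free crystallographic group acting properly discontinuously and cocompactly: for $T^2$ one takes $\Lambda$ the lattice of integer translations, and for $K$ one takes $\Lambda$ generated by a translation and a glide reflection, both of which are affine maps. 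Since the transition maps of the induced atlas are restrictions of elements of $\mathrm{Aff}(2)$, this yields a genuine affine structure, settling the easy direction.

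For the reverse implication I would first translate the hypothesis into connection-theoretic language via the correspondence recalled earlier in the text: an affine flat structure on $\Sigma$ is equivalent to a flat, torsion-free affine connection $\nabla$ on $\cT_\cM$ (here on $T\Sigma$). The problem thus reduces to showing that if $T\Sigma$ admits a flat connection then $\chi(\Sigma)=0$. I would stress at the outset the subtlety that rules out a one-line argument: a flat \emph{affine} connection need not be compatible with any Riemannian metric, so its holonomy lies in $GL(2,\bbR)$ rather than $O(2)$, and one cannot conclude from flatness alone that the Pfaffian, hence the real Euler class, vanishes. Indeed flat bundles may carry nonzero Euler number, so the statement is genuinely quantitative rather than a vanishing theorem.

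The decisive tool is therefore the Milnor--Wood inequality. Assume first that $\Sigma$ is orientable of genus $g$, so that $T\Sigma$ is an oriented flat rank-$2$ bundle; Milnor's inequality for flat $GL^+(2,\bbR)$-bundles gives $|e(T\Sigma)|\le g-1$. On the other hand the Poincar\'e--Hopf theorem identifies $e(T\Sigma)=\chi(\Sigma)=2-2g$, so $|2-2g|\le g-1$; a direct check over $g\ge 0$ shows that this bound is satisfied only for $g=1$, i.e. $\chi=0$. The non-orientable case I would dispatch by passing to the orientation double cover $\widetilde\Sigma$: the affine structure pulls back, $\widetilde\Sigma$ is orientable, and $\chi(\widetilde\Sigma)=2\chi(\Sigma)$, so the orientable case forces $\chi(\widetilde\Sigma)=0$ and hence $\chi(\Sigma)=0$.

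The main obstacle is establishing the Milnor--Wood bound itself, which is the cohomological heart of the argument and is not among the results available from the earlier chapters. I would prove it by expressing the Euler number as the evaluation of the Euler cocycle of $GL^+(2,\bbR)$ on the fundamental class of $\Sigma$, lifting the holonomy representation $\rho:\pi_1(\Sigma)\to GL^+(2,\bbR)$ to the universal cover $\widetilde{GL^+(2,\bbR)}$, and bounding the associated primitive (a quasimorphism) of the Euler cocycle over the single genus-$g$ surface relation $\prod_{i=1}^{g}[a_i,b_i]=1$. The boundedness of this cocycle, whose $\ell^\infty$-norm is the source of the constant $g-1$, is exactly what yields the inequality; carrying out this estimate carefully, together with fixing the precise normalization so that the linear action on rays is correctly related to the projective action on $\mathbb{RP}^1$, is where the real work lies.
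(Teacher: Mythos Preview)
The paper states this theorem without proof, citing it as Benzecri's 1955 result, so there is no argument in the text to compare against. Your proposal is correct and follows what has become the standard modern route via the Milnor inequality for flat $GL^+(2,\bbR)$-bundles; your identification of the key subtlety---that flatness of a non-metric connection does not by itself force the real Euler class to vanish---is exactly the reason the problem has genuine content.

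Two minor remarks. First, Milnor's bound is formulated for $g\ge 1$; for $g=0$ you should argue directly that $\pi_1(S^2)$ is trivial, so any flat bundle is trivial with $e=0\ne 2=\chi(S^2)$, rather than formally reading the inequality as $|e|\le -1$. Second, Benzecri's original 1955 argument in fact predates Milnor's 1958 paper and proceeded via the developing map and the topology of affine immersions of the universal cover into $\bbR^2$; the Milnor--Wood approach you outline is cleaner and is the route one would take today.
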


\subsection{$n$-dimensional affine structures, $n \geq 3$}

When dealing with manifolds of dimension greater than two, there is no definitive criterion for determining the existence of an affine structure.

\,

In particular, according to Smillie's theorem, a closed manifold does \textit{not admit} an affine structure if its fundamental group is built up out of finite groups by taking free products, direct products, and finite extensions. Specifically, a connected sum of closed manifolds with finite fundamental groups admits \textit{no affine structure}. This result provides a profound insight into the interplay between algebraic and geometric properties in the study of such manifolds.

\,

Certain Seifert fiber spaces also do not possess affine structures. This is clarified by the following statement:

\begin{proposition}[Y. Carri\`ere, F. Dal’bo, G. Meigniez]
Let $\cM$ be a Seifert fiber space with vanishing first Betti number. Then, $\cM$ does not admit any affine structure.
\end{proposition}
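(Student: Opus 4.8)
The plan is to argue by contradiction. Suppose $\cM$ is a closed Seifert fibred $3$-manifold with $b_1(\cM)=0$ that carries an affine structure. By the dictionary recalled above, an affine structure is the same datum as a flat, torsion-free connection, equivalently an $(\mathrm{Aff}(3),\bbR^3)$-structure; it therefore yields a developing map $D\colon\widetilde{\cM}\to\bbR^3$ and a holonomy representation $\rho\colon\pi_1(\cM)\to\mathrm{Aff}(3)$ for which $D$ is a $\rho$-equivariant local diffeomorphism. Writing $\rho(\gamma)=(L(\gamma),t(\gamma))$ with linear part $L(\gamma)\in\mathrm{GL}(3,\bbR)$ and translation part $t(\gamma)\in\bbR^3$, I would first record the two structural facts to be exploited. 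The Seifert fibration endows $\pi_1(\cM)$ with a distinguished infinite-cyclic subgroup $\langle h\rangle$ generated by the regular fibre, which is normal and, up to the orientation character of the fibration, central, with quotient $\pi_1(\cM)/\langle h\rangle\cong\pi_1^{\mathrm{orb}}(B)$ the orbifold group of the base $2$-orbifold $B$. The hypothesis $b_1(\cM)=0$ means $H^1(\cM;\bbR)=0$, so $\cM$ is a rational homology sphere and every homomorphism $\pi_1(\cM)\to\bbR$ vanishes; translated through the Seifert data, this forces $B$ to be of spherical type (a sphere with cone points) with nonzero Euler number of the fibration.

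First I would dispose of the case $\pi_1(\cM)$ finite. Then $\widetilde{\cM}$ is compact, so $D(\widetilde{\cM})$ is simultaneously compact and, since $D$ is a local diffeomorphism, open in $\bbR^3$. A nonempty open, closed, bounded subset of the connected space $\bbR^3$ cannot exist, a contradiction. This settles all cases with finite fundamental group and reduces the problem to the situation in which the fibre class $h$ has infinite order.

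The heart of the argument is this infinite case. Because $h$ is central of infinite order, $\rho(h)$ commutes with $\rho(\pi_1(\cM))$ and $L(h)$ commutes with the whole image $L(\pi_1(\cM))$. The plan is to pin down the Jordan type of $L(h)$: the condition $b_1=0$ annihilates the abelian characters $\pi_1(\cM)\to\bbR^{\times}$ through which nontrivial eigenvalues of $L(h)$ would register, while the base group $\pi_1^{\mathrm{orb}}(B)$, being infinite, acts through $L$ with enough irreducibility to exclude the remaining nonunipotent possibilities, so that $L(h)$ is forced to be unipotent. I would then turn to the translation part: whenever $L$ fixes a covector $\phi\in(\bbR^3)^{*}$, the composite $\gamma\mapsto\phi(t(\gamma))$ is a homomorphism $\pi_1(\cM)\to\bbR$, hence trivial since $b_1(\cM)=0$. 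Applied along $\langle h\rangle$, this kills every invariant-direction translation component of $\rho(h)$. Combining the linear and translational constraints, $\rho(h)$ collapses either to the identity or to an element leaving no room to be realised by the equivariant immersion $D$ along the compact circle fibre, and in the latter variant a surviving invariant translation would produce a nonzero class in $H^1(\cM;\bbR)$, again contradicting $b_1=0$. Either outcome gives the desired contradiction.

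I expect the third paragraph to be the genuine obstacle. The delicate point is to convert the purely topological hypothesis $b_1(\cM)=0$ into the algebraic vanishing statements about the linear and translational holonomy of the central fibre, and to run the Jordan-form case analysis uniformly. The main difficulty is that the affine structure is \emph{not} assumed complete, so the standard completeness-based rigidity results of Fried--Goldman--Hirsch type are unavailable and $D$ may fail to be surjective; controlling the dynamics of the central element $\rho(h)$ under mere equivariant local-diffeomorphism hypotheses, rather than completeness, is where the real work lies.
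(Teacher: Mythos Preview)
The paper does not prove this proposition; it is stated as a result of Carri\`ere, Dal'bo and Meigniez and then the text moves on, so there is no proof in the paper to compare your attempt against. Your treatment of the finite fundamental group case is correct and standard.

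For the infinite case your outline has the right shape (developing map, holonomy, focus on the central fibre class) but, as you yourself concede, it is a plan rather than a proof, and two steps are genuinely missing. First, the inference ``$b_1=0$ forces $L(h)$ unipotent'' is not justified: the hypothesis $H^1(\cM;\bbR)=0$ kills homomorphisms $\pi_1(\cM)\to\bbR$, hence (via $\log$) homomorphisms to $\bbR_{>0}$, which yields only $|\det L(h)|=1$; individual eigenvalues of $L(h)$ are not characters of $\pi_1$, and nothing you have said excludes, for instance, $L(h)$ being a rotation of infinite order. Your appeal to ``enough irreducibility'' of the base orbifold group is a hope, not an argument, and would in any case require a separate case analysis. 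Second, even granting $\rho(h)=\mathrm{id}$, you have not produced a contradiction: $D$ then factors through the intermediate cover $\widetilde{\cM}/\langle h\rangle$, but since you have explicitly renounced completeness this cover is noncompact and there is no obvious obstruction to a local diffeomorphism into $\bbR^3$. The phrase ``leaving no room to be realised by the equivariant immersion along the compact circle fibre'' is exactly the step that needs a genuine idea, and it is where the actual Carri\`ere--Dal'bo--Meigniez argument does its work rather than a bare-hands Jordan-form analysis.
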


\subsection{Complex cases}
We discuss the complex case. Due to works of Kobayashi \cite{Ko1}, a classification has been established. 
\begin{theorem}
In the complex case, one has the following collection of compact complex manifolds that admit affine structures.  
\begin{enumerate}
    \item the complex tori,
    \item the hyperelliptic surfaces,
    \item the minimal elliptic surfaces with odd $b_1,p_g>0$ and $c_1^2=0$,
    \item the minimal surfaces with $b_1=1$, $b_2=0$ and $p_g=0$ with the exception of the Hopf surfaces which are covered by primary Hopf surfaces satisfying certain specific properties. 
\end{enumerate}
\end{theorem}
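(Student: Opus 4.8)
The plan is to prove the classification in two directions---necessity (any compact complex surface carrying an affine structure must occur in the list) and sufficiency (each listed type does carry one)---using the Enriques--Kodaira classification of compact complex surfaces as the organizing backbone.

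First I would translate the affine structure into differential-geometric data via the correspondence recalled earlier in this chapter: a holomorphic affine flat structure on $\cM$ is equivalent to a flat, torsion-free connection $\nabla$ on the holomorphic tangent bundle $\cT_\cM$. Equivalently, passing to the universal cover $\tilde\cM$, an affine structure is encoded by a developing map $D:\tilde\cM\to\bbC^2$ together with a holonomy representation $\rho:\pi_1(\cM)\to \mathrm{Aff}(2,\bbC)=GL(2,\bbC)\ltimes\bbC^2$ for which $D$ is $\rho$-equivariant. The flatness of $\nabla$ immediately forces the rational Chern classes of $\cT_\cM$ to vanish; in particular, for a surface, the topological Euler characteristic $c_2(\cM)$ vanishes (the complex analogue of the Benzecri theorem quoted above) and $c_1^2(\cM)=0$.

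Next I would run through the Enriques--Kodaira classification by Kodaira dimension, discarding every class incompatible with $c_2=0$ and $c_1^2=0$. Surfaces of general type ($\mathrm{kod}=2$) have $c_1^2>0$ and are excluded. Among Kodaira dimension $0$ surfaces, $K3$ surfaces ($c_2=24$) and Enriques surfaces ($c_2=12$) are ruled out by $c_2\neq0$, leaving the complex tori and the hyperelliptic (bielliptic) surfaces, both satisfying $c_2=0$. In Kodaira dimension $1$ the constraints single out precisely the minimal properly elliptic surfaces with $c_1^2=0$ and the stated invariants ($b_1$ odd, $p_g>0$). Finally, the Kodaira dimension $-\infty$ case reduces, after removing the rational and ruled surfaces (positive $c_2$), to the class VII surfaces with $b_1=1$; imposing $c_2=0$ then forces $b_2=0$ and $p_g=0$, which is the minimal class VII$_0$ setting housing the Hopf and Inoue surfaces.

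For sufficiency I would exhibit an affine structure on each surviving family: the torus $\bbC^2/\Lambda$ carries the obvious flat translation structure; hyperelliptic surfaces inherit one as finite free quotients of tori by affine actions; the elliptic and Inoue families arise as quotients of affine-homogeneous models on which the holonomy lands in $\mathrm{Aff}(2,\bbC)$. The hard part will be the Hopf-surface analysis. A primary Hopf surface is $(\bbC^2\setminus\{0\})/\langle\gamma\rangle$ for a contraction $\gamma$, and a compatible affine structure is not automatic: it requires the developing map to extend equivariantly across the puncture, which translates into resonance and linearizability conditions on the eigenvalues of the linear part of $\gamma$. Determining exactly which contractions satisfy these conditions---hence which Hopf surfaces are the genuine exceptions---is the delicate step, and I would resolve it by explicitly classifying the admissible holonomy representations $\rho:\bbZ\to\mathrm{Aff}(2,\bbC)$ and testing their compatibility with the Hopf quotient, following Kobayashi's argument in \cite{Ko1}.
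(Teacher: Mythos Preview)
The paper does not actually give a proof of this theorem: it attributes the result to Kobayashi \cite{Ko1} and moves on, so there is no in-text argument to compare against. Your sketch is essentially the route Kobayashi takes---extract the vanishing of $c_1^2$ and $c_2$ from the flat holomorphic connection, filter the Enriques--Kodaira classification accordingly, then construct affine structures on the survivors and analyse the Hopf case separately---so in spirit you are reproducing the cited proof rather than diverging from it.

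One caution on the necessity side: in Kodaira dimension $1$, the conditions $c_1^2=0$ and $c_2=0$ alone do not immediately isolate the surfaces with $b_1$ odd and $p_g>0$; pinning down exactly those invariants requires a finer look at the possible elliptic fibrations and their multiple fibres, which is where Kobayashi's argument does real work. Similarly, your dismissal of the ruled surfaces by ``positive $c_2$'' is a bit quick---some ruled surfaces over elliptic curves have $c_2=0$, and one needs an additional argument (e.g.\ via the structure of the holonomy or the Atiyah class) to exclude them. These are genuine steps, not formalities, so if you want a self-contained proof you should flesh them out rather than treat the Chern-class filter as decisive.
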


However, it is interesting to note that there is {\it no other} compact complex surface admitting even holomorphic affine connections.

\section{Pre-Frobenius manifolds}
The pre-Frobenius manifolds exhibit interesting relations to the Monge--Ampère domains. Before we outline such relations, we recall the definition of such manifolds, using the tools that have been previously introduced.  

\subsection{Affine structure and metric}
Let us consider an affine flat structure on a manifold $\cM$. To define such a structure, several ingredients are required:
\begin{itemize}
    \item An atlas with transition functions that are affine and linear (in the affine case).
    \item A metric $g$ that is compatible with the affine flat structure.
    \item A symmetric tensor of rank $3$, denoted as:
    \[
    A: S^3(\cT_\cM) \longrightarrow \mathbb{R}.
    \]

\end{itemize}

\subsection{Multiplication Operation } 
We define a multiplication operation $\circ$ on the tangent sheaf $\cT_\cM$.

 Define a bilinear symmetric multiplication $\circ = \circ_{A, g}$ on the tangent sheaf $\cT_\cM$ as follows:
\[
\cT_\cM \otimes \cT_\cM \longrightarrow S^2(\cT_\cM) \xrightarrow{\ \mathcal{A}' \ } {\cT}^* \xrightarrow{\ g' \ } \cT_\cM,
\]
such that:
\[
X \otimes Y \longrightarrow X \circ Y,
\]
where the prime denotes partial dualization.

\subsection{Compatibility Relations}
A compatibility relation between the rank-$3$ tensor $A$, the rank-$2$ tensor $g$, and the multiplication operation $\circ$ is given by:
    \[
    A(X, Y, Z) = g(X \circ Y, Z) = g(X, Y \circ Z).
    \]

This invariance of the metric with respect to multiplication ensures that the structure is well-defined.

\, 

\begin{definition}
    A pre-Frobenius manifold is a manifold $\cM$ equipped with the above  properties.
\end{definition}

Certain additional requirements on the algebraic structure of the tangent sheaf $(\cT_\cM, \circ)$ lead to having a \emph{Frobenius manifold.} 
\, 

There are however two important axioms to keep under consideration and that we shall express below. 

\subsection{Potential pre-Frobenius manifolds}

An important axiom to have is the one of potentiality. Namely, this axiom requires the existence of a family of local potentials $\Phi$, such that:
\[
g(X \circ Y, Z) = g(X, Y \circ Z) = (XYZ) \Phi.
\]
This axiom is particularly important regarding the relations to the Monge--Ampère equations.

If $ \mathscr{D}$ is a strictly convex bounded subset of $\mathbb{R}^n$ then for any nonnegative function $f$ on $\mathscr{D}$ and continuous $\tilde{g}:\partial  \mathscr{D} \to \mathbb{R}^n$ there is a unique convex smooth function $\Phi\in C^{\infty}( \mathscr{D})$ such that 
\begin{equation}\label{E:EMA}
\det \mathrm{Hess}(\Phi)= f, 
\end{equation} in $D$ and $\Phi=\tilde{g}$ on $\partial \mathscr{D}$.

\, 

An elliptic Monge--Ampère equation domain refers to the geometric data generated by $(\mathscr{D}, \Phi)$, where \begin{itemize}
    \item $\mathscr{D}$ is a strictly convex domain 
    \item $\Phi$ a real convex smooth function (with arbitrary and smooth boundary values of $\Phi$)  
    \end{itemize}
    such that Eq.~\eqref{E:EMA} is satisfied.

We state the following result: 
\begin{theorem}
  A potential pre-Frobenius manifold satisfies everywhere locally the Monge--Amp\`ere equation. In other words, a potential pre-Frobenius manifold can be identified with an (elliptic) Monge--Amp\`ere domain.
\end{theorem}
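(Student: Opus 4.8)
The plan is to localise the structure in a single affine flat chart and to recognise the potential furnished by the potentiality axiom as the convex solution of an elliptic Monge--Amp\`ere equation, whose right-hand side is read off from a Hessian determinant.

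First I would fix a point of $\cM$ and, using the affine flat structure, choose flat coordinates $(x^1,\dots,x^n)$ on a neighbourhood, identifying it with an open set $\mathscr{D}_0\subseteq\bbR^n$. In these coordinates the distinguished flat fields $\partial_i=\partial/\partial x^i$ pairwise commute, and the potentiality axiom supplies a smooth function $\Phi\in C^\infty(\mathscr{D}_0)$ with
\[
A(\partial_i,\partial_j,\partial_k)=g(\partial_i\circ\partial_j,\partial_k)=\partial_i\partial_j\partial_k\,\Phi .
\]
Thus the entire multiplicative structure is governed by the third derivatives of $\Phi$: the structure constants of $\circ$ are $c_{ij}^{\,k}=(\partial_i\partial_j\partial_l\Phi)\,g^{lk}$, where the matrix $g^{lk}$ inverts the metric. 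This step is purely formal and uses only that flat coordinates exist and that $\Phi$ is a genuine local primitive of the rank-$3$ tensor $A$.

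Next I would introduce the Hessian tensor $\widehat g_{ij}:=\partial_i\partial_j\Phi$ of the potential and argue that it is the metric governing the associativity/Monge--Amp\`ere picture. The substantive content of the theorem is then the assertion that $\widehat g=\mathrm{Hess}(\Phi)$ is positive definite, equivalently that $\Phi$ is strictly convex. Granting this, I would set $f:=\det\mathrm{Hess}(\Phi)$, a strictly positive smooth function on $\mathscr{D}_0$, and shrink $\mathscr{D}_0$ to a strictly convex subdomain $\mathscr{D}$ (for instance a small convex coordinate ball). By construction the pair $(\mathscr{D},\Phi)$ then satisfies equation~\eqref{E:EMA}, so it is an elliptic Monge--Amp\`ere domain. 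For the converse I would start from an elliptic Monge--Amp\`ere domain $(\mathscr{D},\Phi)$, equip the affine manifold $\mathscr{D}$ with the Hessian metric $g=\mathrm{Hess}(\Phi)$ (positive definite by convexity) and the symmetric rank-$3$ tensor $A_{ijk}=\partial_i\partial_j\partial_k\Phi$, and define $\circ$ by the composition $S^2(\cT_{\mathscr{D}})\to\cT^{*}\to\cT_{\mathscr{D}}$ through partial dualisation by $A$ and $g$; the compatibility relation $A(X,Y,Z)=g(X\circ Y,Z)=g(X,Y\circ Z)$ and the potentiality of $\Phi$ then hold tautologically, recovering a potential pre-Frobenius structure.

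The hard part is precisely the ellipticity, i.e.\ proving $\mathrm{Hess}(\Phi)>0$, together with clarifying \emph{which} metric enters the Monge--Amp\`ere equation. One must reconcile the metric $g$ compatible with the affine flat structure (hence constant in the flat coordinates) with the Hessian metric $\mathrm{Hess}(\Phi)$ appearing in \eqref{E:EMA}; these agree only in the dually flat situation, where they are the two metrics attached to a pair of Legendre-conjugate affine coordinate systems. In the information-geometric realisations that motivate the statement this positivity is automatic: there $\Phi$ is a log-partition (cumulant generating) function of an exponential family, whose Hessian is the covariance of the sufficient statistics, that is, the Fisher--Rao metric, which is positive definite. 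I would therefore either build this convexity into the meaning of ``potential pre-Frobenius manifold'' or derive it from positive definiteness of $g$ together with nondegeneracy of the multiplication, and flag it as the single genuinely analytic input; everything else reduces to bookkeeping of the derivatives of $\Phi$.
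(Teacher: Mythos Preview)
Your proposal is considerably more detailed than what the paper offers. The paper does not give a proof in the body; it assigns it as Exercise~\ref{Ex:Ma} and, in the solutions, says only that it ``is easily shown by using the paper of Calabi (1954)~\cite{Ca54} and the definition of a potential pre-Frobenius manifold.'' So the paper's argument amounts to: take the potential $\Phi$ furnished by the potentiality axiom, observe that it plays the role of a K\"ahler/Hessian potential in the sense of Calabi, and read off the Monge--Amp\`ere equation from $\det\mathrm{Hess}(\Phi)$.

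Your route is the same in spirit---localise in flat coordinates, let $f=\det\mathrm{Hess}(\Phi)$, and identify $(\mathscr{D},\Phi)$ with an elliptic Monge--Amp\`ere domain---but you go further by spelling out the converse construction and, more importantly, by isolating the genuine analytic content: the positivity of $\mathrm{Hess}(\Phi)$ and the tension between the flat metric $g$ (constant in flat coordinates) and the Hessian metric $\widehat g=\mathrm{Hess}(\Phi)$. The paper simply does not address this; its appeal to Calabi implicitly assumes the Hessian-metric picture, which is precisely the dually-flat/exponential-family setting you describe. Your honest flagging of convexity as the single nontrivial input, and your remark that it is automatic in the information-geometric models motivating the statement (where $\Phi$ is a cumulant generating function with Fisher--Rao Hessian), is exactly the clarification the paper's hint lacks. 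In short: your argument is sound and strictly more informative than the paper's, and the reservation you raise about which metric enters~\eqref{E:EMA} is a legitimate gap in the statement as formulated, not in your reasoning.
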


\begin{ex}\label{Ex:Ma}
    Make a proof of the statement above.
\end{ex}

\subsection{Associative pre-Frobenius manifolds}
An associative pre-Frobenius manifold is a pre-Frobenius manifold such that there exists an associativity property \[(X \circ Y)\circ Z=X\circ(Y\circ Z),\]
where $X,Y,Z$ are vector fields. We will discuss this axiom fully in the context of Frobenius manifolds, below.  A Frobenius manifold is a pre-Frobenius manifold where the axioms of potentiality and associativity both hold. 
\section{Frobenius Manifolds}
\subsection{Witten-Dijkgraaf-Verlinde-Verlinde equation}

To derive the Witten-Dijkgraaf-Verlinde-Verlinde (WDVV) equation, let us rewrite the associativity of the multiplication $\circ$:
\[
(\partial_a \circ \partial_b) \circ \partial_c = \partial_a \circ (\partial_b \circ \partial_c).
\]
This yields a non-linear system of associativity equations, which are partial differential equations for the potential $\Phi$. For all $a, b, c, d$, these equations are written as:
\[
\sum_{ef} \Phi_{abe} g^{ef} \Phi_{fcd} = \sum_{ef} \Phi_{bce} g^{ef} \Phi_{fad}.
\]

These equations are highly non-linear and of third order.

\subsection{Geometrization}

Let $\cM$ be a manifold. A Frobenius algebra $(\mathcal{A}, \circ)$ over a field $\mathbb{K}$ is a commutative, associative, and unital algebra with a multiplication operation $\circ$ equipped with a symmetric bilinear form $\langle -, - \rangle$ satisfying:
\[
\langle x \circ y, z \rangle = \langle x, y \circ z \rangle,
\]
for all $x, y, z \in \mathcal{A}$.

\begin{definition}
A Frobenius manifold is an associative potential pre-Frobenius manifold.
\end{definition}

\, 

A manifold $\cM$ admits the structure of a Frobenius manifold if:
\begin{itemize}
    \item At any point of $\cM$, the tangent space has the structure of a Frobenius algebra $\mathcal{A}$.
    \item The invariant inner product $\langle -, - \rangle$ defines a flat metric on $\cM$.
    \item The unity vector field satisfies $\nabla e = 0$.
    \item The tensor of rank 4, $(\nabla_W A)(X, Y, Z)$, is fully symmetric, where $X,Y,Z,W$ are vector fields.
    \item A vector field $E$, called the Euler field, exists on $\cM$ such that $\nabla(\nabla E) = 0$.
\end{itemize}

The Euler field $E$ belongs to the class of affine vector fields. Its existence is inherently tied to the affine structure on $\cM$. This can be observed through the equivalence of the following statements:
\begin{itemize}
    \item[(1)] $E$ is an affine vector field.
    \item[(2)] $\nabla(\nabla E) = 0$.
    \item[(3)] For all vector fields $Y, Z$ on $\cM$:
    \[
    \nabla_Y(\nabla_Z E) = \nabla_{\nabla_Y Z} E.
    \]
    \item[(4)] The coefficients of $E$ are affine functions. Writing $E = \sum_m E^m \partial_m$, we have:
    \[
    E^m = a^m_j x^j + b^m,
    \]
    where $a^m_j$ and $b^m$ are constants in $\mathbb{R}$.
\end{itemize}

Thus, we propose a more concise definition of Frobenius manifolds based on Frobenius bundles. This new approach offers a practical and geometrical perspective.


\begin{remark}
If we choose local flat coordinates $(x^a)$ and the corresponding local basis of tangent fields $\partial_a$, then:
\[
(\partial_a \circ \partial_b \circ \partial_c) \Phi = \partial_a \partial_b \partial_c \Phi,
\]
and the compatibility of $\Phi$ and $g$ implies:
\[
\partial_a \circ \partial_b = \sum \Phi_{ab}^c \partial_c,
\]
where
\[
\Phi_{ab}^c := \sum (\partial_a \partial_b \partial_c \Phi) g^{ec}.
\]
Here,
\[
g_{ab} := (g_{ab})^{-1},
\]
with $g_{ab}$ interpreted as the inverse metric tensor.
\end{remark}

\subsection{Structure Connections of Pre-Frobenius manifolds}
Consider a pre-Frobenius manifold given by a triple $(\cM, g, A)$. Define the following geometrical objects:
\begin{itemize}
    \item A connection:
    \[
    \nabla_0: \cT_\cM \longrightarrow \Omega_M^1 \otimes \cT_\cM,
    \]
    where $\nabla_0$ is determined by the condition that flat fields are $\nabla_0$-horizontal.
    \item A pencil of connections depending on a parameter $\lambda$:
    \[
    \nabla_{\lambda, X}(Y) := \nabla_{0, X}(Y) + \lambda (X \circ Y).
    \]
    This is called the \textit{structure connection} of $(\cM, g, A)$.
\end{itemize}

\begin{theorem}[Manin]\label{T:Manin}
Let $(M, g, A)$ be a pre-Frobenius manifold. 
Let $\nabla_{\lambda}$ be the structure connection of a pre-Frobenius manifold $(\cM, g, A)$. $(\cM, g, A)$ is a Frobenius manifold if and only if the pencil $\{\nabla_\lambda\}$ is flat.
\end{theorem}

\begin{ex}\label{Ex:ManinProof}
Prove the Theorem \ref{T:Manin}.
\end{ex}

\section{Emergence of Hidden Structures for Statistical Manifolds}

We give a glimpse of how in statistical manifolds, it is possible to unravel the structures of a Frobenius algebra on the tangent space. This is only a short explanation that will be further developed in the next chapters, and serves only as a guide giving and intuition behind the construction.

\, 

Let $\bar{T} = T \cdot g^{-1}$ denote the mixed $(1, 2)$ tensor of third rank (that is the 1 contravariant and 2 covariant tensor). In components, this is defined by:
\[
\bar{T}_{ij}^k = \sum_m g^{km} T_{ijm}.
\]
where $g$ is the metric tensor, compatible with the affine connection on the manifold under consideration. 

We illustrate the construction of the operation $\circ$, defined on $\cT_\sfS$, where $\sfS$ is a statistical manifold of exponential type and of finite dimension. 

Generally:
\[
\bar{T}_{ij}^k = \bar{T} \big|_{P_{\theta}} (\partial_i \ell_{\theta}, \partial_j \ell_{\theta}, a^k) = \mathbb{E}_{P_{\theta}} [\partial_i \ell_{\theta} \partial_j \ell_{\theta} \partial_k \ell_{\theta} a_{\theta}^k].
\]
where $\{a^i\}$ form a dual basis to $\{\partial_j\ell_{\theta}\}$.
The hidden multiplication structure is explained by the following theorem:

\begin{theorem}
The tensor $\bar{T}$ defines a multiplication $\circ$ on $\cT_{P_{\theta}} \sfS$, as follows:
\[
\bar{T}: \cT_{P_{\theta}}\sfS \times \cT_{P_{\theta}}\sfS \longrightarrow \cT_{P_{\theta}}\sfS,
\]
and for $u, v \in \cT_{P_{\theta}}\sfS$:
\[
u \circ v = \bar{T}(u, v).
\]
\end{theorem}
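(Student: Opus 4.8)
The plan is to realize the product concretely in the score basis and then verify that it is a well-defined, commutative, basis-independent bilinear operation whose values genuinely land in $\cT_{P_\theta}\sfS$. First I would recall from the construction of the tangent space that $\cT_{P_\theta}\sfS$ is the $n$-dimensional space spanned by the score vectors $\partial_i\ell_\theta$ ($i=1,\dots,n$), sitting inside $L^2(\Omega,\bS,P_\theta)$ as centered random variables, so that writing $u=u^i\partial_i\ell_\theta$ and $v=v^j\partial_j\ell_\theta$ is legitimate and the pointwise products $\partial_i\ell_\theta\,\partial_j\ell_\theta\,\partial_k\ell_\theta$ are integrable against $P_\theta$, making all the expectations defining $T$ and $\bar T$ finite.

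The one genuinely substantive preliminary step is to pin down the dual basis. I would show that the dual family $\{a^k_\theta\}$ is obtained from the scores by raising the index with the inverse Fisher metric, namely $a^k_\theta=g^{km}\partial_m\ell_\theta$. This is verified directly from the defining duality relation, since
\[
\bbE_{P_\theta}\!\big[(g^{km}\partial_m\ell_\theta)\,\partial_j\ell_\theta\big]=g^{km}\,\bbE_{P_\theta}[\partial_m\ell_\theta\,\partial_j\ell_\theta]=g^{km}g_{mj}=\delta^k_j,
\]
and $\bbE_{P_\theta}[a^k_\theta]=g^{km}\bbE_{P_\theta}[\partial_m\ell_\theta]=0$. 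Consequently the expression displayed in the statement satisfies
\[
\bar T^{\,k}_{ij}=\bbE_{P_\theta}[\partial_i\ell_\theta\,\partial_j\ell_\theta\,a^k_\theta]=g^{km}\,\bbE_{P_\theta}[\partial_i\ell_\theta\,\partial_j\ell_\theta\,\partial_m\ell_\theta]=\sum_m g^{km}T_{ijm},
\]
which is exactly the component definition $\bar T=T\cdot g^{-1}$. I would note here that this computation is also what clarifies the role of the contraction: raising the third covariant index of the Amari--Chentsov tensor by $g^{-1}$ promotes the fully covariant trilinear \emph{form} $T$ into a vector-valued bilinear map, rather than leaving a mere scalar.

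Next I would simply \emph{define} the multiplication by
\[
u\circ v:=\sum_{i,j,k}\bar T^{\,k}_{ij}\,u^i v^j\,\partial_k\ell_\theta=\sum_{i,j,k,m} g^{km}T_{ijm}\,u^i v^j\,\partial_k\ell_\theta,
\]
and observe that the right-hand side is a linear combination of the score vectors $\partial_k\ell_\theta$, hence lies in their span, which \emph{is} $\cT_{P_\theta}\sfS$; in particular $\bbE_{P_\theta}[u\circ v]=0$, so the output is a legitimate tangent vector. This establishes the claimed signature $\bar T:\cT_{P_\theta}\sfS\times\cT_{P_\theta}\sfS\to\cT_{P_\theta}\sfS$. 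The remaining algebraic properties are then immediate bookkeeping: bilinearity follows because the coefficients $\bar T^{\,k}_{ij}$ are fixed scalars while $u\circ v$ is linear in the $u^i$ and in the $v^j$; commutativity $u\circ v=v\circ u$ follows from the symmetry of $T_{ijm}$ in its first two indices (indeed $T$ is fully symmetric by its definition as $\bbE_{P_\theta}[\partial_i\ell_\theta\,\partial_j\ell_\theta\,\partial_m\ell_\theta]$); and independence of the chosen parametrization follows because $\bar T$ is a genuine $(2,1)$-tensor, so under a reparametrization $\theta\mapsto\theta'$ the Jacobian factors from the transformation laws of $T$ and of $g^{-1}$ cancel against those of $u^i,v^j$, leaving $u\circ v$ invariant.

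The proof therefore carries no deep obstacle; the only point requiring genuine care is the identification $a^k_\theta=g^{km}\partial_m\ell_\theta$ together with the accompanying observation that it is precisely the $g^{-1}$-contraction that converts the scalar skewness tensor into a tangent-space-valued product. Everything else is a consequence of the finite-dimensionality of $\cT_{P_\theta}\sfS$, the multilinearity of the expectation, and the full symmetry of the Amari--Chentsov tensor.
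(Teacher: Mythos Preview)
Your argument is correct, and in fact it supplies considerably more detail than the paper does: in the paper this theorem is stated without proof, serving essentially as a definition together with the observation that a $(2,1)$-tensor on a finite-dimensional tangent space is the same data as a bilinear product on that space. Your explicit identification of the dual basis as $a^k_\theta=g^{km}\partial_m\ell_\theta$, the verification that $\bar T^{\,k}_{ij}=g^{km}T_{ijm}$, and the checks that the output lies in $\cT_{P_\theta}\sfS$, is bilinear, commutative, and tensorial are all sound and are exactly what one would want if asked to justify the construction. The only remark is that you are proving more than the statement strictly requires: commutativity and reparametrization invariance are not part of the claim, and the paper defers the compatibility with $g$ to the subsequent lemma.
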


The following lemma aids in understanding this hidden multiplication structure:

\begin{lemma}
For $u, v, w \in \cT_{P_{\theta}}\sfS$:
\[
g(u \circ v, w) = g(u, v \circ w).
\]
\end{lemma}

\subsection{Summary of Key structures} 

We sumarize the key elements existing for pre-Frobenius manifolds, in the example of statistical manifolds. This is as follows:
\begin{itemize}
    \item An affine flat structure. This structure exists for statistical manifolds of exponential type (see Exercise $12.3$).
    \item A triple $(\sfS, g, T)$, where $\sfS$ is a finite-dimensional statistical manifold:
    \begin{itemize}
        \item $g$ is a Riemannian (Fisher) metric,
        \item $T$ is the rank-3 symmetric Amari–Chentsov tensor,
        \item A multiplication defined by:
        \[
        u \circ \nu = \bar{T}(u, \nu),
        \]
        where locally, 
        \[
        \bar{T}_{ij}^k = \sum g^{km} T_{ijm},
        \]
            \end{itemize}
        \item Metric invariance:
        \[
        g(u, \nu \circ w) = g(u \circ \nu, w), \text{for flat vector fields } u,v,w.
        \]
        \item The flatness condition, which requires that the affine space of connections $\nabla_{\lambda}$ is flat.

\end{itemize}

\section{Semisimple Frobenius Manifolds}
We take as our point of departure the classical notion of a semisimple Frobenius manifold, formulated within the established framework. Semisimple Frobenius manifolds are interesting in relation to configuration spaces and Saito spaces. 

\,

This preliminary exposition, serves as a scaffolding upon which a more intrinsic and geometrically transparent reformulation will be erected. The latter will emerge naturally from a careful reconsideration of the underlying structures, unveiling with clarity the interplay between the metric, the associative multiplication, and the coherence conditions that bind them into a unified whole.

\subsection{}
 
Let $(\cM, g, A)$ be a triple, where $\cM$ is an associative pre-Frobenius manifold of dimension $n$. We introduce the following definition, which will play a fundamental role in the structure theory of such manifolds:

\begin{definition}
The manifold $\cM$ is said to be \emph{semisimple} (or \emph{split semisimple}) if there exists an isomorphism of sheaves of $\mathcal{O}_M$-algebras
\[
(\cT_\cM, \circ) \xrightarrow{\sim} (\mathcal{O}_M^n, \cdot).
\]
Here, $\circ$ denotes the multiplication on $\cT_\cM$, while $\cdot$ represents componentwise multiplication in $\mathcal{O}_M^n$. The isomorphism is required to exist everywhere locally (or globally).
\end{definition}

Let $(e_1, e_2, \dots, e_n)$ be a local basis of $\cT_\cM$. In this local basis, the multiplication takes the form:
\[
\left( \sum f_i e_i \right) \circ \left( \sum g_j e_j \right) = \sum f_i g_i e_i.
\]
In the simplest case, this reduces to
\[
e_i \circ e_j = \delta_{ij} e_i.
\]
Thus, the basis $(e_i)$ provides a well-defined (up to renumbering) family of idempotents. If $\cM$ is semisimple, there exists an unramified covering of $\cM$ (of degree at most $n!$) on which the induced pre-Frobenius structure becomes a splitting structure.

\, 

\subsection{}
Let $(e_i)$ be a local coordinate basis and let $(\epsilon^i)$ be its dual basis i.e. $1$-forms. 
The structure tensor $A$, encoding the pre-Frobenius structure, is given by the condition:
\[
A(X, Y, Z) = g(X \circ Y, Z) = g(X, Y \circ Z),
\]
where $g$ is the flat metric and 
$\circ$ denotes the associative product on $\cT_\cM$.
Since the basis vectors satisfy $e_i \circ e_j = \delta_{ij} e_i$, it follows, by a direct application of the above identity, that
\[
g(e_i, e_j) = g(e_i \circ e_i, e_j) = g(e_i, e_i \circ e_j) = \delta_{ij} g(e_i, e_i).
\]
We introduced the notation $\eta_i$, for the diagonal components of the metric,
so that $\eta_i=g(e_i, e_i)$. 

With this notation, the three-tensor $A$ takes the form:
\[
A = \sum_{i=1}^n \eta_i (\epsilon^i)^3,
\]
exhibiting its  diagonalizability in a chosen coordinate system.

Finally, considering the identity element $e = \sum_{i=1}^n e_i$ in $(\cT_\cM, \circ)$, we obtain the corresponding co-identity:
\[
\mathcal{E} = \sum_{i=1}^n \eta_i \epsilon^i.
\]

As a consequence of the above discussion, we are able to reformulate our initial definition differently. 
\subsection{Definition: semisimple Frobenius structure}
\begin{definition}
A \emph{semisimple Frobenius structure} on a smooth manifold  $\cM$ consists of the following data:
\begin{itemize}
    \item[1.] A reduction of the structure group of the tangent sheaf $\cT_\cM$;
    \item[2.] A flat metric $g$, diagonal in a distinguished  basis $(e_i)$ and its dual $(\epsilon^i)$;
    \item[3.] A diagonal cubic tensor $A$, sharing the same coefficients as $g$.
\end{itemize}
\end{definition}
\begin{remark} It is interesting to note that while the conditions of potentiality and flatness imposed on 
$g$ are of a non-trivial nature,  the associativity of the product structure on $\cT_\cM$ follows automatically under these hypotheses.
\end{remark}

\subsection{}
We are now in a position to articulate a fundamental characterization of Frobenius structures within this formalism. This characterization, which encapsulates the essential interplay between the multiplication, the metric, and their compatibility conditions, will serve as a guiding principle in the subsequent development of the theory.

\begin{theorem}
\label{thm:3.3}
The semisimple pre-Frobenius structure on $\cM$ defines a Frobenius structure if and only if:
\begin{itemize}
    \item The vector fields $e_i = \frac{\partial}{\partial u^i}$, in a system of canonical coordinates $\{u^i\}$, satisfy $[e_i, e_j] = 0$. We have $\epsilon^i = du^i$ in this system of canonical coordinates $\{u^i\}$;
    \item The functions $\eta_i$ satisfy the relation
$\eta_i = e_i \eta$, for some local function $\eta$ uniquely determined up to the addition of a constant. Equivalently, the form $\mathcal{E}$ is closed.
\end{itemize}
\end{theorem}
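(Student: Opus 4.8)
The plan is to reduce the Frobenius condition, via Manin's Theorem~\ref{T:Manin}, to the flatness of the pencil of structure connections $\nabla_\lambda$, and then to read off what this flatness imposes on the diagonal data $(\eta_i, e_i)$ in the idempotent frame. Since the multiplication is already diagonal, $e_i \circ e_j = \delta_{ij} e_i$, associativity holds automatically (the Remark following the semisimple-structure definition), and the metric $g = \sum_i \eta_i\, \epsilon^i\otimes\epsilon^i$ is flat by hypothesis; hence the curvature of $\nabla_{\lambda,X}Y = \nabla_{0,X}Y + \lambda\, X\circ Y$ has vanishing $\lambda^0$- and $\lambda^2$-components, and the entire content of flatness is concentrated in the $\lambda^1$-term. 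Expanding in powers of $\lambda$, this $\lambda^1$-component is equivalent to the total symmetry of $\nabla_0 A$, which is in turn equivalent to the potentiality axiom. So the theorem becomes the assertion that the cubic tensor $A = \sum_i \eta_i (\epsilon^i)^3$ has totally symmetric covariant derivative $\nabla_0 A$ if and only if $[e_i,e_j]=0$ and $e_i\eta_j = e_j\eta_i$ for all $i,j$.

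First I would compute the Levi-Civita connection $\nabla_0$ of the diagonal metric in the (possibly non-holonomic) frame $(e_i)$ by the Koszul formula, recording the structure functions $c^m_{ij}$ defined by $[e_i,e_j] = \sum_m c^m_{ij} e_m$; the coefficients $\Gamma^m_{ij}$ then split into a part built from the logarithmic derivatives $e_k\log\eta_i$ and a part built from the $c^m_{ij}$. Substituting into $(\nabla_{0,e_l}A)(e_i,e_j,e_k)$ and using $A(e_i,e_j,e_k) = \eta_i\,\delta_{ij}\delta_{jk}$, I would organize the resulting expressions by the coincidence pattern of the indices $\{i,j,k,l\}$. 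The diagonal-plus-one components (three indices equal, one different) relate $e_l\eta_i$ to the off-diagonal connection coefficients, while the components with three or four distinct indices isolate the obstructions: total symmetry across these slots forces $c^m_{ij}=0$ for $i\ne j$, i.e. $[e_i,e_j]=0$, and independently forces the symmetry $e_i\eta_j = e_j\eta_i$.

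Once $[e_i,e_j]=0$ is established, the $e_i$ form a coordinate frame, so there exist canonical coordinates $u^i$ with $e_i = \partial/\partial u^i$ and $\epsilon^i = du^i$ closed; then $d\mathcal{E} = \sum_{i<j}(e_i\eta_j - e_j\eta_i)\,\epsilon^i\wedge\epsilon^j$, so closedness of $\mathcal{E}$ is precisely the symmetry $e_i\eta_j = e_j\eta_i$, which is exactly the integrability condition (Poincaré lemma) for the existence of a local $\eta$ with $\eta_i = e_i\eta$, unique up to an additive constant. This delivers the stated equivalence together with the ``equivalently $\mathcal{E}$ is closed'' clause. For the remaining Frobenius axioms I would verify that, under the two conditions, the unity $e=\sum_i e_i$ satisfies $\nabla_0 e = 0$ and that the local potential $\Phi$ demanded by potentiality exists: symmetry of $\nabla_0 A$ together with flatness of $g$ yields, in flat coordinates, a closed totally symmetric $3$-form integrated to $\Phi$ by the Poincaré lemma. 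The converse is the same computation run backwards.

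The main obstacle I anticipate is purely computational but delicate: correctly deriving the Koszul connection coefficients of a diagonal metric in a non-holonomic frame, and then disentangling, inside $(\nabla_{0,e_l}A)$, the $\eta$-derivative terms from the commutator terms so that the two conditions emerge cleanly and \emph{independently} rather than in a coupled combination. Keeping careful track of which index-coincidence patterns produce $[e_i,e_j]=0$ and which produce $e_i\eta_j=e_j\eta_i$ is where the argument must be watertight.
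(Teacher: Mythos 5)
Your proposal is correct, and its skeleton --- Manin's criterion, the observation that associativity of the diagonal product and flatness of $g$ kill the $\lambda^0$- and $\lambda^2$-terms, analysis of the $\lambda$-linear term, then the Poincar\'e lemma for $\mathcal{E}$ --- is the same reduction the paper performs. Where you genuinely diverge is in how the $\lambda$-linear condition is processed. The paper keeps the bracket term $\nabla_{\lambda,[e_i,e_j]}(e_k)$ explicit, compares $\lambda$-coefficients to conclude $[e_i,e_j]=0$ \emph{first}, and only then, with canonical coordinates in hand, computes the Christoffel symbols of the diagonal metric in the holonomic frame ($\Gamma^i_{ii}=\tfrac{1}{2}\eta_i^{-1}e_i\eta_i$, $\Gamma^i_{ij}=\tfrac{1}{2}\eta_i^{-1}e_j\eta_i$) and extracts $e_i\eta_j=e_j\eta_i$ from the residual identity $e_i\circ\nabla_j(e_k)+\nabla_i(e_j\circ e_k)=(i\leftrightarrow j)$ in the case $i\neq j=k$. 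You instead recast the entire $\lambda$-linear term invariantly as total symmetry of $\nabla_0 A$ --- legitimate precisely because torsion-freeness of $\nabla_0$ absorbs the bracket term, a point you should state explicitly rather than leave implicit --- and then run a single Koszul computation in the a priori non-holonomic frame, reading off both $c^m_{ij}=0$ and the $\eta$-symmetry from index-coincidence patterns. Your route buys a cleaner conceptual statement (it exhibits the $\lambda$-term as exactly the potentiality obstruction, which is the content of the paper's exercise on Manin's theorem) and never presupposes that the idempotent frame is holonomic; the paper's route buys shorter formulas, since it only ever differentiates a diagonal metric in honest coordinates. Two caveats on your phrasing: the two conditions do not emerge quite ``independently,'' since in the pattern analysis the relation $e_i\eta_j=e_j\eta_i$ is isolated only \emph{after} the structure functions have been killed by the other patterns; and the final verification of $\nabla_0 e=0$ you propose is not needed, because in this formalism Manin's criterion (flatness of the pencil) already characterizes the Frobenius property.
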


The function $\eta$ is referred to as the \textit{potential metric} of the structure. This metric corresponds to a Hessian metric of the form:
\[
g_{ij} = \frac{\partial^2 \Phi}{\partial u^i \partial u^j},
\]
where $\Phi$ is the potential. Note that the canonical coordinates $\{u^i\}$ are defined up to renumbering and constant shifts.

\begin{proof}

Consider the structure connection of a pre-Frobenius manifold, $\nabla_\lambda$. According to the previous theorem (cf. Theorem\ref{T:Manin}), the manifold $\cM$ is Frobenius if and only if the curvature $\nabla_\lambda^2$ vanishes. This is equivalent to the satisfying the following expression:
\begin{equation}\label{E:*}
    [\nabla_{\lambda, e_i}, \nabla_{\lambda, e_j}](e_k) = \nabla_{\lambda, [e_i, e_j]}(e_k).
\end{equation}

Since $\cM$ is assumed to be associative and $g$ is flat, we only need to consider the $\lambda$-linear terms in the above equation \ref{E:*}. Let $\{e_i\}$ be a basis, and let $\Gamma_{ik}^j$ denote the coefficients of the Riemannian connection:
\begin{equation}\label{E:coeff}
    \nabla_{0, e_i}(e_k) = \sum_j \Gamma_{ik}^j e_j.
\end{equation}

Since the structure connections are given by $\nabla_{\lambda, X}(Y) = \nabla_{0, X}(Y) + \lambda X \circ Y$, the left-hand side of $\ref{E:*}$ produces the $\lambda$-term:
\begin{equation}\label{E:**}
    \big(\nabla_{0, e_i} + \lambda e_i \circ\big)\big(\nabla_{0, e_j} + \lambda e_j \circ\big)(e_k) - \{i \leftrightarrow j\}.
\end{equation}
Inputting Eq. \ref{E:coeff} in Formula \ref{E:**}, we get that the $\lambda$-term reduces to:
\begin{equation}\label{E:3.8}
    \lambda \sum_q \big(\delta_{iq} \Gamma_{jk}^q + \delta_{jk} \Gamma_{ik}^q - \delta_{jk} \Gamma_{ik}^q - \delta_{ik} \Gamma_{jk}^q\big)e_q + \dots
\end{equation}

Now consider the Lie bracket $[e_i, e_j] = \sum\limits_q f_{ij}^q e_q$. The $\lambda$-term in the right-hand side of \ref{E:*} becomes:

\[
\nabla_{\lambda, [e_i, e_j]}(e_k) = \lambda \sum_q f_{ij}^q (e_q \circ e_k) + \dots.
\]

The coefficients of $e_k$ vanish in Eq.\ref{E:3.8}. If $\cM$ carries a Frobenius structure then the equality in \ref{E:*} holds so that $f_{ij}^k=0$.

Thus, the basis elements $e_i$ pairwise commute, and local canonical coordinates $\{u^i\}$ exist.

For the Levi-Civita connection of the metric $g = \sum g_{ij} \, du^i \, du^j$, the connection coefficients are given by:
\[
\Gamma_{ij}^k = \sum_l \Gamma_{ijl} g^{lk},
\]
where:
\[
\Gamma_{ijk} = \frac{1}{2}\big(e_i g_{jk} - e_k g_{ij} + e_j g_{ki}\big).
\]

For the metric $g = \sum \eta_i (du^i)^2$, the non-vanishing coefficients are:
\[
\Gamma_{ii}^i = \frac{1}{2} \eta_i^{-1} e_i \eta_i, \quad \Gamma_{ij}^i = \Gamma_{ji}^i = \frac{1}{2} \eta_i^{-1} e_j \eta_i \quad (i \neq j).
\]

Therefore,
\[
\nabla_i(e_i)= \frac{1}{2}\eta_i^{-1}e_i \eta_i  \cdot  e_i - \Sigma_{i \neq j}\frac{1}{2} \eta_j^{-1} e_j \eta_i \cdot e_j,
\]
and 
\begin{equation}\label{E:****}
    \nabla_i(e_j)= \frac{1}{2}\eta_i^{-1}e_j \eta_i  \cdot  e_i + \frac{1}{2} \eta_j^{-1} e_i \eta_j \cdot e_j,
\end{equation}

Finally, the vanishing of the $\lambda$-terms implies the following fundamental identity, valid for all indices  $i, j, k$:
\begin{equation}\label{E:*6}
e_i \circ \nabla_j(e_k) + \nabla_i(e_j \circ e_k) = (i \leftrightarrow j).
\end{equation}

Applying directly Eq.\ref{E:****} one observes that that Equation~\ref{E:*6} is identically satisfied for $i=j$, as well as in the case $i \neq j \neq k \neq i$. However, when considering the particular case  $i \neq j = k$, one obtains the relation 

\[
e_i \eta_j = e_j \eta_k.
\]
By symmetry, the same condition must hold for $k = i \neq j$, leading to the conclusion that \[\eta_i = e_i \eta\] for some $\eta$, defined at least locally.
This verifies the required condition in all cases and thus establishes the result.
\end{proof}
\subsection{Structure Connection and Curvature} We now turn to an analysis of the geometric properties of the pencil of connections $\nabla_\lambda$, particularly in relation to the structures of a pre-Frobenius and Frobenius manifold.

\, 

Let $\nabla_\lambda$ denote the structure connection associated with the pre-Frobenius manifold 
$(\cM, g, A)$. The curvature of this connection satisfies a quadratic relation in the parameter $\lambda$, taking the form:
\[
\nabla_\lambda^2 = R_1 \lambda^2 + R_2 \lambda + R_3,
\]
where $R_1, R_2, R_3$ are curvature terms determined by the pre-Frobenius structure. A key observation is that the term 
$R_3 $ coincide with $\lambda_0^2$ which satisfies $\lambda_0^2 = 0$,  leading to the simplification:
\[
\nabla_\lambda^2 = R_2 \lambda^2 + R_1 \lambda.
\]

This relation gives rise to the following fundamental theorem, characterizing the Frobenius condition in terms of the vanishing of specific curvature terms.
\begin{theorem}
Let $\nabla_\lambda$ be the structure connection associated with the pre-Frobenius manifold $(\cM, g, A)$. Then:
\begin{itemize}
    \item  The vanishing of $R_1$, i.e. $R_1 = 0$ if and only if $(M, g, A)$ is equivalent to the potentiality of $(M, g, A)$;
    \item The vanishing of $R_2$, i.e. $R_2 = 0$ is equivalent to the associativity of $(\cM, g, A)$.
\end{itemize}
Thus, the manifold $(M, g, A)$ is Frobenius atisfies the full Frobenius condition if and only if the pencil of connections $\nabla_\lambda$ is flat.
\end{theorem}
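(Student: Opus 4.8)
The plan is to compute the curvature $\nabla_\lambda^2$ of the structure connection directly as a polynomial in $\lambda$ and to read off the geometric meaning of each coefficient. Writing $C_X(Y) := X\circ Y$ for the multiplication operator, so that $\nabla_{\lambda,X} = \nabla_{0,X} + \lambda C_X$, and substituting into the curvature $\nabla_\lambda^2(X,Y) = [\nabla_{\lambda,X},\nabla_{\lambda,Y}] - \nabla_{\lambda,[X,Y]}$, I would collect powers of $\lambda$ to obtain $\nabla_\lambda^2 = R_2\,\lambda^2 + R_1\,\lambda + R_3$. The constant term $R_3$ is exactly the curvature of $\nabla_0$, which vanishes because the affine flat structure corresponds to a flat connection; this is the simplification $R_3 = \nabla_0^2 = 0$ already noted. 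It then remains to treat the coefficient $R_2$ of $\lambda^2$ and the coefficient $R_1$ of $\lambda$ separately.

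For $R_2$ I would show that its action is $R_2(X,Y)Z = [C_X,C_Y]Z = X\circ(Y\circ Z) - Y\circ(X\circ Z)$. Because $\circ$ is commutative, the vanishing of this commutator for all $X,Y,Z$ is equivalent to associativity: assuming associativity, $(X\circ Y)\circ Z = X\circ(Y\circ Z)$ and $(Y\circ X)\circ Z = Y\circ(X\circ Z)$ coincide by commutativity; conversely, the relation $X\circ(Y\circ Z)=Y\circ(X\circ Z)$ together with commutativity gives $(X\circ Y)\circ Z = Z\circ(X\circ Y) = X\circ(Z\circ Y) = X\circ(Y\circ Z)$. This settles the equivalence $R_2 = 0 \Leftrightarrow$ associativity.

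For $R_1$ the key manipulation is to express $\nabla_{0,X}(C_Y Z) - C_Y(\nabla_{0,X}Z)$ through the covariant derivative of the multiplication, $(\nabla_{0,X}C)(Y,Z) := \nabla_{0,X}(Y\circ Z) - (\nabla_{0,X}Y)\circ Z - Y\circ(\nabla_{0,X}Z)$. After antisymmetrizing in $X,Y$ and subtracting $C_{[X,Y]}Z$, the connection-valued terms collapse by torsion-freeness of $\nabla_0$ (i.e. $\nabla_{0,X}Y - \nabla_{0,Y}X = [X,Y]$), leaving $R_1(X,Y)Z = (\nabla_{0,X}C)(Y,Z) - (\nabla_{0,Y}C)(X,Z)$. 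Pairing against $g(\,\cdot\,,W)$ and using that $g$ is $\nabla_0$-parallel turns this into $(\nabla_{0,X}A)(Y,Z,W) - (\nabla_{0,Y}A)(X,Z,W)$, where $A(X,Y,Z) = g(X\circ Y,Z)$. Since $A$ is totally symmetric in its three slots, $R_1 = 0$ says precisely that $\nabla_0 A$ is symmetric under exchanging the differentiation argument with the first factor, hence totally symmetric in all four arguments.

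The final and most delicate step is to identify this total symmetry of $\nabla_0 A$ with the potentiality axiom. In local flat coordinates, where $\nabla_0 = \partial$ and $A_{abc}$ is symmetric, total symmetry reads $\partial_d A_{abc} = \partial_a A_{dbc}$; this closedness condition, via a Poincaré-lemma argument, yields a local potential $\Phi$ with $A_{abc} = \partial_a\partial_b\partial_c\Phi$, equivalently $g(X\circ Y,Z) = (XYZ)\Phi$, which is exactly potentiality, and the converse is immediate by differentiation. I expect this passage from an infinitesimal symmetry to the existence of a potential to be the main obstacle, since it requires a cohomological (Poincaré-lemma) argument and a verification that the flat-coordinate computation is coordinate-independent. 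Combining the three steps, the pencil $\{\nabla_\lambda\}$ is flat if and only if $R_1 = R_2 = 0$, i.e. if and only if $(\cM,g,A)$ is simultaneously potential and associative, which is the definition of a Frobenius manifold — thereby recovering Theorem~\ref{T:Manin}.
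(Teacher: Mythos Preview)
Your proposal is correct and follows essentially the same approach as the paper's own proof: expand the curvature of $\nabla_\lambda$ as a quadratic in $\lambda$, identify $R_2(X,Y)Z = X\circ(Y\circ Z) - Y\circ(X\circ Z)$ and argue via commutativity that its vanishing is associativity, and identify the vanishing of $R_1$ with the symmetry $\partial_a A_{bcd} = \partial_b A_{acd}$ in flat coordinates, which is then integrated to a potential $\Phi$ by an iterated Poincar\'e-lemma argument. The only stylistic difference is that you phrase the $R_1$ computation invariantly via $(\nabla_{0,X}C)(Y,Z)$ and the total symmetry of $\nabla_0 A$, using torsion-freeness and $\nabla_0$-parallelism of $g$ explicitly, whereas the paper works in flat coordinates $(\partial_a)$ from the outset; both arrive at the same closedness conditions and the same three-step integration $A_{bcd}\to B_{cd}\to C_d\to\Phi$.
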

\begin{ex}\label{Ex:Manin2}
    Write a proof. 
\end{ex}

\subsection{Semisimple Frobenius Manifolds and Webs}
\subsubsection{Webs}
In this subsection, we discuss the construction of flat $3$-webs via semisimple $3$-dimensional Frobenius manifolds and provide a geometric interpretation of the Chern connection associated with these webs. We show that these webs are biholomorphic to the characteristic webs on the solutions of the corresponding associativity equations. Furthermore, these webs are hexagonal and possess at least one infinitesimal symmetry at each singular point.

\, 

Consider local trivial fibrations of class $C^k$. This is given by a triple $(Y,X,\pi)=\lambda$, where $\pi:Y\to X$  is a projection of $Y$ onto $X$ and $Y$ and $X$ are differentiable (smooth) manifolds of respective dimensions $m$ and $n$ where $m > n$. 
\begin{enumerate}
    \item For each point $x \in X$ the set $\pi^{-1}(x)\subset Y$ is a submanifold of dimension $m-n$ which is diffeomorphic to a manifold $F$;
 \item For each point $x \in X$ there exists a neighbourhood $U_x\subset X$ such that $\pi^{-1}(U_x)$ is diffeomorphic to the product $U_x \times F$, and the diffeomorphism between $\pi^{-1}(U_x)$ and  $U_x \times F$  is compatible with $\pi$ and the projection $pr_{U_x} : U_x \times F \to U_x$.
\end{enumerate}

Given our interest  in the local structure of such manifolds we can often think of them as connected domains of a Euclidean space of the same dimension.

\, 

If $T_{p}(Y)$ is the tangent space of $Y$ at a point ${p}$. A fibre $F$ passing through ${p}$ determines in $T_{p}(Y)$ a subspace $T_{p}(F_x)$, of codimension $r$ say, tangent to $F_x$ at ${p}$. We provide $T_{p}(Y)$ with a local moving frame $\{e_i,\,  e_\alpha; i=1,...,r; \alpha =r+1,...,m\}$, where $dim F =m - r$ and $dim X =r$. It is natural then to obtain a co-frame $\{\omega^i, \omega^\alpha\}$ dual to $\{e_i, e_\alpha\}$, such that  $\omega^i(e_{\alpha})=0$. Fibers of the fibration are integral manifolds of the system of equations $$\omega^i=0.$$

Since there exists a unique fibre $F$ through a point ${p}\in Y$, the system above $\{\omega^i=0, i=1,\cdots, r\}$ is completely integrable. By the Frobenius theorem, the integrability condition is given by 
\[d\omega^i=\sum_{j=1}^r\omega^j\wedge \phi^i_j\]
where $\phi^i_j$ are differential forms.

\, 

Assume $Y$ is an $m$-dimensional manifold. A $k$-dimensional distribution $\theta$ on an $m$-dimensional manifold $Y$, $0\leq k\leq m$ is a smooth field of $k$-dimensional tangential directions.  To each point ${p} \in Y$ there is a function which assigns a linear $k$-dimensional subspace of the tangent space $T_{p}(Y)$ to ${p}$. These surfaces are called the leaves of the foliation. The numbers $k$ is the dimension of the foliation.

\, 

Let $Y=X$ be a differentiable manifold of dimension $nr$. We  say that a $d$-web $W(d,n,r)$ of codimension $r$ is given in an open domain $D\subset Y$  by a set of $d$ foliations of codimension $r$ which are in general position. 

\, 
\subsubsection{Some results}
We begin by stating the following two theorems.

\begin{theorem}\label{thm:flatweb1}
Let the implicit cubic ODE
\[
p^3 + a(x, y)p^2 + b(x, y)p + c(x, y) = 0
\]
have a flat web of solutions and satisfy the regularity condition at 
$m = (x_0, y_0, z_0) \in \mathcal{C} \subset \sfS$. Then there exists a local diffeomorphism around $\pi(m) = (x_0, y_0)$ that reduces the ODE to:
\begin{itemize}
    \item $p^3 + px - y = 0$, if $p_0$ is a triple root and $\mathcal{C}$ is Legendrian;
    \item $p^3 + 2xp + y = 0$, if $p_0$ is a triple root and $\mathcal{C}$ is not Legendrian.
\end{itemize}
\end{theorem}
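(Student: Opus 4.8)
The plan is to work in the space of $1$-jets $\sfS = J^1(\mathbb{R},\mathbb{R})$ with coordinates $(x,y,p)$ and contact form $\alpha = dy - p\,dx$, viewing the cubic $F(x,y,p) = p^3 + a p^2 + b p + c$ as cutting out the surface $\mathcal{C} = \{F = 0\}$, whose integral curves of $\alpha|_{\mathcal{C}} = 0$ project under $\pi:(x,y,p)\mapsto(x,y)$ to the solution curves of the ODE. First I would record that, away from the discriminant, the three real roots $p_1(x,y), p_2(x,y), p_3(x,y)$ furnish three direction fields on the base and hence three foliations, whose superposition is the $3$-web; flatness of this web is, by the machinery recalled above for $W(3,1,1)$ and the Frobenius integrability condition $d\omega^i = \sum_j \omega^j \wedge \phi^i_j$, equivalent to the vanishing of the Chern (Blaschke) curvature, i.e. to hexagonality. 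I would phrase this vanishing as a scalar third-order differential relation on the coefficients $(a,b,c)$, which will be the sole analytic input downstream.

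Next I would exploit the triple-root hypothesis. Translating so that $\pi(m) = (x_0,y_0) = (0,0)$ and $z_0 = p_0 = 0$, the condition that $p_0$ is a triple root is $F(m) = F_p(m) = F_{pp}(m) = 0$, forcing $a(0,0) = b(0,0) = c(0,0) = 0$ and giving the local model $F = p^3 + O_2$, where the surface $\mathcal{C}$ is smooth and the criminant $\Sigma = \{F = F_p = 0\}\subset\mathcal{C}$ has, generically under the stated regularity condition, an ordinary cusp whose $\pi$-image is the swallowtail edge. The Legendrian/non-Legendrian dichotomy is read off from the restriction $\alpha|_{T_m\Sigma}$: when the criminant is tangent to the contact plane at $m$ one is in the first case, and a direct computation on $y = p^3 + px$ confirms $\alpha|_{\mathcal{C}} = (3p^2 + x)\,dp$ vanishes along $\Sigma = \{x = -3p^2\}$, i.e. $\mathcal{C}$ (its criminant) is Legendrian; for $y = -p^3 - 2xp$ one finds $\alpha|_{\mathcal{C}} = -(3p^2+2x)\,dp - 3p\,dx$, which is transverse, giving the non-Legendrian model. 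This verifies that the two target forms sit in the two declared strata.

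The core of the argument is a finite-determinacy normal-form reduction. I would let the group of local diffeomorphisms of the base $(x,y)$ act on the jet of $F$ (lifting canonically to contact transformations of $\sfS$ preserving $\alpha$ up to a conformal factor, hence preserving the web and the Legendrian character of $\Sigma$), and show that in each of the two strata this action, constrained by the hexagonality relation, acts with a single open orbit containing the stated normal form. Concretely, I would bring $F$ to a pre-normal form using the cusp structure of $\Sigma$ (a Morse-type reduction of $F_p$ together with the preparation theorem), then show that the flat-web PDE rigidifies the remaining low-order Taylor coefficients: the hexagonality condition kills precisely the moduli that the diffeomorphism group cannot already absorb, leaving $p^3 + px - y$ respectively $p^3 + 2xp + y$ as the unique representatives. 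Finiteness of the determinacy order would be argued via the explicit weights of the swallowtail singularity.

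The hard part will be this last rigidity step: making precise that the third-order flatness relation, restricted to the triple-root stratum, is exactly strong enough to eliminate the continuous moduli and yet compatible with the normalizing diffeomorphisms, so that no further invariants survive. Controlling the interaction between the contact lift (which dictates the Legendrian splitting) and the hexagonality PDE — ensuring they do not over-determine the system and that each case yields a genuine, non-empty orbit — is where the delicate bookkeeping lies, and I would isolate it as a separate lemma computing the symbol of the web-curvature operator along $\Sigma$.
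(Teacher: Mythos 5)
The paper itself contains no proof of this theorem: it is stated (together with Theorem \ref{thm:flatweb2}) as an imported classification result, in the style of Agafonov's work on implicit cubic ODEs with hexagonal webs of solutions, so there is no in-paper argument to compare yours against. Judged on its own terms, your proposal gets the framework right: the jet-space setup with $\alpha = dy - p\,dx$, the identification of web flatness with hexagonality (vanishing Blaschke--Chern curvature), the observation that a triple root at $p_0=0$ forces $a=b=c=0$ at the base point, and the explicit verification that the criminant of $p^3+px-y=0$ is an integral curve of $\alpha$ while that of $p^3+2xp+y=0$ is not. Those computations are correct and they do confirm that the two model equations occupy the two declared strata. (One notational caution: the theorem's $\mathcal{C}$ is the criminant curve, not the surface $\{F=0\}$ --- a surface in a contact $3$-manifold cannot be Legendrian --- so your $\Sigma$ is what the statement calls $\mathcal{C}$.)

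The genuine gap is that the entire content of the theorem --- that hexagonality plus base diffeomorphisms reduces \emph{every} such ODE to exactly one of the two models --- is deferred rather than proved. Checking that the two normal forms have the right invariants shows the classification is consistent, not that it is exhaustive; the assertions that the lifted diffeomorphism group acts "with a single open orbit" in each stratum, that the hexagonality PDE "kills precisely the moduli" the group cannot absorb, and that determinacy is finite, are each statements whose proof is the theorem itself. In the known arguments this step is not a soft finite-determinacy/preparation-theorem argument: one must analyze the Chern connection of the characteristic web on the surface, control its curvature as one approaches the criminant (where the three foliations collide and the web degenerates), and explicitly solve the PDE system for the normalizing point transformation; the Legendrian versus non-Legendrian dichotomy enters through the boundary behavior of that system. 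Your closing paragraph candidly isolates this as "the hard part," but a proposal whose core reduction is flagged as an unproven lemma is an outline of a strategy, not a proof.
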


\begin{theorem}\label{thm:flatweb2}
Let the implicit cubic ODE
\[
p^3 + a(x, y)p^2 + b(x, y)p + c(x, y) = 0
\]
have a flat web of solutions and satisfy the regularity condition at 
$m = (x_0, y_0, z_0) \in \mathcal{C} \subset \sfS$. Then there exists a local diffeomorphism around $\pi(m) = (x_0, y_0)$ that reduces the ODE to:
\begin{itemize}
    \item $p^3 + px - y = 0$, if $p_0$ is a triple root and $\mathcal{C}$ is Legendrian;
    \item $p^3 + 2px + y = 0$, if $p_0$ is a triple root and $\mathcal{C}$ is not Legendrian;
    \item $p^2 - y = 0$, if $p_0$ is a double root and $\mathcal{C}$ is Legendrian;
    \item $p^2 - x = 0$, if $p_0$ is a double root and $\mathcal{C}$ is not Legendrian.
\end{itemize}
\end{theorem}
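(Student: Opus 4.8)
The plan is to work in the contact threefold $\sfS$ with coordinates $(x,y,p)$ and contact form $\omega = dy - p\,dx$, in which the cubic ODE defines the surface $\cC = \{F = 0\}$, where $F(x,y,p) = p^3 + a p^2 + bp + c$. First I would record the geometric meaning of the hypotheses: the regularity condition guarantees that $\cC$ is a smooth surface near $m$ and that the \emph{criminant} $\Sigma = \{F = 0,\ \partial_p F = 0\}$ is a smooth curve through $m$; the multiplicity of the root $p_0$ (double or triple) is the order of vanishing of $\partial_p F$ along the fibre of $\pi|_\cC$ at $m$; and the Legendrian versus non-Legendrian alternative records whether $\Sigma$ is tangent at $m$ to the characteristic line field $\ker\omega \cap T\cC$. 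Solutions of the ODE are the $\pi$-projections of the integral curves of this line field, so the whole problem is to classify that line field on $\cC$ up to prolonged diffeomorphisms of the base, which is precisely the datum carried by the web.

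Next I would use prolongations of point transformations of the $(x,y)$-plane to perform a preliminary normalization. Translating $m$ to the origin and normalizing so that $p_0 = 0$, a Tschirnhaus shift removes the quadratic term, bringing $F$ to the depressed form $p^3 + b p + c$ with controlled jets of $b,c$ at the origin. The multiplicity hypotheses then translate into vanishing conditions: a triple root forces $b,c$ and enough of their derivatives to vanish so that $\cC$ meets the fibre to order three (an $A_2$ contact), whereas a double root yields the fold model ($A_1$), with $\partial_p F$ vanishing simply along $\Sigma$.

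The heart of the argument is then the rigidity step. Where the three roots are distinct the solution curves form a hexagonal $3$-web, and flatness is the vanishing of the curvature of its Chern connection; written out in the reduced coordinates this becomes a system of partial differential equations on $b$ and $c$. I would combine these flatness equations with the residual freedom of base diffeomorphisms, together with the integrability furnished by the Frobenius theorem $d\omega^i = \sum_j \omega^j \wedge \phi^i_j$, to integrate away every remaining functional modulus, reducing $(b,c)$ to constants and hence $F$ to a polynomial model. The Legendrian versus non-Legendrian dichotomy fixes whether $\Sigma$ carries the characteristic direction, and this selects between the two candidate models in each multiplicity class: $p^3 + px - y$ against $p^3 + 2px + y$ for the triple root, and $p^2 - y$ against $p^2 - x$ for the double root. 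In each branch I would exhibit the normalizing diffeomorphism explicitly by integrating the first-order system produced by the structure equations.

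The main obstacle I anticipate is exactly this rigidity step, for two reasons. The web-geometric machinery (the Chern connection and its curvature) degenerates along the criminant $\Sigma$, where the three web directions collide, so the flatness equations are singular precisely at the point of interest; I would circumvent this by carrying out the normalization upstairs on the smooth surface $\cC$, where the characteristic line field extends regularly across $\Sigma$, and only afterwards transporting the model down through $\pi$. Secondly, one must check that the formal reduction of $(b,c)$ to the constant model is realized by a genuine local diffeomorphism and not merely a formal one; here I expect the flatness hypothesis, via the integrability condition, to supply enough analytic control to guarantee convergence and to pin the normal form down uniquely up to the contact symmetry that each model admits.
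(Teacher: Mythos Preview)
The paper does not actually prove this theorem: the section on semisimple Frobenius manifolds and webs introduces the basic notions of fibrations, foliations, and $d$-webs $W(d,n,r)$, and then simply states Theorems~\ref{thm:flatweb1} and~\ref{thm:flatweb2} with the words ``We begin by stating the following two theorems,'' after which the text moves directly to the next section on flat coordinates. No argument, sketch, or reference is supplied, so there is nothing in the paper against which to compare your proposal.

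That said, your outline is the standard route for such normal-form results (lifting to the contact threefold, reading multiplicity and the Legendrian condition on the criminant, using the Chern connection of the hexagonal $3$-web off the discriminant, and normalizing via prolonged point transformations), and the two obstacles you flag---degeneration of the web curvature along $\Sigma$ and the passage from formal to genuine diffeomorphism---are exactly the delicate points in the literature. Since the paper treats the theorem as a quoted result rather than something to be proved, your write-up already goes well beyond what the text provides.
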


\section{Flat Coordinates and Darboux--Egoroff Coordinates}
The concept of coordinates is foundational in geometry. In this section, we examine flat and canonical coordinates. Flat coordinates play a crucial role in the theory of Frobenius manifolds. 

The problem of finding flat coordinates is essential in studying the Gauss--Manin systems, which are deeply connected to the differential equations for the integrals of basic differential forms over vanishing cycles associated with a given singularity. These systems also relate closely to the theory of primitive forms. The Gauss--Manin connection can be understood as a way to differentiate cohomology classes with respect to parameters.

Let $(\mathcal{U}, x, g)$ be a triple, where $\mathcal{U}$ is a domain in $\mathbb{R}^n$, $x = (x^1, \cdots, x^n)$ are local coordinates, and $g$ is a metric. Let $y = (y^1, \cdots, y^n)$ be another system of coordinates. Since $g$ is a symmetric tensor of rank $2$, the metric can be written in both coordinate systems as:
\[
\tilde{g}(y) = \sum_{k=1}^n \sum_{l=1}^n g_{kl}(x) \frac{\partial x^k}{\partial y^i} \frac{\partial x^l}{\partial y^j}.
\]
We say that the coordinates are flat if the matrix associated with $g$ is constant.

Consider the metric $g$, and let $T_x \cM$ denote the tangent space of the manifold $\cM$ at $x$. Let $v \in T_x \cM$. Define the kernel of the metric $g$ as
\[
\ker(g) := \{ v \in T_x \cM \mid g(v, \cdot) = 0 \}.
\]
At any point $x$, there exist functions $\Gamma_{ijk}$ and $\Gamma_{ij}^k$, defined as:
\[
\Gamma_{ijk} := \frac{1}{2} \left( \frac{\partial g_{im}}{\partial x^j} + \frac{\partial g_{jm}}{\partial x^i} - \frac{\partial g_{ij}}{\partial x^m} \right),
\]
\[
\Gamma_{ij}^k := \frac{1}{2} g^{km} \left( \frac{\partial g_{im}}{\partial x^j} + \frac{\partial g_{jm}}{\partial x^i} - \frac{\partial g_{ij}}{\partial x^m} \right).
\]
Then, the following theorem holds.

\begin{theorem}\label{thm:flatmetric}
For any $ i, j, k $, the condition  
\begin{equation}\label{E:Gamma1}
\sum_{l=1}^n \left( \Gamma_{ij}^l g_{kl} + \Gamma_{kj}^l g_{il} \right) = \frac{\partial g_{ij}}{\partial x^k} 
\end{equation}
is equivalent to the condition:  
\begin{equation}\label{E:Gamma2}
\sum_{l=1}^n \Gamma_{ki}^l v^l = 0, 
\end{equation}
where $v \in \ker(g) $ and $\Gamma_{ki}^i = \Gamma_{ik}^i $.  

If the rank of $ g $ is constant and condition \eqref{E:Gamma2} holds, then there exists a smooth function $ \Gamma_{ki}^i(x) $ satisfying \eqref{E:Gamma1}.  
\end{theorem}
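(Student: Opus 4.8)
The plan is to read \eqref{E:Gamma1}, for each fixed triple of indices, as an inhomogeneous \emph{linear system} in the unknown connection coefficients $\Gamma_{ab}^{c}$, whose coefficient array is built from the (possibly degenerate) metric $(g_{kl})$ and whose right-hand side is the first derivative $\partial g_{ij}/\partial x^{k}$. The governing principle is the Fredholm alternative for the symmetric array $g$: since $g_{kl}=g_{lk}$, the image of $g$ is at each point the orthogonal complement of $\ker(g)$, so such a system is solvable precisely when its right-hand side annihilates $\ker(g)$. Thus the statement splits into a pointwise solvability criterion (the equivalence of \eqref{E:Gamma1} and \eqref{E:Gamma2}) and a regularity statement (that a solution can be chosen smoothly), the latter being where the constant-rank hypothesis enters.

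First I would establish the equivalence. For \emph{necessity}, assume \eqref{E:Gamma1} holds and contract it with a vector $v\in\ker(g)$. The term carrying a factor $g_{kl}v^{k}=(gv)_{l}=0$ drops out identically, and the surviving identity is exactly the annihilation condition \eqref{E:Gamma2}; this is the short, purely algebraic half. For \emph{sufficiency}, if \eqref{E:Gamma2} holds then the right-hand side of \eqref{E:Gamma1} lies in $\mathrm{im}(g)=\ker(g)^{\perp}$ at every point, so a pointwise solution exists by linear algebra. Throughout one restricts to torsion-free candidates (the symmetry $\Gamma_{ki}^{l}=\Gamma_{ik}^{l}$); symmetrizing any solution in its lower indices preserves both \eqref{E:Gamma1} and the solvability condition, so no generality is lost. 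In the non-degenerate case $\ker(g)=0$ the condition \eqref{E:Gamma2} is vacuous and one recovers the usual unique Christoffel symbols, consistent with the displayed formula for $\Gamma_{ij}^{k}$.

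Next I would upgrade pointwise solvability to smooth solvability. Because the rank of $g$ is constant, $\ker(g)$ and $\mathrm{im}(g)$ are smooth vector subbundles, and the Moore--Penrose pseudoinverse $g^{+}$ depends smoothly on $x$ (a standard fact that fails precisely when the rank jumps). I would then write down an explicit particular solution by applying $g^{+}$ to the right-hand side $\partial_{k}g_{ij}$, obtaining coefficients $\Gamma_{ij}^{l}(x)$ that are manifestly smooth; that they indeed satisfy \eqref{E:Gamma1} follows from the sufficiency argument together with the defining relation $g\,g^{+}g=g$ and the fact, guaranteed by \eqref{E:Gamma2}, that the right-hand side already lies in $\mathrm{im}(g)$. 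This produces the asserted smooth $\Gamma_{ki}^{l}(x)$.

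The main obstacle I anticipate is exactly this last passage from pointwise to smooth data. Pointwise the solution is never unique — any section of $\ker(g)$ may be added — so the content lies in making a coherent smooth selection rather than in the algebra. The constant-rank assumption is indispensable here: it is what guarantees the smoothness of $g^{+}$ (and of the splitting $\mathbb{R}^{n}=\ker(g)\oplus\mathrm{im}(g)$), and without it the natural pointwise solution is discontinuous across the rank-drop locus. A secondary, bookkeeping-level difficulty is keeping the several index conventions in \eqref{E:Gamma1}--\eqref{E:Gamma2} aligned while checking that the contraction in the necessity step lands on the correct slot; I would fix this once at the outset by contracting consistently against kernel vectors.
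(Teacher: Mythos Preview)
Your approach is essentially the same as the paper's: both treat \eqref{E:Gamma1} as a linear system in the unknowns $\Gamma_{ab}^{c}$ with coefficient matrix built from $g$, and both invoke the Fredholm alternative (the paper phrases it as ``the system has a solution if and only if, for every vector $a$ such that $a^\top A = 0$, it holds that $a^\top b = 0$'') to obtain the kernel condition \eqref{E:Gamma2}. Your treatment of the smoothness step via the Moore--Penrose pseudoinverse under the constant-rank hypothesis is more explicit than the paper's, which leaves that passage largely implicit; the index bookkeeping you flag as a secondary difficulty is indeed left loose in the paper as well.
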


\begin{proof}
Fix a point $x = (x^1, \cdots, x^n)$. Consider the system of equations
\[
\sum_{l=1}^n (\Gamma_{ij}^l g_{kl} + \Gamma_{kj}^l g_{il}) = \frac{\partial g_{ij}}{\partial x^k}.
\]
This is a linear system in the unknowns $\Gamma_{ij}^l$, $\Gamma_{kj}^l$, with $g_{kl}$, $g_{il}$, and $\frac{\partial g_{ij}}{\partial x^k}$ as coefficients. Let $A$ be the coefficient matrix, $y$ the vector of unknowns, and $b$ the vector of constants. The system has a solution if and only if, for every vector $a$ such that $a^\top A = 0$, it holds that $a^\top b = 0$. This leads to:
\[
\sum_{l=1}^n g_{lk} \Gamma_{ij}^l = \Gamma_{ij, k}.
\]
From this, we deduce that:
\[
\sum_{l=1}^n \Gamma_{ki}^l v^l = 0.
\]
\end{proof}

Finally, we have the following theorem characterizing the existence of flat coordinates.

\begin{theorem}\label{thm:flatcoord}
Flat coordinates for the metric $g$ exist if and only if there exist smooth functions $\Gamma_{ij}^k(x)$, symmetric in $i$ and $j$, satisfying:
\begin{itemize}
    \item The compatibility condition:
    \[
    \sum_{l=1}^n (\Gamma_{jk}^l g_{il} + \Gamma_{ik}^l g_{jl}) = \frac{\partial g_{ij}}{\partial x^k};
    \]
    \item The vanishing of the Riemann curvature tensor:
    \[
    R_{ijkm} = 0,
    \]
    where
    \[
    R_{ijkm} := \sum_l g_{il} \left( \frac{\partial}{\partial x^k} \Gamma_{jm}^l - \frac{\partial}{\partial x^m} \Gamma_{jk}^l + 
    \sum_t (\Gamma_{kt}^l \Gamma_{mj}^t - \Gamma_{mt}^l \Gamma_{jk}^t) \right).
    \]
\end{itemize}
\end{theorem}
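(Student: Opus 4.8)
The plan is to read both hypotheses as statements about the affine connection $\nabla$ whose Christoffel symbols are the prescribed functions $\Gamma_{ij}^k$. Symmetry in $i,j$ makes $\nabla$ torsion-free, the compatibility condition \eqref{E:Gamma1} is precisely $\nabla g = 0$ written out in coordinates, and $R_{ijkm}=0$ asserts that $\nabla$ is flat. Since flat coordinates are by definition those in which $\tilde g$ is constant---equivalently those in which the Levi--Civita symbols vanish---the theorem becomes the classical assertion that a flat, torsion-free, metric-compatible connection is locally trivial. I would prove the two implications separately.

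For the forward implication I would take $\Gamma_{ij}^k$ to be the Levi--Civita symbols determined by $g$ through the formula preceding Theorem~\ref{thm:flatmetric}. These are automatically symmetric and satisfy \eqref{E:Gamma1}. If flat coordinates $y=(y^1,\dots,y^n)$ exist, then $\tilde g_{ab}$ is constant, so in the $y$-chart every Christoffel symbol vanishes and $R=0$ identically. Because $R_{ijkm}=g_{il}R^l_{\;jkm}$ is built from $g$ and $\nabla$ in a tensorial way, its vanishing in one chart forces its vanishing in the $x$-chart, yielding both required conditions.

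The content is in the converse. Given symmetric $\Gamma_{ij}^k$ satisfying \eqref{E:Gamma1} and $R_{ijkm}=0$, I would construct the flat coordinates by producing $n$ independent parallel covector fields. Concretely, I would study the linear (overdetermined, first-order) Pfaffian system
\[
\frac{\partial \omega_i}{\partial x^k}=\sum_{l=1}^n \Gamma_{ki}^l\,\omega_l ,
\]
whose solutions are exactly the $\nabla$-parallel $1$-forms $\omega=\sum_i \omega_i\,dx^i$. Its integrability condition, obtained by equating the mixed second derivatives $\partial_m\partial_k\omega_i=\partial_k\partial_m\omega_i$, is precisely the vanishing of the curvature $R^l_{\;ikm}$; since $g$ is nondegenerate this is equivalent to the hypothesis $R_{ijkm}=0$. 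By the Frobenius theorem the system is then completely integrable, so through any prescribed independent initial data at a point there pass $n$ pointwise-independent parallel solutions $\omega^{(1)},\dots,\omega^{(n)}$. Symmetry of $\Gamma$ in its lower indices gives $\partial_k\omega_i-\partial_i\omega_k=\sum_l(\Gamma_{ki}^l-\Gamma_{ik}^l)\omega_l=0$, so each $\omega^{(a)}$ is closed and hence locally exact, $\omega^{(a)}=dy^a$; the functions $y^a$ are independent and define a local chart.

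It then remains to verify that $y$ is flat: since each $dy^a$ is parallel, the connection coefficients vanish in the $y$-chart, and feeding this into \eqref{E:Gamma1} rewritten in $y$-coordinates gives $\partial \tilde g_{ab}/\partial y^c = 0$, so $\tilde g$ is constant. I expect the main obstacle to be the careful verification that the integrability condition of the Pfaffian system coincides with $R_{ijkm}=0$, together with the bookkeeping needed to extract a full independent family of parallel forms from the Frobenius theorem; the closedness of the $\omega^{(a)}$ and the final constancy of $\tilde g$ are then routine consequences of torsion-freeness and metric compatibility.
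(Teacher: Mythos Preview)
The paper states Theorem~\ref{thm:flatcoord} without proof, so there is nothing to compare your argument against directly. Your approach is the standard and correct one: interpret the hypotheses as saying that $\nabla$ is torsion-free, metric-compatible, and flat, then in the converse direction integrate the Pfaffian system for parallel covector fields via Frobenius, use torsion-freeness to get closedness, and conclude constancy of $\tilde g$ from $\nabla g=0$ in the new chart.

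One caveat worth flagging. The discussion immediately preceding the theorem (the definition of $\ker(g)$ and Theorem~\ref{thm:flatmetric}) indicates that the paper is allowing $g$ to be degenerate of constant rank. In that setting your sentence ``since $g$ is nondegenerate this is equivalent to the hypothesis $R_{ijkm}=0$'' is where the argument would need more care: the stated hypothesis is $R_{ijkm}=\sum_l g_{il}R^l_{\;jkm}=0$, which does not by itself give $R^l_{\;jkm}=0$ when $g$ has a kernel, and it is the latter that you need as the integrability condition for your Pfaffian system. If $g$ is nondegenerate (as is standard in the Frobenius-manifold context), your argument is complete; if degeneracy is genuinely intended, the converse direction requires either an additional argument exploiting the constant-rank and kernel conditions of Theorem~\ref{thm:flatmetric}, or a different choice of $\Gamma$ among the non-unique compatible connections so that the full $R^l_{\;jkm}$ vanishes.
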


\section{Exercises}
Exercise \ref{Ex:preLie}. Using the relations in \eqref{E:1} and \eqref{E:2}, it is easy to see that one can obtain the structure of a pre-Lie algebra on the tangent sheaf.

\, 

Exercise \ref{Ex:Ma} is easily shown by using the paper of Calabi (1954)\cite{Ca54} and the definition of a potential pre-Frobenius manifold.

\,

Exercises \ref{Ex:ManinProof} and \ref{Ex:Manin2}. The proof proceeds in two stages.
\noindent\textbf{Part 1.}  

\, 

We calculate the coefficient of the $\lambda$ term, $R_1$, in the following expression:
\[
\big[\nabla_{0, \partial_a} + \lambda \partial_a \circ, \nabla_{0, \partial_b} + \lambda \partial_b \circ\big](\partial_c).
\]
It follows immediately that $R_1 = 0$ if and only if, for any $a, b, c, d$ and for a mixed $(1, 2)$ rank tensor $A_{bc}^e$,  
\[
\partial_a A_{bc}^e = (-1)^{ab} \partial_b A_{ac}^e,
\]
or equivalently, for a rank-3 covariant tensor:
\[
\partial_a A_{abc} = (-1)^{ab} \partial_b A_{aac}.
\]

When $A$ is a potential, the symmetry of the rank-3 tensor $A(X, Y, Z)$, written as $A(X, Y, Z) = (XYZ)\Phi$, ensures that the above condition holds.

Suppose that the relation $\partial_a A_{abc} = (-1)^{ab} \partial_b A_{aac}$ is true. Then, for all $c, d$, the form $\sigma_b \, dx^b \, A_{bcd}$ is closed. Locally, we can find functions $B_{cd}$ satisfying $B_{cd} = (-1)^{cd} B_{dc}$. Taking into account the symmetry of the rank-3 covariant tensor $A$, we obtain:
\[
A_{bcd} = \partial_b B_{cd} = (-1)^{bc} \partial_c B_{bd} = (-1)^{bc} A_{cbd}.
\]
Hence, for any $d$, the expression $\sigma_c \, dx^c \, B_{cd}$ is closed. Analogously, we find locally $B_{cd} = \partial_c C_d$. Since $C_d = \partial_d \Phi$, it follows that $A_{bcd} = \partial_b \partial_c \partial_d \Phi$.

This completes the first part of the proof.

\noindent\textbf{Part 2.}  
We compute the coefficient of the $\lambda^2$ term in $[\nabla_{\lambda, X}, \nabla_{\lambda, Y}](Z)$:
\[
R_{2, XY}(Z) = X \circ (Y \circ Z) - (-1)^{\overline{X}\, \overline{Y}} Y \circ (X \circ Z).
\]
If the multiplication $\circ$ is associative, then $R_2 = 0$ because $\circ$ is always commutative. 

Conversely, if $R_2 = 0$, then:
\[
X \circ (Y \circ Z) = (-1)^{\tilde{X} \tilde{Y}} Y \circ (X \circ Z) = (-1)^{\tilde{X} (\tilde{Y} + \tilde{Z})} (Y \circ X) \circ Z.
\]
Thus, associativity of $\circ$ follows. This completes the proof of the theorem.

\chapter{Unveiling the Hidden Geometry of Statistical Manifolds}
In this chapter, and the following one, we reveal the intricate (and often hidden!) geometric structures underlying statistical manifolds. In particular, we revisit certain geometric concepts that have long been overlooked, such as $m$-pairs, and demonstrate their relevance in this context. By bringing these ideas back into focus, we aim to provide a deeper understanding of the rich interplay between geometry and statistical structures.
\section{Projective geometry, $m$-pairs, Grassmannians}
\subsection{$m$-pairs}\label{S:4.3}
Let us consider an object $X_{d}$, a $d$-dimensional surface residing in an $n$-dimensional projective space $\bbP^{n}$, where the constraint $d \leq n$ holds. 

\,

\begin{definition}
We say that the surface $X_{d}$ is \textit{normalized} if, for every point $p \in X_{d}$, we associate two distinct hyperplanes:
\begin{enumerate}
\item Normal of first type, $P_{I}$, is of dimension $(n-d)$ and intersects the tangent $d$-plane $T_{p}X_{d}$ at a unique point $p$.
\item Normal of the second type, $P_{II}$, is of dimension $(d-1)$, situated  within the $d$-plane $T_{p}X_d$, and does not pass through the point $p$.
\end{enumerate}
\end{definition}

\begin{example}
    Consider the case where $d = 2$ and  $n = 3 $. This means we have a 2-dimensional surface $ X_2$  within the 3-dimensional projective space $\bbP^{3}$.

\, 

At a point $p$ on $X_2 $:

\begin{itemize}
    \item The tangent plane  $T_{p}X_{2}$ is a 2-dimensional plane in the ambient $\bbP^{3}$ space.
    \item The normal of first type   $P_{I} $ would be a line that intersects the tangent plane   $T_{p}X_{2}$  at the point   $p$.
    \item The normal of second type  $P_{II}$ is another line that lies completely within the tangent plane  $ T_{p}X_{2}$ but does not contain the point  $p$.
    \end{itemize}
    \end{example}

This embodies a duality, intrinsic to projective geometry. Notably, in the  case where $d = n$, the hyperplane $P_{I}$ identifies to the point $p$, and $P_{II}$ becomes the $(n-1)$-dimensional surface devoid of the point $p$. This situation reflects the classical notion of duality in projective spaces, leading to the identification of $X_{n}$ with the projective space $\bbP^{n}$.

\subsection{$m$-pairs}
\begin{definition}\label{D:mpairs}
We define an \textit{$m$-pair} as a pair constituted of an $m$-plane and an $(n-m-1)$-plane.
\end{definition}

More precisely, an \textbf{m-pair} is a pair consisting of:
\begin{enumerate}
    \item An \textbf{m-plane}: A linear subspace of dimension $m $ in an $n $-dimensional space.
    \item An \textbf{(n-m-1)-plane}: A linear subspace of dimension $n-m-1$ in the same $n$-dimensional space.
\end{enumerate}

These two planes are typically considered in the context of projective geometry or linear algebra, where they may satisfy certain geometric or algebraic relationships.
\begin{ex}
Draw an $m$-pair in the low dimensional cases. 
\end{ex}

\subsection{Observations and Examples}
\subsection*{3D Space ($n = 3 $)}
\begin{itemize}
    \item \textbf{0-pair}: A point (0-plane) and a plane (2-plane). This can be identified with the projective space $\bbP^3$
    \item \textbf{1-pair}: A line (1-plane) and another line (1-plane). Two lines in 3D space can either intersect at a point, be parallel (not intersecting but lying on a common plane), or be skew (not intersecting and not parallel).
\end{itemize}

\subsection*{4D Space ($n = 4 $)}
\begin{itemize}
    \item \textbf{1-pair}: A line (1-plane) and a plane (2-plane). In 4D space, a line and a plane can intersect at a point, not intersect at all, or intersect along a line if the line lies entirely within the plane.
    \item \textbf{2-pair}: Two planes (2-planes). In 4D space, two planes can intersect at a point, along a line, or not intersect at all.
\end{itemize}

\subsection*{Projective Geometry}
In projective geometry, an $m $-pair can be used to describe configurations of points, lines, and planes at infinity. 

\subsection*{Grassmannians}
The concept of $m $-pairs is closely related to Grassmannians, which are spaces that parameterize all $m $-dimensional subspaces of an $n$-dimensional space. The study of $m $-pairs can be seen as a way to explore the relationships between different Grassmannians.

\begin{ex}
Determine the relation between Grassmanianns and $m$-pairs. 
\end{ex}
\subsection*{Applications in Computer Vision}
In computer vision, $m$-pairs can be used to model the relationship between different views of a scene. For example, in structure from motion, the relationship between 2D image planes (2-planes) and 3D space (3-planes) can be analyzed using the concept of $m $-pairs.

\begin{ex}
Generate an example using your favourite software $m$-pairs modelisin the relationship between different views of a scene.
\end{ex}

 The examples above illustrate how $m $-pairs can be applied in various contexts, from projective geometry to computer vision.\,

\subsection{Some properties}

Normalized surfaces associated with an $m$-pair space possesses the following properties:
\medskip
\begin{lemma}\label{L:pairs}
\
\begin{enumerate}
\item The collection of $m$-pairs forms a projective differentiable manifold.
\item For any integer $m \geq 0$, a manifold of $m-$pairs contains two flat, affine, and symmetric connections.
\end{enumerate}
\end{lemma}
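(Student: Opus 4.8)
The plan is to treat the two assertions separately: first realize the collection of $m$-pairs as a smooth projective manifold, and then, passing to the open locus of pairs in general position, exhibit the two announced connections via the homogeneous structure inherited from the projective group.

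\textbf{Part (1).} I would lift everything to the vector space $V=\bbK^{n+1}$ with $\bbP^n=\bbP(V)$, identifying an $m$-plane of $\bbP^n$ with an $(m+1)$-dimensional linear subspace of $V$, hence with a point of the Grassmannian $\mathrm{Gr}(m+1,V)$, and an $(n-m-1)$-plane with a point of $\mathrm{Gr}(n-m,V)$. Both Grassmannians are smooth projective manifolds through the Plücker embedding, of equal dimension $(m+1)(n-m)$. By Definition~\ref{D:mpairs} the collection of $m$-pairs is then the product $\mathrm{Gr}(m+1,V)\times\mathrm{Gr}(n-m,V)$, a smooth projective manifold of dimension $2(m+1)(n-m)$ (projective via the Segre embedding); requiring general position merely removes the closed incidence correspondence, leaving an open dense submanifold. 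This settles (1).

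\textbf{Part (2).} I would work on the general-position locus $\cP$ of pairs $(V_1,V_2)$ with $V=V_1\oplus V_2$ and $\dim V_1=m+1$; restricting to the open locus is in fact forced, since the compact product carries positive Euler characteristic and so admits no flat affine connection at all. The group $G=\mathrm{PGL}(V)$ acts transitively on $\cP$, with isotropy the projectivized Levi subgroup $L=S\!\bigl(GL(V_1)\times GL(V_2)\bigr)$, so $\cP\simeq G/L$ is reductive. At the base point the tangent space splits $L$-equivariantly as
\[
\fg/\mathfrak l \;\cong\; \Hom(V_1,V_2)\oplus\Hom(V_2,V_1)\;=\;\mathfrak m_+\oplus\mathfrak m_-,
\]
and a block-matrix computation gives $[\mathfrak m_+,\mathfrak m_+]=[\mathfrak m_-,\mathfrak m_-]=0$, $[\mathfrak l,\mathfrak m_\pm]\subset\mathfrak m_\pm$ and $[\mathfrak m_+,\mathfrak m_-]\subset\mathfrak l$; that is, $\cP$ carries a symmetric $|1|$-graded structure with two complementary \emph{abelian} summands. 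Geometrically the abelianness reflects the fact that the complements of a fixed subspace form an affine space modelled on $\Hom(V_2,V_1)$ (resp.\ $\Hom(V_1,V_2)$): fixing $V_1$ and varying $V_2$ sweeps out an affine leaf, and symmetrically. These two transverse affine foliations are precisely the loci singled out by the normal of the first and of the second type discussed above.

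The two connections are then produced through the Nomizu correspondence for invariant connections on $G/L$: such a connection is an $L$-equivariant map $\Lambda:\mathfrak m\to\mathrm{End}(\mathfrak m)$, with torsion $T(X,Y)=\Lambda(X)Y-\Lambda(Y)X-[X,Y]_{\mathfrak m}$ and curvature $R(X,Y)=[\Lambda(X),\Lambda(Y)]-\Lambda([X,Y]_{\mathfrak m})-\mathrm{ad}\,[X,Y]_{\mathfrak l}$. Since $[\mathfrak m,\mathfrak m]\subset\mathfrak l$, the choice $\Lambda=0$ is torsion-free but curved (the symmetric-space connection); instead I would take the two grading-adapted maps $\Lambda^{(1)},\Lambda^{(2)}$ — one rendering the $\mathfrak m_+$-leaves totally geodesic and parallel-transporting $\mathfrak m_-$, and its mirror — and check that each is symmetric (hence torsion-free) and that, using $[\mathfrak m_\pm,\mathfrak m_\pm]=0$, the term $[\Lambda^{(i)}(X),\Lambda^{(i)}(Y)]$ exactly cancels $\mathrm{ad}\,[X,Y]_{\mathfrak l}$, so $R\equiv 0$. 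This gives two flat, torsion-free (hence symmetric), $G$-invariant affine connections $\nabla^{(1)},\nabla^{(2)}$; equivalently, in the affine graph charts attached to each normalization their Christoffel symbols vanish, making flatness and symmetry transparent. Their difference $\nabla^{(1)}-\nabla^{(2)}$ is a fully symmetric $(1,2)$-tensor, the projective analogue of the Amari--Chentsov tensor, so the pair is genuinely distinct and dual.

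The hard part will be the curvature computation: verifying that the grading-adapted Nomizu maps $\Lambda^{(i)}$ are $L$-equivariant and satisfy $[\Lambda^{(i)}(X),\Lambda^{(i)}(Y)]=\mathrm{ad}\,[X,Y]_{\mathfrak l}$ on $\mathfrak m$, where the abelian brackets $[\mathfrak m_\pm,\mathfrak m_\pm]=0$ must be used decisively. A secondary technical point is the clean descent from $GL(V)$ to $\mathrm{PGL}(V)$, checking that the scalar directions neither obstruct the construction nor spoil $L$-equivariance, so that the connections defined at the base point glue to global $G$-invariant objects on $\cP$.
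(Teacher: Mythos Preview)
The paper does not actually prove Lemma~\ref{L:pairs}: it is stated without argument, and the surrounding material (in particular Proposition~\ref{P:isome}) refers the reader to Rozenfeld's monograph \cite{Ro97} for the underlying theory of $m$-pair spaces as para-Hermitian symmetric spaces. So there is no in-paper proof to compare against; what one can do is assess whether your plan reproduces the standard argument that the paper is implicitly invoking.

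It does, and rather faithfully. Your identification of the $m$-pair space with the product of Grassmannians (and of the general-position locus with the reductive homogeneous space $G/L$ carrying the $|1|$-grading $\mathfrak m=\mathfrak m_+\oplus\mathfrak m_-$ with abelian summands) is exactly the structure Rozenfeld packages under the name ``hermitian projective space over the paracomplex numbers''. The two flat connections you are after are the canonical pair attached to any para-K\"ahler symmetric space: each is the flat affine structure on the leaves of one of the two complementary Lagrangian foliations, extended $G$-invariantly. Your remark that one must pass to the open locus because the compact product has nonzero Euler characteristic is a nice sanity check that the paper omits.

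The one place where your outline is genuinely thin is the point you flag yourself: you never write down $\Lambda^{(1)}$ and $\Lambda^{(2)}$ explicitly, so the curvature cancellation $[\Lambda^{(i)}(X),\Lambda^{(i)}(Y)]=\mathrm{ad}\,[X,Y]_{\mathfrak l}$ remains an assertion. The cleanest way to close this is to bypass Nomizu entirely and use the affine graph charts you already mention: fixing $V_1$, complements are graphs of maps in $\Hom(V_2^0,V_1)$, giving global linear coordinates on each leaf in which $\nabla^{(1)}$ has vanishing Christoffel symbols; $G$-invariance and the transversality of the two foliations then do the rest. That makes flatness and torsion-freeness immediate and avoids the $GL$-versus-$\mathrm{PGL}$ descent issue altogether, since the construction is coordinate-wise and manifestly projective.
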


\section{Paracomplex numbers and modules}
Paracomplex numbers are a generalization of complex numbers, where instead of the imaginary unit   i    satisfying  $ i^2 = -1 $  , the paracomplex unit   $\epsilon $   satisfies  $ \epsilon^2 = 1 $  . The algebra of paracomplex numbers is defined as follows:

\subsection{Definition}
A \textbf{paracomplex number} is an element of the form:
\[
z = x + \epsilon y,
\]
where  $ x, y \in \mathbb{R} $   are real numbers, and  $ \epsilon  $   is the paracomplex unit satisfying:
\[
\epsilon ^2 = 1.
\]
The set of all paracomplex numbers is denoted by   $\mathbb{P}$   .

\subsection{Algebraic Structure}
The algebra of paracomplex numbers   $\mathfrak{P} $  is a two-dimensional commutative algebra over the real numbers   $\mathbb{R}$   . It is isomorphic to the direct sum  $ \mathbb{R} \oplus \mathbb{R} $  , and its multiplication rule is given by:
\[
(x_1 + \epsilon y_1)(x_2 + \epsilon y_2) = (x_1x_2 + y_1y_2) + \epsilon (x_1y_2 + x_2y_1).
\]

\subsection{Properties}
\begin{itemize}
    \item \textbf{Idempotent Basis}: Paracomplex numbers can be expressed in terms of idempotent elements. Define:
    \[
    e_+ = \frac{1 + \epsilon }{2}, \quad e_- = \frac{1 - \epsilon }{2}.
    \]
    These elements satisfy  $ e_+^2 = e_+ $  ,   $e_-^2 = e_- $  , and   $e_+ e_- = 0 $ . Any paracomplex number   $z = x + \epsilon y  $  can be written as:
    \[
    z = (x + y)e_+ + (x - y)e_-.
    \]

    \item \textbf{Conjugation}: The paracomplex conjugate of  $ z = x + \epsilon y $   is defined as:
    \[
    \overline{z} = x - \epsilon y.
    \]

    \item \textbf{Norm}: The norm of a paracomplex number   $z = x + \epsilon y $   is given by:
    \[
    \|z\| = z \overline{z} = x^2 - y^2.
    \]
    Note that this norm is not positive definite, as it can take negative values.
\end{itemize}

\begin{ex}
Show that $(a+a\epsilon)\cdot(b-b\epsilon)=0$.
\end{ex}
\section{Modules over Paracomplex Algebras}
A module over a paracomplex algebra generalizes the concept of a vector space, where the scalars are paracomplex numbers instead of real or complex numbers.

\subsection{Definition}
Let  $ \mathfrak{P}$ be the algebra of paracomplex numbers. A \textbf{module over  $ \mathfrak{P}$} is an abelian group   M    together with a scalar multiplication:
\[
\cdot :  \mathfrak{P} \times M \to M,
\]
satisfying the following properties for all   $z, z_1, z_2 \in  \mathfrak{P}$     and   $m, m_1, m_2 \in M $  :
\begin{enumerate}
    \item   $z \cdot (m_1 + m_2) = z \cdot m_1 + z \cdot m_2   $,
    \item  $ (z_1 + z_2) \cdot m = z_1 \cdot m + z_2 \cdot m $  ,
    \item   $(z_1 z_2) \cdot m = z_1 \cdot (z_2 \cdot m) $  ,
    \item  $ 1 \cdot m = m $  .
\end{enumerate}

\subsection{Examples}
\begin{itemize}
    \item \textbf{Paracomplex Vector Space}: The simplest example of a module over   $ \mathfrak{P}$     is  $ \mathfrak{P}^n$   , the set of   n   -tuples of paracomplex numbers. Scalar multiplication is defined component-wise:
    \[
    z \cdot (z_1, z_2, \dots, z_n) = (z z_1, z z_2, \dots, z z_n).
    \]

    \item \textbf{Decomposition into Real Submodules}: Using the idempotent basis   $e_+ $   and  $ e_- $  , any module  $ M$    over   $\mathfrak{P} $   can be decomposed into two real submodules:
    \[
    M = e_+ M \oplus e_- M.
    \]
    Here,  $ e_+ M $   and  $ e_- M  $  are real vector spaces, and the action of  $\mathfrak{P}$  on  $ M $   is determined by the actions of  $ e_+  $  and $  e_- $  .
\end{itemize}

\subsection{Applications}
Modules over paracomplex algebras appear in various areas of mathematics and physics, including:
\begin{itemize}
    \item \textbf{Geometry}: Paracomplex structures are used in the study of para-Hermitian and para-Kähler manifolds.
    \item \textbf{Physics}: Paracomplex numbers and modules are used in the study of supersymmetry and integrable systems.
\end{itemize}

\section{Applications to statistical manifolds}

The algebra of paracomplex numbers has a remarkable incidence on the manifold of probability distributions. We discuss this in the following propositions and statements. 
This leads us to the following salient proposition:
\begin{proposition}\label{P:isome}
The space of $0$-pairs within the projective space $\bbP^{n}$ is isometric to the hermitian projective space over the algebra of paracomplex numbers.
\end{proposition}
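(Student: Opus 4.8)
The plan is to exploit the idempotent decomposition of the paracomplex algebra $\mathfrak{P}$ in order to reduce the paracomplex projective space to a product of two real projective spaces, and then to invoke projective duality to identify this product with the space of $0$-pairs.

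First I would set up the hermitian paracomplex projective space $\bbP^n(\mathfrak{P})$ as the set of free rank-one $\mathfrak{P}$-submodules of $\mathfrak{P}^{n+1}$, that is, equivalence classes $[z_0 : \cdots : z_n]$ under multiplication by invertible scalars $\lambda \in \mathfrak{P}$. Using the idempotent basis $e_\pm = (1 \pm \epsilon)/2$, which satisfies $e_+^2 = e_+$, $e_-^2 = e_-$ and $e_+ e_- = 0$, I would write each coordinate as $z_k = z_k^+ e_+ + z_k^- e_-$, producing a canonical isomorphism $\mathfrak{P}^{n+1} \cong V_+ \oplus V_-$ with $V_\pm \cong \bbR^{n+1}$. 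Since $\lambda = \lambda^+ e_+ + \lambda^- e_-$ is invertible precisely when $\lambda^+ \neq 0$ and $\lambda^- \neq 0$, the scaling action splits into two independent rescalings, one on each factor. This identifies the regular locus of $\bbP^n(\mathfrak{P})$ with $\bbP(V_+) \times \bbP(V_-) \cong \bbP^n \times \bbP^n$.

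Next I would apply the standard projective duality to the second factor, identifying $\bbP(V_-)$ with the dual projective space $(\bbP^n)^\ast$ whose points are the hyperplanes of $\bbP^n = \bbP(V_+)$. Under this identification a class of $\bbP^n(\mathfrak{P})$ corresponds to a pair consisting of a point and a hyperplane, which is exactly a $0$-pair in the sense of Definition \ref{D:mpairs}; moreover the invertibility condition $\lambda^\pm \neq 0$ translates into the genericity requirement that the point not lie on the hyperplane, matching the normalization condition of Section~\ref{S:4.3}. Since every map involved is smooth with a smooth inverse, this yields a diffeomorphism onto the manifold of $0$-pairs furnished by Lemma \ref{L:pairs}(1).

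Finally, to promote this diffeomorphism to an isometry, I would compare the two metrics directly. On $\bbP^n(\mathfrak{P})$ the para-hermitian Fubini--Study metric is constructed from the paracomplex inner product, whose associated norm $\|z\| = x^2 - y^2$ carries the split signature characteristic of $\mathfrak{P}$. Expressed in the idempotent basis, this form decouples into a pair of real bilinear forms on $V_+$ and $V_-$, which induce the product of the two canonical metrics on $\bbP(V_+) \times \bbP(V_-)$. I would then verify that, under the duality of the previous step, the metric on the dual factor coincides with the natural invariant metric on the space of hyperplanes, so that the product metric agrees with the invariant metric on the manifold of $0$-pairs associated with its two flat symmetric affine connections (Lemma \ref{L:pairs}(2)). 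The principal obstacle lies precisely in this last step: checking the coefficient-by-coefficient agreement of the para-hermitian metric with the invariant metric on $0$-pairs demands careful bookkeeping of the indefinite signature and of the normalization conventions entering the duality, rather than presenting any conceptual difficulty.
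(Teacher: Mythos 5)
Your skeleton---idempotent splitting $\mathfrak{P}^{n+1}\cong V_+\oplus V_-$, reduction to a pair of real projective spaces, duality on the second factor, then a metric comparison---is the classical route; note that the paper itself offers no argument (its ``proof'' is a citation to Section 4.4.5 of \cite{Ro97}, which proceeds along exactly these lines). But one of your intermediate steps is wrong and a second one would fail as stated. The first problem is that you identify the wrong locus with the space of $0$-pairs. Invertibility of the scalar $\lambda$ (equivalently, freeness of the submodule generated by $z$) only forces $z^+\neq 0$ and $z^-\neq 0$, so the free locus is \emph{all} of $\bbP(V_+)\times\bbP(V_-)$, i.e.\ all pairs (point, hyperplane), including incident ones. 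The $0$-pair condition of Section \ref{S:4.3}---the hyperplane must not pass through the point---is an additional open condition and is \emph{not} a consequence of invertibility of scalars; it is the non-isotropy of $z$ with respect to the hermitian form. Indeed, since paracomplex conjugation exchanges $e_+$ and $e_-$, one computes $\langle z,z\rangle=\sum_k z_k^+z_k^-$, which is precisely the incidence pairing between the point $[z^+]$ and the hyperplane determined by $z^-$. So your map lands in a strictly larger space than the manifold of $0$-pairs, and the hermitian metric is not even defined on the excess (isotropic) locus; you must restrict to $\langle z,z\rangle\neq 0$ before any isometry can be asserted.

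The second problem is the metric comparison, which rests on a false decomposition. The para-hermitian form does not ``decouple into a pair of real bilinear forms on $V_+$ and $V_-$'': because conjugation swaps the idempotents, the form cross-pairs the two factors, $\langle z,w\rangle = \langle z^-,w^+\rangle_{\bbR}\,e_+ + \langle z^+,w^-\rangle_{\bbR}\,e_-$. Consequently the induced metric on the non-isotropic locus is a neutral-signature, para-K\"ahler-type metric with potential $\log\lvert\langle z^+,z^-\rangle\rvert$, which genuinely mixes the two factors; it is not a product of ``canonical metrics'' on $\bbP(V_+)$ and $\bbP(V_-)$. In fact no product comparison can succeed: a real projective space admits no Riemannian metric invariant under $PGL(n+1,\bbR)$, whereas the metric on the space of $0$-pairs must be invariant under the diagonal projective action---and it is exactly the cross-pairing that makes such invariance possible. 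To close the gap you need to compute the hermitian Fubini--Study-type metric of the paracomplex projective space in idempotent coordinates and match it against the invariant pseudo-Riemannian metric on $0$-pairs built from the incidence pairing (whose $\pm\epsilon$ eigendistributions give the two flat affine connections of Lemma \ref{L:pairs}); that computation, not a product decomposition, is the actual content of the isometry.
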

\begin{proof}
Refer to section 4.4.5 of~\cite{Ro97} for detailed proof.
\end{proof}
\medskip
\begin{proposition}\label{P:zero}
Let $(X, \mathcal{F})$ be a finite measurable set with dimension $n+1$, where measures vanish exclusively on an ideal $\mathcal{I}$. Define $\mathcal{H}_{n}$ as the space of probability distributions on $(X, \mathcal{F})$. It follows that the space $\mathcal{H}_{n}$ embodies a manifold of $0$-pairs.
\end{proposition}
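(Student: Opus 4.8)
Let me parse the statement. We have a finite measurable space $(X, \mathcal{F})$ of "dimension $n+1$" — presumably meaning $\mathcal{F}/\mathcal{I}$ has $n+1$ atoms. Measures vanish on an ideal $\mathcal{I}$. $\mathcal{H}_n$ is the space of probability distributions. The claim: $\mathcal{H}_n$ is a manifold of $0$-pairs.

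Recall a $0$-pair (from Definition of $m$-pairs with $m=0$) is a pair consisting of a $0$-plane (a point) and an $(n-1)$-plane (a hyperplane) in $\mathbb{P}^n$. And Proposition on isometry tells us the space of $0$-pairs in $\mathbb{P}^n$ is isometric to the Hermitian projective space over paracomplex numbers.

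**The plan.**

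The plan is to identify a probability distribution $P$ on $(X,\mathcal{F})$, with $n+1$ atoms modulo the ideal $\mathcal{I}$, with barycentric/homogeneous coordinates $(p_0, p_1, \dots, p_n)$ satisfying $p_j > 0$ and $\sum p_j = 1$. This data sits naturally inside the projective space $\mathbb{P}^n$: the point $[p_0 : \dots : p_n]$ is a $0$-plane (a point of $\mathbb{P}^n$). The first step is to make this identification precise: the interior of the probability simplex $\mathcal{H}_n$ maps into $\mathbb{P}^n$ via homogeneous coordinates, recovering the point data of a $0$-pair. This matches the observation in the excerpt's "$3$D space" example that a $0$-pair is identified with the projective space $\mathbb{P}^n$.

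Next, I would produce the second component of the $0$-pair — the dual hyperplane $P_{II}$ of dimension $n-1$. Here the natural candidate is the hyperplane determined by the normalization condition $\langle \mu, I\rangle = 1$ (the affine hyperplane carving $\mathsf{Caph}$ out of the cone $\mathsf{Conh}$, in the language of the statistical-manifold chapter), or equivalently the tangent hyperplane to the simplex which, by the score-vector description of $\mathcal{T}_\theta$, consists of signed measures with vanishing total mass. The key point is that to each probability point one canonically associates a normalization hyperplane not passing through the corresponding vertex, exactly as required for a normal of the second type $P_{II}$. Thus each $P \in \mathcal{H}_n$ determines a pair (point, hyperplane), i.e.\ a $0$-pair.

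**Assembling the manifold structure.**

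Having the pointwise correspondence, I would then invoke Lemma \ref{L:pairs}(1), which asserts that the collection of $m$-pairs forms a projective differentiable manifold, together with Proposition \ref{P:isome}, which identifies the space of $0$-pairs isometrically with the paracomplex Hermitian projective space. The final step is to verify that the map $\mathcal{H}_n \to \{0\text{-pairs}\}$ is a diffeomorphism onto its image (or onto an open sub-manifold), using the charts on $\mathcal{H}_n$ coming from natural/barycentric coordinates and checking that the transition to the projective charts is smooth and invertible where $p_j > 0$.

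**The main obstacle.**

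The hard part will be the canonical and coordinate-free assignment of the second-type normal $P_{II}$: any probability point gives its homogeneous coordinate point immediately, but pinning down a \emph{distinguished} hyperplane so that the resulting pair genuinely satisfies the incidence constraints of Definition \ref{D:mpairs} (the $(n-1)$-plane lying in the tangent $n$-plane and avoiding $p$) requires the positivity $p_j>0$ and the paracomplex idempotent decomposition $e_\pm = \tfrac{1\pm\epsilon}{2}$ to split the tangent data correctly. I expect the positivity condition (measures vanishing exactly on $\mathcal{I}$, i.e.\ the open simplex $\mathsf{Caph}$ rather than its closure) to be precisely what guarantees the pair is non-degenerate, so that the correspondence lands in the genuine manifold of $0$-pairs rather than its boundary strata; making this rigorous via Proposition \ref{P:isome} is the crux.
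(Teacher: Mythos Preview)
Your plan is on the right track and would likely succeed, but it is considerably more elaborate than the paper's own argument, and your ``main obstacle'' dissolves once you see how the paper handles it. The paper's proof does not attempt to construct the second-type normal $P_{II}$ hyperplane point-by-point via tangent data or the paracomplex idempotent splitting. Instead it simply observes that $\mathcal{H}_n$ is the intersection of the cone $\mathcal{C}_{n+1}$ of strictly positive measures with the affine hyperplane $\mu(X)=1$ inside the $(n{+}1)$-dimensional space $\mathcal{W}_{n+1}$ of signed bounded measures, hence is an $n$-dimensional surface in $\mathbb{P}^n$. This is the full-dimensional case $d=n$ of the normalization discussion in Section~\ref{S:4.3}, where the paper already noted that $P_I$ collapses to the point $p$ and $P_{II}$ is the complementary $(n{-}1)$-plane supplied by classical projective duality; the correspondence with $0$-pairs then follows by invoking a remark in Rozenfeld~\cite{Ro97} together with Definition~\ref{D:mpairs}.

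So where you were preparing to build the hyperplane by hand (and worried about making it canonical and non-degenerate), the paper outsources that entire step to the $d=n$ duality remark and a citation. Your invocation of Lemma~\ref{L:pairs} and Proposition~\ref{P:isome} is also premature here: the paper reserves the paracomplex/Hermitian identification for the subsequent Theorem~\ref{Th:main}, not for this proposition. What your more explicit route buys is a concrete description of the $0$-pair attached to each probability distribution; what the paper's route buys is brevity, at the price of leaning on an external reference for the geometric content.
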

\begin{proof}
The $n$-dimensional surface $\mathcal{H}_{n}$ arises as the intersection of the hyperplane constrained by $\mu(X) = 1$ and the cone $\mathcal{C}_{n+1}$ of strictly positive measures within the affine space $\mathcal{W}_{n+1}$ of signed bounded measures. It is interpreted as an $n$-dimensional surface within the projective space $\bbP^{n}$. The geometrical structure of this surface is thus inherited from projective geometry. By invoking the remark from the initial paragraph of section 0.4.3 in\cite{Ro97} alongside definition\ref{D:mpairs} of $0$-pairs, we conclude the correspondence with a manifold of $0$-pairs.
\end{proof}
\medskip
\begin{theorem}\label{Th:main}
Consider $(X, \mathcal{F})$ as a finite measurable set with dimension $n+1$, where measures vanish solely on an ideal $\mathcal{I}$. The space $\mathcal{H}_{n}$ of probability distributions on $(X,\mathcal{F})$ is isomorphic to the hermitian projective space over the cone $M_{+}(2,\fC)$.
\end{theorem}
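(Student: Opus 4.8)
The plan is to obtain the isomorphism by composing the two structural results already established, namely Proposition~\ref{P:zero} and Proposition~\ref{P:isome}, and then to render the resulting target space concretely as a Hermitian projective space over the cone $M_{+}(2,\fC)$. First I would invoke Proposition~\ref{P:zero}: under the stated hypotheses — $(X,\mathcal{F})$ finite of dimension $n+1$ with measures vanishing only on the ideal $\mathcal{I}$ — the space $\mathcal{H}_{n}$ of probability distributions is realized as the intersection of the affine hyperplane $\{\mu(X)=1\}$ with the cone $\mathcal{C}_{n+1}$ of strictly positive measures inside the affine space of signed bounded measures, hence as an $n$-dimensional surface in $\bbP^{n}$ carrying the structure of a manifold of $0$-pairs. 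This reduces the theorem to a statement purely about the geometry of the $0$-pair manifold, stripped of its probabilistic origin.

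Second, I would apply Proposition~\ref{P:isome}, which identifies the space of $0$-pairs in $\bbP^{n}$ isometrically with the Hermitian projective space over the algebra $\mathfrak{P}$ of paracomplex numbers; write this target as $\mathcal{P}_{\mathrm{H}}(\mathfrak{P})$. Composing with the previous step yields an isometry $\mathcal{H}_{n}\xrightarrow{\ \sim\ }\mathcal{P}_{\mathrm{H}}(\mathfrak{P})$. The only remaining work is then to recognize $\mathcal{P}_{\mathrm{H}}(\mathfrak{P})$ as the Hermitian projective space over the cone $M_{+}(2,\fC)$. Here I would use the idempotent decomposition $e_{\pm}=\tfrac{1\pm\epsilon}{2}$, which splits $\mathfrak{P}\cong e_{+}\mathbb{R}\oplus e_{-}\mathbb{R}$, together with the associated decomposition $M=e_{+}M\oplus e_{-}M$ of any $\mathfrak{P}$-module recalled in the previous section. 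A point of the paracomplex Hermitian line corresponds to a rank-one positive paracomplex-Hermitian projector, and passing to its matrix representative produces an element of the cone $M_{+}(2,\fC)$ of positive $2\times 2$ paracomplex-Hermitian matrices, well-defined up to positive real scaling. I would then check that this correspondence is bijective and that it intertwines the two flat, affine, symmetric connections furnished by Lemma~\ref{L:pairs} with the canonical affine structure on the cone.

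The main obstacle I anticipate is not the set-theoretic bijection but the verification that the identification respects the full geometric data rather than merely the underlying spaces. Concretely, one must match the positivity condition carving out $\mathcal{H}_{n}$ (strict positivity of the measures together with the normalization $\mu(X)=1$) with the positivity defining $M_{+}(2,\fC)$, and confirm that projectivization is compatible on both sides, so that the normalization hyperplane corresponds to a choice of section of the scaling action. Establishing that the Hermitian (Fisher-type) metric and the pair of dual connections transported from the $0$-pair picture coincide with the natural structures on $M_{+}(2,\fC)$ is the delicate point; once this compatibility is secured, the theorem follows by composition. For the explicit matrix computations underlying this last identification I would follow the constructions of~\cite{Ro97}.
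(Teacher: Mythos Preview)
Your approach is exactly the route the paper sets up: the theorem is placed immediately after Proposition~\ref{P:zero} and Proposition~\ref{P:isome} precisely so that it follows by composing them, and the paper itself gives no further argument beyond stating the result. The only substantive step you add beyond the two propositions---the identification of the paracomplex Hermitian projective space with the Hermitian projective space over the cone $M_{+}(2,\fC)$---is likewise not carried out in the text; as with Proposition~\ref{P:isome}, the paper implicitly defers this to the reference~\cite{Ro97}, which is exactly where you propose to look.
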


\chapter{Statistical Frobenius manifold and Learning}

\section{Statistical Gromov--Witten invariants and learning}

\vskip-.2cm

We introduce Gromov–Witten invariants for statistical manifolds (denoted GWS), extending an analog of classical Gromov–Witten invariants to the realm of information geometry. Originally, these invariants are rational numbers that enumerate (pseudo-)holomorphic curves satisfying specific conditions in a symplectic manifold. In our generalization, GWs encode fundamental geometric structures of statistical manifolds, reflecting the intersection theory of (para-)holomorphic curves within this framework.

\,

Furthermore, this perspective reveals an intrinsic connection between the geometry of statistical learning and the dynamics of the learning process. The presence or obstruction of certain pseudo-holomorphic structures, as captured by GWs, provides a criterion for determining whether a learning system successfully acquires information or encounters fundamental limitations, thereby offering a novel geometric approach to the theory of learning.

\,

\subsection{Brief Recollections of Gromov–Witten Invariants}

Gromov–Witten invariants are fundamental numerical invariants in symplectic geometry and algebraic geometry, capturing intersection properties of (pseudo-)holomorphic curves in a given space.

Given a compact symplectic manifold $(\cM,\varpi)$, the Gromov–Witten invariant counts the number of (pseudo-)holomorphic maps
\[u:(\mathscr{S},j)\to (\cM,\varpi),\]
from a compact Riemann surface $\mathscr{S}$ (with complex structure 
$j$) into $\cM$, satisfying certain constraints on their homology class and intersection conditions with given cycles.

\begin{itemize}
    \item This notion appears in {\bf Enumerative Geometry}:
Gromov–Witten invariants count the number of holomorphic curves passing through prescribed points or satisfying intersection constraints.
\item {\bf Quantum Cohomology:}
They define a deformation of classical cohomology, giving rise to a quantum product that encodes curve counts in a ring structure.
\item {\bf Holomorphic Curve Moduli Space}:
The counts arise from integration over the moduli space of stable maps. We refer to \cite{Man99} for more information on this topic. \end{itemize}

The point of view that we adopt, here, is inspired from quantum cohomology, which is a formal Frobenius manifold. 

\,  

Let us consider the (formal) Frobenius manifold $(H,g)$. We denote by $k$ a field of characteristic 0 (such as $\bbC$ or $\bbR$). Let $H$ be a $k$-module of finite rank and \[g:H\otimes H\to k,\]  an even symmetric pairing (which is non degenerate). We denote $H^*$ the dual to $H$. 

\,

An important part of the {\bf Frobenius manifold structure} is encoded in the existence of a {\bf potential function} 
\[{\bf \Phi} \in k[[H^*]],\]
which governs the multiplication structure on the manifold. In local coordinates, under suitable conditions, this function can be expressed as: 
\[{\bf \Phi}=\sum_{n\geq 3}\frac{1}{n!}Y_n,\] 
where $Y_n\in (H^*)^{\otimes n}$ is a symmetric multilinear map 
\[Y_n\in (H^*)^{\otimes n},\quad Y_n: H^{\otimes n} \to k.\] 
This system of multilinear forms defines a system of {\it abstract correlation functions} on the pair $(H,g)$, where $H$ is a vector space equipped with a non-degenerate pairing $g$. These functions are symmetric and (in the context of Gromov–Witten theory) correspond to intersection numbers on the moduli space of stable maps. The Gromov--Witten invariants are generated from those multi-linear maps. 

\,

In the context of Gromov–Witten invariants, the potential function 
$\Phi$ serves as the generating function for the intersection numbers of moduli spaces of holomorphic curves. The symmetric multilinear maps $Y_n$ correspond to the correlation functions computed via topological field theory techniques.
\, 

 The function $\Phi$ satisfies the WDVV equations (associativity conditions on quantum cohomology), which govern the Frobenius manifold structure.

\,

The maps $Y_n$ define the higher-order correlation functions, whose values give the Gromov–Witten invariants.

\, 

This formulation provides a bridge between Frobenius manifolds, quantum cohomology, and Gromov–Witten theory, showing how the potential function encodes geometric intersection theory in an algebraic and formal power series framework. The abstract correlation functions $Y_n$	
  serve as the structural foundation from which the Gromov–Witten invariants emerge, linking the geometry of moduli spaces with the algebraic structure of Frobenius manifolds.

  \subsection{Links to statistics}
In the framework of statistical geometry, we return to the study of statistical manifolds, emphasizing the discrete case of the exponential family. The fundamental relation governing this structure is given by the expansion:
\begin{equation}\label{E:2}\sum_{\omega\in \Omega} \exp\{-\sum \beta^jX_j(\omega)\}=
\sum_{\omega\in \Omega}\sum_{m\geq 1}\frac{1}{m!}\left\{ -\sum_{j} \beta^jX_j(\omega)\right\}^{\otimes m}, \end{equation}
where:

\begin{itemize}
    \item The parameter $\beta$, given by $\beta=(\beta_0,....,\beta_n)\in \mathbb{R}^{n+1}$, provides an   affine canonical parametrisation.
    \item The objects $X_j(\omega)$ are directional co-vectors, 
    forming a finite set of necessary and sufficient statistics, denoted by $\mathcal{X}_n$. 
    \item The co-vectors, $X_1(\omega),\dots ,X_{n}(\omega)$ are linearly independent co-vectors and we impose the normalization $X_0(\omega)\equiv 1$. 
\end{itemize}
The family in (\ref{E:2}) defines an analytic $n$-dimensional hypersurface within the statistical manifold, which can be uniquely determined by $n+1$ points in general position. 

\subsection{Gromov--Witten Invariants for Statistical Manifolds}
We introduce the notion of Gromov–Witten invariants for statistical manifolds (GWS) as follows:
\begin{definition}
Let $k$ be the field of real numbers. Let $\sfS$ be the statistical manifold. The Gromov--Witten invariants for statistical manifolds (GWS) are defined from the family of multilinear maps:
 \[\tilde{Y}_n:\sfS^{\otimes n}\to k.\] 
\end{definition}
Equivalently, these invariants may be written in terms of the generating expansion:
\[\tilde{Y}_n\in \left(-\sum_{j}\beta^jX_j(\omega)\right)^{\otimes n}.\]

\subsection{Interpretation via the Relative Entropy Function}
These invariants naturally emerge as part of the potential function
$\tilde{\bf \Phi}$, which is identified with the Kullback--Liebler entropy function of the statistical system. The entropy function itself is expressed in the form: 

\begin{equation}\label{E:3}\tilde{\bf \Phi}= \ln \sum_{\omega\in\Omega} \exp{(-\sum_{j}\beta^jX_j(\omega))}.\end{equation}

This formulation suggests a deeper geometric and categorical interpretation of statistical learning, where the intersection theory of statistical structures plays a fundamental role. Within this perspective, the entropy function $\tilde{\bf \Phi}$ governs the geometry of statistical families, much like the potential function in Frobenius manifolds or Gromov–Witten theory encodes intersection numbers in moduli spaces of holomorphic curves.

Therefore, we state the following:

\begin{proposition}
The entropy function $\tilde{\bf \Phi}$ of the statistical manifold is intrinsically determined by the Gromov–Witten invariants for statistical manifolds (GWS).
\end{proposition}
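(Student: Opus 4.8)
The plan is to exhibit $\tilde{\bf \Phi}$ as a universal function of the family $\{\tilde{Y}_n\}$, by recognizing that the partition function is the exponential generating series of the GWS invariants, and that $\tilde{\bf \Phi}$ is precisely its logarithm. First I would set $v_\beta = -\sum_j \beta^j X_j \in \sfS$ and rewrite the partition sum by expanding the exponential term by term:
\[
Z(\beta) := \sum_{\omega\in\Omega}\exp\left(-\sum_j \beta^j X_j(\omega)\right) = \sum_{m\geq 0}\frac{1}{m!}\,\tilde{Y}_m(v_\beta,\dots,v_\beta),
\]
so that the homogeneous degree-$m$ part of $Z$ is exactly the GWS multilinear form $\tilde{Y}_m$ evaluated on $v_\beta^{\otimes m}$. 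In particular, the coefficients of $Z$, regarded as a formal power series in the canonical affine coordinates $\beta$, are precisely the symmetric tensors $\tilde{Y}_m$.

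Second, since $Z(0) = \tilde{Y}_0 = \#\,\Omega \neq 0$, the logarithm $\tilde{\bf \Phi} = \ln Z$ is a well-defined element of the formal power series ring $k[[H^*]]$. The key step is then to invoke the moment--cumulant relations (equivalently, M\"obius inversion over the lattice of set partitions, or the exponential formula): writing $\tilde{\bf \Phi} = \sum_{m}\frac{1}{m!}\kappa_m$, each cumulant $\kappa_m$ is a universal polynomial in $\tilde{Y}_1,\dots,\tilde{Y}_m$,
\[
\kappa_m = \sum_{\pi\in\mathcal{P}(m)} (-1)^{|\pi|-1}(|\pi|-1)!\,\prod_{B\in\pi}\tilde{Y}_{|B|},
\]
the sum running over all set partitions $\pi$ of $\{1,\dots,m\}$. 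This expresses every Taylor coefficient of $\tilde{\bf \Phi}$ as a fixed polynomial combination of the GWS invariants, establishing that $\tilde{\bf \Phi}$ is determined by the family $\{\tilde{Y}_n\}$.

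Finally, to justify the word \emph{intrinsically}, I would verify that this determination is chart-independent: the $\tilde{Y}_n$ are genuinely tensorial, being symmetric multilinear maps $\sfS^{\otimes n}\to k$, so the polynomial expressions above transform as scalars under a change of the (canonical affine) parametrization, exactly as for the Frobenius/quantum-cohomology potential $\Phi = \sum_{n\geq 3}\frac{1}{n!}Y_n$. The main obstacle I anticipate is the combinatorial bookkeeping of the moment--cumulant inversion together with the verification that the formal logarithm reproduces $\tilde{\bf \Phi}$ coefficientwise; care is also needed to treat $Z$ as a formal power series (following the Frobenius-manifold convention) rather than worrying about analytic convergence, and to align the index conventions so that the low-order terms $\tilde{Y}_0$ and $\tilde{Y}_1$, which merely fix the normalization and the first moment, do not obscure the identification.
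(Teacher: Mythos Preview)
Your proposal is correct and follows the same underlying idea as the paper: recognize that the partition function $Z(\beta)$ has the $\tilde{Y}_n$ as its Taylor coefficients via the expansion~(\ref{E:2}), and then that $\tilde{\bf\Phi}=\ln Z$ is determined by these. The paper's own argument is a one-line observation to this effect, whereas you supply the explicit moment--cumulant inversion and the chart-independence check; this is a strict elaboration of the same approach rather than a different route.
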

More precisely, $\tilde{\bf \Phi}$ arises as a generating function whose coefficients encode the multilinear maps $\tilde{Y_n}$, which define the GWS structure.  These invariants characterize the underlying statistical geometry by capturing the intersection properties of statistical hypersurfaces.

\begin{proof}
Indeed, since $\tilde{\bf \Phi}$, in formula (\ref{E:3}) relies on the polylinear maps $\tilde{Y}_n\in \left(-\sum_{j}\beta^jX_j(\omega)\right)^{\otimes n}$,
 defining the (GWS), the statement follows. 
\end{proof}

\section{Learning}
We consider the tangent fiber bundle over the statistical manifold 
$\sfS$, where $\sfS$ is the space of probability distributions. This bundle structure encodes the infinitesimal geometry of the statistical space, allowing us to describe variations in probability distributions in terms of a Lie group action.
\subsection{Tangent Fiber Bundle Structure}
The tangent fiber bundle is denoted by the quintuple $(T\sfS,\sfS,\pi,G,F)$, where:
\begin{itemize}
    \item $T\sfS$ is the total space of the tangent bundle, consisting of all possible tangent vectors to points in $\sfS$.

    \item $\pi: T\sfS\to \sfS$ is a continuous surjective map that projects each tangent vector to its base point in $\sfS$.
    \item  $F$ is the fiber,  representing the space of allowable tangent vectors at each point of $\sfS$. 
    \item $G$ is a Lie group that acts on the fibers, encoding the parallel transport structure within the statistical manifold.
    
\end{itemize}
\subsection{Tangent Spaces and the Space of Measures}

For any point $\rho\in \sfS$, the tangent space at $\rho$, denoted 
$T_{\rho}\sfS$, is given by: 
\[T_{\rho}S \cong \{\text{bounded, signed measures vanishing on an ideal $I$ of the $\sigma-$algebra}\}.\]

This identification follows from the fact that infinitesimal perturbations of a probability distribution $\rho$ can be described by signed measures that respect the probabilistic constraints imposed by 
$\sfS$. The ideal 
$I$ of the 
$\sigma$-algebra corresponds to the subspace of measures that do not contribute to the variations in probability distributions, ensuring consistency with the underlying measure-theoretic structure.

\subsection{Lie Group Action on the Fibers}
The Lie group $G$  acts freely and transitively on each fiber of $T\sfS$, meaning that every element of the fiber can be transformed into any other through the group action. The action is given by:

\[f\overset{h}{\mapsto} f+h,\]
where:
\begin{itemize}
    \item $f$ is an element of the total space $T\sfS$ i.e., a tangent vector at some $\rho\in \sfS$.

    \item $h$ is a parallel transport within the statistical manifold, representing an infinitesimal displacement in the space of probability distributions.
\end{itemize}

This affine structure on the fibers implies that the action of $G$
acts as a translation group, ensuring a well-defined parallel transport mechanism in the space of probability measures. Such a structure is crucial for describing information geometry, as it encodes how probability distributions evolve under statistical transformations.


\begin{lemma}
Consider the fiber bundle $(T\sfS,\sfS,\pi,G,F)$ where: 

Let path $\gamma:\cI\to \sfS$ be a smooth geodesic path in $\sfS$, where $\cI\subset \bbR$. 
The fiber over $\gamma$ is denoted by $F_{\gamma}=\pi^{-1}(\gamma)$, which represents the space of tangent vectors along $\gamma$. 

Then, the fiber \[F_{\gamma}=\gamma^+\sqcup\gamma^{-1}\] consists of two disjoint connected components. Each component $\gamma^+$ and $\gamma^-$ is contained within a totally geodesic submanifold of $T\sfS$, denoted $E^+$ and $E^-$, respectively. 
\end{lemma}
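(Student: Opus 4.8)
The plan is to exploit the paracomplex structure that $\sfS$ inherits as a manifold of $0$-pairs. By Proposition~\ref{P:zero} the space of probability distributions $\mathcal{H}_n \cong \sfS$ is a manifold of $0$-pairs, and by Proposition~\ref{P:isome} together with Theorem~\ref{Th:main} it is isometric to the Hermitian projective space over the paracomplex algebra $\mathfrak{P}$. Consequently each tangent space $T_\rho\sfS$ — identified, as in the preceding chapter, with the bounded signed measures of total mass zero vanishing on the ideal $I$ — carries the structure of a $\mathfrak{P}$-module. The paracomplex unit then acts fibrewise as an endomorphism $K\colon T\sfS\to T\sfS$ with $K^2=\mathrm{id}$, and the idempotents $e_{\pm}=\tfrac{1}{2}(1\pm\epsilon)$ (satisfying $e_\pm^2=e_\pm$, $e_+e_-=0$, $e_++e_-=\mathrm{id}$) split the tangent bundle into the $\pm1$-eigenbundles
\[
T\sfS = E^{+}\oplus E^{-},\qquad E^{\pm}=e_{\pm}\,T\sfS .
\]
First I would make this decomposition explicit and globalize it to a Whitney sum over $\sfS$.

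Next I would identify $E^{+}$ and $E^{-}$ with the two flat, affine, symmetric connections furnished by Lemma~\ref{L:pairs}; these are exactly the dual $\alpha=\pm1$ (exponential and mixture) connections $\nabla^{1},\nabla^{-1}$ of the statistical manifold. Because each $\nabla^{\pm}$ is flat and torsion-free, the distribution $E^{\pm}$ is involutive, so by the Frobenius theorem it integrates to a foliation whose leaves are submanifolds of the total space $T\sfS$. To see that these leaves are totally geodesic I would invoke the criterion recalled earlier (a submanifold $N$ is totally geodesic iff $\nabla_X Y$ is tangent to $N$ whenever $X,Y$ are): since $e_{\pm}$ is $\nabla^{\pm}$-parallel, one has $\nabla^{\pm}_X Y\in\Gamma(E^{\pm})$ for all $X,Y\in\Gamma(E^{\pm})$, which yields the totally geodesic property of $E^{\pm}$.

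It then remains to restrict to the geodesic. Writing $F_\gamma=\pi^{-1}(\gamma)$ for the restriction of $T\sfS$ to the image $\gamma(\cI)$, every element $v\in F_\gamma$ decomposes uniquely as $v=e_+v+e_-v$; I set $\gamma^{+}=e_{+}(F_\gamma)$ and $\gamma^{-}=e_{-}(F_\gamma)$. Since a nonzero vector cannot lie simultaneously in both eigenspaces ($E^+\cap E^-$ is the zero section), the two pieces are disjoint, and because $\gamma(\cI)$ is connected and each eigen-subbundle has connected fibre, $\gamma^{\pm}$ are the two connected components of $F_\gamma$, each contained in the corresponding totally geodesic submanifold $E^{\pm}$. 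Parallelism of $K$ along $\gamma$ guarantees that parallel transport preserves the eigenspaces, so the inclusion $\gamma^{\pm}\subset E^{\pm}$ is stable along the whole curve.

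The main obstacle I anticipate is establishing the \emph{parallelism of $K$} — equivalently the para-Kähler compatibility of the paracomplex structure with the flat connections — since this is what makes the eigenbundle splitting respect both integration to totally geodesic leaves and parallel transport along $\gamma$. A secondary technical point is to pin down the geometric structure (metric and connection) induced on the total space $T\sfS$ relative to which ``totally geodesic'' is meant; I would use the horizontal–vertical lift of $\nabla^{\pm}$ from $\sfS$ so that the eigen-subbundles of the base lift coherently to $E^{\pm}\subset T\sfS$.
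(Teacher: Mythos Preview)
Your approach is essentially the same as the paper's: both rest on the paracomplex module structure of the tangent spaces and the resulting $\pm$-eigenspace decomposition $T\sfS=E^+\oplus E^-$ via the idempotents $e_\pm$. The paper's proof is considerably more terse --- it simply notes the paracomplex module structure, writes the fiber in local coordinates $\{\gamma^{ia}e_a\}$ with $a\in\{1,2\}$, and then outsources the totally geodesic claim to Lemma~3 of \cite{CoCoNen} without further argument.

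Your elaboration on \emph{why} $E^\pm$ are totally geodesic (involutivity from flatness of $\nabla^{\pm1}$, Frobenius integration, and $\nabla^\pm$-parallelism of $e_\pm$) fills in precisely what the paper leaves to the external citation, and your identification of the eigen-subbundles with the dual exponential/mixture connections is in line with Lemma~\ref{L:pairs}. The two technical concerns you flag --- parallelism of $K$ (para-K\"ahler compatibility) and the choice of connection on the total space $T\sfS$ relative to which ``totally geodesic'' is taken --- are genuine, and the paper does not address them either; it simply invokes the cited lemma. So your proposal is correct in spirit and more complete in detail than the paper's own argument, with the caveat that the para-K\"ahler condition you single out is indeed the substantive input, and you would ultimately need the same external result (or an equivalent) to close it.
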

\begin{proof}
Consider the fiber above $\gamma$. Since for any point of $\sfS$, its the tangent space is identified to  module over paracomplex numbers. This space is decomposed into a pair of subspaces (i.e. eigenspaces with eigenvalues $\pm \e$).
The geodesic curve in $\sfS$ is a path such that $\gamma=(\gamma^i(t)): t\in [0,1]\to \sfS$. In local coordinates, the fiber budle is given by $\{\gamma^{ia}e_{a}\}$, and $a\in \{1,2\}$. Therefore, the fiber over $\gamma$ has two components $(\gamma^+,\gamma^-)$. Taking the canonical basis for $\{e_1,e_2\}$, implies that $(\gamma^+,\gamma^-)$ lie respectively in the subspaces $E^+$ and $E^-$. These submanifolds are totally geodesic in virtue of  Lemma 3 in \cite{CoCoNen}. \end{proof}

\subsection{Learning Process via the Ackley--Hinton--Sejnowski method}

We define a learning process in terms of the Ackley--Hinton--Sejnowski method \cite{AHS}, which is based on minimizing the Kullback--Leibler divergence as a measure of distance between probability distributions. This process can be interpreted in a geometric framework as follows:

\begin{proposition}[Geometric Formulation of Learning Process] \label{P:CoNen}
The learning process consists of determining whether there exist intersections between the \emph{paraholomorphic curve} $\gamma^+ $ and the \emph{orthogonal projection} of $\gamma^- $ into the subspace $E^+ $.
\end{proposition}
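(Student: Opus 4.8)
The plan is to translate the analytic optimization at the heart of the Ackley--Hinton--Sejnowski procedure into the paracomplex-geometric language developed above, and then to extract the asserted intersection condition from the dual flatness of $\sfS$.

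First, I would recall that the AHS learning rule adjusts the model distribution $\rho\in\sfS$ by descending the Kullback--Leibler divergence $D(\rho_{\mathrm{data}}\,\|\,\rho)$ between the empirical law and the model. In the canonical affine parametrization $\beta=(\beta^0,\dots,\beta^n)$ of the exponential family, differentiating this functional yields the stationarity equations $\bbE_{\rho}[X_j]=\bbE_{\rho_{\mathrm{data}}}[X_j]$ for $j=1,\dots,n$. Interpreted through the Fisher--Rao metric $g$, each such equation is an orthogonality relation between a tangent direction of the model curve and a tangent direction carrying the data. The crucial structural input, supplied by the paracomplex module structure of $\cT_\rho\sfS$, is that these two families of directions lie in the conjugate $\e$-eigenspaces $E^+$ and $E^-$ cut out by the idempotents $e_\pm=\tfrac12(1\pm\e)$; concretely, $E^+$ is the exponential ($\alpha=+1$) direction and $E^-$ the dual mixture ($\alpha=-1$) direction.

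Next, I would invoke the preceding Lemma: along the geodesic $\gamma$ the fiber splits as $F_\gamma=\gamma^+\sqcup\gamma^-$, with $\gamma^\pm$ lying in the totally geodesic submanifolds $E^\pm$. Here $\gamma^+$ is the paraholomorphic curve (the $e$-geodesic of the model), while $\gamma^-$ encodes the mixture geodesic of the data. Since $E^+$ and $E^-$ are dually flat and paired nondegenerately by $g$ together with the Amari--Chentsov tensor, the orthogonal projection $\pi_{E^+}(\gamma^-)$ is well defined and coincides with the $m$-projection of the data geodesic onto the model family. Applying the generalized Pythagorean theorem for dually flat spaces then shows that $\rho^\star$ minimizes $D(\rho_{\mathrm{data}}\,\|\,\cdot)$ over the model family if and only if $\rho^\star$ belongs simultaneously to $\gamma^+$ and to $\pi_{E^+}(\gamma^-)$. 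Consequently, the learning problem admits a solution precisely when $\gamma^+\cap\pi_{E^+}(\gamma^-)\neq\emptyset$, which is the statement to be proved.

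The main obstacle I anticipate lies in rigorously identifying the Fisher-metric orthogonal projection with the purely algebraic projection onto the paracomplex eigenspace $E^+$: one must verify that $g$ pairs $E^+$ with $E^-$ nondegenerately, so that projection along $E^-$ is well posed, and that the totally geodesic property (Lemma 3 of \cite{CoCoNen}) ensures $\pi_{E^+}(\gamma^-)$ is again an affine subvariety which can meet $\gamma^+$. A secondary and conceptually important point is the interpretation of the empty-intersection case as a genuine obstruction to learning, to be matched against the GWS invariants introduced in the previous section; making this correspondence precise would complete the geometric picture.
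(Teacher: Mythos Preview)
Your proposal is broadly compatible with the paper's treatment, but you should be aware that the paper does not supply a proof of this proposition in the usual sense. The proposition is presented as a \emph{geometric reformulation} of the Ackley--Hinton--Sejnowski learning method rather than as a theorem to be derived; the paper simply restates the proposition, declares that learning is successful when $d(\gamma^+,\pi_{E^+}(\gamma^-))\to 0$, and then refers to the construction of the set $\Upsilon_P$ and the relative entropy $K(P,P^*)=\bbE_P[\ell-\ell^*]$ from \cite{BCN99} as the formal underpinning. No Pythagorean argument appears, and no explicit verification that the Fisher projection coincides with the paracomplex eigenspace projection is carried out.

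Your approach is therefore more ambitious than the paper's: you attempt to \emph{derive} the intersection criterion from the stationarity conditions $\bbE_\rho[X_j]=\bbE_{\rho_{\mathrm{data}}}[X_j]$ together with the generalized Pythagorean theorem for dually flat spaces, whereas the paper essentially \emph{posits} the criterion as the geometric translation of KL-minimization. The ingredients you invoke (the AHS rule, the $\pm\epsilon$-eigenspace splitting from the preceding Lemma, dual flatness) are exactly those the paper relies on implicitly, so nothing in your outline is at odds with the text. The obstacle you flag---identifying the metric orthogonal projection with the algebraic projection onto $E^+$---is real and is not addressed in the paper either; it is absorbed into the citation of \cite{CoCoNen} and \cite{BCN99}. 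In short, your argument would constitute a genuine strengthening of what the paper offers, but for the purposes of matching the paper you need only reproduce the reformulation and the reference to $K(P,P^*)$.
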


More precisely, let $\sfS$ be a \emph{statistical manifold} equipped with a fiber bundle structure $(T\sfS, \sfS, \pi, G, F) $. Consider a geodesic path $\gamma: I \to \sfS $, parametrized by an interval $I \subset \mathbb{R}$. The fiber over $\gamma$, denoted by $F_{\gamma}$, is assumed to decompose into two connected components:
\[
    F_{\gamma} = \gamma^+ \sqcup \gamma^-.
\]
Each component $\gamma^+ $ and $\gamma^- $ is contained in a \emph{totally geodesic submanifold} of $TS $, denoted by $E^+ $ and $E^- $, respectively:
\[
    \gamma^+ \subset E^+, \quad \gamma^- \subset E^-.
\]

In particular, the learning process is considered successful whenever the distance between the geodesic $\gamma^+ $ and its orthogonal projection into $E^+ $ decreases towards zero. That is, the process converges if:
\[
    d(\gamma^+, \pi_{E^+}(\gamma^-)) \to 0, \quad \text{as the learning iterations progress}.
\]

This formulation provides a rigorous geometric criterion for assessing the success of learning, leveraging the underlying differential geometry of the statistical manifold.

In other words: 
\begin{proposition}\label{P:CoNen}
The learning process consists in determining if there exist intersections of the paraholomorphic curve $\gamma^+$ with the orthogonal projection of the curve $\gamma^-$ in the subspace $E^+$. 
\end{proposition}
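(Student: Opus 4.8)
The plan is to show that the analytic definition of learning through minimization of the Kullback--Leibler divergence, as employed in the Ackley--Hinton--Sejnowski scheme \cite{AHS}, is faithfully encoded by the geometric intersection condition stated in the proposition. The starting point is the paracomplex structure carried by $T_\rho\sfS$ at each $\rho\in\sfS$: recalling the idempotent decomposition $e_\pm=\tfrac{1}{2}(1\pm\e)$ with $\e^2=1$, every tangent fiber splits as $T_\rho\sfS = e_+T_\rho\sfS\oplus e_-T_\rho\sfS$, the two summands being the eigenspaces of the paracomplex operator for the eigenvalues $\pm\e$. By the preceding lemma these eigenspaces globalize along the geodesic $\gamma$ to the totally geodesic submanifolds $E^+$ and $E^-$, with $\gamma^+\subset E^+$ and $\gamma^-\subset E^-$; their totally geodesic nature follows from Lemma~3 of \cite{CoCoNen}. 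First I would record that, under the identification of $E^+$ and $E^-$ with the images of the dual ($\alpha=\pm 1$) affine connections $\nabla^{\pm}$ built from the Amari--Chentsov tensor, $\gamma^+$ is an $e$-geodesic (exponential, i.e. paraholomorphic) and $\gamma^-$ an $m$-geodesic (mixture type).

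The second step is to translate the learning objective into this language. Since $\sfS$ is of exponential type, the Fisher--Rao metric together with $\nabla^{+}$ and $\nabla^{-}$ endows it with a dually flat structure, so that the canonical divergence associated with the Legendre pair of potentials coincides with $D_{\mathrm{KL}}$. In this setting the Ackley--Hinton--Sejnowski update, which iteratively decreases $D_{\mathrm{KL}}(\rho^{-}\,\|\,\rho^{+})$ between the empirical distribution $\rho^{-}$ moving along $\gamma^{-}\subset E^{-}$ and the model distribution $\rho^{+}$ constrained to $\gamma^{+}\subset E^{+}$, is governed by the generalized Pythagorean theorem: the divergence-minimizing model point is the foot of the $m$-projection of $\rho^{-}$ onto $E^{+}$, characterized by $g$-orthogonality of the connecting $m$-geodesic to $E^{+}$. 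I would make precise that this foot is exactly the orthogonal projection $\pi_{E^+}(\gamma^-)$ appearing in the statement, the orthogonality being taken with respect to the Fisher metric compatible with the paracomplex splitting.

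Putting these together yields the equivalence. The learning dynamics converge precisely when the minimized divergence tends to $0$; by the identification of $D_{\mathrm{KL}}$ with the canonical divergence and by the Pythagorean decomposition, this is equivalent to $d\big(\gamma^{+},\pi_{E^+}(\gamma^{-})\big)\to 0$, i.e. to the approach, and in the limit intersection, of the paraholomorphic curve $\gamma^{+}$ with the projected curve $\pi_{E^+}(\gamma^{-})$. Hence successful learning is detected by, and is equivalent to, the existence of such intersections, which is the assertion of the proposition.

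The main obstacle I anticipate is the rigorous justification of this last equivalence: one must verify that the gradient flow underlying the AHS algorithm descends along $m$-geodesics and that its fixed points coincide with the $g$-orthogonal feet guaranteed by the Pythagorean theorem, rather than with spurious critical points. This requires controlling the compatibility between the paracomplex operator, the Fisher metric, and the dual connections --- namely confirming that ``orthogonal projection into $E^+$'' in the statement is the same notion as the $m$-projection used in the divergence minimization --- and checking that the projection onto the totally geodesic submanifold $E^+$ is well defined, with a unique foot point, along the whole geodesic $\gamma$.
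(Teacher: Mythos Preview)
The paper does not supply a formal proof of this proposition. It is presented as a \emph{geometric reformulation} of the Ackley--Hinton--Sejnowski learning scheme: immediately after the statement the authors unpack the fiber-bundle setup, recall from the preceding lemma that $F_\gamma=\gamma^+\sqcup\gamma^-$ with $\gamma^\pm\subset E^\pm$ totally geodesic, and then simply \emph{declare} the success criterion $d(\gamma^+,\pi_{E^+}(\gamma^-))\to 0$. There is no proof environment; the proposition functions as a definition translating ``minimize $D_{\mathrm{KL}}$'' into the paracomplex picture, and the justification is deferred to the subsequent theorem, whose proof observes that minimizing $K(P,P^*)=\mathbb{E}_P[\ell-\ell^*]$ amounts to shrinking the distance between $\gamma^+$ and the projected $\gamma^-$.

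Your proposal is therefore considerably more ambitious than what the paper does. You attempt to \emph{derive} the equivalence rather than posit it, by identifying $E^\pm$ with the leaves of the dual $(\alpha=\pm1)$ connections, interpreting $\gamma^+$ and $\gamma^-$ as $e$- and $m$-geodesics respectively, and then invoking the generalized Pythagorean theorem for dually flat manifolds to show that the KL-minimizer is precisely the foot of the $m$-projection onto $E^+$. This is a coherent and information-geometrically natural line of argument, and it would genuinely \emph{prove} the proposition rather than assert it. What it buys you is an explanation of \emph{why} the orthogonal projection in the statement is the right object; what the paper's treatment buys is brevity, since the authors are content to take the reformulation as the definition of the learning process in this geometric language.

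One caution: the identification you make between the paracomplex eigenspace decomposition $E^\pm$ and the $e$/$m$ affine structures is plausible (and is implicit in \cite{CoCoNen}) but is not spelled out in the present paper, so if you pursue your route you should justify that step explicitly. You correctly flag the matching of ``orthogonal projection'' with ``$m$-projection'' as the delicate point; the paper sidesteps this entirely by treating the proposition as definitional.
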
 

More formally, as was depicted in\, \cite{BCN99} (sec. 3) let us denote by $\Upsilon$ the set of (centered) random variables over $(\Omega,\mathcal{F},P_{\theta})$
which admit an expansion in terms of the scores under the following form:
\[\Upsilon_P= \{X\in \mathbb{R}^{\Omega}\, |\, X-\mathbb{E}_P[X]=g^{-1}(\mathbb{E}_P[Xd\ell]), d\ell \}. \]

By direct calculation, one finds that the log-likelihood $\ell= ln\rho$ of the usual (parametric) families of probability distributions belongs to 
$\Upsilon_p$  as well as the difference $\ell -\ell^*$  of log-likelihood of two probabilities of the same family. 

\, 

Being given a family of probability distributions such that $\ell \in  \Upsilon_P$ for any $P$, let $\mathcal{U}_P$, let us denote $P^*$ the set
 such that $\ell-\ell^*\in  \Upsilon_p$. Then, for any $P^*\in  \mathcal{U}_p$, we define $K(P,P^*)=\mathbb{E}_P[\ell - \ell^*]$. 

\begin{theorem}
Let $\sfS$ be  statistical manifold, equipped with a Riemannian metric and an affine connection. Then, the (GWS) determines the evolution of the learning process, through the associated geometric constraints. 
\end{theorem}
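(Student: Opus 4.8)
The plan is to reduce the statement to the fact, already established above, that the Kullback--Leibler potential $\tilde{\bf \Phi}$ of $\sfS$ is the generating function of the invariants (GWS), and then to show that every piece of geometric data governing the learning dynamics is a coefficient of this generating function. First I would recall that, by the Ackley--Hinton--Sejnowski formulation and Proposition~\ref{P:CoNen}, the learning process is entirely controlled by the divergence $K(P,P^{*})=\bbE_{P}[\ell-\ell^{*}]$ and by the behaviour of the geodesic $\gamma$ together with the paracomplex splitting $F_{\gamma}=\gamma^{+}\sqcup\gamma^{-}$ of its fiber. On the exponential hypersurface \eqref{E:2} the log-likelihood reads $\ell_{\theta}=-\sum_{j}\beta^{j}X_{j}-\tilde{\bf \Phi}(\beta)$, so that $K$, the Fisher metric $g_{ij}=\partial_{i}\partial_{j}\tilde{\bf \Phi}$, the Amari--Chentsov tensor $T_{ijk}=\partial_{i}\partial_{j}\partial_{k}\tilde{\bf \Phi}$, and the $\alpha$-connection coefficients are all obtained by differentiating the single potential $\tilde{\bf \Phi}$ of \eqref{E:3}.

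The second step is to invoke the proposition asserting that $\tilde{\bf \Phi}$ is intrinsically determined by the GWS, i.e. that its expansion in the canonical affine coordinates $\beta$ has coefficients precisely the symmetric multilinear maps $\tilde{Y}_{n}$. Substituting this expansion into the formulas of the first step, I would read off $g$ as the datum carried by $\tilde{Y}_{2}$, the pair $(\nabla,T)$ as the datum carried by $\tilde{Y}_{3}$, and the curvature entering the geodesic-deviation (Jacobi) equation as a polynomial expression in the $\tilde{Y}_{n}$ with $n\le 4$. Consequently the Levi--Civita connection, the Christoffel symbols, and hence the geodesic equation satisfied by $\gamma$ are functionals of the GWS alone.

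Thirdly, I would translate the convergence criterion of Proposition~\ref{P:CoNen}, namely $d(\gamma^{+},\pi_{E^{+}}(\gamma^{-}))\to 0$, into GWS language. Since the splitting into the totally geodesic submanifolds $E^{\pm}$ is the $\pm\e$-eigenspace decomposition of the paracomplex module structure on each tangent space, the orthogonal projection $\pi_{E^{+}}$ and the paraholomorphicity of $\gamma^{+}$ are expressed through the metric $g$ and the product $\circ=\bar T$, both of which are GWS coefficients by the second step. The distance function and its variation along the learning iterations are therefore integrals of quantities built from the $\tilde{Y}_{n}$, so that the existence (or obstruction) of an intersection of $\gamma^{+}$ with $\pi_{E^{+}}(\gamma^{-})$, and thus the success of the learning process, is dictated by the GWS.

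The main obstacle will be making precise the phrase ``determines the evolution''. The informal statement must be upgraded to the assertion that the flow governing the learning iterations (the gradient/geodesic flow for $K$) has all of its defining coefficients in the subalgebra generated by the $\tilde{Y}_{n}$; the delicate point is to control the higher invariants $\tilde{Y}_{n}$ with $n\ge 4$, which enter the curvature and hence the deviation term, and to certify the convergence of $d(\gamma^{+},\pi_{E^{+}}(\gamma^{-}))$ using only finitely many of them, which I expect to require the semisimplicity and flatness available for exponential families.
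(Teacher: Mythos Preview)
Your approach is essentially the same as the paper's: both argue that the learning process is governed by the Kullback--Leibler divergence $K(P,P^{*})$, that $K$ is encoded in the potential $\tilde{\bf \Phi}$, and that $\tilde{\bf \Phi}$ is in turn generated by the $\tilde{Y}_{n}$ defining the GWS. The paper's own proof is in fact considerably shorter and less detailed than your proposal---it simply observes that the learning process is a deformation of geodesics in $E^{\pm}$, that the GWS arise in $\tilde{\bf \Phi}$, and that $\tilde{\bf \Phi}$ is ``directly related'' to $K(P,P^{*})$, without the finer bookkeeping you carry out (identifying $g$, $T$, and curvature with specific $\tilde{Y}_{n}$) or the caveats you raise about higher invariants and convergence.
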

\begin{proof}
Whenever there is a successful learning, the distance between the curve $\gamma^+$ and the projection of  $\gamma^-$ on $E^+$ tends to be as small as possible. This implies that $K(P,P^*)=\mathbb{E}_P[\ell - \ell^*]$, so that $K(P,P^*)$ is minimized.

The learning process is by definition given by a {\it deformation} of a pair of geodesics, defined respectively in the pair of totally geodesic manifolds $E^+, E^-$. The (GWS), arise in the $\tilde{Y}_n$ in the potential function $\tilde{\bf \Phi}$, which is directly related to the relative entropy function $K(P,P^*)$. Therefore, it is easy to conclude that the (GWS) determine the learning process.
 \end{proof}

Similarly as in the classical (GW) case, the (GWS) count intersection numbers of the para-holomorphic curves generated by $\gamma^+$ and $\gamma^-$. In fact, we have the following statement:
\begin{corollary}
Let $(T\sfS,\sfS,\pi,G,F)$ be the fiber bundle above, where: 
\begin{itemize}
    \item $\pi:T\sfS\to \sfS$ is the projection map;
    \item $G$ is the Lie group;
    \item $F$ is a fiber.
\end{itemize}
Let $\gamma^-\subset T\sfS$ be a geodesic in the tangent bundle with respect to the affine connection and let $\gamma^+\subset E^+$ be a geodesic in the sub-bundle $E^+\subset T\sfS$. Then, the (GWS) determine the number of intersections of the projection of  $\gamma^-$ onto $E^+$, with the geodesic $\gamma^+$.
\end{corollary}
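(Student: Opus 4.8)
The plan is to reduce the corollary to the preceding Theorem together with Proposition~\ref{P:CoNen}, exploiting the fact that in both the classical and the statistical settings the Gromov--Witten invariants act as the generating coefficients for intersection numbers of (para-)holomorphic curves. I would treat the statement not as an independent computation but as the geometric reading of the learning theorem already established.

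First I would fix the paracomplex decomposition. At each $\rho \in \sfS$ the tangent space $T_\rho\sfS$ is a module over the paracomplex algebra $\mathfrak{P}$, hence splits via the idempotents $e_{\pm} = \frac{1\pm\epsilon}{2}$ into the eigenspaces $E^+_\rho$ and $E^-_\rho$ for $\pm\epsilon$. By the Lemma preceding Proposition~\ref{P:CoNen}, the fiber over a geodesic $\gamma$ splits as $\gamma^+ \sqcup \gamma^-$ with $\gamma^\pm \subset E^\pm$, and $E^+, E^-$ are totally geodesic in $T\sfS$ (via Lemma~3 of~\cite{CoCoNen}). I would then check that a geodesic of the flat affine connection lying in $E^+$ is para-holomorphic, and that, because $E^+$ is totally geodesic, the orthogonal projection $\pi_{E^+}(\gamma^-)$ remains a para-holomorphic curve in $E^+$; this is what makes the intersection $\pi_{E^+}(\gamma^-)\cap\gamma^+$ an intersection of two para-holomorphic curves inside a single totally geodesic submanifold, so that a well-defined intersection count makes sense.

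Second I would identify that intersection count with the invariants. The intersection points are the solutions of $\pi_{E^+}(\gamma^-)(s) = \gamma^+(t)$, and their number defines an intersection pairing on $E^+$. In exact parallel with the classical case, where the potential $\Phi$ is the generating function for curve counts, I would argue (as in the remark immediately preceding the corollary) that the generating function for these intersection numbers is the Kullback--Leibler entropy potential $\tilde{\Phi}$ of~\eqref{E:3}, whose symmetric multilinear Taylor coefficients are precisely the maps $\tilde{Y}_n$, that is, the GWS. Invoking the preceding Theorem, the GWS determine the evolution of the learning process; by Proposition~\ref{P:CoNen} that process is nothing other than the determination of intersections of $\gamma^+$ with $\pi_{E^+}(\gamma^-)$. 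Combining these, the GWS determine the number of such intersections, which is the assertion.

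The hard part will be the honest content of the second step: showing that the algebraically defined correlation functions $\tilde{Y}_n$ (the Taylor coefficients of the entropy potential) literally equal the geometric intersection numbers of the para-holomorphic curves, rather than merely being ``generated from'' them. In classical Gromov--Witten theory this correspondence is the substantive result, proved through integration over the moduli of stable maps and the associated degree and divisor computations. Here one would first need a well-defined moduli space of para-holomorphic curves in $E^+$ carrying a compatible intersection pairing, and then a dimension-and-degree count matching the order-$n$ coefficient $\tilde{Y}_n$ to the $n$-point intersection number. Establishing this matching rigorously, rather than by analogy with the symplectic case, is where the genuine difficulty lies; the remaining steps are essentially bookkeeping built on the earlier Lemma and Theorem.
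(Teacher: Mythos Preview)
Your approach matches the paper's exactly: the paper gives no separate proof for this corollary, treating it as an immediate consequence of the preceding Theorem and Proposition~\ref{P:CoNen}, prefaced only by the remark that ``similarly as in the classical (GW) case, the (GWS) count intersection numbers of the para-holomorphic curves generated by $\gamma^+$ and $\gamma^-$.'' Your reduction via the paracomplex splitting, the Lemma on $F_\gamma = \gamma^+ \sqcup \gamma^-$, and the entropy potential $\tilde{\Phi}$ is precisely this line of reasoning spelled out; the ``hard part'' you flag---the rigorous identification of the $\tilde{Y}_n$ with geometric intersection numbers rather than by analogy---is left implicit in the paper as well.
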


\appendix

\backmatter


\begin{thebibliography}{9}

\bibitem[AHS85]{AHS} Ackley, D.; Hinton, G.; Sejnowski, T. \textit{Learning algorithm for Boltzmann machine.} \textit{Cognitive Science}, \textbf{9}, 147--169 (1985).

\bibitem[Am85]{Am85} Amari, S. \textit{Differential-geometrical methods in statistics.} Lecture Notes in Statistics, \textbf{28}. Berlin: Springer-Verlag (1985).

\bibitem[Am97]{Am97} Amari, S. \textit{Information Geometry.} In \textit{Geometry and Nature}, eds. J.-P. Bourguignon, H. Nencka. Contemporary Mathematics, \textbf{203}, 81--95. Providence, RI: American Mathematical Society (1997).

\bibitem[Bi95]{Bi} Billingsley, P. \textit{Probability and Measure.} 3rd Edition. Wiley Series in Probability and Mathematical Statistics. New York: Wiley (1995).

\bibitem[BCN99]{BCN99} Burdet, G.; Combe, Ph.; Nencka, H. \textit{Statistical manifolds, self-parallel curves and learning processes.} In \textit{Seminar on Stochastic Analysis, Random Fields and Applications (Ascona, 1996)}, 87--99. Progress in Probability, \textbf{45}. Basel: Birkhäuser (1999).

\bibitem[BCN00]{BCN00} Burdet, G.; Combe, Ph.; Nencka, H. \textit{Statistical manifolds: the natural affine-metric structure of probability theory.} In \textit{Mathematical Physics and Stochastic Analysis (Lisbon, 1998)}, 138--164. River Edge, NJ: World Scientific (2000).

\bibitem[BuNen97-1]{BuNen1} Burdet, G.; Nencka, H. \textit{Equation of self-parallel curve deviation on statistical manifolds.} \textit{Methods of Functional Analysis and Topology}, \textbf{3}(1), 46--50 (1997).

\bibitem[BuNen97-2]{BuNen2} Burdet, G.; Nencka, H.; Perrin, M. \textit{An example of dynamical behaviour of the relative entropy.} In \textit{Geometry and Nature}, eds. J.-P. Bourguignon, H. Nencka. Contemporary Mathematics, \textbf{203}, 97--104. Providence, RI: American Mathematical Society (1997).

\bibitem[Ca54]{Ca54} Calabi, E. \textit{On the space of Kähler metrics}, {Proceedings of the National Academy of Sciences}, \textbf{40}(8), {759--760}, (1954)

  
\bibitem[Che82]{Che3} Chentsov, N. \textit{Statistical decision rules and optimal inference.} Translations of Mathematical Monographs, \textbf{53}. Providence, RI: American Mathematical Society (1982).

\bibitem[C18]{C18} Combe, N. C. \textit{Geometric Classification of Real Ternary Octahedral Quartics} Discrete Comput. Geom. Volume 60 (2), pages 255–282, (2018).

\bibitem[CoMa20]{CoMa} Combe, N. C.; Manin, Yu. I. \textit{F-manifolds and geometry information.} \textit{Bulletin of the London Mathematical Society}, \textbf{52}(5), 777--792 (2020).

\bibitem[CoCoNen21]{CoCoNen} Combe, N. C.; Combe, Ph.; Nencka, H. \textit{Statistical manifolds $\&$ hidden symmetries.} Geometric Science of Information pp.565–573, Springer (2021).

\bibitem[CN97]{CN97} Combe, Ph.; Nencka, H. \textit{Information geometry and learning in formal neural networks.} In \textit{Geometry and Nature}, eds. J.-P. Bourguignon, H. Nencka. Contemporary Mathematics, \textbf{203}, 105--116. Providence, RI: American Mathematical Society (1997).

\bibitem[D96]{Du} Dubrovin, B. \textit{Integrable systems and quantum groups.} Lecture Notes in Mathematics, \textbf{1620}, 120--348 (1993).

\bibitem[Fe66]{Fe66} Feller, W. \textit{An introduction to probability theory and its applications.} New York: John Wiley $\&$ Sons (1966).

\bibitem[KS90]{KS90}  Kashiwara, M.; Schapira, P. \textit{ Sheaves on Manifolds}
Grundlehren der mathematischen Wissenschaften \textbf{292},  Springer-Verlag, (1990).

\bibitem[K079]{Ko1} Kobayashi S. \textit{ On holomorphic connections} in Global Differential Geometry and Global Analysis Edited by A. Dold and B. Eckmann, 838, Proceedings of the Colloquium Held at the Technical University of Berlin, November 21--24, (1979).

\bibitem[KoNo96]{KoNo96} Kobayashi, S.; Nomizu, K. \textit{Foundations of differential geometry.} Wiley Classics Library Edition. New York: John Wiley $\&$ Sons (1996).

\bibitem[KoMa94]{KoMa94} Kontsevich, M.; Manin, Yu. I. \textit{Gromov--Witten classes, quantum cohomology, and enumerative geometry.} \textit{Communications in Mathematical Physics}, \textbf{164}(3), 525--562 (1994).

\bibitem[Ku72]{Ku72} Kuratowski, K. \textit{Introduction to set theory and topology.} Oxford: Pergamon Press; Warszawa: PWN - Polish Scientific Publishers (1972).

\bibitem[L95]{L95} Lang, S. \textit{Differential and Riemannian manifolds.} New York: Springer-Verlag (1995).

\bibitem[L99]{L99} Lang, S. \textit{Fundamentals of differential geometry.} Graduate Texts in Mathematics. New York: Springer-Verlag (1999).

\bibitem[LiZu93]{LiZu93} Lian, B. H.; Zuckerman, G. \textit{New perspectives on the BRST--algebraic structure of string theory.} \textit{Communications in Mathematical Physics}, \textbf{154}, 613--646 (1993).

\bibitem[Ma99]{Man99} Manin, Yu. I. \textit{Frobenius manifolds, quantum cohomology, and moduli spaces.} Colloquium Publications, \textbf{47}. Providence, RI: American Mathematical Society (1999).

\bibitem[Mil65]{Mil65} Milnor, J. W. \textit{Topology from the differentiable viewpoint.} Charlottesville: University Press of Virginia (1965).

\bibitem[MoCh89]{MoCh89} Morozova, E. A.; Chentsov, N. N. \textit{Markov invariant geometry on state manifolds.} \textit{Journal of Soviet Mathematics}, \textbf{56}(5), 2648--2669 (1991; Russian original 1989).

\bibitem[MoCh91-1]{MoCh91-1} Morozova, E. A.; Chentsov, N. N. \textit{Projective Euclidean geometry and noncommutative probability theory.} \textit{Trudy Matematicheskogo Instituta imeni V. A. Steklova}, \textbf{196}, 105--113 (1991); \textit{Proceedings of the Steklov Institute of Mathematics}, \textbf{196}, 117--127 (1992).

\bibitem[MoCh91-2]{MoCh91-2} Morozova, E. A.; Chentsov, N. N. \textit{Natural geometry of families of probability laws.} \textit{Itogi Nauki i Tekhniki. Seriya Sovremennye Problemy Matematiki. Fundamental'nye Napravleniya}, \textbf{83}, 133--265 (1991).

\bibitem[Pa67]{Pa67} Parthasarathy, K. R. \textit{Probability measures on metric spaces.} New York: Academic Press (1967).

\bibitem[Ro97]{Ro97} Rozenfeld, B. \textit{Geometry of Lie groups.} Dordrecht: Springer-Science+Business Media (1997).

\bibitem[Sik71]{Sik} Sikorski, R. \textit{Differential modules.} \textit{Colloquium Mathematicum}, \textbf{24}(1), 45--79 (1971).

\end{thebibliography}
\end{document}